\numberwithin{equation}{section}
\numberwithin{figure}{section}
\newtheorem{theorem}{Theorem}[section]
\newtheorem{theoremi}{Theorem}
\newtheorem{corollary}[theorem]{Corollary}
\newtheorem{lemma}[theorem]{Lemma}
\newtheorem{proposition}[theorem]{Proposition}
\newtheorem{definition-proposition}[theorem]{Definition-Proposition}
\newtheorem{question}[theorem]{Question}
\theoremstyle{definition}
\newtheorem{definition}[theorem]{Definition}
\newtheorem{remark}[theorem]{Remark}
\newtheorem{example}[theorem]{Example}
\newtheorem{examplei}[theoremi]{Example}
\theoremstyle{plain}
\newtheorem{thm}[theorem]{Theorem}
\newtheorem{prp}[theorem]{Proposition}
\newtheorem{lem}[theorem]{Lemma}
\newtheorem{cor}[theorem]{Corollary}
\newtheorem{fct}[theorem]{Fact}
\theoremstyle{definition}
\newtheorem{dfn}[theorem]{Definition}
\newtheorem{rmk}[theorem]{Remark}
\newtheorem{ex}[theorem]{Example}
\newtheorem{cond}[theorem]{Condition}
\renewcommand{\AA}{\mathcal{A}}
\newcommand{\BB}{\mathcal{B}}
\renewcommand{\b}{\mathrm{b}}
\newcommand{\CC}{\mathcal{C}}
\newcommand{\DD}{\mathcal{D}}
\newcommand{\D}{\mathsf{D}}
\newcommand{\E}{\mathbb{E}}
\newcommand{\EE}{\mathcal{E}}
\newcommand{\FF}{\mathcal{F}}
\newcommand{\GG}{\mathcal{G}}
\newcommand{\K}{\mathsf{K}}
\newcommand{\LL}{\mathcal{L}}
\newcommand{\M}{\mathsf{M}}
\newcommand{\N}{\mathbb{N}}
\newcommand{\NN}{\mathcal{N}}
\newcommand{\RR}{\mathcal{R}}
\renewcommand{\SS}{\mathcal{S}}
\newcommand{\s}{\mathfrak{s}}
\newcommand{\TT}{\mathcal{T}}
\newcommand{\Z}{\mathbb{Z}}
\newcommand{\Ext}{\operatorname{Ext}\nolimits}
\newcommand{\op}{\operatorname{op}\nolimits}
\newcommand{\Image}{\operatorname{Im}\nolimits}
\newcommand{\Kernel}{\operatorname{Ker}\nolimits}
\newcommand{\Cokernel}{\operatorname{Coker}\nolimits}
\newcommand{\Ab}{\mathsf{Ab}}
\newcommand{\coker}{\Cokernel}
\newcommand{\im}{\Image}
\renewcommand{\ker}{\Kernel}
\newcommand{\un}{\underline}
\newcommand{\ov}{\overline}
\DeclareMathOperator{\moduleCategory}{\mathsf{mod}} \renewcommand{\mod}{\moduleCategory}
\DeclareMathOperator{\id}{\mathsf{id}}
\DeclareMathOperator{\Iso}{\mathsf{Iso}}
\DeclareMathOperator{\gp}{\mathsf{gp}}
\DeclareMathOperator{\simp}{\mathsf{sim}}
\DeclareMathOperator{\add}{\mathsf{add}}
\DeclareMathOperator{\udim}{\underline{\dim}}
\newcommand{\iso}{\cong}
\newcommand{\defl}{\twoheadrightarrow}
\newcommand{\la}{\langle}
\newcommand{\ra}{\rangle}
\newcommand{\ot}{\leftarrow}
\newcommand{\crel}{\sim_c}
\newcommand{\ceq}{\approx_c}
\newcommand{\sst}[1]{\substack{#1}}
\newenvironment{sbmatrix}{\left[\begin{smallmatrix}}{\end{smallmatrix}\right]}
\newcommand{\ol}{\overline}
\newcommand{\wti}{\widetilde}
\newcommand{\imply}{\Rightarrow}
\newcommand{\equi}{\Leftrightarrow}
\newcommand{\xr}[1]{\xrightarrow{\, #1 \, }}
\newcommand{\inj}{\hookrightarrow}
\newcommand{\surj}{\twoheadrightarrow}
\newcommand{\simto}{\xr{\sim}}
\newcommand{\dto}{\dashrightarrow}
\newcommand{\xdr}[1]{\xdashrightarrow[]{ #1 }}
\newcommand{\bbE}{\mathbb{E}}
\newcommand{\bbN}{\mathbb{N}}
\newcommand{\bbZ}{\mathbb{Z}}
\newcommand{\calL}{\mathcal{L}}
\newcommand{\calR}{\mathcal{R}}
\newcommand{\frs}{\mathfrak{s}}
\newcommand{\Sg}{\Sigma}
\newcommand{\Mon}{\mathsf{Mon}}
\newcommand{\catmod}{\moduleCategory}
\newcommand{\catA}{\mathcal{A}}
\newcommand{\catC}{\mathcal{C}}
\newcommand{\catD}{\mathcal{D}}
\newcommand{\catF}{\mathcal{F}}
\newcommand{\catN}{\mathcal{N}}
\newcommand{\catS}{\mathcal{S}}
\newcommand{\catT}{\mathcal{T}}
\newcommand{\catX}{\mathcal{X}}
\newcommand{\Fun}{\operatorname{Fun}\nolimits}
\DeclareMathOperator{\Mor}{Mor}
\DeclareMathOperator{\Ker}{Ker}
\DeclareMathOperator{\Cone}{Cone}
\newcommand{\wpb}{\arrow[rd,"{\mathrm{wPB}}",phantom]}
\DeclareMathOperator{\Serre}{\mathsf{Serre}}
\DeclareMathOperator{\Face}{\mathsf{Face}}
\newcommand{\face}{\mathrm{face}}
\DeclareMathOperator{\Proj}{\mathsf{Proj}}
\newcommand{\ETCat}{\mathsf{ETCat}}
\DeclareMathOperator{\exa}{ex}
\newcommand{\arr}[1]{\arrow[{#1}]}
\newcommand{\gen}[1]{\la #1 \ra}
\newenvironment{bsmatrix}{\left[\begin{smallmatrix}}{\end{smallmatrix}\right]}
\newenvironment{enur}{\begin{enumerate}[label={\upshape(\roman*)}]}{\end{enumerate}}
\newenvironment{enua}{\begin{enumerate}[label={\upshape(\arabic*)}]}{\end{enumerate}}
\tikzset{
  symbol/.style={
    draw=none,
    every to/.append style={
      edge node={node [sloped, allow upside down, auto=false]{$#1$}}}
  },
  whitev/.style={circle, fill=white, draw=black, inner sep=1.5pt, outer sep=0pt}
}
\newcommand*{\da@rightarrow}{\mathchar"0\hexnumber@\symAMSa 4B }
\newcommand*{\da@leftarrow}{\mathchar"0\hexnumber@\symAMSa 4C }
\newcommand*{\xdashrightarrow}[2][]{%
  \mathrel{%
    \mathpalette{\da@xarrow{#1}{#2}{}\da@rightarrow{\,}{}}{}%
  }%
}
\newcommand{\xdashleftarrow}[2][]{%
  \mathrel{%
    \mathpalette{\da@xarrow{#1}{#2}\da@leftarrow{}{}{\,}}{}%
  }%
}
\newcommand*{\da@xarrow}[7]{%
  \sbox0{$\ifx#7\scriptstyle\scriptscriptstyle\else\scriptstyle\fi#5#1#6\m@th$}%
  \sbox2{$\ifx#7\scriptstyle\scriptscriptstyle\else\scriptstyle\fi#5#2#6\m@th$}%
  \sbox4{$#7\dabar@\m@th$}%
  \dimen@=\wd0 %
  \ifdim\wd2 >\dimen@
    \dimen@=\wd2 %
  \fi
  \count@=2 %
  \def\da@bars{\dabar@\dabar@}%
  \@whiledim\count@\wd4<\dimen@\do{%
    \advance\count@\@ne
    \expandafter\def\expandafter\da@bars\expandafter{%
      \da@bars
      \dabar@ 
    }%
  }%
  \mathrel{#3}%
  \mathrel{%
    \mathop{\da@bars}\limits
    \ifx\\#1\\%
    \else
      _{\copy0}%
    \fi
    \ifx\\#2\\%
    \else
      ^{\copy2}%
    \fi
  }%
  \mathrel{#4}%
}
\begin{document}
\title[Grothendieck monoids of extriangulated categories]{Grothendieck monoids of extriangulated categories}

\author[H. Enomoto]{Haruhisa Enomoto}
\address{Graduate School of Science, Osaka Metropolitan University, 1-1 Gakuen-cho, Naka-ku, Sakai, Osaka 599-8531, Japan}
\email{henomoto@omu.ac.jp}

\author[S. Saito]{Shunya Saito}
\address{Graduate School of Mathematics, Nagoya University, Chikusa-ku, Nagoya. 464-8602, Japan}
\email{m19018i@math.nagoya-u.ac.jp}

\subjclass[2020]{18E10, 18E35, 18G80}
\keywords{extriangulated categories; Grothendieck monoid; localization of categories}
\begin{abstract}
  We study the Grothendieck monoid (a monoid version of the Grothendieck group) of an extriangulated category, and give some results which are new even for abelian categories. First, we classify Serre subcategories and dense 2-out-of-3 subcategories using the Grothendieck monoid. Second, in good situations, we show that the Grothendieck monoid of the localization of an extriangulated category is isomorphic to the natural quotient monoid of the original Grothendieck monoid. This includes the cases of the Serre quotient of an abelian category and the Verdier quotient of a triangulated category.
  As a concrete example, we introduce an intermediate subcategory of the derived category of an abelian category, which lies between the abelian category and its one shift.
  We show that intermediate subcategories bijectively correspond to torsionfree classes in the abelian category, and then compute the Grothendieck monoid of an intermediate subcategory.
\end{abstract}

\maketitle

\tableofcontents

\section{Introduction}
In the representation theory of algebras, we often consider extension-closed subcategories of a triangulated category which are neither abelian nor triangulated. An \emph{extriangulated category}, introduced by Nakaoka--Palu \cite{NP}, is an appropriate framework to consider such a class of subcategories. Extriangulated categories unify both exact categories and triangulated categories, and we have the notion of \emph{conflations} in an extriangulated category, which generalize conflations (short exact sequences) in an exact category and triangles in a triangulated category.

The \emph{Grothendieck group} is the classical and basic invariant for both a triangulated category and an exact category. As a natural generalization, one can define the Grothendieck group $\K_0(\CC)$ of an extriangulated category $\CC$ as in \cite{H, ZZ}.
On the other hand, the \emph{Grothendieck monoid}, a natural monoid version of the Grothendieck group, has been studied for an exact category as a more sophisticated invariant (\cite{BeGr, JHP, Saito}) than the Grothendieck group.
The aim of this paper is to study the Grothendieck monoid of an extriangulated category.

Let $\CC$ be an extriangulated category. The Grothendieck monoid $\M(\CC)$ can be naturally defined as the universal commutative monoid equipped with a function $[-] \colon \CC \to \M(\CC)$ which is additive on each conflation. For a triangulated category, the Grothendieck monoid coincides with the Grothendieck group (Example \ref{ex:gro-mon}).
As a first use of the Grothendieck monoid, we show that \emph{Serre subcategories} of an extriangulated category $\CC$ can be classified purely combinatorially. Here a subcategory $\SS$ of $\CC$ is \emph{Serre} if for any conflation $X \to Y \to Z \dashrightarrow$ in $\CC$, we have $Y \in \SS$ if and only if both $X \in \SS$ and $Z \in \SS$.

\begin{theoremi}[= Proposition \ref{prop:serre-face-bij}]\label{thm:a}
  Let $\CC$ be a skeletally small extriangulated category. Then there is a bijection between the following two sets:
  \begin{enumerate}
    \item The set of Serre subcategories of $\CC$.
    \item The set of \emph{faces} of $\M(\CC)$
  \end{enumerate}
  Here a \emph{face $F$} of a monoid $M$ is a submonoid of $M$ such that $a + b \in F$ if and only if both $a \in F$ and $b \in F$ for any $a, b \in M$.
\end{theoremi}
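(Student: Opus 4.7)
The plan is to construct mutually inverse maps $\Phi$ and $\Psi$ between the two sets. Given a Serre subcategory $\SS \subseteq \CC$, define
\[
  \Phi(\SS) := \{[X] \in \M(\CC) \mid X \in \SS\},
\]
and given a face $F$ of $\M(\CC)$, define
\[
  \Psi(F) := \{X \in \CC \mid [X] \in F\}.
\]

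That $\Psi(F)$ is Serre is a direct translation of definitions: for any conflation $X \infl Y \defl Z \dto$, the relation $[Y] = [X] + [Z]$ holds in $\M(\CC)$, so the face condition for $F$ applied at this sum is precisely the Serre condition for $\Psi(F)$; containment of the zero object and closure under isomorphism are immediate. The remaining assertions, namely that $\Phi(\SS)$ is a face and that the two maps are mutually inverse, all reduce to a single \emph{invariance} claim: if $[X] = [Y]$ in $\M(\CC)$ and $X \in \SS$, then $Y \in \SS$. Granting invariance, closure of $\Phi(\SS)$ under addition and the face property of $\Phi(\SS)$ both follow by combining the split conflation $X \to X \oplus Y \to Y$ with the Serre property of $\SS$; the equality $\Psi(\Phi(\SS)) = \SS$ is then immediate from invariance, and $\Phi(\Psi(F)) = F$ is formal (every element of $\M(\CC)$ can be represented as $[X]$ for some $X \in \CC$, since $\CC$ is additive).

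The main obstacle is thus the invariance statement. By construction, $\M(\CC)$ is the quotient of the free commutative monoid on isomorphism classes of objects of $\CC$ modulo the congruence generated by the relations $[Y] = [X] + [Z]$ arising from conflations $X \infl Y \defl Z \dto$. Consequently, an equality $[X] = [Y]$ in $\M(\CC)$ is witnessed by a finite zigzag of such generating moves, and the Serre property of $\SS$ is exactly the condition that membership in $\SS$ is preserved at each step, in both directions of each move. Therefore, once a workable combinatorial description of the defining congruence of $\M(\CC)$ is in hand, which should be developed in an earlier section of the paper, invariance follows by a direct induction on the length of the witnessing zigzag, completing the bijection.
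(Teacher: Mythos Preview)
Your proposal is correct and follows essentially the same approach as the paper. The paper packages the argument slightly differently---first establishing a general bijection between \emph{c-closed} subcategories and arbitrary subsets of $\M(\CC)$ (Proposition~\ref{prp:c-closed bij}), then characterizing Serre subcategories as exactly the c-closed subcategories closed under direct sums and summands (Proposition~\ref{prp:Serre c-eq}), and finally restricting---but your ``invariance'' claim is precisely the ``only if'' direction of Proposition~\ref{prp:Serre c-eq}, and the combinatorial description you defer to is exactly the construction $\M(\CC)\cong\Iso\CC/{\ceq}$ of Proposition~\ref{prop:gro-mon-ceq}.
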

This generalizes \cite{Saito} where $\CC$ is assumed to be an exact category.
We also establish a classification of \emph{dense 2-out-of-3 subcategories} of $\CC$, that is, a subcategory $\DD$ of $\CC$ satisfying $\add \DD = \CC$ and the 2-out-of-3 property for conflations.
\begin{theoremi}[= Theorem \ref{thm:dense-2-3-bij}, Corollary \ref{cor:dense-2-3-k0}]\label{thm:b}
  Let $\CC$ be a skeletally small extriangulated category. Then there is a bijection between the following sets:
  \begin{enumerate}
    \item The set of dense 2-out-of-3 subcategories of $\CC$.
    \item The set of cofinal subtractive submonoid of $\M(\CC)$ (see Definition \ref{def:subt-cofin}).
    \item The set of subgroups of $\K_0(\CC)$ such that $\rho^{-1}(H)$ is cofinal in $\M(\CC)$, where $\rho \colon \M(\CC) \to \K_0(\CC)$ is the natural map.
  \end{enumerate}
\end{theoremi}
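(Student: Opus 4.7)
The plan is to prove the three-way bijection by establishing (1)$\leftrightarrow$(2) and (2)$\leftrightarrow$(3) separately. For the first correspondence, I would define the forward map by $\Phi(\DD) := \{[X] \in \M(\CC) : X \in \DD\}$ and the inverse by $\Psi(N) := \{X \in \CC : [X] \in N\}$. For the second, the maps are $N \mapsto \langle \rho(N)\rangle$ (the subgroup of $\K_0(\CC)$ generated by the image) and $H \mapsto \rho^{-1}(H)$.

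The well-definedness of $\Phi$ breaks into three parts. Closure of $\Phi(\DD)$ under addition and containing $0$ follows from the split conflations $X \to X \oplus Y \to Y$ and $0 \to 0 \to 0$, using 2-out-of-3. Cofinality follows directly from $\add \DD = \CC$: if $X$ is a summand of $D_1 \oplus \cdots \oplus D_n$ with $D_i \in \DD$, write this sum as $X \oplus X'$ and note $[X] + [X'] = \sum [D_i] \in \Phi(\DD)$. For subtractivity I would first prove a preparatory \emph{lemma}: every dense 2-out-of-3 subcategory $\DD$ is closed under direct summands. Indeed, if $X \oplus Y \in \DD$, density gives $Z$ with $Y \oplus Z \in \DD$ (as a summand of something in $\DD$, plus previous step), so $X \oplus Y \oplus Z \in \DD$ by direct-sum closure, and then the split conflation $X \inflto X \oplus (Y \oplus Z) \deflto Y \oplus Z$ plus 2-out-of-3 yields $X \in \DD$. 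Subtractivity of $\Phi(\DD)$ then reduces to verifying that $\DD$ is closed under $\M(\CC)$-equivalence, which will be the main step. Conversely, $\Psi(N)$ is dense because cofinality of $N$ plus surjectivity of $\CC \to \M(\CC)$ on isoclasses yields $Y$ with $X \oplus Y \in \Psi(N)$, and 2-out-of-3 follows from $[Y] = [X] + [Z]$ in $\M(\CC)$ for conflations $X \to Y \to Z \dashrightarrow$ together with subtractivity.

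The hard part, as anticipated, is showing $\Psi(\Phi(\DD)) \subseteq \DD$: given $X \in \CC$ with $[X] = [Y]$ in $\M(\CC)$ for some $Y \in \DD$, conclude $X \in \DD$. Here I would use the explicit construction of $\M(\CC)$ as $\Iso(\CC)/\!\sim$, where $\sim$ is the monoid congruence generated by $[Y] \sim [X] + [Z]$ for each conflation $X \inflto Y \deflto Z$. Consequently $[X] = [Y]$ iff there is an element $W \in \CC$ and a finite zigzag of conflation-moves connecting $X \oplus W$ to $Y \oplus W$ in $\Iso(\CC)$. Using the lemma above (summand closure) plus density, one extends $W$ so that $W$ and all intermediate summands lie in $\DD$, and then 2-out-of-3 propagates membership in $\DD$ along each single conflation-move. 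The reverse equality $\Phi(\Psi(N)) = N$ is immediate since each element of $\M(\CC)$ is represented by an actual object.

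For (2)$\leftrightarrow$(3), given a subgroup $H$ of $\K_0(\CC)$ its preimage $\rho^{-1}(H)$ is automatically subtractive (because $H$ is closed under negation) and cofinal by hypothesis. Conversely, for a cofinal subtractive $N$, I would prove the standard monoid-theoretic fact that $N = \rho^{-1}\langle\rho(N)\rangle$. The key calculation: if $\rho([X]) = \rho(a) - \rho(b)$ with $a, b \in N$, then by definition of group completion there exists $c \in \M(\CC)$ with $[X] + b + c = a + c$; choosing $d$ with $c + d \in N$ via cofinality, one obtains $[X] + (b+c+d) = a + (c+d)$ with both sides summands involving elements of $N$, so subtractivity forces $[X] \in N$. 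The main obstacle remains the zigzag argument in the previous paragraph, as it is the one step that genuinely uses the structure of conflations beyond formal monoid manipulations.
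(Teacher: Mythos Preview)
Your overall architecture matches the paper: establish that dense 2-out-of-3 subcategories are c-closed, then use the bijection between c-closed subcategories and subsets of $\M(\CC)$, and handle (2)$\leftrightarrow$(3) by the monoid-theoretic argument you sketch (which is exactly the paper's Proposition~\ref{prop:cof-subt-subgrp}). The well-definedness checks for $\Phi$ and $\Psi$ and the computation for (2)$\leftrightarrow$(3) are correct.

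However, there is a genuine gap: your preparatory lemma is \emph{false}. Dense 2-out-of-3 subcategories are \emph{not} closed under direct summands in general. For instance, in $\TT=\D^\b(\mod k)$ with $\K_0(\TT)\iso\Z$, the dense triangulated subcategory $\DD=\{X:[X]\in 2\Z\}$ contains $k\oplus k[1]$ (class $0$) but not $k$ (class $1$). Your argument breaks at ``$X\oplus Y\oplus Z\in\DD$ by direct-sum closure'': from $X\oplus Y\in\DD$ and $Y\oplus Z\in\DD$ you only get $X\oplus Y\oplus Y\oplus Z\in\DD$, not $X\oplus Y\oplus Z$. In fact the lemma \emph{cannot} hold: combined with c-closedness (which you correctly identify as the main step) and Proposition~\ref{prp:Serre c-eq}, it would force every dense 2-out-of-3 subcategory to be Serre, hence equal to $\CC$. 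Since your zigzag argument for c-closedness explicitly invokes this lemma (``using the lemma above (summand closure) plus density, one extends $W$\ldots''), that argument does not go through as written.

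The paper's fix (Proposition~\ref{prop:dense-2-3-c-cl}) is to bypass the zigzag bookkeeping entirely and prove the single-step statement: for any conflation $X\to Y\to Z\dto$, one has $Y\in\DD$ if and only if $X\oplus Z\in\DD$. Both directions are obtained by taking the direct sum of the given conflation with a suitable split conflation. For $Y\in\DD\Rightarrow X\oplus Z\in\DD$: choose $W$ with $Y\oplus Z\oplus W\in\DD$ by density, deduce $Z\oplus W\in\DD$ from the split conflation $Y\to Y\oplus Z\oplus W\to Z\oplus W$, then apply 2-out-of-3 to the conflation $X\oplus Z\to Y\oplus Z\oplus W\to Z\oplus W$. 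For the converse: apply 2-out-of-3 to $X\oplus Z\to Y\oplus X\oplus Z\to X\oplus Z$ to get $Y\oplus(X\oplus Z)\in\DD$, then cancel $X\oplus Z$. This one-step equivalence makes the chain $A_0\crel A_1\crel\cdots\crel A_n$ propagate membership in $\DD$ immediately, with no need to control intermediate summands or invoke summand closure.
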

Applying Theorem \ref{thm:b} to a triangulated category, we immediately obtain Thomason's classification \cite{thomason} of dense triangulated subcategories via subgroups of the Grothendieck group (Corollary \ref{cor:thomason}).
In \cite{matsui}, 
Matsui classified dense resolving subcategories of an exact category \emph{containing a generator} 
via certain subgroups of the Grothendieck group (see also \cite{ZZ}).
Applying Theorem \ref{thm:b} to an exact category,
we obtain a classification of \emph{all} dense resolving subcategories,
which generalizes \cite{matsui, ZZ} (Corollary \ref{cor:matsui}).

Next, we consider the behavior of the Grothendieck monoid with respect to an \emph{exact localization} of Nakaoka--Ogawa--Sakai \cite{NOS}.
An exact localization $\CC/\NN$ of an extriangulated category $\CC$ by an extension-closed subcategory $\NN$ of $\CC$ is (if exists) the universal extriangulated category which sends $\NN$ to $0$ (Proposition \ref{prop:ex-loc-univ2}). Typical examples are Serre quotients of abelian categories and Verdier quotients of triangulated categories.
Under certain assumptions, where an exact localization exists, we show that this localization ``commutes with the monoid quotients'' as follows.
\begin{theoremi}[= Corollary \ref{cor:mon loc ET}]\label{thm:c}
  Let $\CC$ be a skeletally small extriangulated category and $\NN$ an extension-closed subcategory of $\CC$. Under some conditions (see Corollary \ref{cor:mon loc ET}), we have the following coequalizer diagram of commutative monoids:
  \[
    \begin{tikzcd}
      \M(\NN) \rar[shift left, "\M(\iota)"] \rar[shift right, "0"']
      & \M(\CC) \rar["\M(Q)"] & \M(\CC/\NN),
    \end{tikzcd}
  \]
  where $\iota \colon \NN \hookrightarrow \CC$ and $Q \colon \CC \to \CC/\NN$ are the inclusion and the localization functor respectively. In particular, $\M(\CC/\NN)$ is isomorphic to the quotient monoid $\M(\CC)/\Image \M(\iota)$ (see Definition \ref{def:quot-submon}).
  This can be applied to the following cases.
  \begin{enur}
    \item $\CC$ is a triangulated category and $\NN$ is a thick subcategory of $\CC$. In this case, $\CC/\NN$ is the usual Verdier quotient of a triangulated category.
    \item $\CC$ is a Frobenius category and $\NN$ is the subcategory of all projective objects in $\CC$. In this case, $\CC/\NN$ is the usual (triangulated) stable category.
    \item $\CC$ is an abelian category and $\NN$ is a Serre subcategory of $\CC$. In this case, $\CC/\NN$ is the usual Serre quotient of an abelian category.
  \end{enur}
  Moreover, if $\NN$ is a Serre subcategory (e.g. {\upshape (iii)}), then $\M(\iota) \colon \M(\NN) \to \M(\CC)$ is an injection.
\end{theoremi}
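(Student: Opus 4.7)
The plan is to apply the universal properties of the Grothendieck monoids on both sides to construct mutually inverse monoid homomorphisms between $\M(\CC/\NN)$ and the quotient monoid $\M(\CC)/\Image \M(\iota)$. The coequalizer claim is equivalent to such an isomorphism by Definition \ref{def:quot-submon}. Since $Q \colon \CC \to \CC/\NN$ is exact by Proposition \ref{prop:ex-loc-univ2} and annihilates $\NN$, functoriality of $\M$ supplies a map $\M(Q) \colon \M(\CC) \to \M(\CC/\NN)$ with $\M(Q) \circ \M(\iota) = 0 = \M(Q) \circ 0$, which therefore factors through a monoid homomorphism $\phi \colon \M(\CC)/\Image \M(\iota) \to \M(\CC/\NN)$. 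The main content is to show that $\phi$ is an isomorphism.

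Surjectivity of $\phi$ is automatic since, in the Nakaoka--Ogawa--Sakai construction of the exact localization \cite{NOS}, the objects of $\CC/\NN$ are the objects of $\CC$. For the inverse, the strategy is to define an additive-on-conflations function $\CC/\NN \to \M(\CC)/\Image \M(\iota)$ by $X \mapsto \overline{[X]_\CC}$ and then invoke the universal property of $\M(\CC/\NN)$. Two well-definedness checks constitute the main technical core: first, if $X \cong Y$ in $\CC/\NN$, one needs $\overline{[X]_\CC} = \overline{[Y]_\CC}$ modulo $\Image \M(\iota)$; second, if $X \to Y \to Z \dashrightarrow$ is a conflation in $\CC/\NN$, one needs $\overline{[Y]_\CC} = \overline{[X]_\CC} + \overline{[Z]_\CC}$. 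Both rely on the explicit description in \cite{NOS} of morphisms and conflations in $\CC/\NN$ as being represented by data in $\CC$ modulo perturbation by objects of $\NN$, and such perturbations vanish after quotienting by $\Image \M(\iota)$. Once the resulting $\psi \colon \M(\CC/\NN) \to \M(\CC)/\Image \M(\iota)$ is in hand, $\phi \circ \psi = \id$ and $\psi \circ \phi = \id$ follow immediately by checking on generators. The three examples (i)--(iii) then follow from the fact that the hypotheses of the general localization framework are known to hold for thick subcategories of triangulated categories, the subcategory of projectives in a Frobenius category, and Serre subcategories of abelian categories, producing the classical Verdier, stable, and Serre quotients respectively.

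For the final injectivity claim when $\NN$ is Serre, the plan is to unwind $\M(\CC)$ as the congruence quotient of the free commutative monoid $F(\Iso \CC)$ by the relation generated by $[B] \sim [A] + [C]$ over conflations $A \to B \to C$, and to show that for $a, b \in F(\Iso \NN)$ any chain of conflation-relations in $F(\Iso \CC)$ witnessing $a \sim b$ can be replaced by one lying entirely inside $F(\Iso \NN)$. Here the Serre 2-out-of-3 property is essential: any conflation in $\CC$ contributing to such a chain and having two of its terms inside $\NN$ must have its third term in $\NN$ as well, so by tracking which summands lie outside $\NN$ and using that they appear only transiently, the chain can be rectified to use only conflations internal to $\NN$. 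This is the extriangulated-category analogue of the argument used in \cite{Saito} for exact categories, and the careful bookkeeping of such transient summands is the step I expect to be the main obstacle.
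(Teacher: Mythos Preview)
Your overall strategy for the monoid isomorphism matches the paper's: both factor $\M(Q)$ through $\phi\colon\M(\CC)/\M_\NN\to\M(\CC/\NN)$, note surjectivity because $Q$ is the identity on objects, and then effectively invert $\phi$ by checking that the assignment $X\mapsto[X]_{\CC}\bmod\M_\NN$ is well-defined on $\Iso(\CC/\NN)$ and respects conflations. The paper phrases this last step as a direct injectivity check on $\phi$ via the c-equivalence description of $\M(\CC/\NN)$ (Proposition~\ref{prop:gro-mon-ceq}), but the content is the same.

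Where your sketch is too thin is exactly the point you wave at with ``perturbation by objects of $\NN$''. The paper isolates this as Lemma~\ref{lem:sat mod}: if $X\cong Y$ in $\CC/\NN$ then $[X]\equiv[Y]\bmod\M_\NN$. An isomorphism in $\CC/\NN$ is a roof $Q(s)^{-1}Q(g)$ with $s\in S_\NN$ (Remark~\ref{rmk:roof}); then $Q(g)$ is an isomorphism, but nothing in the NOS construction tells you that $g$ itself lies in $S_\NN$. The paper therefore introduces the hypothesis that $S_\NN$ is \emph{saturated} (condition~(ii) of Theorem~\ref{thm:mon loc ET}) and devotes real work to verifying it in the two situations of interest: Proposition~\ref{prp:sat tri} for biresolving $\NN$, and Proposition~\ref{prp:sat ex} together with (EL4) for the percolating case, both resting on the kernel computation $\Ker Q=\NN$ (Corollary~\ref{cor:Ker}). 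Your appeal to ``the hypotheses of the general localization framework are known to hold'' hides precisely this saturatedness verification, which is not part of the NOS localization package but is proved here. The same issue recurs in your conflation check: Fact~\ref{fct:NOS loc}(2) lets you replace a conflation in $\CC/\NN$ by the image of one in $\CC$ only up to isomorphisms in $\CC/\NN$, and handling those isomorphisms again needs Lemma~\ref{lem:sat mod}.

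For the Serre injectivity, your free-monoid plan works, but you are overestimating the difficulty. With $\NN$ Serre, a single step $u+[B]\leftrightarrow u+[A]+[C]$ in $F(\Iso\CC)$ that starts in $F(\Iso\NN)$ stays there: if $B\in\NN$ then $A,C\in\NN$, and if $A,C\in\NN$ then $B\in\NN$. So there are no ``transient summands'' at all, and the chain lies in $F(\Iso\NN)$ by induction. The paper's version (Proposition~\ref{prp:Serre inj}) is the same argument packaged more cleanly: it uses the presentation $\M(\CC)=\Iso\CC/{\ceq}$ and the fact that Serre subcategories are c-closed (Proposition~\ref{prp:Serre c-eq}), so each object $A_i$ in a $\ceq$-chain between two objects of $\NN$ already lies in $\NN$, and the witnessing conflation is then automatically a conflation in $\NN$.
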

The above coequalizer diagram immediately yields the right exact sequence of the Grothendieck group (Corollary \ref{cor:loc-gr}):
\[
  \begin{tikzcd}
    \K_0(\NN) \rar & \K_0(\CC) \rar & \K_0(\CC/\NN) \rar & 0.
  \end{tikzcd}
\]
This unifies the well-known results for the abelian case and the triangulated case. Moreover, the injectivity of $\M(\NN) \to \M(\CC)$ for a Serre subcategory $\NN$ is quite remarkable, since it fails for the Grothendieck groups (see e.g. Example \ref{ex:not inj K0}).

Next, we address a categorification of a \emph{monoid localization}, which makes certain elements of a monoid invertible (see Definition \ref{def:mon-loc}).
For this purpose,
we study \emph{intermediate subcategories} of the derived category in detail,
which also gives a concrete example of our study described above for an extriangulated category which is neither abelian nor triangulated.
Let $\AA$ be a skeletally small abelian category and $\D^\b(\AA)$ the bounded derived category of $\AA$. Then a subcategory $\CC$ of $\D^\b(\AA)$ is called an \emph{intermediate subcategory} if $\AA \subseteq \CC \subseteq \AA[1] * \AA$ holds and $\CC$ is closed under extensions and direct summands. We show in Theorem \ref{thm:inter-torf-bij} that there is a bijection between torsionfree classes $\FF$ in $\AA$ and intermediate subcategories $\CC$ of $\D^\b(\AA)$, where the corresponding intermediate subcategory is given by $\FF[1] * \AA$. This is illustrated as follows.
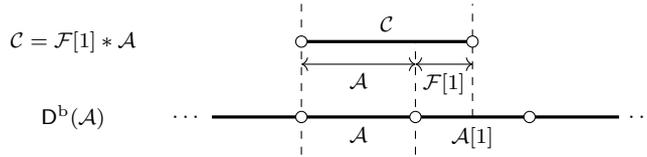
\begin{figure}[htp]
  \centering
  \begin{tikzpicture}[xscale=1.5, every node/.style={font=\footnotesize}]
    \draw[dashed] (0,-0.5) -- (0,1.5);
    \draw[dashed] (1,-0.5) -- (1,1);
    \draw[dashed] (1.5,0) -- (1.5,1.5);

    \draw[<->] (1,0.7) -- (1.5,0.7) node[midway, below] {$\FF[1]$};
    \draw[<->] (0,0.7) -- (1,0.7) node[midway, below] {$\AA$};    
    
    \node at (-2, 0) {$\D^\b(\AA)$};
    \node at (-2, 1) {$\CC = \FF[1] * \AA$};

    \node[whitev] (0) at (0,0) {}; 
    \node[whitev] (1) at (1,0) {}; 
    \node[whitev] (2) at (2,0) {};

    \node (3) at (3,0) {$\cdots$};
    \node (-1) at (-1,0) {$\cdots$};

    \node[whitev] (c0) at (0,1) {};
    \node[whitev] (c1) at (1.5,1) {};

    \draw[very thick] (0) -- (1) node[midway, below] {$\AA$};
    \draw[very thick] (1) -- (2) node[midway, below] {$\AA[1]$};
    \draw[very thick] (-1) -- (0);
    \draw[very thick] (2) -- (3);
    \draw[very thick] (c0) -- (c1) node[midway, above] {$\CC$};

  \end{tikzpicture}
  \caption{An intermediate subcategory}
  \label{fig:inter}
\end{figure}

We show that the Grothendieck monoid of an intermediate subcategory $\FF[1] * \AA$ can be described using a monoid localization (see Definition \ref{def:mon-loc}) as follows.
\begin{theoremi}[= Theorem \ref{thm:mon-of-inter}]
  Let $\AA$ be a skeletally small abelian category and $\FF$ a torsionfree class in $\AA$. Then $\M(\FF[1] * \AA)$ is isomorphic to the monoid localization $\M(\AA)_{\M_\FF}$ of $\M(\AA)$ with respect to a subset $\M_\FF := \{[F] \mid F \in \FF\} \subseteq \M(\AA)$.
\end{theoremi}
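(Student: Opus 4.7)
The plan is to construct mutually inverse monoid homomorphisms between $\M(\CC)$ and $\M(\AA)_{\M_\FF}$, where $\CC := \FF[1] * \AA$ inherits its extriangulated structure from $\D^\b(\AA)$ via being extension-closed.

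For the map $\Phi \colon \M(\AA)_{\M_\FF} \to \M(\CC)$, I start from the exact inclusion $\iota \colon \AA \hookrightarrow \CC$, which induces $\M(\iota) \colon \M(\AA) \to \M(\CC)$. For each $F \in \FF$, the standard triangle $F \to 0 \to F[1] \dashrightarrow$ in $\D^\b(\AA)$ has all three terms in $\CC$, so it is a conflation there and yields $[F] + [F[1]] = 0$ in $\M(\CC)$. Thus $\M(\iota)([F])$ is invertible for every $F \in \FF$, so by the universal property of monoid localization (Definition \ref{def:mon-loc}), $\M(\iota)$ factors uniquely through the desired $\Phi$.

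For the inverse $\Psi \colon \M(\CC) \to \M(\AA)_{\M_\FF}$, observe that $X \in \D^\b(\AA)$ lies in $\CC = \FF[1] * \AA$ if and only if $H^i(X) = 0$ for $i \neq -1, 0$ and $H^{-1}(X) \in \FF$; the corresponding conflation is the truncation triangle $H^{-1}(X)[1] \to X \to H^0(X) \dashrightarrow$. Define
\[
  \phi(X) := [H^0(X)] - [H^{-1}(X)] \in \M(\AA)_{\M_\FF},
\]
which makes sense because $[H^{-1}(X)] \in \M_\FF$ becomes invertible in the localization. Given a conflation $X \to Y \to Z \dashrightarrow$ in $\CC$, the vanishing of cohomology outside $\{-1,0\}$ collapses the long exact sequence to a 6-term exact sequence in $\AA$,
\[
  0 \to H^{-1}(X) \to H^{-1}(Y) \to H^{-1}(Z) \to H^0(X) \to H^0(Y) \to H^0(Z) \to 0.
\]
Splicing this into four short exact sequences and adding the resulting identities in $\M(\AA)$ yields
\[
  [H^{-1}(X)] + [H^{-1}(Z)] + [H^0(Y)] = [H^{-1}(Y)] + [H^0(X)] + [H^0(Z)].
\]
Passing to $\M(\AA)_{\M_\FF}$ and using the invertibility of the $H^{-1}$-terms, this rearranges to $\phi(Y) = \phi(X) + \phi(Z)$. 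Hence $\phi$ descends to a monoid homomorphism $\Psi$.

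It remains to check $\Phi$ and $\Psi$ are mutually inverse. For $A \in \AA$, $\Psi\Phi([A]) = [H^0(A)] - [H^{-1}(A)] = [A]$, and for $F \in \FF$, $\Psi\Phi(-[F]) = \Psi([F[1]]) = -[F]$; by universality, $\Psi \Phi = \id$. Conversely, for $X \in \CC$,
\[
  \Phi\Psi([X]) = [H^0(X)] + [H^{-1}(X)[1]] = [X],
\]
by the defining conflation of $X$ in $\CC$. The principal obstacle is showing well-definedness of $\Psi$: conflations in $\CC$ are triangles rather than short exact sequences, and the 6-term cohomology identity only reorganizes into the sought two-term relation once the $H^{-1}$-terms are inverted, which is precisely why the monoid localization appears in the statement.
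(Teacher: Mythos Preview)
Your proposal is correct and follows essentially the same approach as the paper: both arguments hinge on the conflation $F \to 0 \to F[1] \dashrightarrow$ to make $[F]$ invertible in $\M(\CC)$, and on the 6-term cohomology exact sequence to verify that $X \mapsto [H^0(X)] - [H^{-1}(X)]$ respects conflations. The only cosmetic difference is that the paper directly verifies that $\M(\iota)$ satisfies the universal property of the localization (constructing $\overline{\varphi}$ for an arbitrary target monoid $M$), whereas you instantiate this once with $M = \M(\AA)_{\M_\FF}$ to produce an explicit inverse $\Psi$; the underlying computation is identical.
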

This enables us to compute $\M(\FF[1] * \AA)$ in concrete situations. Also, this can be interpreted as follows: \emph{the inclusion of a subcategory $\AA \hookrightarrow \FF[1] * \AA$ categorifies the monoid localization $\M(\AA) \to \M(\AA)_{\M_\FF}$} in the sense that by applying the functor $\M(-) \colon \ETCat \to \Mon$ we obtain the monoid localization. Moreover, we show that \emph{all the monoid localization of $\M(\AA)$ appear in this way} (Remark \ref{rem:mon-loc-categorify}).
This result explains why it is natural to study Grothendieck monoids in the generality of extriangulated categories.
The Grothendieck monoid of an exact category is reduced, that is, $0$ is the only invertible element, while that of a triangulated category is a group, that is, every element is invertible.
In particular, most of the monoid localizations of the Grothendieck monoid do not appear as the Grothendieck monoid if we only consider exact categories and triangulated categories.
Thus, to realize a localization as the Grothendieck monoid,
we have to consider extriangulated categories which are neither abelian nor triangulated.

Finally, we consider Serre subcategories of $\CC := \FF[1]*\AA$ and the behavior of an exact localization of $\CC$ in detail:
\begin{theoremi}[= Proposition \ref{prop:inter-serre-bij}, Theorem \ref{thm:inter-serre-loc}]\label{thm:d}
  Let $\AA$ be a skeletally small abelian category and $\FF$ a torsionfree class in $\AA$, and put $\CC := \FF[1] * \AA$.
  \begin{enumerate}
    \item The assignment $\SS \mapsto \FF[1] * \SS$ gives a bijection between the set of Serre subcategories $\SS$ of $\AA$ containing $\FF$ and the set of Serre subcategories of $\CC$.
    \item Let $\SS$ be a Serre subcategory of $\AA$ containing $\FF$. Then we can apply Theorem \ref{thm:c} to an exact localization $\CC/(\FF[1] * \SS)$, and we have an exact equivalence $\AA/\SS \simeq \CC/(\FF[1] * \SS)$ which makes the following diagram commute:
    \begin{equation}\label{eq:intro}
      \begin{tikzcd}
        \AA \rar[hookrightarrow] \dar & \FF[1] * \AA \dar \\
        \AA / \SS \rar["\sim"] & (\FF[1] * \AA)\big / (\FF[1] * \SS) .
      \end{tikzcd}
    \end{equation}
  \end{enumerate}
\end{theoremi}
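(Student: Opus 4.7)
The plan is to reduce everything to cohomological computations in $\AA$ using the standard $t$-structure on $\D^\b(\AA)$. The key technical observation, to be established first, is that for any $X \in \CC = \FF[1]*\AA$ the canonical triangle $H^{-1}(X)[1] \to X \to H^0(X) \to H^{-1}(X)[2]$ is a conflation in $\CC$ (because $H^{-1}(X) \in \FF$), and consequently, given any Serre $\SS \subseteq \AA$ with $\FF \subseteq \SS$, one has $X \in \FF[1]*\SS$ if and only if $H^0(X) \in \SS$.

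For part (1), the forward direction follows from this characterization: given a conflation $X \to Y \to Z$ in $\CC$, the long exact cohomology sequence
\[
  H^{-1}(X) \to H^{-1}(Y) \to H^{-1}(Z) \to H^0(X) \to H^0(Y) \to H^0(Z) \to 0
\]
and the fact that $H^{-1}(Z) \in \FF \subseteq \SS$ let us transfer 2-out-of-3 from $\SS$ (using Serre closure under sub/quotient/extension) to the $H^0$-values, and closure under direct summands is immediate from additivity of $H^0$. The inverse assignment is $\TT \mapsto \TT \cap \AA$. The main point is that $\FF \subseteq \TT \cap \AA$ automatically: for $F \in \FF$, the triangle $F \to 0 \to F[1] \to F[1]$ in $\D^\b(\AA)$ has all its terms in $\CC$ (since $F[1] \in \FF[1] \subseteq \CC$), hence is a conflation, so Serre-ness of $\TT$ together with $0 \in \TT$ forces $F, F[1] \in \TT$. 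Thus $\FF \cup \FF[1] \subseteq \TT$, and applying 2-out-of-3 to the canonical triangle of each $X \in \TT$ gives $H^0(X) \in \TT \cap \AA$, so $\TT = \FF[1]*(\TT \cap \AA)$. The identity $(\FF[1]*\SS) \cap \AA = \SS$ follows because any $A \in \AA$ in $\FF[1]*\SS$ has $H^{-1}(A) = 0$ and therefore $A \cong H^0(A) \in \SS$, completing the bijection.

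For part (2), I would first verify the hypotheses of Theorem~\ref{thm:c} (i.e.\ of Corollary~\ref{cor:mon loc ET}) for $\NN := \FF[1]*\SS \subseteq \CC$; given that $\NN$ is Serre in $\CC$ by part (1), this should follow from the general framework of \cite{NOS} applied to our setting. Next, define $\Phi \colon \CC \to \AA/\SS$ on objects by $\Phi(X) := \overline{H^0(X)}$ and on morphisms by the induced map; the LES argument above shows that $\Phi$ is exact (the image of $H^{-1}(Z)$ vanishes in $\AA/\SS$, so each conflation in $\CC$ is sent to a short exact sequence in $\AA/\SS$). Since $\Phi$ annihilates $\NN$, the universal property of the exact localization produces an exact functor $\bar\Phi \colon \CC/\NN \to \AA/\SS$. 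Conversely, the composite $\AA \hookrightarrow \CC \to \CC/\NN$ kills $\SS$ (because $\SS \subseteq \NN$), hence factors through an exact functor $\Psi \colon \AA/\SS \to \CC/\NN$; the commutativity of diagram~(\ref{eq:intro}) is built into this construction.

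Finally, to see that $\Psi$ and $\bar\Phi$ are quasi-inverse: on $\AA/\SS$ we have $\bar\Phi\Psi(A) = \overline{H^0(A)} = \bar A$; on $\CC/\NN$, applied to $X \in \CC$, the canonical conflation $H^{-1}(X)[1] \to X \to H^0(X)$ has left term in $\FF[1] \subseteq \NN$, so the localization formally inverts $X \to H^0(X)$, giving $\Psi\bar\Phi(X) \cong X$ in $\CC/\NN$. The principal obstacle I anticipate is checking that the precise hypotheses of Theorem~\ref{thm:c}/Corollary~\ref{cor:mon loc ET} are indeed met by the pair $(\CC, \FF[1]*\SS)$; the remaining content, once those conditions are in place, is a routine bookkeeping exercise driven by the cohomological characterization above and the universal property of the exact localization.
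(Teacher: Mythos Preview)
Your approach to part~(1) is essentially identical to the paper's (Proposition~\ref{prop:inter-serre-bij}): the same cohomological characterization $\FF[1]*\SS = \{X \in \CC : H^0(X) \in \SS\}$, the same long exact sequence argument for the Serre property, the same use of the conflation $F \to 0 \to F[1]$ to force $\FF \cup \FF[1] \subseteq \TT$, and the same verification that the two assignments are mutually inverse.

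For part~(2), your strategy---construct both functors via universal properties and check they are quasi-inverse using the canonical conflation $H^{-1}(X)[1] \to X \to H^0(X)$---also matches the paper (Theorem~\ref{thm:inter-serre-loc}). The step you defer, however, is precisely where the paper does the most work: verifying the hypotheses of Corollary~\ref{cor:mon loc ET} is the content of Proposition~\ref{prop:inter-loc}, which establishes (EL1)--(EL4) for $(\CC, \FF[1]*\SS)$, and this does \emph{not} follow formally from $\NN$ being Serre in $\CC$. In particular, (EL4) (admissibility of $\CC$) requires an explicit octahedral/truncation argument to produce a deflation-inflation factorization of an arbitrary morphism in $\FF[1]*\AA$, and (EL3) requires showing that any $\varphi$ with $f\varphi = 0$ for an inflation $f$ (resp.\ $\varphi g = 0$ for a deflation $g$) factors through an object of $\FF$ (resp.\ $\FF[1]$), which the paper does by chasing through two successive triangles in $\D^\b(\AA)$. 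So your phrase ``should follow from the general framework of \cite{NOS}'' understates the situation: these conditions are specific to $\CC$ and need direct verification. Finally, for $\Psi\bar\Phi \simeq \id$ your sketch gives the objectwise isomorphism; to upgrade this to a \emph{natural} isomorphism the paper invokes the universal property of the exact $2$-localization (Remark~\ref{rmk:2-loc}), a step your outline omits.
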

\begin{examplei}\label{ex:intro}
  Let $k$ be a field and $Q$ a quiver $1 \ot 2 \ot 3$, and let $\AA := \mod kQ$. Then the Auslander-Reiten quiver of $\D^\b(\AA)$ is as follows:
  \[
    \begin{tikzpicture}
      \fill[gray!30, rounded corners]
      (0,-.2) -- (-.2,0) -- (2,2.3) -- (4.3,2.3) -- (5.5,1) -- (4,-.2) -- cycle;
      \node at (-1,1) {$\cdots$};
      \node (3m1) at (0,2) {$\sst{3}[-1]$};
      \node (1) at (0,0) {$\sst{1}$};
      \node (21) at (1,1) {$\sst{2 \\ 1}$};
      \node (321) at (2,2) {$\sst{3 \\ 2 \\ 1}$};
      \node (2) at (2,0) {$\sst{2}$};
      \node (32) at (3,1) {$\sst{3\\2}$};
      \node (3) at (4,0) {$\sst{3}$};
      \node (1p1) at (4,2) {$\sst{1}[1]$};
      \node (21p1) at (5,1) {$\sst{2\\1}[1]$};
      \node (321p1) at (6,0) {$\sst{3\\2\\1}[1]$};
      \node (2p1) at (6,2) {$\sst{2}$[1]};
      \node (32p1) at (7,1) {$\sst{3\\2}$[1]};
      \node (3p1) at (8,2) {$\sst{3}$[1]};
      \node (1p2) at (8,0) {$\sst{1}$[2]};
      \node at (9,1) {$\cdots$};

      \draw[->] (3m1) -- (21);
      \draw[->] (1) -- (21);
      \draw[->] (21) -- (321);
      \draw[->] (21) -- (2);
      \draw[->] (321) -- (32);
      \draw[->] (2) -- (32);
      \draw[->] (32) -- (3);
      \draw[->] (21) -- (321);
      \draw[->] (32) -- (1p1);
      \draw[->] (3) -- (21p1);
      \draw[->] (1p1) -- (21p1);
      \draw[->] (21p1) -- (2p1);
      \draw[->] (21p1) -- (321p1);
      \draw[->] (2p1) -- (32p1);
      \draw[->] (321p1) -- (32p1);
      \draw[->] (32p1) -- (3p1);
      \draw[->] (32p1) -- (1p2);
    \end{tikzpicture}
  \]
  Consider the torsionfree class $\FF := \add \{ \sst{1}, \sst{2\\1}\}$ in $\AA$. Then an intermediate subcategory $\CC := \FF[1]* \AA$ is the additive closure of the gray region.
  By computing the monoid localization, we obtain $\M(\CC) \iso \M(\AA)_{\M_\FF} \iso \Z \oplus \Z \oplus \N$ (see Example \ref{ex:toy} for details).
  There are exactly two Serre subcategories of $\AA$ containing $\FF$, namely, $\SS := \add \{ \sst{1}, \sst{2\\1}, \sst{2}\}$ and $\AA$. Therefore, $\CC$ has exactly two Serre subcategories $\FF[1] * \SS$ and $\CC$. For example, if we consider $\SS$, we have an exact equivalence $\AA/\SS \simeq \CC/ (\FF[1] * \SS)$, and by taking the Grothendieck monoid of \eqref{eq:intro}, we obtain the following commutative diagram of monoids:
  \[
    \begin{tikzcd}
      \N \oplus \N \oplus \N \rar[hookrightarrow] \dar[twoheadrightarrow] & \Z \oplus \Z \oplus \N \dar[twoheadrightarrow] \\
      \frac{\N \oplus \N \oplus \N}{\N \oplus \N \oplus 0} \iso \N \rar["\sim"] & \frac{\Z \oplus \Z \oplus \N}{\Z \oplus \Z \oplus 0} \iso \N.
    \end{tikzcd}
  \]
\end{examplei}

\medskip
\noindent
{\bf Organization.}
In Section \ref{sec:2}, we define and construct the Grothendieck monoid of an extriangulated category.
In Section \ref{sec:3}, we give classifications of some classes of subcategories of an extriangulated category using its Grothendieck monoid, including Theorems \ref{thm:a} and \ref{thm:b}.
In Section \ref{sec:4}, we discuss the relation between the exact localization of an extriangulated category and the monoid quotient, and prove Theorem \ref{thm:c}.
In Section \ref{sec:5}, we study intermediate subcategories of the derived category of an abelian category.
In Section \ref{sec:6}, we give some questions about the Grothendieck monoid.
In Appendix \ref{app:mon}, we collect basic facts about commutative monoids which we shall use in this paper.

\medskip
\noindent
{\bf Conventions.}
Throughout this paper, \emph{all subcategories are full and closed under isomorphisms}. By $A \iso B$ for two objects $A$ and $B$ in a category $\CC$, we mean that $A$ and $B$ are isomorphic in $\CC$.

For a skeletally small category $\CC$, we denote by $\Iso \CC$ the set of all isomorphism classes of objects in $\CC$. For an object $X$ in $\CC$, the isomorphism class of $X$ is denoted by $[X] \in \Iso\CC$.

For a subcategory $\DD$ of an additive category $\CC$, we denote by $\add \DD$ the subcategory of $\CC$ consisting of direct summands of direct sums of objects in $\DD$. This is the smallest additive subcategory of $\CC$ closed under direct summands containing $\DD$.

We denote by $\Mor \catC$ the class of all morphisms in $\CC$.
For a subclass $S\subseteq \Mor \catC$ and objects $X,Y\in\catC$,
we denote by $S(X,Y)$ the class of all morphisms in $S$ from $X$ to $Y$.
We simply write $\CC(X,Y) := (\Mor\CC)(X,Y)$.

By a \emph{monoid} $M$ we mean a \emph{commutative} monoid with a unit, and we always use an additive notation: the operation is denoted by $+$ with its unit $0$. A \emph{homomorphism} between monoids is a map which preserves both the addition and $0$. We denote by $\N$ the monoid of non-negative integers.
For a subset $S$ of a monoid $M$, we denote by $\la S \ra_\N$ the submonoid of $M$ generated by $S$. Similarly, $\la S \ra_\Z$ denotes the subgroup of $M$ generated by $S$ if $M$ is an abelian group.

We denote by $\Mon$ the category of (commutative) monoids and monoid homomorphisms, and by $\Ab$ the category of abelian groups and group homomorphisms.

For a noetherian ring $\Lambda$, we denote by $\mod\Lambda$ the abelian category of finitely generated right $\Lambda$-modules.

\section{The Grothendieck monoid of an extriangulated category}\label{sec:2}
In this section, we first give basic definitions and properties about extriangulated categories. Next, we define the Grothendieck monoid $\M(\CC)$ of an extriangulated category $\CC$ by the universal property, and then give a construction of $\M(\CC)$.

\subsection{Preliminaries on extriangulated categories}
In this paper, we omit the precise definitions and axioms of extriangulated categories. We refer the reader to \cite{NP,LN} for the basics of extriangulated categories.

An extriangulated category $(\CC, \E, \s)$ consists of the following data satisfying certain axioms:
\begin{itemize}
  \item $\CC$ is an additive category.
  \item $\E$ is an additive bifunctor $\E \colon \CC^{\op} \times \CC \to \Ab$.
  \item $\s$ is a correspondence which associates to each $\delta \in \E(Z,X)$ an equivalence class of complexes $[X \to Y \to Z]$ in $\CC$.
\end{itemize}
Here two complexes $[X \xrightarrow{f} Y \xrightarrow{g} Z]$ and $[X \xrightarrow{f'} Y' \xrightarrow{g'} Z ]$ are \emph{equivalent} if there is an isomorphism $y \colon Y \xrightarrow{\sim} Y'$ which makes the following diagram commute:
\[
\begin{tikzcd}
  X \dar[equal]\rar["f"] & Y \rar["y"] \dar["y"', "\sim" sloped] & Z \dar[equal]\\
  X \rar["f'"'] & Y' \rar["g'"'] & Z\rlap{.}
\end{tikzcd}
\]
For an extriangulated category $(\CC, \E,\s)$, we call a complex $X \xrightarrow{f} Y \xrightarrow{g} Z$ a \emph{conflation} if its equivalence class is equal to $\s(\delta)$ for some $\delta \in \E(Z,X)$. In this case, we call $f$ an \emph{inflation} and $g$ a \emph{deflation}, and say that this complex \emph{realizes} $\delta$. We write this situation as follows:
\[
\begin{tikzcd}
  X \rar["f"] & Y \rar["g"] & Z \rar[dashed, "\delta"] & ,
\end{tikzcd}
\]
and we also call this diagram a conflation in $\CC$.
In what follows, we often write $\CC = (\CC, \E, \s)$ for an extriangulated category $(\CC,\E,\s)$ for simplicity.

Triangulated categories and Quillen's exact categories can be naturally considered as extriangulated categories as follows:
\begin{enumerate}
  \item Let $\TT$ be a triangulated category with shift functor $\Sigma$. Then by setting $\E(Z, X) := \TT(Z, \Sigma X)$, we may regard $\TT$ as an extriangulated category. In this case, for each $h \in \E(Z,X) = \TT(Z, \Sigma X)$, its realization $\s(h)$ is given by $[X \xr{f}Y \xr{g} Z]$ which makes $X \xr{f} Y \xr{g} Z \xr{h} \Sigma X$ a triangle in $\TT$.
  \item Let $\EE$ be a skeletally small exact category. We have a functor $\Ext_\EE^1(-,-) : \EE^{\op} \times \EE \to \Ab$, where $\Ext^1_\EE(Z, X)$ is the set of equivalence classes of conflations (in the sense of exact categories) in $\EE$ of the form $0 \to X \to Y \to Z \to 0$. Then by setting $\E := \Ext^1_\EE$ and $\s := \id$, we may regard $\EE$ as an extriangulated category.
\end{enumerate}
\emph{Throughout this paper, we always regard triangulated categories and exact categories (in particular, abelian categories) as extriangulated categories in this way.}

For later use, we fix some terminologies for extriangulated categories.
Let $\catC = (\CC, \E, \s)$ be an extriangulated category.
For any $\delta \in \E(Z,X)$ and any morphisms $x\in \CC(X, X')$ and $z\in \CC(Z', Z)$,
we denote $\E(Z,x)(\delta)$ and $\E(z,X)(\delta)$
briefly by $x_*\delta$ and $z^*\delta$ respectively.

A \emph{morphism of conflations} from $X\xr{f} Y\xr{g} Z \xdr{\delta}$ to $X'\xr{f'} Y'\xr{g'} Z' \xdr{\delta'}$
is a triplet $\left(x \colon X\to X', y \colon Y\to Y', z \colon Z\to Z'\right)$ of morphisms in $\catC$
satisfying $yf=f'x$, $zg=g'y$ and $x_*\delta = z^*\delta'$.
We often write a morphism of conflations $(x,y,z)$ as follows:
\[
\begin{tikzcd}
X \arr{r,"f"} \arr{d,"x"'} & Y \arr{r,"g"} \arr{d,"y"} & Z \arr{r,"\delta",dashed} \arr{d,"z"} & {}\\
X' \arr{r,"f'"'} & Y \arr{r,"g'"'} & Z' \arr{r,"\delta'"',dashed} & {}.
\end{tikzcd}
\]

For two subcategories $\CC_1$ and $\CC_2$ of an extriangulated category $\CC$, we denote by $\CC_1 * \CC_2$ the subcategory of $\CC$ consisting of $X \in \CC$ such that there is a conflation
\[
  \begin{tikzcd}
    C_1 \rar & X \rar & C_2 \rar[dashed] & \ 
  \end{tikzcd}
\]
in $\CC$ with $C_1\in \CC_1$ and $C_2 \in \CC_2$.

We recall some properties of a subcategory $\catD$ of an extriangulated category $\catC$.
\begin{itemize}
\item 
$\catD$ is \emph{closed under finite direct sums} if it contains a zero object of $\catC$
and for any objects $X,Y\in \catD$, 
their direct sum $X\oplus Y$ in $\catC$ belongs to $\catD$.
We also say that $\catD$ is an \emph{additive subcategory} in this case.
\item 
$\catD$ is \emph{closed under direct summands} if $X\oplus Y \in\catD$ implies $X$, $Y\in\catD$ for any objects $X,Y\in \catC$.
\item 
$\catD$ is \emph{closed under extensions} if $\DD$ is additive and $\DD * \DD \subseteq \DD$ holds,
that is, for any conflation $X \to Y \to Z \dto$ in $\catC$,
we have that $X,Z\in\catD$ implies $Y\in\catD$.
We also say that $\catD$ is an \emph{extension-closed subcategory} in this case.
\end{itemize}
An extension-closed subcategory $\catD$ of an extriangulated category $\catC$
also has the structure of an extriangulated category induced from that of $\CC$,
see \cite[Remark 2.18]{NP}. In this structure, a conflation $X \to Y \to Z \dto$ in $\DD$ is precisely the conflation in $\CC$ with $X$, $Y$, and $Z$ in $\DD$.
When we consider extension-closed subcategories of an extriangulated category, we always regard $\DD$ as an extriangulated category in this way.

One of the main topics of this paper is the notion of \emph{Serre subcategories}, which is a natural generalization of that in abelian categories.
\begin{definition}
  Let $\CC$ be an extriangulated category and $\catD$ an additive subcategory of $\CC$. Then $\DD$ is called a \emph{Serre subcategory} if for any conflation
  \[
    \begin{tikzcd}
      X \rar & Y \rar & Z \rar[dashed] & \ 
    \end{tikzcd}
  \]
  in $\CC$, we have that both $X$ and $Z$ are in $\catD$ if and only if $Y\in\catD$.
\end{definition}
In particular, a Serre subcategory is extension-closed, so it can be regarded as an extriangulated category in itself.

We also need the natural notion of functors between extriangulated categories, namely, an \emph{exact functor}:
\begin{definition}[{\cite[Definition 2.32]{BS}}]
  Let $(\CC_i, \E_i, \s_i)$ be an extriangulated category for $i=1,2,3$.
\begin{enua}
\item
An \emph{exact functor $(F,\phi) \colon (\CC_1, \E_1, \s_1) \to (\CC_2, \E_2, \s_2)$} is a pair of an additive functor $F \colon \CC_1 \to \CC_2$ and a natural transformation $\phi \colon \E_1 \Rightarrow \E_2 \circ (F^{\op} \times F)$ such that for every conflation
  \[
    \begin{tikzcd}
      X \rar["f"] & Y \rar["g"] & Z \rar[dashed, "\delta"] & \ 
    \end{tikzcd}
  \]
  in $\CC_1$, the following is a conflation in $\CC_2$:
  \[
    \begin{tikzcd}
      FX \rar["Ff"] & FY \rar["Fg"] & FZ \rar[dashed, "\phi_{Z, X}(\delta)"] & \ .
    \end{tikzcd}
  \]
We often write $F = (F,\phi) \colon \catC_1 \to \catC_2$ in this case.
\item
For two exact functors $F_1=(F_1,\phi_1)\colon \catC_1 \to \catC_2$ 
and $F_2=(F_2, \phi_2)\colon \catC_2 \to \catC_3$,
their composition is defined by 
$F_2\circ F_1 = (F_2\circ F_1, \phi_2\cdot \phi_1) : \catC_1 \to \catC_3$,
where 
\[
(\phi_2\cdot \phi_1)_{C,A}\colon \bbE_1(C,A) \xr{(\phi_1)_{C,A}} \bbE_2(FC,FA) 
\xr{(\phi_2)_{FC,FA}} \bbE_3(GFC,GFA).
\]
\item
Let $F=(F,\phi)$ and $G=(G,\psi)$ be two exact functors $\catC_1 \to \catC_2$.
A \emph{natural transformation} $\eta\colon F \Rightarrow G$ of \emph{exact functors}
is a natural transformation of additive functors
such that, for any conflation $A \xr{x} B \xr{y} C \xdr{\delta}$ in $\catC$,
the following is a morphism of conflations in $\DD$:
\[
\begin{tikzcd}
FA \arr{r,"Fx"} \arr{d,"\eta_A"'} & FB \arr{r,"Fy"} \arr{d,"\eta_B"'} & FC \arr{r,"\phi_{C,A}(\delta)"} \arr{d,"\eta_C"',dashed} &{}\\
GA \arr{r,"Gx"'} & GB \arr{r,"Gy"'} & GC \arr{r,"\psi_{C,A}(\delta)"',dashed} &{}.
\end{tikzcd}
\]
\item
$\ETCat$ is the category of skeletally small extriangulated categories and exact functors.
\end{enua}  
\end{definition}
\emph{Horizontal compositions} and \emph{vertical compositions}
of natural transformations of exact functors are defined
by those for natural transformations of additive functors.
The compositions are also natural transformations of exact functors.
In particular,
we can define an \emph{exact equivalence} of extriangulated categories
in the same way as that of additive categories.

\begin{rmk}\label{rmk:ex funct}
Let $F=(F,\phi)\colon \catC \to \catD$ be an exact functor of extriangulated categories,
and let $(a,b,c)$ be a morphism of conflations
\[
\begin{tikzcd}
A_1 \arr{r,"x_1"} \arr{d,"a"'} & B_1 \arr{r,"y_1"} \arr{d,"b"'} & C_1 \arr{r,"\delta_1",dashed} \arr{d,"c"'} & {}\\
A_2 \arr{r,"x_2"'} & B_2 \arr{r,"y_2"'} & C_2 \arr{r,"\delta_2"',dashed} & {}
\end{tikzcd}
\]
in $\catC$.
Then $(Fa,Fb,Fc)$ gives a morphism of conflations
\[
\begin{tikzcd}
FA_1 \arr{r,"Fx_1"} \arr{d,"Fa"'} & FB_1 \arr{r,"Fy_1"} \arr{d,"Fb"'} & FC_1 \arr{r,"\phi_{C_1,A_1}(\delta_1)",dashed} \arr{d,"Fc"'} &[1cm]{}\\
FA_2 \arr{r,"Fx_2"'} & FB_2 \arr{r,"Fy_2"'} & FC_2 \arr{r,"\phi_{C_2,A_2}(\delta_2)"',dashed} &{}
\end{tikzcd}
\]
in $\catD$ by the naturality of $\phi$.
\end{rmk}

Later, we need the following observation about exact functors to exact categories, which can be easily proved by considering the extriangulated structure on exact categories.
\begin{lemma}\label{lem:exact-func-to-exact-cat}
  Let $\CC = (\CC, \E, \s)$ be an extriangulated category, $\EE$ an exact category, and $F \colon \CC \to \EE$ an additive functor. Then the following are equivalent.
  \begin{enumerate}
    \item There is some $\phi$ which makes $(F, \phi)$ an exact functor between extriangulated categories.
    \item For each conflation $X \xr{f} Y \xr{g} Z \dto $ in $\CC$, the following is a conflation in $\EE$:
    \[
      \begin{tikzcd}
        0 \rar & FX \rar["Ff"] & FY \rar["Fg"] & FZ \rar & 0.
      \end{tikzcd}
    \]
  \end{enumerate}
  Moreover, in this case, $\phi$ in {\upshape (1)} is uniquely determined.
\end{lemma}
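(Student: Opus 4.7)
The plan hinges on the fact that when $\EE$ is viewed as an extriangulated category via $\Ext^1_\EE$, the realization $\s_\EE$ is the identity, so an element of $\Ext^1_\EE(FZ,FX)$ is completely recovered from the equivalence class of any three-term sequence realizing it. Direction $(1) \Rightarrow (2)$ is then immediate: if $(F,\phi)$ is exact, applying it to a conflation $X \xr{f} Y \xr{g} Z \xdr{\delta}$ in $\CC$ yields a conflation in $\EE$, which by definition of the extriangulated structure on an exact category is precisely a short exact sequence $0 \to FX \to FY \to FZ \to 0$.

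For $(2) \Rightarrow (1)$, I would define $\phi_{Z,X}(\delta) \in \Ext^1_\EE(FZ,FX)$ by picking any realization $X \xr{f} Y \xr{g} Z$ of $\delta$ and taking the class of the short exact sequence $0 \to FX \xr{Ff} FY \xr{Fg} FZ \to 0$ supplied by hypothesis (2). If $[X \xr{f} Y \xr{g} Z]$ and $[X \xr{f'} Y' \xr{g'} Z]$ both realize $\delta$, they are related by an iso $y \colon Y \simto Y'$ over the identities on $X$ and $Z$; applying the additive functor $F$ produces the iso $Fy$ over identities, so the two short exact sequences are equivalent in $\EE$, and $\phi_{Z,X}$ is well-defined.

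Naturality and biadditivity are verified by lifting standard extriangulated-category manipulations through $F$: for $x \colon X \to X'$, an axiom of extriangulated categories provides a morphism of conflations $(x, y, \id_Z)$ from a realization of $\delta$ to a realization of $x_*\delta$, and $F$-functoriality together with (2) turns this into a morphism of short exact sequences over $\id_{FZ}$, proving $\phi_{Z,X'}(x_*\delta) = (Fx)_* \phi_{Z,X}(\delta)$; naturality in the contravariant variable is analogous. Biadditivity of $\phi_{Z,X}$ reduces similarly to direct sums of realizations, using that the addition on $\E(Z,X)$ is expressible through Baer-sum-type diagrams involving diagonal and codiagonal morphisms, which are preserved by the additive functor $F$.

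Uniqueness is then forced by the opening observation: any $\psi$ making $(F,\psi)$ exact must send a realization of $\delta$ to a realization of $\psi_{Z,X}(\delta)$ in $\EE$, so $\psi_{Z,X}(\delta)$ is pinned down by the very short exact sequence used to define $\phi_{Z,X}(\delta)$. The main obstacle is the bookkeeping in the naturality and biadditivity checks; conceptually there is nothing surprising once well-definedness is established, but these verifications require careful use of the axioms governing morphisms between conflations and their additive structure.
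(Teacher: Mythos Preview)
Your proposal is correct and follows exactly the approach the paper has in mind; in fact the paper does not spell out a proof at all but merely remarks that the lemma ``can be easily proved by considering the extriangulated structure on exact categories,'' which is precisely your observation that $\s_\EE = \id$ forces $\phi_{Z,X}(\delta)$ to be the equivalence class of the short exact sequence obtained by applying $F$ to any realization of $\delta$. Your outline of well-definedness, naturality, additivity, and uniqueness fills in the details the paper leaves to the reader.
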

Due to this fact, we call an additive functor $F \colon \CC \to \EE$ satisfying (2) above just an \emph{exact functor}.

\subsection{Definition and construction of Grothendieck monoid}
For a triangulated category or an exact category $\CC$, its \emph{Grothendieck group $\K_0(\CC)$} is a basic invariant and has been studied and used in various areas. Recently, the \emph{Grothendieck monoid of an exact category}, a natural monoid version of the Grothendieck group, has been introduced by Berenstein--Greenstein in \cite{BeGr} to study Hall algebras, and its relation to the categorical property of an exact category was studied in \cite{JHP}.

Let $\CC$ be an extriangulated category. We can naturally generalize these constructions to an extriangulated category $\CC$. The Grothendieck group $\K_0(\CC)$ was formulated by Zhu--Zhuang \cite{ZZ} and Haugland \cite{H} and was used to classify certain subcategories of an extriangulated category. The aim of this paper is to define and investigate the \emph{Grothendieck monoid $\M(\CC)$ of $\CC$}.

First, we define the Grothendieck monoid by the universal property. Recall that all monoids are commutative.

\begin{definition}\label{def:grmon}
  Let $\CC$ be a skeletally small extriangulated category.
  \begin{enumerate}
    \item We say that a map $f \colon \Iso\CC \to M$ for a monoid $M$ \emph{respects conflations} if it satisfies the following conditions:
   \begin{enumerate}
     \item $f[0] = 0$ holds.
     \item For every conflation
     \[
     \begin{tikzcd}
      X \rar & Y \rar & Z \rar[dashed] & \
     \end{tikzcd}
     \]
     in $\CC$, we have $f[Y] = f[X] + f[Z]$ in $M$.
   \end{enumerate}
   \item A \emph{Grothendieck monoid} $\M(\CC)$ is a monoid $\M(\CC)$ together with a map $\pi \colon \Iso\CC \to \M(\CC)$ which satisfies the following universal property:
   \begin{enumerate}
     \item $\pi$ respects conflations in $\CC$.
     \item Every map $f\colon \Iso\CC \to M$ for a monoid $M$ which respects conflations in $\CC$ uniquely factors through $\pi$, that is, there exists a unique monoid homomorphism $\ov{f} \colon \M(\CC) \to M$ which satisfies $f = \ov{f} \pi$.
     \[
     \begin{tikzcd}
       \Iso\CC \dar["\pi"'] \rar["f"] & M \\
       \M(\CC) \ar[ru, "\ov{f}"', dashed]
     \end{tikzcd}
     \]
   \end{enumerate}
   By abuse of notation, we often write $\pi[X] =: [X] \in \M(\CC)$ for $X \in \CC$.
  \end{enumerate}
\end{definition}
Since the Grothendieck monoid of $\CC$ is defined by the universal property, we must show that it actually exists. For later use, we give an explicit construction of $\M(\CC)$.

\begin{definition}
  Let $\CC$ be an extriangulated category and $A$ and $B$ objects in $\CC$.
  \begin{enumerate}
    \item $A$ and $B$ are \emph{conflation-related}, abbreviated by \emph{c-related}, if there is a conflation
    \[
      \begin{tikzcd}
        X \rar & Y \rar & Z \rar[dashed] & \
      \end{tikzcd}
    \]
    such that either of the following holds in $\CC$:
    \begin{enumerate}
      \item $Y \iso A$ and $X \oplus Z \iso B$, or
      \item $Y \iso B$ and $X \oplus Z \iso A$.
    \end{enumerate}
    In this case, we write $A \crel B$.
    \item $A$ and $B$ in $\CC$ are \emph{conflation-equivalent}, abbreviated by \emph{c-equivalent}, if there are objects $A_0, A_1, \dots, A_n$ in $\CC$ for some $n \geq 0$ satisfying $A_0 \iso A$, $A_n \iso B$, and $A_i \crel A_{i+1}$ for each $i$. In this case, we write $A \ceq B$.
  \end{enumerate}
\end{definition}
It is immediate that $\crel$ and $\ceq$ induce binary relations $\crel$ and $\ceq$ on the set $\Iso\CC$ by putting $[A] \crel [B]$ (resp. $[A] \ceq [B]$) if $A \crel B$ (resp. $A \ceq B$). Then clearly $\ceq$ is an equivalence relation on $\Iso\CC$ generated by $\crel$.
On the other hand, $\Iso\CC$ can be regarded as a monoid with the addition given by $[A] + [B] = [A \oplus B]$. Then we can state the following explicit construction of the Grothendieck monoid.
\begin{proposition}\label{prop:gro-mon-ceq}
  Let $\CC = (\CC, \E, \s)$ be a skeletally small extriangulated category. Then the following hold.
  \begin{enumerate}
    \item $\ceq$ is a monoid congruence on $\Iso\CC$ (see Definition \ref{def:mon-congr} for a monoid congruence).
    \item The quotient monoid $\Iso\CC/{\ceq}$ together with the natural projection $\pi \colon \Iso\CC \defl \Iso\CC/{\ceq}$ gives a Grothendieck monoid of $\CC$.
  \end{enumerate}
\end{proposition}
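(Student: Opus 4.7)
The plan is to verify the two claims in sequence, with (1) feeding directly into (2).

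For (1), I first recall that $\ceq$ is the equivalence relation generated by $\crel$, so being an equivalence relation is automatic. Hence it suffices to show that $\ceq$ is compatible with the addition $[A]+[B]=[A\oplus B]$, and by transitivity plus symmetry it is enough to prove that $A \crel B$ implies $A \oplus C \ceq B \oplus C$ for any object $C$. Given a conflation $X \to Y \to Z \dto$ with $Y \iso A$ and $X \oplus Z \iso B$, I would take the direct sum with the split conflation $C \to C \to 0 \dto$ to obtain a new conflation
\[
\begin{tikzcd}
X \oplus C \rar & Y \oplus C \rar & Z \rar[dashed] & ,
\end{tikzcd}
\]
which witnesses $Y \oplus C \crel (X \oplus C) \oplus Z$, i.e.\ $A \oplus C \crel B \oplus C$. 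This gives (1).

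For (2a), note that $[0] \ceq 0$ is trivial in $\Iso\CC/\ceq$, and any conflation $X \to Y \to Z \dto$ yields $Y \crel X \oplus Z$ directly from the definition, so $\pi[Y] = \pi[X] + \pi[Z]$ in the quotient. Hence $\pi$ respects conflations.

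For (2b), let $f \colon \Iso\CC \to M$ be a map respecting conflations. First I would observe that $f$ is automatically a monoid homomorphism from $(\Iso\CC, \oplus, 0)$ to $M$: we have $f[0]=0$, and the split conflation $A \to A\oplus B \to B \dto$ gives $f[A\oplus B] = f[A]+f[B]$. By the universal property of the quotient monoid (Appendix), $f$ factors uniquely through $\Iso\CC/\ceq$ precisely when $f$ is constant on $\ceq$-classes, and by induction on the length of a $\crel$-chain this reduces to $A \crel B \Rightarrow f[A]=f[B]$. But if $Y \iso A$ and $X \oplus Z \iso B$ for a conflation $X \to Y \to Z \dto$, then
\[
f[A] = f[Y] = f[X]+f[Z] = f[X \oplus Z] = f[B],
\]
using respectively that $f$ respects this conflation and that $f$ is additive on direct sums. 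Uniqueness of the induced map $\ov{f} \colon \M(\CC) \to M$ follows from the fact that it is determined on the generating set $\{\pi[X] \mid X \in \CC\}$.

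I do not expect a genuine obstacle here: the content is the compatibility of $\ceq$ with $\oplus$ in step (1), and everything else is a formal manipulation of the universal property together with the observation that ``respects conflations'' implies ``additive on direct sums'' via split conflations. The only subtle point worth stating carefully is the direct-sum-of-conflations construction, which is a standard consequence of the axioms of an extriangulated category (the biadditivity of $\E$ and the compatibility of $\s$ with it).
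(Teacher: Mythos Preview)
Your proposal is correct and follows essentially the same approach as the paper: the reduction in (1) to the one-step case via the direct sum with the split conflation $C \to C \to 0 \dto$, and in (2b) the use of the split conflation to show $f$ is additive on direct sums and then constancy on $\crel$-classes, are exactly what the paper does. The only cosmetic difference is that in (1) you state only the case $Y \iso A$, $X \oplus Z \iso B$, whereas the paper phrases it to cover both cases at once; since $\crel$ is symmetric this is harmless.
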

\begin{proof}
  (1)
  We must show that $[A] \ceq [B]$ implies $[A] + [C] \ceq [B] + [C]$ holds in $\Iso\CC$ for every $A$, $B$, and $C$ in $\CC$. By the definition of $\ceq$, it clearly suffices to show that $[A] \crel [B]$ implies $[A] + [C] \crel [B] + [C]$, so suppose that $[A] \crel [B]$ holds.
  Then there is a conflation
  \[
    \begin{tikzcd}
      X \rar["f"] & Y \rar["g"] & Z \rar[dashed] & \
    \end{tikzcd}
  \]
  such that either (a) $Y \iso A$ and $X \oplus Z \iso B$ or (b) $Y \iso B$ and $X \oplus Z \iso A$. On the other hand, we have the following conflation:
  \[
    \begin{tikzcd}
      C \rar[equal] & C \rar & 0 \rar[dashed] & \
    \end{tikzcd}
  \]
  since $\s$ is an additive realization \cite[Definition 2.10]{NP}. Moreover, since conflations are closed under finite direct sums (because $\s$ is an additive realization), we obtain the following conflation:
  \[
    \begin{tikzcd}
      X \oplus C \rar["f \oplus \id_C"] & Y \oplus C \rar["{[g, \, 0]}"] & Z \rar[dashed] & \
    \end{tikzcd}
  \]
  Therefore, we obtain $Y \oplus C \crel (X \oplus C) \oplus Z \iso  (X \oplus Z) \oplus C$. This implies $[Y] + [C] \crel [X \oplus Z] + [C]$ in $\Iso\CC$. Therefore, in either case, we obtain $[A] + [C] \crel [B] + [C]$.

  (2)
  By (1), we obtain the quotient monoid $\Iso\CC / {\ceq}$ and the natural monoid homomorphism $\pi \colon \Iso\CC \defl \Iso\CC / {\ceq}$.
  We first show that $\pi \colon \Iso\CC \defl \Iso\CC/{\ceq}$ respects conflations. Since $\pi$ preserves the additive unit, we have $\pi[0] = 0$. Let $X \to Y \to Z \dashrightarrow$ be a conflation in $\CC$. Then we have $Y \crel X \oplus Z$.
  It follows that $[Y] \crel [X \oplus Z] = [X] + [Z]$ holds in $\Iso\CC$,
  and hence we obtain $\pi[Y] = \pi[X] + \pi[Z]$ in $\Iso\CC/{\ceq}$.

  Next, suppose that we have a map $f \colon \Iso\CC \to M$ for a monoid $M$ which respects conflations, and we will show that there is a unique monoid homomorphism $\ov{f} \colon \Iso\CC/{\ceq} \to M$ satisfying $f = \ov{f} \pi$. The uniqueness is clear since $\pi$ is surjective, so we only show the existence of such $\ov{f}$.
  To obtain a well-defined map $\ov{f} \colon \Iso\CC/{\ceq} \to M$ satisfying $f = \ov{f} \pi$, it clearly suffices to show that $[A] \crel [B]$ in $\Iso\CC$ implies $f[A] = f[B]$, so suppose $[A] \crel [B]$. Then there is a conflation $X \to Y \to Z \dashrightarrow$ such that either (a) $Y \iso A$ and $X \oplus Z \iso B$ or (b) $Y \iso B$ and $X \oplus Z \iso A$. On the other hand, since $f$ respects conflations, we have $f[Y] = f[X] + f[Z]$. Moreover, we have the following split conflation
  \[
    \begin{tikzcd}
      X \rar["{\begin{sbmatrix} \id_X \\ 0\end{sbmatrix}}"] & X \oplus Z \rar["{[0,\, \id_Z]}"] & Z \rar[dashed, "0"] & \
    \end{tikzcd}
  \]
  because $\s$ is an additive realization. Therefore, $f[X] + f[Z] = f[X \oplus Z]$ holds, and hence $f[Y] = f[X \oplus Z]$. Thus $f[A] = f[B]$ holds in either case.
\end{proof}
In what follows, we often identify $\M(\CC)$ with $\Iso\CC/{\ceq}$ and write $[X] \in \Iso\CC/{\ceq}$ to represent an element for $X \in \CC$.

The assignment $\CC \mapsto \M(\CC)$ actually gives a functor:
\begin{proposition}\label{prp:ex fun hom}
  We have a functor $\M(-) \colon \ETCat \to \Mon$ defined as follows.
  \begin{itemize}
    \item To $\CC\in \ETCat$, we associate the Grothendieck monoid $\M(\CC) \in \Mon$.
    \item To an exact functor $F \colon \CC \to \DD$, we associate a monoid homomorphism $\M(F) \colon \M(\CC) \to \M(\DD)$ defined by $[C] \mapsto [F(C)]$.
  \end{itemize}
\end{proposition}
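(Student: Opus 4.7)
The plan is to obtain $\M(F)$ as the unique factorization provided by the universal property of the Grothendieck monoid (Definition \ref{def:grmon}), and then verify functoriality using the same universal property.

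First, given an exact functor $F = (F,\phi) \colon \CC \to \DD$ in $\ETCat$, I consider the composite map
\[
  \Iso\CC \xrightarrow{\Iso F} \Iso\DD \xrightarrow{\pi_\DD} \M(\DD),
  \qquad [C] \longmapsto [FC],
\]
which is well-defined since $F$ preserves isomorphisms. I claim this map respects conflations in the sense of Definition \ref{def:grmon}(1). Indeed, $F$ is additive, so $F(0) \iso 0$ and thus $[F(0)] = 0$ in $\M(\DD)$; and for any conflation $X \to Y \to Z \dto$ in $\CC$, exactness of $F$ provides a conflation $FX \to FY \to FZ \dto$ in $\DD$, so $[FY] = [FX] + [FZ]$ in $\M(\DD)$ because $\pi_\DD$ respects conflations. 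By the universal property, there exists a unique monoid homomorphism $\M(F) \colon \M(\CC) \to \M(\DD)$ such that $\M(F)[C] = [FC]$ for all $C \in \CC$, which is exactly the assignment stated.

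For functoriality, I need $\M(\id_\CC) = \id_{\M(\CC)}$ and $\M(G \circ F) = \M(G) \circ \M(F)$ for composable exact functors $F \colon \CC \to \DD$ and $G \colon \DD \to \EE$. Both identities are checked by observing that both sides are monoid homomorphisms $\M(\CC) \to \M(\CC)$ (resp.\ $\M(\CC) \to \M(\EE)$) which agree on the generating set $\{[C] \mid C \in \CC\}$: indeed $\id_{\M(\CC)}[C] = [C] = [\id_\CC(C)]$, and $(\M(G) \circ \M(F))[C] = \M(G)[FC] = [GFC] = \M(G \circ F)[C]$. The uniqueness clause in the universal property of $\M(\CC)$ then forces equality of both homomorphisms.

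The proof is essentially a formal consequence of the universal property, so there is no real obstacle; the only point that requires genuine (though trivial) input from the theory of extriangulated categories is that an exact functor sends conflations to conflations, which is built into the definition of exact functor.
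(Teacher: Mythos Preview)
Your proof is correct and follows precisely the approach indicated by the paper, which in fact omits the proof entirely with the remark that it ``is straightforward by using the defining universal property of the Grothendieck monoid.'' Your write-up fills in exactly those details.
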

Since the proof is straightforward by using the defining universal property of the Grothendieck monoid, we omit it.

\begin{remark}\label{rem:k0-as-gp}
  The Grothendieck group $\K_0(\CC)$ of an extriangulated category can be defined similarly by the universal property (see also \cite{H,ZZ}). The relation between $\M(\CC)$ and $\K_0(\CC)$ is as follows.
  To any commutative monoid $M$, we can associate its \emph{group completion $\gp M$}, which is the universal abelian group together with the monoid homomorphism $M \to \gp M$ (see Definition \ref{def:mon-loc} and Proposition \ref{prop:gp-compl-univ}).
  Then the defining properties of $\M(\CC)$ and $\K_0(\CC)$ immediately show that $\K_0(\CC)$ is isomorphic to $\gp\M(\CC)$. Moreover, the group completion defines a functor $\gp \colon \Mon \to \Ab$, and we have a natural isomorphism of functors $\K_0(-) \simeq \gp \circ \M(-) \colon \ETCat \to \Ab$.
\end{remark}

\begin{example}\label{ex:gro-mon}
  The following are typical examples of the computation of the Grothendieck monoid. For more examples, we refer the reader to Section \ref{sec:5}.
  \begin{enumerate}
    \item Let $\EE$ be an exact category. Then our $\M(\EE)$ (as an extriangulated category) coincides with the Grothendieck monoid of an exact category which is studied in \cite{BeGr, JHP}.
    For example, \cite[Theorem 4.12]{JHP} implies that $\M(\EE)$ is a free monoid if and only if $\EE$ satisfies the Jordan-H\"older-like property. In particular, if $\EE$ is an abelian length category with $n$ simple objects, then $\M(\EE) \cong \N^n$ holds.
    \item Let $\TT$ be a triangulated category. Then \emph{$\M(\TT)$ becomes a group, and hence $\M(\TT)$ and $\K_0(\TT)$ coincide}. Indeed, for every element $[X] \in \M(\TT)$, there is a triangle
    \[
      \begin{tikzcd}
        X \rar & 0 \rar & \Sigma X \rar[equal] & \Sigma X.
      \end{tikzcd}
    \]
    This means that we have a conflation $X \to  0 \to \Sigma X \dashrightarrow$, which implies $[X] + [\Sigma X] = 0$ in $\M(\TT)$. Therefore, every element in $\M(\TT)$ is invertible, that is, $\M(\TT)$ is a group.
    In general, the converse do not hold: there is an extriangulated category which is not triangulated but whose Grothendieck monoid is a group (see Corollary \ref{cor:a1-a-k0} for example).
  \end{enumerate}
\end{example}

\section{Classifying subcategories via Grothendieck monoid}\label{sec:3}
Throughout this section, $\catC=(\catC, \bbE, \frs)$ is a skeletally small extriangulated category.
In this section, we give classifications of several subcategories of $\CC$ via its Grothendieck monoid $\M(\CC)$. More precisely, we consider the following assignments:
\begin{itemize}
  \item For a subcategory $\catD$ of $\catC$,
  we define a subset $\M_\DD$ of $\M(\catC)$ by
  \[
  \M_{\catD}:=\{[D]\in\M(\catC) \mid D\in \catD  \}.
  \]
  \item For a subset $N$ of $\M(\catC)$,
  we define a subcategory $\DD_N$ of $\catC$ by
  \[
  \catD_{N}:=\{X\in\catC \mid [X]\in N  \}.
  \]
\end{itemize}
In what follows, we will show that these maps give bijections between certain subcategories of $\CC$ and certain subsets of $\M(\CC)$.
We freely identify $\M(\CC)$ with $\Iso\CC/{\ceq}$ by Proposition \ref{prop:gro-mon-ceq}.

\subsection{Serre subcategories and faces}\label{s:Serre face}
In this subsection, we establish a bijection between the set of Serre subcategories of $\catC$
and the set of faces of $\M(\catC)$.
Most results in this subsection are direct generalizations of \cite{Saito}.
However, we give more sophisticated proofs using c-equivalences 
and new results such as Proposition \ref{prp:Serre c-eq}.
\begin{dfn}
A subcategory $\catD$ of $\catC$ is said to be \emph{closed under c-equivalences}
if for any conflation $A \to B \to C \dto$,
we have that $B\in \catD$ if and only if $A\oplus C \in \catD$.
We also say that $\catD$ is a \emph{c-closed subcategory} for short.
\end{dfn}

The following lemma follows immediately from Proposition \ref{prop:gro-mon-ceq}. We freely use this characterization in what follows.
\begin{lem}\label{lem:c-closed-char}
The following are equivalent for a subcategory $\catD$ of $\catC$.
\begin{enua}
\item
$\catD$ is closed under c-equivalences.
\item
For every $X, Y \in \CC$, if $X \crel Y$ holds and $X$ belongs to $\DD$,
then $Y$ also belongs to $\DD$.
\item
For every $X, Y \in \CC$, if $[X] = [Y]$ holds in $\M(\CC)$ and $X$ belongs to $\DD$,
then $Y$ also belongs to $\DD$.
\end{enua}
\end{lem}

The relation between the subcategories $\catD_N$ defined above 
and c-closed subcategories is as follows.
Note that a subcategory is not assumed to be additive.
\begin{prp}\label{prp:c-closed bij}
The following hold for a skeletally small extriangulated category $\CC$.
\begin{enua}
\item
$\catD_{N}$ is closed under c-equivalences for any subset $N$ of $\M(\catC)$.
\item
The assignments $\catD \mapsto \M_{\catD}$ and $N\mapsto \catD_N$ give
mutually inverse bijections between
the set of c-closed subcategories of $\CC$ and the power set of $\M(\CC)$.
\end{enua}
\end{prp}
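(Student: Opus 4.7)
The plan is to leverage the explicit description $\M(\CC) = \Iso\CC/{\ceq}$ from Proposition \ref{prop:gro-mon-ceq}, which reduces both parts to formal manipulations of equivalence classes on $\Iso\CC$. Part (1) is then immediate: if $X \in \DD_N$ and $X \crel Y$, then $[X] = [Y]$ in $\M(\CC)$, hence $[Y] = [X] \in N$ and so $Y \in \DD_N$.

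For part (2), I would verify the two round-trip identities $\M_{\DD_N} = N$ and $\DD_{\M_\DD} = \DD$. Note first that $\DD_N$ is automatically a subcategory in our sense (full and closed under isomorphism) because $[-] \colon \Iso \CC \to \M(\CC)$ depends only on the isomorphism class. The inclusion $\M_{\DD_N} \subseteq N$ is immediate from the definitions; for the reverse, surjectivity of the projection $\Iso\CC \to \M(\CC)$ provided by Proposition \ref{prop:gro-mon-ceq} lets me realize any $n \in N$ as $[X]$ for some $X \in \CC$, and then $X \in \DD_N$ contributes $n$ to $\M_{\DD_N}$. For the other composition, $\DD \subseteq \DD_{\M_\DD}$ is clear; conversely, if $X \in \DD_{\M_\DD}$ then $[X] = [D]$ for some $D \in \DD$, so $X \ceq D$ as elements of $\Iso\CC$.

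The one mild subtlety is that c-closedness is defined only via the generating relation $\crel$, whereas the previous step produces only a $\ceq$-relation. I would bridge the gap by induction on the length of a chain $X = A_0 \crel A_1 \crel \cdots \crel A_n = D$ witnessing $X \ceq D$: since $\crel$ is symmetric in its two arguments (by the ``either (a) or (b)'' form of its definition), applying c-closedness along the chain from $A_n = D$ back to $A_0 = X$ propagates membership in $\DD$ one step at a time, yielding $X \in \DD$. Beyond this one bookkeeping step, the argument is essentially formal, with all the substantive content already packaged inside Proposition \ref{prop:gro-mon-ceq}; I do not anticipate any real obstacle.
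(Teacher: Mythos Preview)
Your proposal is correct and follows essentially the same approach as the paper's proof. The only cosmetic difference is that the paper absorbs your inductive bridge from $\crel$ to $\ceq$ into the sentence preceding the proposition (``By Proposition \ref{prop:gro-mon-ceq}, a subcategory $\DD$ is c-closed if and only if $[X]=[Y]$ and $X\in\DD$ imply $Y\in\DD$''), and your verification of $\M_{\DD_N}\subseteq N$ is actually slightly more direct than the paper's, which unnecessarily invokes part (1).
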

\begin{proof}
(1)
Let $X\in\catD_N$ and $Y\in \catC$, and suppose that $[X] = [Y]$ in $\M(\CC)$.
Then we have $[Y] = [X] \in N$, and hence $Y$ also belongs to $\catD_N$.
This proves that $\catD_{N}$ is closed under c-equivalences.

(2)
First, we prove that $\M_{\catD_N}=N$ holds for each subset $N$ of $\M(\CC)$.
Clearly, we have $\M_{\catD_N}\supseteq N$.
Take $[X] \in \M_{\catD_N}$.
Then there is an object $Y\in \catD_N$ such that $[X]=[Y]$ holds in $\M(\CC)$.
Since $\catD_N$ is closed under c-equivalences by (1),
we have that $X$ is also in $\catD_N$, that is, $[X]\in N$.
Hence, we obtain $\M_{\catD_N}=N$.

Next, we prove that $\catD_{\M_{\catD}}=\catD$ holds
for a c-closed subcategory $\catD$ of $\catC$.
Clearly, we have $\catD_{\M_{\catD}} \supseteq \catD$.
Take $X\in \catD_{\M_{\catD}}$, then we have $[X]\in \M_\catD$,
so there exists an object $Y \in \catD$ satisfying $[X]=[Y]$ in $\M(\CC)$.
Since $\catD$ is closed under c-equivalences, we obtain $X\in\catD$.
Therefore, we obtain $\catD_{\M_{\catD}}=\catD$, which completes the proof.
\end{proof}

Now we will consider \emph{Serre subcategories} of an extriangulated category.
Recall that a subcategory $\catD$ of $\catC$ is called a \emph{Serre subcategory} 
if it is additive and for any conflation $X \to Y \to Z \dto$ in $\catC$,
we have that $X,Z\in\catD$ if and only if $Y\in\catD$.
We denote by $\Serre \CC$ the set of Serre subcategories of $\CC$.

The following characterizes Serre subcategories using c-equivalences.
\begin{prp}\label{prp:Serre c-eq}
A subcategory of $\catC$ is a Serre subcategory
if and only if it is closed under finite direct sums, direct summands, and c-equivalences.
\end{prp}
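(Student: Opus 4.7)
The plan is to verify the two directions of the equivalence directly using the definition of $\crel$ and the axiom that $\s$ is an additive realization (so split conflations exist in $\CC$).

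For the forward direction, suppose $\catD$ is a Serre subcategory. Closure under finite direct sums is built into the definition of a Serre subcategory. Closure under direct summands follows by applying the Serre condition to the split conflation $A \to A \oplus B \to B \dto$: if $A \oplus B \in \catD$, then both $A$ and $B$ lie in $\catD$. For closure under c-equivalences, it suffices to show closure under $\crel$. If $A \crel B$ via a conflation $X \to Y \to Z \dto$, then $\{A,B\} = \{Y, X\oplus Z\}$ up to isomorphism. If $A \cong Y$ and $B \cong X \oplus Z$ with $A \in \catD$, the Serre condition forces $X, Z \in \catD$, hence $B \cong X \oplus Z \in \catD$ by additivity; if instead $A \cong X \oplus Z$ and $B \cong Y$ with $A \in \catD$, then direct summands gives $X, Z \in \catD$, hence $B \cong Y \in \catD$ by the Serre condition.

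For the reverse direction, suppose $\catD$ is closed under finite direct sums, direct summands, and c-equivalences. Given any conflation $X \to Y \to Z \dto$ in $\CC$, we have $Y \crel X \oplus Z$ directly from the definition of $\crel$. If $X, Z \in \catD$, then $X \oplus Z \in \catD$ by closure under direct sums, and closure under c-equivalences yields $Y \in \catD$. Conversely, if $Y \in \catD$, then $X \oplus Z \in \catD$ by closure under c-equivalences, and closure under direct summands gives $X, Z \in \catD$. This is precisely the Serre condition, and additivity is already assumed, so $\catD$ is a Serre subcategory.

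There is no real obstacle here: both directions reduce to unwinding the definition of $\crel$ once we observe that the conflation $X \to Y \to Z \dto$ itself witnesses $Y \crel X \oplus Z$, and that the split conflation $A \to A \oplus B \to B \dto$ (available because $\s$ is an additive realization, as used in the proof of Proposition \ref{prop:gro-mon-ceq}) encodes the direct summand condition. The only mild subtlety is keeping the two cases (a) and (b) in the definition of $\crel$ separate in the forward direction, since one case uses the Serre condition directly while the other first needs closure under direct summands to feed into the Serre condition.
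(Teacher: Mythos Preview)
Your proof is correct and follows essentially the same approach as the paper's: both directions are handled by unwinding the definition of $\crel$, treating the two cases (a) and (b) separately in the forward direction and using $Y \crel X \oplus Z$ together with closure under direct sums and summands in the reverse direction. The only difference is cosmetic: the paper declares closure under direct summands ``clear'' in the forward direction, whereas you spell out the split conflation argument explicitly.
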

\begin{proof}
It is clear that a Serre subcategory is closed under finite direct sums and direct summands.
Let $A\to B\to C \dto$ be a conflation in $\catC$.
For any additive subcategory $\catD$ closed under direct summands,
$A\oplus C$ belongs to $\catD$ if and only if both $A$ and $C$ belong to $\catD$.
Thus we can conclude that $\catD$ is Serre if and only if it is closed under c-equivalences.
%
%
\end{proof}

Now we can establish a bijection between Serre subcategories and faces. 
\begin{prp}\label{prop:serre-face-bij}
The bijection in Proposition \ref{prp:c-closed bij} (2) restricts to
the bijection between the set $\Serre \catC$ of Serre subcategories of $\catC$
and the set $\Face\M(\catC)$ of faces of $\M(\catC)$ (see Definition \ref{def:face}).
\end{prp}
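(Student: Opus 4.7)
The plan is to leverage the bijection from Proposition \ref{prp:c-closed bij} together with the characterization of Serre subcategories in Proposition \ref{prp:Serre c-eq}. Since the assignments $\DD \mapsto \M_\DD$ and $N \mapsto \DD_N$ are already mutually inverse bijections between c-closed subcategories and arbitrary subsets of $\M(\CC)$, it suffices to check that under these maps, the Serre condition on $\DD$ corresponds exactly to the face condition on $N$. By Proposition \ref{prp:Serre c-eq}, being Serre amounts to being c-closed plus closed under finite direct sums and direct summands. By Definition \ref{def:face}, being a face amounts to being a submonoid satisfying the ``2-out-of-3'' condition for the sum.

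For the forward direction, I would take a Serre subcategory $\DD$ and show $\M_\DD$ is a face. Clearly $0 = [0] \in \M_\DD$ since $0 \in \DD$. If $[A], [B] \in \M_\DD$, representatives in $\DD$ can be chosen (using that $\DD$ is c-closed), so $A \oplus B \in \DD$ because $\DD$ is closed under finite direct sums, giving $[A] + [B] = [A\oplus B] \in \M_\DD$. For the face property, suppose $[A]+[B] \in \M_\DD$; pick $C \in \DD$ with $[C] = [A\oplus B]$. Since $\DD$ is c-closed, $A \oplus B \in \DD$, and since $\DD$ is closed under direct summands, both $A$ and $B$ lie in $\DD$, so $[A], [B] \in \M_\DD$.

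For the reverse direction, I would take a face $N$ of $\M(\CC)$ and show $\DD_N$ is Serre. By Proposition \ref{prp:c-closed bij}(1), $\DD_N$ is c-closed, so by Proposition \ref{prp:Serre c-eq} it remains only to check closure under finite direct sums and direct summands. Closure under finite direct sums: $0 \in \DD_N$ as $0 \in N$, and for $A, B \in \DD_N$, $[A\oplus B] = [A]+[B] \in N$ since $N$ is a submonoid, hence $A \oplus B \in \DD_N$. Closure under direct summands: if $A \oplus B \in \DD_N$, then $[A] + [B] = [A \oplus B] \in N$, and the face property forces $[A], [B] \in N$, so $A, B \in \DD_N$.

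There is no real obstacle here; the argument is a direct translation between the three-part characterization of Serre subcategories (c-closed, closed under direct sums, closed under direct summands) and the three-part characterization of faces (contains $0$, closed under addition, closed under ``splitting'' sums). The only place one must be careful is in using c-closedness to replace the representative $C$ of $[A \oplus B]$ by the actual object $A \oplus B$ when verifying the face property, and symmetrically, invoking Proposition \ref{prp:c-closed bij}(1) at the start of the reverse direction so that Proposition \ref{prp:Serre c-eq} applies.
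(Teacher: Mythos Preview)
Your proof is correct and follows essentially the same approach as the paper: both reduce to checking that $\M_\DD$ is a face when $\DD$ is Serre and that $\DD_N$ is Serre when $N$ is a face, using Proposition~\ref{prp:Serre c-eq} to translate the Serre condition into c-closedness plus closure under finite direct sums and direct summands. The arguments for each direction are the same as the paper's, including the key use of c-closedness to pass from $[A\oplus B]\in\M_\DD$ to $A\oplus B\in\DD$.
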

\begin{proof}
Let $\catS$ be a Serre subcategory of $\catC$ and $F$ a face of $\M(\catC)$.
We already know $\catD_{\M_{\catS}}=\catS$ and $\M_{\catD_{F}}=F$ 
by Propositions \ref{prp:c-closed bij} and \ref{prp:Serre c-eq}.
Hence we only need to show that $\M_{\catS}$ is a face and $\catD_F$ is a Serre subcategory.

We first prove that $\M_{\catS}$ is a face.
Note that $\catS$ is closed under finite direct sums, direct summands, and c-equivalences
by Proposition \ref{prp:Serre c-eq}.
It is clear that $\M_{\catS}$ is a submonoid of $\M(\catC)$
since $\catS$ is closed under direct sums.
Suppose that $[X]+[Y]\in \M_{\catS}$ for some objects $X,Y\in\catC$.
By the definition of $\M_{\catS}$, there exists an object $Z\in\catS$
such that $[Z]=[X]+[Y]=[X\oplus Y]$.
Then we have $X\oplus Y\in\catS$ because $\catS$ is closed under c-equivalences.
Since $\catS$ is closed under direct summands,
both $X$ and $Y$ belong to $\catS$, and hence $[X], [Y]\in \M_{\catS}$.
This proves that $\M_{\catS}$ is a face of $M(\catC)$.

Next, we prove that $\catD_F$ is a Serre subcategory.
It is obvious that $\catD_F$ is closed under finite direct sums
since $F$ is a submonoid of $\M(\catC)$.
We already know that $\catD_F$ is c-closed by Proposition \ref{prp:c-closed bij} (1).
Thus it is enough to show that $\catD_F$ is closed under direct summands
by Proposition \ref{prp:Serre c-eq}.
Let $X$ and $Y$ be objects in $\catC$ with $X\oplus Y \in \catD_F$.
Then $[X]+[Y]=[X\oplus Y]\in F$.
Because $F$ is a face of $\M(\catC)$,
both $[X]$ and $[Y]$ belong to $F$,
which implies both $X$ and $Y$ belong to $\catD_F$.
Therefore $\catD_F$ is a Serre subcategory of $\catC$.
\end{proof}

Finally,
we compare $\M_{\catD}$ with $\M(\catD)$ for an extension-closed subcategory $\catD \subseteq \catC$.
The natural inclusion functor $\iota \colon \catD \inj \catC$
induces the monoid homomorphism $\M(\iota) \colon \M(\catD) \to \M(\catC)$ 
by Proposition \ref{prp:ex fun hom}.
Clearly, the image of $\M(\iota)$ coincide with $\M_{\catD}$.
Thus we have a surjective monoid homomorphism $\M(\catD) \surj \M_{\catD}$.
This monoid homomorphism is not injective in general, as the following example shows.

\begin{example}\label{ex:not inj}
Let $\catA$ be a skeletally small abelian category.
Then $\catA$ can be regarded as an extension-closed subcategory of its bounded derived category $\D^\b(\catA)$.
The natural inclusion functor $\AA \hookrightarrow \D^\b(\AA)$ induces a monoid homomorphism
\[
\M(\catA) \to \M(\D^\b(\catA))=\K_0(\D^\b(\catA)) \iso \K_0(\catA)
\]
by Example \ref{ex:gro-mon} (2).
In fact, the composition $\M(\AA) \to \K_0(\AA)$ of the above maps coincides with the group completion of $\M(\catA)$ (see Remark \ref{rem:k0-as-gp}). 
Consider the polynomial ring $k[T]$ over a field $k$.
Then the monoid homomorphism $\M(\catmod k[T]) \to K_0(\catmod k[T])$ given above is not injective.
Indeed,
$[k[T]/(T)]$ is non-zero in $\M(\catmod k[T])$ by \cite[Proposition 3.5]{JHP},
but it is zero in $\K_0(\catmod k[T])$ because there is a short exact sequence
\[
\begin{tikzcd}
  0 \rar & k[T] \rar["T"] & k[T] \rar & k[T]/(T) \rar & 0.
\end{tikzcd}
\]
\end{example}

In spite of this example, if we consider Serre subcategories, then the natural monoid homomorphism is injective:
\begin{prp}\label{prp:Serre inj}
Let $\catS$ be a Serre subcategory of $\catC$ and $\iota \colon \SS \to \CC$ the inclusion functor.
Then the monoid homomorphism 
\[
\M(\iota) \colon \M(\catS) \to \M(\catC)
\]
is injective.
In particular, it induces an isomorphism $\M(\catS) \simto \M_{\catS} \subseteq \M(\catC)$ of monoids.
\end{prp}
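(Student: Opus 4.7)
The plan is to identify $\M(\SS)$ and $\M(\CC)$ with $\Iso\SS/{\approx_c^\SS}$ and $\Iso\CC/{\approx_c^\CC}$ respectively via Proposition \ref{prop:gro-mon-ceq}, where the superscripts distinguish c-equivalence computed inside $\SS$ from that computed inside $\CC$. Under this identification, $\M(\iota)$ sends the class $[X]_\SS$ to the class $[X]_\CC$. So the task reduces to showing: for $X, Y \in \SS$, if $X \approx_c^\CC Y$ holds in $\CC$, then already $X \approx_c^\SS Y$ holds in $\SS$.

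Suppose $X \approx_c^\CC Y$, so there is a chain $A_0, A_1, \dots, A_n \in \CC$ with $A_0 \iso X$, $A_n \iso Y$, and $A_i \sim_c^\CC A_{i+1}$ for every $i$. The key step is to prove, by induction on $i$, that every $A_i$ lies in $\SS$. The base case is clear since $A_0 \iso X \in \SS$. For the inductive step, assume $A_i \in \SS$. By definition of $\sim_c^\CC$, there is a conflation $U \to V \to W \dto$ in $\CC$ with either (a) $V \iso A_i$ and $U \oplus W \iso A_{i+1}$, or (b) $V \iso A_{i+1}$ and $U \oplus W \iso A_i$. In case (a), the Serre property applied to $V \iso A_i \in \SS$ forces $U, W \in \SS$, hence $A_{i+1} \iso U \oplus W \in \SS$. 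In case (b), closure under direct summands of $\SS$ (which holds by Proposition \ref{prp:Serre c-eq}) gives $U, W \in \SS$, and then extension-closedness of $\SS$ gives $V \iso A_{i+1} \in \SS$.

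Once we know every $A_i$ lies in $\SS$, the conflation $U \to V \to W \dto$ witnessing $A_i \sim_c^\CC A_{i+1}$ has all three terms in $\SS$. By the description of the extriangulated structure on an extension-closed subcategory recalled in the excerpt, this is then a conflation in the extriangulated category $\SS$. Therefore $A_i \sim_c^\SS A_{i+1}$ for every $i$, and concatenating yields $X \approx_c^\SS Y$, as desired. Consequently $\M(\iota)$ is injective, and since its image is $\M_\SS$ by construction, it factors as an isomorphism $\M(\SS) \simto \M_\SS \subseteq \M(\CC)$.

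The main (and essentially only) obstacle is the inductive step establishing that the chain of c-related objects can be kept inside $\SS$; this is precisely where both defining implications of the Serre property are needed, one in each direction (a) and (b), together with closure under direct summands. Once that is in hand, the rest is bookkeeping with the construction of $\M(-)$ from Proposition \ref{prop:gro-mon-ceq}.
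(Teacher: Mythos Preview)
Your proof is correct and follows essentially the same approach as the paper's. The only cosmetic difference is that the paper invokes Proposition~\ref{prp:Serre c-eq} (Serre subcategories are c-closed) to conclude in one line that all the intermediate objects $A_i$ lie in $\SS$, whereas you unwind that same induction explicitly; the remaining step---checking that the witnessing conflations live in $\SS$---is identical in both arguments.
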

\begin{proof}
Suppose that $A, B \in\catS$ satisfies $\M(\iota)[A] = \M(\iota)[B]$ in $\M(\catC)$,
that is, $A\ceq B$ in $\catC$.
There is a sequence of objects $A_0=A, A_1, \dots, A_n=B$ 
such that $A_i \crel A_{i+1}$ in $\catC$ for all $i$.
Then $A_i\in\catS$ for all $i$ since $\catS$ is c-closed.
Thus it is enough to show that $A \crel B$ in $\catC$ implies $A \crel B$ in $\catS$.
Since $A \crel B$ in $\catC$,
there is a conflation
\begin{equation}\label{diag:Serre inj}
\begin{tikzcd}
  X \rar & Y \rar & Z \rar[dashed] & \
\end{tikzcd}
\end{equation}
in $\catC$ satisfying either
(a) $Y\iso A$ and $X\oplus Z \iso B$
or (b) $Y\iso B$ and $X\oplus Z \iso A$.
Since $A,B\in\catS$ and $\SS$ is closed under direct summands, we have $X,Y,Z \in \catS$ in both cases.
Then the sequence \eqref{diag:Serre inj} is a conflation in $\catS$,
which implies $A \crel B$ in $\catS$.
\end{proof}

This injectivity is remarkable since it is false for Grothendieck groups and one has to consider the higher $K$-group $\K_1$ to deal with its failure.

\begin{example}\label{ex:not inj K0}
Consider the subcategory $\catS$ of $\catmod k[T]$ consisting of 
finitely generated torsion $k[T]$-modules.
It is clearly a Serre subcategory.
The natural inclusion functor $\catS \inj \catmod k[T]$ induces
an injective monoid morphism $\M(\catS) \inj \M(\catmod k[T])$ on the Grothendieck monoids
by Proposition \ref{prp:Serre inj}.
On the other hand,
it induces a zero morphism $\K_0(\catS) \xr{0} \K_0(\catmod k[T])$ on the Grothendieck groups.
Indeed, every object in $\SS$ is a finite direct sum of finitely generated indecomposable torsion $k[T]$-modules, and such a module $M$ has a free resolution
\[
  \begin{tikzcd}
    0 \rar & k[T] \rar["f"] & k[T] \rar & M \rar & 0
  \end{tikzcd}
\]
for some polynomial $f\in k[T]$
by the structure theorem for finitely generated modules over a principal ideal domain.
This implies $[M]=0$ in $\K_0(\catmod k[T])$.
\end{example}

\subsection{Dense 2-out-of-3 subcategories and cofinal subgroups}
In this subsection, we give classifications of dense 2-out-of-3 subcategories of $\CC$ in terms of $\M(\CC)$ and $\K_0(\CC)$, which generalize Thomason's classification of dense triangulated subcategories of a triangulated category \cite{thomason} in terms of $\K_0(\CC)$.
\begin{definition}
  Let $\DD$ be an additive subcategory of $\CC$.
  \begin{enumerate}
    \item $\DD$ is a \emph{dense} subcategory if $\add \DD = \CC$ holds, that is, for every $C \in \CC$, there is some $C' \in \CC$ satisfying $C \oplus C' \in \DD$.
    \item $\DD$ is a \emph{2-out-of-3} subcategory if it satisfies 2-out-of-3 for conflations, that is, if two of three objects $X, Y, Z$ in a conflation $X \to Y\to Z\dto$ belong to $\DD$, then so does the third.
  \end{enumerate}
\end{definition}
We note that 2-out-of-3 subcategories closed under direct summands are called \emph{thick subcategories}, see Definition \ref{dfn:thick}.
\begin{example}
  Let $\TT$ be a triangulated subcategory. Then a subcategory of $\TT$ is a 2-out-of-3 subcategory if and only if it is a triangulated subcategory (see e.g. \cite[1.1]{thomason}).
\end{example}
\begin{remark}
  Let $\DD$ be a 2-out-of-3 subcategory of $\CC$. If $X \oplus Y \in \DD$ and $X \in \DD$, then $Y \in \DD$ by a split conflation $X \to X \oplus Y \to Y \dto$. We will freely use this property in what follows. 
\end{remark}

We can relax the 2-out-of-3 condition of dense 2-out-of-3 subcategories by the following observation.
\begin{proposition}[{\cite[Lemma 5.5]{ZZ}}]
  Let $\DD$ be a dense additive subcategory of $\CC$. Then the following are equivalent.
  \begin{enumerate}
    \item For every conflation $X \to Y \to Z \dto$ in $\CC$, if $X$ and $Y$ belong to $\DD$, then so does $Z$.
    \item For every conflation $X \to Y \to Z \dto$ in $\CC$, if $Y$ and $Z$ belong to $\DD$, then so does $X$.
  \end{enumerate}
\end{proposition}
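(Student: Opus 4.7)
The plan is to prove (1) $\Rightarrow$ (2); the converse then follows by applying the same argument to the opposite extriangulated category $\CC^{\op}$, since condition (2) for $\DD \subseteq \CC$ is precisely condition (1) for $\DD$ regarded as a subcategory of $\CC^{\op}$ (in which a conflation $X \to Y \to Z$ of $\CC$ becomes a conflation $Z \to Y \to X$), and both density and additivity are self-dual under this passage.

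To carry this out, I would assume (1) and consider a conflation $X \to Y \to Z \dto$ with $Y, Z \in \DD$, aiming to show $X \in \DD$. Using density twice, I would first pick $X' \in \CC$ with $X \oplus X' \in \DD$, and then pick $W \in \CC$ with $X' \oplus W \in \DD$. Taking the direct sum of the original conflation with the two split conflations $X' \to X' \to 0 \dto$ and $0 \to W \to W \dto$ (which is permitted because $\s$ is an additive realization) yields the conflation
\[
X \oplus X' \longrightarrow Y \oplus X' \oplus W \longrightarrow Z \oplus W \dashrightarrow.
\]
The first term $X \oplus X'$ lies in $\DD$ by choice, and the middle term $Y \oplus (X' \oplus W)$ lies in $\DD$ by additivity; hypothesis (1) then forces $Z \oplus W \in \DD$.

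From here, three successive applications of (1) to split conflations finish the argument: applying (1) to $Z \to Z \oplus W \to W \dto$ gives $W \in \DD$, then to $W \to X' \oplus W \to X' \dto$ gives $X' \in \DD$, and finally to $X' \to X \oplus X' \to X \dto$ gives the desired $X \in \DD$. The key step — and where I expect the main obstacle to lie — is the very first invocation of (1) above: naively applying it to $X \oplus X' \to Y \oplus X' \to Z \dto$ (the direct sum of the original with just $X' \to X' \to 0 \dto$) would require $X' \in \DD$, which is as hard as the original problem. The second use of density, introducing $W$ with $X' \oplus W \in \DD$, is precisely what resolves this circularity by letting the middle term $Y \oplus X' \oplus W$ lie in $\DD$ without knowing $X' \in \DD$ directly.
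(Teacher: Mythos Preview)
Your proof is correct. Note, however, that the paper does not supply its own proof of this proposition: it simply cites \cite[Lemma 5.5]{ZZ} and moves on. So there is no in-paper argument to compare against.

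On the merits of your argument: the double use of density (first to obtain $X'$ with $X\oplus X'\in\DD$, then to obtain $W$ with $X'\oplus W\in\DD$) is exactly the right device, and your explanation of why a single application is insufficient is accurate. The three trailing applications of (1) to split conflations are routine. The duality appeal for (2) $\Rightarrow$ (1) via $\CC^{\op}$ is legitimate in the extriangulated setting. This is essentially the standard argument one finds in the literature (and in particular matches the spirit of the direct-sum tricks the paper itself uses later, e.g.\ in the proof of Proposition~\ref{prop:dense-2-3-c-cl}).
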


A key observation in this subsection is as follows.
\begin{proposition}\label{prop:dense-2-3-c-cl}
  Let $\DD$ be a dense 2-out-of-3 subcategory of $\CC$. Then $\DD$ is closed under c-equivalences.
\end{proposition}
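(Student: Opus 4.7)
The plan is to reduce the statement to the following key lemma: for every conflation $X \to Y \to Z \dto$ in $\CC$, one has $Y \in \DD$ if and only if $X \oplus Z \in \DD$. Once this is proved, closure under $\crel$ is immediate, since $A \crel B$ means there is a conflation $X \to Y \to Z \dto$ with $\{A, B\} = \{Y,\, X \oplus Z\}$, and the lemma forces $A \in \DD$ to be equivalent to $B \in \DD$. Closure under the generated equivalence relation $\ceq$ then follows by a straightforward induction on the length of a chain of $\crel$-steps, which is precisely the statement that $\DD$ is c-closed.

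\textbf{Proof of the key lemma.} I would exploit density twice: choose $X', Z' \in \CC$ with $X \oplus X' \in \DD$ and $Z \oplus Z' \in \DD$. Taking the direct sum of the given conflation with the split conflations $X' \xr{\id} X' \to 0 \dto$ and $0 \to Z' \xr{\id} Z' \dto$ (these are conflations because $\s$ is an additive realization) produces the conflation
\[
  X \oplus X' \longrightarrow Y \oplus X' \oplus Z' \longrightarrow Z \oplus Z' \dto.
\]
Its outer terms lie in $\DD$ by construction, so by the 2-out-of-3 property the middle term $Y \oplus X' \oplus Z'$ lies in $\DD$ as well.

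\textbf{Comparing $Y$ and $X \oplus Z$ with $X' \oplus Z'$.} The split conflation $X' \oplus Z' \to Y \oplus X' \oplus Z' \to Y \dto$ has its middle term in $\DD$ by the previous step, so 2-out-of-3 gives $Y \in \DD$ if and only if $X' \oplus Z' \in \DD$. On the other hand, $(X \oplus Z) \oplus (X' \oplus Z') \iso (X \oplus X') \oplus (Z \oplus Z')$ lies in $\DD$ because $\DD$ is closed under finite direct sums, and a second application of 2-out-of-3 to the split conflation $X' \oplus Z' \to (X \oplus Z) \oplus (X' \oplus Z') \to X \oplus Z \dto$ shows $X \oplus Z \in \DD$ if and only if $X' \oplus Z' \in \DD$. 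Chaining these two equivalences yields the lemma.

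\textbf{Main obstacle.} The argument is short, but its heart is the construction of the padded conflation: density lets one replace the uncontrolled terms $X$ and $Z$ by objects already in $\DD$ at the cost of an innocuous common summand $X' \oplus Z'$, which then cancels uniformly from both sides of the equivalence. The subtlety one must respect is that $\DD$ is \emph{not} assumed to be closed under direct summands — a typical dense 2-out-of-3 subcategory such as $2\Z \subseteq \Z$ is not — so 2-out-of-3 may only be applied to conflations whose two relevant terms are already known to belong to $\DD$, which is precisely what the choice of $X'$ and $Z'$ guarantees at each step.
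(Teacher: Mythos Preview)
Your proof is correct and follows essentially the same strategy as the paper: reduce to showing $Y \in \DD \Leftrightarrow X \oplus Z \in \DD$ for every conflation, then pad the conflation by direct-summing with split conflations chosen via density so that 2-out-of-3 applies. The execution differs slightly: you pad symmetrically with $X'$ and $Z'$ so that both outer terms land in $\DD$, obtaining a single pivot $X' \oplus Z'$ against which both $Y$ and $X \oplus Z$ are compared, whereas the paper handles the two implications separately with a different padding for each direction (e.g.\ choosing $W$ with $Y \oplus Z \oplus W \in \DD$ for one implication). Your version is a bit more uniform, but the underlying idea is the same.
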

\begin{proof}
  Take any conflation
  \begin{equation}\label{eq:original}
    \begin{tikzcd}
      X \rar["f"] & Y \rar["g"] & Z \rar[dashed] & \ 
    \end{tikzcd}
  \end{equation}
  in $\CC$. It suffices to show that $Y$ belongs to $\DD$ if and only if so does $X \oplus Z$.

  First, suppose that $Y$ belongs to $\DD$. Since $\DD$ is dense, there is some $W \in \CC$ satisfying $Y \oplus Z \oplus W \in \DD$. By taking the direct sum of \eqref{eq:original} and a split conflation $Z \to Z \oplus W \to W \dto$, we obtain the following conflation.
  \begin{equation}\label{eq:trick1}
    \begin{tikzcd}[ampersand replacement=\&, column sep = large]
      X \oplus Z \rar["{\begin{sbmatrix}f & 0 \\ 0 & \id_Z \\ 0 & 0\end{sbmatrix}}"] \&
      Y \oplus Z \oplus W \rar["{\begin{sbmatrix}g & 0 & 0 \\ 0 & 0 & \id_W \end{sbmatrix}}"] \&
      Z \oplus W \rar[dashed] \& \ 
    \end{tikzcd}
  \end{equation}
  Since $\DD$ is 2-out-of-3, $Y \oplus (Z \oplus W) \in \DD$ and $Y \in \DD$ implies $Z \oplus W \in \DD$. Therefore, \eqref{eq:trick1} implies that $X \oplus Z$ belongs to $\DD$.

  Conversely, suppose that $X \oplus Z$ belongs to $\DD$. By taking the direct sum of \eqref{eq:original} and a split conflation $Z \to Z \oplus X \to X \dto$, we obtain the following conflation:
  \begin{equation}\label{eq:trick2}
    \begin{tikzcd}[ampersand replacement=\&, column sep = large]
      X \oplus Z \rar["{\begin{sbmatrix}f & 0 \\ 0 & 0 \\ 0 & \id_Z \end{sbmatrix}}"] \&
      Y \oplus X \oplus Z \rar["{\begin{sbmatrix} 0 & \id_X & 0 \\ g & 0 & 0 \end{sbmatrix}}"] \&
      X \oplus Z \rar[dashed] \& \ 
    \end{tikzcd}
  \end{equation}
  Since $\DD$ is 2-out-of-3 and $X \oplus Z \in \DD$, we have $Y \oplus (X \oplus Z) \in \DD$, which implies $Y \in \DD$ by $X \oplus Z \in \DD$.
\end{proof}
Now we can state the following classification of dense 2-out-of-3 subcategories.
\begin{theorem}\label{thm:dense-2-3-bij}
  Let $\CC$ be a skeletally small extriangulated category. Then the bijection in Proposition \ref{prp:c-closed bij} induces a bijection between the following two sets.
  \begin{itemize}
    \item The set of dense 2-out-of-3 subcategories of $\CC$.
    \item The set of cofinal subtractive submonoids of $\M(\CC)$ (see Definition \ref{def:subt-cofin}).
  \end{itemize}
\end{theorem}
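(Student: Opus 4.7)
The plan is to use the bijection of Proposition \ref{prp:c-closed bij} and Proposition \ref{prop:dense-2-3-c-cl} (which says that every dense 2-out-of-3 subcategory is c-closed) to reduce the problem to checking that the two subcategory/subset properties correspond under $\DD \leftrightarrow \M_\DD$. Thus I only need to verify (i) if $\DD$ is a dense 2-out-of-3 subcategory, then $\M_\DD$ is a cofinal subtractive submonoid, and (ii) if $N$ is a cofinal subtractive submonoid, then $\DD_N$ is a dense 2-out-of-3 subcategory.

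For (i), first observe that $\M_\DD$ is a submonoid because $\DD$ is additive: $[0] = 0 \in \M_\DD$ and $[X]+[Y] = [X \oplus Y] \in \M_\DD$ for $X, Y \in \DD$. Cofinality is immediate from density: for any $X \in \CC$ there exists $X' \in \CC$ with $X \oplus X' \in \DD$, so $[X] + [X'] \in \M_\DD$. For subtractivity, suppose $[X]+[Y] \in \M_\DD$ and $[X] \in \M_\DD$; since $\DD$ is c-closed by Proposition \ref{prop:dense-2-3-c-cl}, both $X$ and $X \oplus Y$ lie in $\DD$, and then the split conflation $X \to X\oplus Y \to Y \dto$ together with the 2-out-of-3 property yields $Y \in \DD$, hence $[Y] \in \M_\DD$.

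For (ii), additivity of $\DD_N$ follows since $N$ is a submonoid containing $0$. Density of $\DD_N$ is immediate from cofinality of $N$: given $X \in \CC$, choose $m \in \M(\CC)$ with $[X] + m \in N$, write $m = [Y]$, and then $X \oplus Y \in \DD_N$. For the 2-out-of-3 property, take a conflation $X \to Y \to Z \dto$ so $[Y]=[X]+[Z]$ in $\M(\CC)$. If $X, Z \in \DD_N$ then $[Y] \in N$ since $N$ is closed under addition; if $X, Y \in \DD_N$ (or symmetrically $Y, Z \in \DD_N$), then $[X]+[Z] = [Y] \in N$ and $[X] \in N$ (resp.\ $[Z] \in N$), so subtractivity gives $[Z] \in N$ (resp.\ $[X] \in N$).

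Putting these together with Proposition \ref{prp:c-closed bij} gives the bijection: the equalities $\DD_{\M_\DD} = \DD$ and $\M_{\DD_N} = N$ are already known, and the characterizations above show the restriction is well-defined in both directions. I expect no serious obstacle; the only place requiring care is the subtractivity step in (i), where one must invoke c-closedness (Proposition \ref{prop:dense-2-3-c-cl}) to pass from the equation $[X] = [X']$ with $X' \in \DD$ back to membership $X \in \DD$ before applying the 2-out-of-3 property to a split conflation.
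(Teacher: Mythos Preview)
Your proof is correct and follows essentially the same approach as the paper: reduce via Proposition~\ref{prp:c-closed bij} and Proposition~\ref{prop:dense-2-3-c-cl} to checking that the assignments $\DD \mapsto \M_\DD$ and $N \mapsto \DD_N$ are well-defined between the two specified sets, then verify each property (submonoid, cofinal, subtractive; additive, dense, 2-out-of-3) exactly as you do. The subtractivity step in (i), where you invoke c-closedness to pass from $[X] \in \M_\DD$ to $X \in \DD$ before applying 2-out-of-3 to the split conflation, is handled identically in the paper.
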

\begin{proof}
  Due to Proposition \ref{prp:c-closed bij} (2) and Proposition \ref{prop:dense-2-3-c-cl}, we only have to check the following well-definedness of maps:
  \begin{enumerate}
    \item $\M_\DD$ is a cofinal subtractive submonoid for a dense 2-out-of-3 subcategory $\DD$.
    \item $\DD_N$ is a dense 2-out-of-3 subcategory of $\CC$ for a cofinal subtractive submonoid $N$.
  \end{enumerate}
  
  (1)
  Let $\DD$ be a dense 2-out-of-3 subcategory of $\CC$. Since $\DD$ is closed under direct sums, $\M_\DD$ is a submonoid of $\M(\CC)$.
  To show that $\M_\DD$ is cofinal in $\M(\CC)$, take any $[C] \in \M(\CC)$. Since $\CC$ is dense, there is some $C'$ satisfying $C \oplus C' \in \DD$. This implies $[C] + [C'] = [C \oplus C'] \in \M_\DD$. Thus $\M_\DD$ is cofinal in $\M(\CC)$.

  Next, to show that $\M_\DD$ is subtractive, suppose that $x + y$ and $x$ belong to $\M_\DD$. Take $X, Y \in \CC$ satisfying $[X] =x $ and $[Y] = y$. Then $[X \oplus Y]$ and $[X]$ belong to $\M_\DD$.
  Since $\DD$ is c-closed by Proposition \ref{prop:dense-2-3-c-cl}, we have $\DD = \DD_{\M_\DD}$ by Proposition \ref{prp:c-closed bij}. Therefore, $X \oplus Y$ and $X$ belong to $\DD$. Since $\DD$ is 2-out-of-3, we obtain $Y \in \DD$. Thus $y = [Y] \in \M_\DD$ holds.

  (2)
  Let $N$ be a cofinal subtractive submonoid of $\M(\DD)$. To show that $\DD_N$ is dense, take any $C \in \CC$. Since $N$ is cofinal, there is some $C' \in \CC$ satisfying $[C \oplus C'] = [C] + [C'] \in N$. Thus $C \oplus C' \in \DD_N$ holds.

  Next, we will check that $\DD_N$ is 2-out-of-3. Take any conflation $X \to Y \to Z \dto$ in $\CC$. Then we have $[Y] = [X] + [Z]$ in $\M(\CC)$. If $X$ and $Z$ belong to $\DD_N$, then $[X]$ and $[Z]$ belong to $N$, and hence so does $[Y] = [X] + [Z]$ since $N$ is a submonoid. Thus $Y$ belongs to $\DD_N$.
  Similarly, if $X$ and $Y$ belong to $\DD_N$, then $[X]$ and $[Y] = [X] + [Z]$ belong to $N$, and hence so does $[Z]$ since $N$ is subtractive. Thus $Z$ belongs to $\DD_N$. The same argument works if $Y$ and $Z$ belong to $\DD_N$.
\end{proof}

As a corollary, we can immediately deduce the following classification of dense triangulated subcategories.
\begin{corollary}[{\cite[Theorem 2.1]{thomason}}]\label{cor:thomason}
  Let $\TT$ be a skeletally small triangulated category. Then there exists a bijection between the following two sets:
  \begin{itemize}
    \item The set of dense triangulated subcategories of $\TT$.
    \item The set of subgroups of $\K_0(\TT)$.
  \end{itemize}
\end{corollary}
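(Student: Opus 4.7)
The plan is to deduce this corollary directly from Theorem \ref{thm:dense-2-3-bij} applied to the extriangulated category $\TT$, by making two identifications under which the classifying data simplify.

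First, I would invoke Example \ref{ex:gro-mon}(2), which gives $\M(\TT) = \K_0(\TT)$ as an abelian group, and the Example preceding Proposition \ref{prop:dense-2-3-c-cl}, which identifies 2-out-of-3 subcategories of $\TT$ with triangulated subcategories. Under these identifications, the left-hand side of Theorem \ref{thm:dense-2-3-bij} becomes the set of dense triangulated subcategories of $\TT$, and the right-hand side becomes the set of cofinal subtractive submonoids of the abelian group $\K_0(\TT)$.

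The remaining step is to show that, inside an abelian group $G$, the cofinal subtractive submonoids are exactly the subgroups. One inclusion is immediate: any subgroup $H \subseteq G$ is a submonoid, it is subtractive (if $x, x+y \in H$ then $y = (x+y) - x \in H$), and it is cofinal because for every $g \in G$ we have $g + (-g) = 0 \in H$. For the converse, let $N$ be a cofinal subtractive submonoid of $G$; then for $x \in N$, the relation $x + (-x) = 0 \in N$ together with the subtractive property forces $-x \in N$, so $N$ is closed under inverses and is therefore a subgroup.

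Combining these two observations with Theorem \ref{thm:dense-2-3-bij} yields the desired bijection. There is no real obstacle here: the content of the corollary is entirely contained in the previous theorem, and the only verification to record is the group-theoretic fact that ``cofinal subtractive submonoid'' collapses to ``subgroup'' once the ambient monoid is already a group.
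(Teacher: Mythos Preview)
Your proposal is correct and follows essentially the same route as the paper: apply Theorem \ref{thm:dense-2-3-bij}, identify $\M(\TT)$ with $\K_0(\TT)$ via Example \ref{ex:gro-mon}(2), identify dense 2-out-of-3 subcategories with dense triangulated subcategories, and then observe that in a group the cofinal subtractive submonoids are exactly the subgroups (the paper cites Remark \ref{rem:grp-subt-cof} for this last step, which you spell out directly).
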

\begin{proof}
  Since $\M(\TT) \iso \K_0(\TT)$ holds by Example \ref{ex:gro-mon} (2) and dense triangulated subcategories of $\TT$ are precisely dense 2-out-of-3 subcategories, we only have to check that a subset of $\K_0(\TT)$ is a cofinal subtractive submonoid if and only if it is a subgroup. This follows from Remark \ref{rem:grp-subt-cof}.
\end{proof}

Due to Theorem \ref{thm:dense-2-3-bij}, the classification of dense 2-out-of-3 subcategories can be reduced to the classification of cofinal subtractive submonoids of the Grothendieck monoid.
Since it is easier to deal with subgroups of a group than with submonoids of a monoid, we study the relation between cofinal subtractive submonoids of a monoid and subgroups of its group completion. The main observation can be summarized as follows:
\begin{proposition}\label{prop:cof-subt-subgrp}
  Let $M$ be a monoid and $\rho \colon M \to \gp M$ its group completion.
  Consider the following two maps:
  \[
    \begin{tikzcd}
      \{ \text{submonoids of $M$} \}
      \rar["\Phi", shift left] & \lar["\Psi", shift left]
      \{ \text{subgroups of $\gp M$} \},
    \end{tikzcd}
  \]
  where $\Phi(N) = \la \rho(N) \ra_\Z$ and $\Psi (H) = \rho^{-1}(H)$.
  Then the following hold.
  \begin{enumerate}
    \item $\Psi\Phi(N) \supseteq N$ and $\Phi\Psi(H) \subseteq H$ hold for every $N$ and $H$.
    \item If $N$ is a cofinal subtractive submonoid of $M$, then $\Psi\Phi(N) = N$ holds.
    \item $\Phi$ and $\Psi$ induce bijections between the following sets:
    \begin{itemize}
      \item The set of cofinal subtractive submonoids of $M$.
      \item The set of subgroups $H$ of $\gp M$ such that $\rho^{-1}(H)$ is cofinal in $M$.
    \end{itemize}
    \item $\Phi$ and $\Psi$ induce bijections between the following sets for each cofinal subset $S$ of $M$:
    \begin{itemize}
      \item The set of subtractive submonoids of $M$ containing $S$.
      \item The set of subgroups of $\gp M$ containing $\rho(S)$.
    \end{itemize}
  \end{enumerate}
\end{proposition}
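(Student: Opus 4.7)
The plan is to prove the four claims in order, with (1) being immediate from unpacking definitions, (2) being the technical heart of the proposition, and (3)--(4) being formal consequences after checking well-definedness.

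For (1), I would observe directly from the definitions: any $n \in N$ satisfies $\rho(n) \in \rho(N) \subseteq \langle \rho(N)\rangle_\Z = \Phi(N)$, so $n \in \Psi\Phi(N)$. Conversely, $\rho(\rho^{-1}(H)) \subseteq H$ and $H$ is already a subgroup of $\gp M$, so $\langle \rho(\rho^{-1}(H))\rangle_\Z \subseteq H$, i.e., $\Phi\Psi(H) \subseteq H$.

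The hard part is (2), which is where both cofinality and subtractivity have to be used together. Given $m \in \Psi\Phi(N)$, I would write $\rho(m) = \sum_i \rho(a_i) - \sum_j \rho(b_j)$ with $a_i,b_j \in N$, set $a := \sum a_i$, $b := \sum b_j$, and reduce to the identity $\rho(m+b) = \rho(a)$ in $\gp M$. By the standard explicit description of the group completion, there is $t \in M$ such that $m + b + t = a + t$ in $M$. Cofinality of $N$ produces $s \in M$ with $t + s \in N$; writing $u = t+s$, we get the equation $m + (b+u) = a + u$ in $M$ with both $b+u$ and $a + u$ lying in $N$ (since $N$ is a submonoid containing $b$, $a$, $u$). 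Subtractivity applied to $m + (b+u) = a+u \in N$ and $b+u \in N$ yields $m \in N$, proving $\Psi\Phi(N) \subseteq N$, and hence equality thanks to (1).

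For (3), I would first verify well-definedness of $\Phi$ and $\Psi$ between the two named sets: if $N$ is a cofinal subtractive submonoid, then $\Phi(N)$ is a subgroup of $\gp M$, and $\rho^{-1}(\Phi(N)) = N$ is cofinal by (2); conversely, if $H$ is a subgroup with $\rho^{-1}(H)$ cofinal, then $\Psi(H)$ is a submonoid which is subtractive (since $m+n, m \in \rho^{-1}(H)$ gives $\rho(n) = \rho(m+n) - \rho(m) \in H$) and cofinal by assumption. The equality $\Psi\Phi(N) = N$ is (2); for $\Phi\Psi(H) = H$, the inclusion $\subseteq$ is (1), and for $\supseteq$ I would take $h \in H$, write $h = \rho(a) - \rho(b)$ in $\gp M$, pick $c \in M$ with $b+c \in \rho^{-1}(H)$ using cofinality, and observe that $\rho(a+c) = h + \rho(b+c) \in H$ too, so $h = \rho(a+c) - \rho(b+c) \in \Phi\Psi(H)$. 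Finally, (4) reduces to (3): a subtractive submonoid containing a cofinal subset $S$ is automatically cofinal, and a subgroup of $\gp M$ containing $\rho(S)$ has preimage containing $S$ hence cofinal, so both sets in (4) sit inside the corresponding sets in (3), and the bijection of (3) restricts.
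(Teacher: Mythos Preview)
Your proposal is correct and follows essentially the same approach as the paper's proof: part (2) uses the explicit description of $\gp M$ together with cofinality and subtractivity in exactly the same way, the argument for $\Phi\Psi(H)=H$ in (3) via the cofinality trick matches the paper's, and (4) is reduced to (3) in the same manner.
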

\begin{proof}
  It is clear that $\Psi(H)$ is a subtractive submonoid of $M$ for every subgroup $H$ of $G$.
  
  (1) Clear from definitions.

  (2)
  Let $N$ be a cofinal subtractive submonoid of $M$, and put $H:= \la \rho(N) \ra_\Z$. It suffices to show $\rho^{-1}(H) \subseteq N$ by (1).
  Suppose $x \in \rho^{-1}(H)$. Then since $\rho(x) \in H = \la \rho(N) \ra_\Z$ and $N$ is a submonoid of $M$, there are elements $n_1$ and $n_2$ in $N$ such that $\rho(x) = \rho(n_1) - \rho(n_2)$, so $\rho(x + n_2) = \rho(n_1)$. Therefore, there is an element $m \in M$ such that $x + n_2 + m = n_1 + m$ in $M$ (see the argument below Definition \ref{def:mon-loc}).
  Then, since $N$ is cofinal in $M$, there is $m' \in M$ satisfying $m + m' \in N$. Then we have $x + (n_2 + m + m') = n_1 + m + m'$. Now $x \in N$ follows since $N$ is subtractive and $x + (n_2 + m + m')$ and $n_2 + m + m'$ belong to $N$.

  (3)
  Let $H$ be a subgroup of $\gp M$. We claim that if $\rho^{-1}(H)$ is cofinal in $M$, then $\Phi\Psi(H) = H$ holds. It suffices to show $H \subseteq \Phi\Psi(H)$ by (1). Let $h \in H$, then $h = \rho(x) - \rho(y)$ holds for some $x, y \in M$. Since $\rho^{-1}(H)$ is cofinal, there is some $y' \in M$ such that $y + y' \in \rho^{-1}(H)$. Then $h = \rho(x + y') - \rho(y + y')$ and $\rho(y + y') \in H$ hold, which imply $\rho(x + y') = h + \rho(y + y') \in H$. It follows that both $x + y'$ and $y + y'$ belong to $\rho^{-1}(H)$, so we obtain $h = \rho(x + y') - \rho(y + y') \in \la \rho (\rho^{-1}(H)) \ra_\Z = \Phi\Psi(H)$.
 
  Therefore, by (2) and the above argument, we only have to check that $\Phi$ and $\Psi$ are well-defined on these two sets. This is immediate from (2) and definitions.

  (4)
  Every subtractive submonoid containing $S$ is cofinal, and every subgroup $H$ of $\gp M$ containing $\rho(S)$ satisfies that $\rho^{-1}(H)$ is cofinal by $S \subseteq \rho^{-1}(H)$. Therefore, the two sets in (4) are subsets of the corresponding two sets in (3). Thus it suffices to observe that $\Phi$ and $\Psi$ are well-defined on the two sets in (4), which is immediate from definitions.
\end{proof}

By combining this with Theorem \ref{thm:dense-2-3-bij}, we immediately obtain the following result.
\begin{corollary}\label{cor:dense-2-3-k0}
  Let $\CC$ be a skeletally small extriangulated category. Then there is a bijection between the following two sets, where $\rho \colon \M(\CC) \to \K_0(\CC)$ is the group completion.
  \begin{itemize}
    \item The set of dense 2-out-of-3 subcategories of $\CC$.
    \item The set of subgroups $H$ of $\K_0(\CC)$ such that $\rho^{-1}(H)$ is cofinal in $\M(\CC)$.
  \end{itemize}
\end{corollary}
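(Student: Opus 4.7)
The plan is to obtain this bijection by composing two previously established bijections, so the proof should be very short. First I would invoke Theorem \ref{thm:dense-2-3-bij}, which already gives a bijection between dense 2-out-of-3 subcategories of $\CC$ and cofinal subtractive submonoids of $\M(\CC)$, via the maps $\DD \mapsto \M_\DD$ and $N \mapsto \DD_N$.

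Next I would identify $\K_0(\CC)$ with the group completion $\gp \M(\CC)$ via Remark \ref{rem:k0-as-gp}, so that the natural map $\rho \colon \M(\CC) \to \K_0(\CC)$ becomes precisely the group completion homomorphism. Then Proposition \ref{prop:cof-subt-subgrp}(3) applied to $M = \M(\CC)$ supplies a bijection between cofinal subtractive submonoids of $\M(\CC)$ and subgroups $H$ of $\K_0(\CC)$ for which $\rho^{-1}(H)$ is cofinal in $\M(\CC)$, via $N \mapsto \la \rho(N)\ra_\Z$ and $H \mapsto \rho^{-1}(H)$.

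Composing these two bijections yields the asserted bijection. There is no real obstacle here — the only sanity check worth mentioning is that both bijections are canonical and land in matching sets, so the composite sends a dense 2-out-of-3 subcategory $\DD$ to the subgroup $\la \rho(\M_\DD)\ra_\Z = \la [D] \mid D \in \DD \ra_\Z$ of $\K_0(\CC)$, and conversely sends a subgroup $H$ with $\rho^{-1}(H)$ cofinal to the subcategory $\DD_{\rho^{-1}(H)} = \{X \in \CC \mid [X] \in H\text{ in }\K_0(\CC)\}$. Since every verification is absorbed into the two cited results, the proof reduces to this one-line composition.
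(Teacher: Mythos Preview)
Your proposal is correct and matches the paper's approach exactly: the paper states this corollary immediately after Proposition \ref{prop:cof-subt-subgrp} with the remark that combining it with Theorem \ref{thm:dense-2-3-bij} yields the result. Your explicit description of the composite maps is a nice addition but not required.
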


Using this observation, we can obtain all dense 2-out-of-3 subcategories in an abelian length category with finitely many simples. First, recall the following description of the Grothendieck monoid.
Let $\AA$ be an abelian length category, that is, an abelian category such that every object has a composition series. Suppose that $\{S_1, \dots S_n\}$ is the set of all non-isomorphic simple objects in $\AA$. Then for $C \in \AA$, define $\udim C := (x_1, \dots , x_n) \in \N^n$, where $x_i$ is the multiplicity of $S_i$ in the composition series of $C$ (this is well-defined due to the Jordan-H\"older theorem). Then $\udim$ respects conflations, and moreover, it induces the following isomorphisms of monoids and groups:
\begin{equation}\label{eq:JHP}
  \begin{tikzcd}
    \M(\AA) \rar["\udim", "\sim"'] \dar["\rho"'] & \N^n \dar[hookrightarrow, "\iota"] \\
    \K_0(\AA) \rar["\udim", "\sim"'] & \Z^n \rlap{,}
  \end{tikzcd}
\end{equation}
where $\rho$ is the group completion and $\iota$ is the natural inclusion.

\begin{corollary}
  Let $\AA$ be an abelian length category with $n$ simple objects up to isomorphism. Then there are bijections between the following two sets:
  \begin{itemize}
    \item The set of dense 2-out-of-3 subcategories $\CC$ of $\AA$.
    \item The set of subgroups $H$ of $\Z^n$ containing a strictly positive element.
  \end{itemize}
  Here an element $x = (x_1, \dots, x_n)$ of $\Z^n$ is \emph{strictly positive} if $x_i >0$ for all $i$.
  The maps are given by $\CC \mapsto \la\{\udim C \mid C \in \CC \}\ra_\Z$ and $H \mapsto \{ C \in \CC \mid \udim C \in H\}$.
\end{corollary}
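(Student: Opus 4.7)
The plan is to deduce the corollary by combining Corollary \ref{cor:dense-2-3-k0} with the isomorphism \eqref{eq:JHP}. Via $\udim$, the group completion $\rho \colon \M(\AA) \to \K_0(\AA)$ becomes the inclusion $\iota \colon \N^n \hookrightarrow \Z^n$, so subgroups $H$ of $\K_0(\AA)$ such that $\rho^{-1}(H)$ is cofinal in $\M(\AA)$ correspond, under $\udim$, to subgroups $H' \subseteq \Z^n$ such that $H' \cap \N^n$ is cofinal in $\N^n$. Combined with Corollary \ref{cor:dense-2-3-k0}, this reduces the statement to the following claim: a subgroup $H'$ of $\Z^n$ satisfies that $H' \cap \N^n$ is cofinal in $\N^n$ if and only if $H'$ contains a strictly positive element.

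For the ``if'' direction, given a strictly positive $x = (x_1,\dots,x_n) \in H'$, for any $y = (y_1,\dots,y_n) \in \N^n$ I would choose $k \in \N$ large enough that $k x_i \geq y_i$ for every $i$, and then set $y' := kx - y \in \N^n$. Then $y + y' = kx \in H'$, proving cofinality. For the ``only if'' direction, I would apply the cofinality hypothesis to each standard basis vector $e_i \in \N^n$ to obtain $y_i' \in \N^n$ with $w_i := e_i + y_i' \in H' \cap \N^n$; then $w := w_1 + \cdots + w_n \in H'$ has every coordinate at least $1$ and hence is strictly positive.

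Finally, the description of the maps follows by tracing through: the bijection in Theorem \ref{thm:dense-2-3-bij} sends $\CC$ to $\M_\CC = \{[C] \mid C \in \CC\}$, which under the composition $\M(\AA) \xr{\rho} \K_0(\AA) \xr{\udim} \Z^n$ gets identified (via Proposition \ref{prop:cof-subt-subgrp}) with $\la \{\udim C \mid C \in \CC\} \ra_\Z$; conversely, a subgroup $H$ of $\Z^n$ containing a strictly positive element produces the subcategory $\{C \in \AA \mid \udim C \in H\}$. The main point requiring care is the translation of cofinality across $\udim$, but this is immediate from the commutative square \eqref{eq:JHP} and the fact that $\udim$ is an isomorphism on both rows, so there is no real obstacle.
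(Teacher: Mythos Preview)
Your proposal is correct and follows essentially the same approach as the paper: both reduce via Corollary \ref{cor:dense-2-3-k0} and the isomorphisms \eqref{eq:JHP} to the claim that $H \cap \N^n$ is cofinal in $\N^n$ if and only if $H$ contains a strictly positive element. The paper simply calls this claim ``straightforward'' and omits the argument, whereas you have spelled it out (and also traced through the explicit description of the maps), so your version is actually more detailed than the paper's.
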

\begin{proof}
  By Corollary \ref{cor:dense-2-3-k0} and the isomorphisms in \eqref{eq:JHP}, we only have to observe that $\N^n \cap H$ is cofinal in $\N^n$ if and only if $H$ contains a strictly positive element for a subgroup $H$ of $\Z^n$. This is straightforward, so we omit it.
\end{proof}

Certain classes of dense 2-out-of-3 subcategories were classified via the Grothendieck group in \cite{matsui} (for the exact case) and \cite{ZZ} (for the extriangulated case).
We explain that their results can be immediately deduced from ours. Let us explain some terminology to state them.
\begin{definition}
  Let $\CC$ be an extriangulated category. Then a set $\GG$ of objects in $\CC$ is called a \emph{generator} if for every $C \in \CC$ there is a conflation $X \to G \to C \dto$ in $\CC$ with $G \in \GG$.
\end{definition}
Now we can deduce their results as follows.
\begin{corollary}[{\cite[Theorem 2.7]{matsui}, \cite[Theorem 5.7]{ZZ}}]\label{cor:matsui}
  Let $\CC$ be a skeletally small extriangulated category and $\GG$ a generator of $\CC$. Then there is a bijection between the following two sets:
  \begin{enur}
    \item The set of dense 2-out-of-3 subcategories of $\CC$ containing $\GG$.
    \item The set of subgroups of $\K_0(\CC)$ containing the image of $\GG$.
  \end{enur}
\end{corollary}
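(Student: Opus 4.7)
The plan is to combine Theorem \ref{thm:dense-2-3-bij} with Proposition \ref{prop:cof-subt-subgrp}\,(4) applied to the cofinal subset $S := \M_\GG = \{[G] \mid G \in \GG\} \subseteq \M(\CC)$. The key preliminary observation is that $\M_\GG$ is cofinal in $\M(\CC)$: given any $C \in \CC$, the defining conflation $X \to G \to C \dto$ with $G \in \GG$ yields $[C] + [X] = [G] \in \M_\GG$ in $\M(\CC)$, which is exactly the cofinality condition.

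Next, I would establish the matching of ``contains $\GG$'' across the bijection of Theorem \ref{thm:dense-2-3-bij}. Namely, for a dense 2-out-of-3 subcategory $\DD$ of $\CC$, the inclusion $\GG \subseteq \DD$ is equivalent to $\M_\GG \subseteq \M_\DD$. One direction is trivial, and the converse uses that $\DD$ is closed under c-equivalences (Proposition \ref{prop:dense-2-3-c-cl}): if $[G] \in \M_\DD$ then some $D \in \DD$ satisfies $[D] = [G]$ in $\M(\CC)$, and since $\DD = \DD_{\M_\DD}$ by Proposition \ref{prp:c-closed bij}, we get $G \in \DD$. Therefore Theorem \ref{thm:dense-2-3-bij} restricts to a bijection between dense 2-out-of-3 subcategories of $\CC$ containing $\GG$ and cofinal subtractive submonoids of $\M(\CC)$ containing $\M_\GG$. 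But any subtractive submonoid $N$ with $N \supseteq \M_\GG$ is automatically cofinal, since $\M_\GG$ already is; so the cofinality hypothesis becomes redundant, and we are left with the set of subtractive submonoids of $\M(\CC)$ containing $\M_\GG$.

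Finally, I would apply Proposition \ref{prop:cof-subt-subgrp}\,(4) with $S = \M_\GG$ (cofinal by the first step) to obtain a bijection between subtractive submonoids of $\M(\CC)$ containing $\M_\GG$ and subgroups of $\gp\M(\CC) = \K_0(\CC)$ containing $\rho(\M_\GG)$, where $\rho \colon \M(\CC) \to \K_0(\CC)$ is the group completion (Remark \ref{rem:k0-as-gp}). Since $\rho(\M_\GG)$ is precisely the image of $\GG$ in $\K_0(\CC)$, composing these bijections yields the desired correspondence.

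I do not anticipate a substantial obstacle; the only subtle point is keeping straight that cofinality of $\M_\GG$ is what makes the ``cofinal'' qualifier drop out of both the monoid side (any submonoid above a cofinal set is cofinal) and the group side (any subgroup $H$ with $\rho(\M_\GG) \subseteq H$ satisfies $\M_\GG \subseteq \rho^{-1}(H)$, so $\rho^{-1}(H)$ is cofinal). This is what allows the final statement to be phrased purely in terms of ``containing the image of $\GG$'' with no residual cofinality hypothesis.
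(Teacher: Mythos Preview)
Your proposal is correct and follows essentially the same route as the paper: show $\M_\GG$ is cofinal via the generator conflation, restrict the bijection of Theorem \ref{thm:dense-2-3-bij} to the ``containing $\GG$'' side (yielding subtractive submonoids containing $\M_\GG$, with cofinality automatic), and then apply Proposition \ref{prop:cof-subt-subgrp}\,(4). The only cosmetic difference is that the paper checks the two directions of well-definedness separately (if $\GG \subseteq \DD$ then $[\GG] \subseteq \M_\DD$; if $[\GG] \subseteq N$ then $\GG \subseteq \DD_N$), which avoids invoking c-closedness, whereas you establish the full equivalence $\GG \subseteq \DD \Leftrightarrow \M_\GG \subseteq \M_\DD$ --- but this is not a substantive divergence.
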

\begin{proof}
  We will show that these sets are in bijection with the following one:
  \begin{enur}
    \item[(iii)] The set of subtractive submonoids of $\M(\CC)$ containing the image of $\GG$. 
  \end{enur}

  Denote by $[\GG] \subseteq \M(\CC)$ the image of $\GG$ in $\M(\CC)$, then $[\GG]$ is cofinal in $\M(\CC)$. Indeed, for every $[C] \in \M(\CC)$, there is a conflation $X \to G \to C \dto$ in $\CC$, and thus $[C] + [X] = [G] \in [\GG]$ holds in $\M(\CC)$.
  Therefore, by Proposition \ref{prop:cof-subt-subgrp} (4), we have a bijection between (ii) and (iii), and every submonoid in (iii) is cofinal.
  Therefore, (i) and (iii) are subsets of the two sets in Theorem \ref{thm:dense-2-3-bij}. Hence it suffices to observe the following well-definedness, which are immediate from definitions:
  If $\DD$ is a dense 2-out-of-3 subcategory containing $\GG$, then $\M_\DD$ contains $[\GG]$, and if $N$ is a submonoid of $\M(\CC)$ containing $[\GG]$, then $\DD_N$ contains $\GG$.
\end{proof}

\begin{remark}
  In \cite[Theorem 5.1]{H}, the above classification of dense 2-out-of-3 subcategories via the Grothendieck group was generalized to an \emph{$n$-exangulated category}, which is a generalization of extriangulated category. Therefore, it may be possible to establish classifications similar to this paper for an $n$-exangulated categories using the Grothendieck monoid of an $n$-exangulated category.
\end{remark}
\section{Grothendieck monoids and localization of extriangulated categories}\label{sec:4}
The purpose of this section is to prove Theorem \ref{thm:mon loc ET},
which describes the Grothendieck monoid of the localization of extriangulated categories as a monoid quotient of the Grothendieck monoid.
\emph{Throughout this section, we assume that
every category, functor, and subcategory is additive}.
\subsection{Localization of extriangulated categories}\label{ss:loc ET}
We first recall the localization of an extriangulated category 
following \cite{NOS}.
\begin{dfn}\label{def:ex-loc}
Let $\catC$ be an extriangulated category,
and let $S\subseteq \Mor\catC$ be a class of morphisms.
A pair $(\catC_S,Q)$ of an extriangulated category $\catC_S$
and an exact functor $Q \colon \catC \to \catC_S$
is the \emph{exact localization of $\catC$ with respect to $S$}
if it satisfies the following conditions:
\begin{enur}
\item
$F(s)$ is an isomorphism in $\catC_S$ for any $s\in S$.
\item
For any extriangulated category $\DD$ and any exact functor $F\colon \catC \to \catD$ 
such that $F(s)$ is an isomorphism for any $s\in S$,
there exists a unique exact functor $F_S \colon \catC_S \to \catD$
satisfying $F=F_S\circ Q$.
\end{enur} 
\end{dfn}
If the exact localization exists,
it is unique up to exact isomorphisms.
Note that the exact localization is closed under exact \emph{isomorphisms}, but not closed under exact \emph{equivalences}, see Remark \ref{rmk:2-loc} below.

Nakaoka--Ogawa--Sakai \cite{NOS} constructs the exact localization of an extriangulated category 
by a class of morphisms under some assumptions.
We only recall the construction of the localization of an extriangulated category 
by the class of morphisms determined by a thick subcategory, as we shall explain.

From now on, $\catC=(\catC,\bbE,\frs)$ is an extriangulated category.

\begin{dfn}\label{dfn:thick}
A subcategory $\catN$ of $\catC$ is a \emph{thick} subcategory
if it satisfies the following conditions:
\begin{enur}
\item
$\catN$ is closed under direct summands.
\item
$\catN$ satisfies 2-out-of-3 for conflations in $\catC$,
that is, if two of three objects $A, B, C$ in a conflation $A \to B\to C\dto$ 
belong to $\catN$,
then so does the third.
\end{enur}
\end{dfn}

For a thick subcategory $\catN \subseteq \catC$,
we set the following classes of morphisms:
\begin{align*}
\calL &:=\{\ell\in \Mor\catC \mid \text{there is a conflation $A \xr{\ell} B \to N \dto $ with $N\in\catN$} \},\\
\calR &:=\{r\in \Mor\catC \mid \text{there is a conflation $N \to A \xr{r} B \dto $ with $N\in\catN$} \}.
\end{align*}
We define $S_{\catN}$ to be
the class of all finite compositions of morphisms in $\calL$ and $\calR$.
We can easily check $\calL \circ \calL \subseteq \calL$ and $\calR \circ \calR \subseteq \calR$.
Thus a morphism $s$ in $S_{\catN}$ is of the form
$s=\cdots \ell_{n-1} r_n \ell_{n+1} r_{n+2} \cdots $ for some $\ell_i\in \calL$ and $r_j \in \calR$.
The thick subcategory $\catN$ can be recovered from $S_{\catN}$ since we have
\begin{align}\label{eq:thick N}
\catN=\{A\in \catC \mid \text{both $A\to 0$ and $0\to A$ belong to $S_{\catN}$} \}
\end{align}
by \cite[Lemma 4.5]{NOS}.
In the following, we consider 
the exact localization $\catC/\catN :=\catC_{S_{\catN}}$ of $\catC$ with respect to $S_{\catN}$.
This localization satisfies the following natural universal property.
\begin{proposition}\label{prop:ex-loc-univ2}
  Let $\CC$ be an extriangulated category and $\NN$ a thick subcategory of $\CC$. Suppose that the exact localization $Q \colon \CC \to \CC / \NN := \CC_{S_\NN}$ exists. Then it satisfies the following conditions.
  \begin{enur}
    \item $Q(N) \iso 0$ holds for every $N \in \NN$.
    \item For any extriangulated category $\DD$ and an any exact functor $F \colon \CC \to \DD$ such that $F(N) \iso 0$ holds for every $N \in \NN$, there exists a unique exact functor $F_\NN \colon \CC/\NN \to \DD$ satisfying $F=F_\NN \circ Q$.
  \end{enur}
\end{proposition}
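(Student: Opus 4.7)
The plan is to deduce both parts from the universal property of $\CC_{S_\NN}$ in Definition \ref{def:ex-loc}, combined with the description \eqref{eq:thick N} of $\NN$ in terms of $S_\NN$. Part (i) is essentially a reformulation: for $N \in \NN$, \eqref{eq:thick N} says that $N \to 0$ lies in $S_\NN$, so by Definition \ref{def:ex-loc}(i) its image $Q(N) \to Q(0) = 0$ is an isomorphism in $\CC/\NN$, giving $Q(N) \iso 0$.

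For part (ii), given an exact $F \colon \CC \to \DD$ with $F(N) \iso 0$ for every $N \in \NN$, I would show that $F$ inverts every morphism in $S_\NN$; Definition \ref{def:ex-loc}(ii) then produces the desired $F_\NN$ uniquely with $F = F_\NN \circ Q$. Since $S_\NN$ consists of finite compositions of morphisms in $\calL \cup \calR$ and compositions of isomorphisms are isomorphisms, it suffices to treat $\ell \in \calL$ and, symmetrically, $r \in \calR$. For $\ell \in \calL$ fitting into a conflation $A \xr{\ell} B \to N \dto$ with $N \in \NN$, exactness of $F$ yields a conflation $FA \xr{F\ell} FB \to FN \dto$ in $\DD$ with $FN \iso 0$, so the task reduces to showing $F\ell$ is an isomorphism.

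The main obstacle is the following basic lemma on extriangulated categories: any conflation whose third term is isomorphic to $0$ has its inflation an isomorphism. I would prove this by observing that the conflation $FA \xr{F\ell} FB \to FN \dto$ realizes some $\delta \in \E(FN, FA) \iso \E(0, FA) = 0$, so $\delta = 0$; by additivity of the realization $\s$, the conflation is therefore equivalent to the split one $FA \to FA \oplus FN \to FN$ with inclusion and projection, and $F\ell$ is identified with this split inclusion. Since $FA \oplus FN \iso FA \oplus 0 \iso FA$, the inclusion is an isomorphism, and hence so is $F\ell$. The argument for $r \in \calR$ is entirely symmetric, using that a conflation with first term isomorphic to $0$ has its deflation an isomorphism.
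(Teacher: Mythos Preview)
Your proof is correct and follows essentially the same approach as the paper's: both reduce to showing that $F(N) \iso 0$ for all $N \in \NN$ implies $F$ inverts every $\ell \in \calL$ and $r \in \calR$, via the image conflation having a zero end term. The only cosmetic difference is that the paper dispatches the lemma ``a conflation with third term isomorphic to $0$ has its inflation an isomorphism'' by citing the associated long exact sequence \cite[Corollary 3.12]{NP}, whereas you argue directly that the extension class must vanish and the conflation is split.
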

\begin{proof}
  By comparing the claimed properties with Definition \ref{def:ex-loc}, it suffices to show the following claim: for an extriangulated category $\DD$ and an exact functor $F \colon \CC \to \DD$, we have that $F (N) \iso 0$ holds for every $N \in \NN$ if and only if $F(s)$ is an isomorphism for every $s \in S_\NN$.

  To see the ``only if'' part, suppose that $F(N) \iso 0$ holds for every $N \in \NN$. It suffices to check that $F(\ell)$ and $F(r)$ are isomorphisms for $\ell \in \calL$ and $r \in \calR$ respectively. By the definition of $\calL$, there is a conflation $A \xr{\ell} B \to N \dto $ with $N\in\catN$. Since $F$ is an exact functor, we obtain a conflation $F(A) \xr{F(\ell)} F(B) \to F(N) \dto$ in $\DD$. Then $F(N) \iso 0$ implies that $F(\ell)$ is an isomorphism in $\DD$ by considering the associated long exact sequence (cf. \cite[Corollary 3.12]{NP}). The same proof applies to $r \in \calR$.

  To see the ``if'' part, suppose that $F(s)$ is an isomorphism for every $s \in S_\NN$, and let $N \in \NN$. Then $0 \to N$ clearly belongs to $\calL \subseteq S_\NN$, so $0 \to F(N)$ is an isomorphism in $\DD$. Thus the assertion holds.
\end{proof}

Let $\ol{\catC}:=\catC/[\catN]$ be the quotient by the ideal $[\catN]$
consisting of morphisms which factor through objects in $\catN$,
and let $p \colon \catC \to \ol{\catC}$ be the canonical functor.
In what follows,
we write $\ol{f}:=p(f)$ for any morphism $f$ in $\catC$.
Set $\ol{S_{\catN}}:=p(S_{\catN})$
\footnote{Our notation $\ol{S_{\catN}}$ is different from the one in \cite{NOS}.
However, they coincide if (i) of Condition \ref{cond:NOS loc} is satisfied. 
See \cite[Lemma 3.2]{NOS}.}.
We recall a condition in \cite{NOS}
under which the exact localization $\catC/\catN=(\catC/\catN,Q)$ exists.
\begin{cond}\label{cond:NOS loc}
Let $\catN$ be a thick subcategory of $\catC$.
\begin{enur}
\item 
$f\in S_{\catN}$ holds for any split monomorphism $f \colon A \to B$ in $\catC$
such that $\ol{f}$ is an isomorphism in $\ol{\catC}$. (This is equivalent to the dual condition by \cite[Lemma 3.2]{NOS}: $f\in S_{\catN}$ holds for any split epimorphism $f \colon A \to B$ in $\catC$ such that $\ol{f}$ is an isomorphism in $\ol{\catC}$.)
\item
$\ol{S_{\catN}}$ satisfies 2-out-of-3 with respect to compositions in $\ol{\catC}$.
\item
$\ol{S_{\catN}}$ is a multiplicative system in $\ol{\catC}$.
\item
The set $\left\{\ol{t x s} \mid \text{$x$ is an inflation in $\catC$ and}\, s, t\in S_{\catN} \right\}$ is closed under compositions. 
Dually, the set $\left\{\ol{t y s} \mid \text{$y$ is a deflation in $\catC$ and}\, s, t\in S_{\catN} \right\}$ is closed under compositions.
\end{enur}
\end{cond}

\begin{fct}[{\cite[Theorem 3.5, Lemma 3.32]{NOS}}]\label{fct:NOS loc}
Let $\catN$ be a thick subcategory of $\catC$.
If it satisfies Condition \ref{cond:NOS loc},
then there exists the exact localization $\catC/\catN$ satisfying the following properties.
\begin{enua}
\item
$\catC/\catN$ is constructed as the category $\ol{S_{\catN}}^{-1}\ol{\catC}$ of fractions.
In particular, every morphism in $\catC/\catN$ can be described as a right or left roof of morphisms in $\ol{\catC}$.
\item
For any inflation $\alpha$ in $\catC/\catN$,
there exist an inflation $f$ in $\catC$ and isomorphisms $\beta, \gamma$ in $\catC/\catN$
satisfying $\alpha = \beta \circ Q(f) \circ \gamma$.
Dually, for any deflation $\alpha$ in $\catC/\catN$,
there exist a deflation $f$ in $\catC$ and isomorphisms $\beta, \gamma$ in $\catC/\catN$
satisfying $\alpha = \beta \circ Q(f) \circ \gamma$.
\end{enua}
\end{fct}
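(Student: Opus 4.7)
The plan is to build $\catC/\catN$ in two steps: first take the additive quotient $\ol\catC = \catC/[\catN]$, then invert $\ol{S_\catN}$, and finally transfer the extriangulated data from $\catC$ to the resulting category. The underlying category is handled by Condition~\ref{cond:NOS loc}~(iii): since $\ol{S_\catN}$ is a multiplicative system in $\ol\catC$, the Gabriel--Zisman calculus of fractions produces $\ol{S_\catN}^{-1}\ol{\catC}$, in which every morphism is a left or right roof $s^{-1}\circ \ol f$ or $\ol g \circ t^{-1}$ with $s,t\in\ol{S_\catN}$; Condition~(ii) (the 2-out-of-3 property for $\ol{S_\catN}$) is what guarantees that composition of roofs is well-defined and that composable roofs can be brought to common denominators. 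Set $Q$ to be the composite $\catC \xrightarrow{p} \ol{\catC} \to \ol{S_\catN}^{-1}\ol{\catC}$; Condition~(i) ensures that this $Q$ inverts all of $S_\catN$, not just the image under $p$.

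Next I would equip $\ol{S_\catN}^{-1}\ol{\catC}$ with an extriangulated structure $(\E',\s')$ by declaring a complex to be a conflation precisely when it is isomorphic in the localization to the image under $Q$ of a conflation in $\catC$, and declaring $\E'(Z,X)$ to consist of equivalence classes of such data where two realizations are identified when they are connected by a morphism of conflations in $\catC$ whose horizontal maps become identities in the localization. Bifunctoriality of $\E'$ in $Z$ and $X$ along morphisms of $\catC/\catN$ is then checked roof-by-roof: given a roof $s^{-1}\ol f$ in $\catC/\catN$, one pulls back or pushes forward the original conflation along $f$ in $\catC$ and then inverts $s$ in $S_\catN$; again the multiplicative system axioms let us move the ``denominator'' $s$ past the inflation/deflation that realizes a given class. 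The additive realization axiom and $(\mathrm{ET1})$--$(\mathrm{ET3})$ for $(\E',\s')$ then reduce to the corresponding statements for $(\E,\s)$ combined with standard calculus-of-fractions arguments.

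The main obstacle will be verifying $(\mathrm{ET4})$ and its dual, the axiom on compositions of inflations (respectively deflations). In $\catC/\catN$ a composable pair of inflations may be represented by roofs whose numerators in $\catC$ are inflations but whose denominators are in $S_\catN$; to take the composition one must recombine the denominators with the inflations and still obtain an inflation-like morphism. This is precisely where Condition~\ref{cond:NOS loc}~(iv) is used: the class $\{txs \mid x \text{ inflation},\ s,t\in S_\catN\}$ is closed under composition, so the composite of two inflations in $\catC/\catN$ is again represented, up to isomorphism, by an inflation of $\catC$ ``sandwiched'' by morphisms in $S_\catN$. The dual case follows from the dual half of Condition~(iv). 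Once $(\mathrm{ET4})$ is in place, one also reads off statement~(b): an inflation $\alpha$ in $\catC/\catN$ can, by construction of $\E'$, be embedded into a conflation isomorphic to the image of a conflation in $\catC$, and extracting the first morphism yields the desired factorization $\alpha = \beta\circ Q(f)\circ \gamma$ with $\beta,\gamma$ isomorphisms in $\catC/\catN$; the deflation case is dual.

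Finally, the universal property of Definition~\ref{def:ex-loc} is obtained as follows: an exact functor $F\colon \catC\to \catD$ inverting $S_\catN$ kills every object of $\catN$ by (\ref{eq:thick N}), so it factors through $p$ to give $\bar F\colon \ol\catC \to \catD$; since $\bar F$ sends $\ol{S_\catN}$ to isomorphisms, the Gabriel--Zisman universal property produces a unique additive $F_S\colon \catC/\catN\to \catD$, and exactness of $F_S$ is automatic from the definition of conflations in $\catC/\catN$ as roof-isomorphic images of conflations in $\catC$. I expect the delicate bookkeeping in the extension bifunctor (independence of representatives, and compatibility with both kinds of roofs simultaneously) to be the most tedious part, but conceptually the hard kernel of the argument is the interaction of $(\mathrm{ET4})$ with Condition~(iv).
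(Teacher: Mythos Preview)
The paper does not prove this statement: it is recorded as a Fact, cited from \cite[Theorem 3.5, Lemma 3.32]{NOS}, and used as a black box throughout Section~\ref{sec:4}. There is therefore no proof in the present paper to compare your proposal against.

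As a sketch of the argument in \cite{NOS}, your outline is in the right spirit: pass to the ideal quotient $\ol\catC$, form the category of fractions for the multiplicative system $\ol{S_\catN}$, and then transport the extriangulated structure by declaring conflations to be those isomorphic to images of conflations in $\catC$; Condition~(iv) is indeed the key input for the (ET4)-type axioms, and part~(2) of the Fact drops out of that description. One point to correct: Condition~\ref{cond:NOS loc}~(i) does not serve the purpose you assign it. Inversion of $S_\catN$ by $Q$ is automatic once $\ol{S_\catN}=p(S_\catN)$ is inverted, since $Q$ factors through $p$. Condition~(i) is rather a compatibility condition between $S_\catN$ and the additive structure (it forces certain split monomorphisms into $S_\catN$), used in \cite{NOS} to ensure the localized category is additive and that the various closure properties of $\ol{S_\catN}$ match up; cf.\ Remark~\ref{rmk:EL}~(2). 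Your description of the bifunctor $\E'$ is also more heuristic than what is actually needed; making it well-defined and bifunctorial is where most of the technical work in \cite{NOS} lies.
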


\begin{rmk}
Let us confirm 
that Condition \ref{cond:NOS loc} implies the conditions in \cite[Theorem 3.5]{NOS}.
Suppose that $S_{\catN}$ satisfies Condition \ref{cond:NOS loc}.
It is clear that $S_{\catN}$ satisfies (M0) in \cite[Section 3]{NOS}.
The condition (MR1), (MR2), and (MR4) in \cite[Theorem 3.5]{NOS} 
are nothing but (i), (iii), and (iv) of Condition \ref{cond:NOS loc}, respectively.
By \cite[Lemma 4.6]{NOS}, $S_{\catN}$ satisfies (M3) in \cite[Corollary 3.4]{NOS}.
Thus it also satisfies (MR3) by the condition (i) and \cite[Lemma 3.2, Claim 3.6]{NOS}.
Therefore $S_{\catN}$ satisfies all the conditions in \cite[Theorem 3.5]{NOS}.
\end{rmk}

\begin{rmk}\label{rmk:2-loc}
  Some readers may find the definition of exact localizations unsatisfactory
  since it is not preserved by exact equivalences.
  In fact, there is a notion of \emph{exact $2$-localizations}, which is preserved by exact equivalences.
  For two extriangulated categories $\catC$ and $\catD$,
  we denote by $\Fun^{\exa}(\catC,\catD)$ the category of exact functors $\catC \to \catD$
  and natural transformations of them.
  Any exact functor $F\colon \catC \to \catC'$ induces
  a functor $F^*:\Fun^{\exa}(\catC',\catD) \to \Fun^{\exa}(\catC,\catD)$
  defined by $F^*(G):=G\circ F$.
  
  For a class $S\subseteq \Mor\catC$ of morphisms in an extriangulated category $\catC$,
  the \emph{exact $2$-localization} of $\catC$ with respect to $S$
  is a pair $(\catC_S,Q)$ of an extriangulated category $\catC_S$
  and an exact functor $Q\colon \catC \to \catC_S$
  which satisfies the following conditions:
  \begin{enur}
  \item
  $F(s)$ is an isomorphism in $\catC_S$ for any $s\in S$.
  \item
  For any extriangulated category $\DD$ and an exact functor $F\colon \catC \to \catD$ 
  such that $F(s)$ is an isomorphism for any $s\in S$,
  there exist an exact functor $\wti{F}\colon \catC_S \to \catD$
  and a natural isomorphism $F \iso \wti{F} \circ Q$ of exact functors.
  \item
  The functor
  \[
  Q^*\colon \Fun^{\exa}(\catC_S, \catD) \to \Fun^{\exa}(\catC,\catD)
  \]
  is fully faithful for every extriangulated category $\catD$.
  \end{enur}

  Actually, the exact localization $\catC/\catN$ obtained by Fact \ref{fct:NOS loc}
  is also the exact $2$-localization by \cite[Theorem 3.5]{NOS}.
\end{rmk}

Although Condition \ref{cond:NOS loc} seems a little technical, there is a sufficient condition which we can easily verify,
see Fact \ref{fct:loc tri} and \ref{fct:loc ex}.
In this paper, we focus on the situation where Condition \ref{cond:NOS loc} is satisfied
so that we can use the additional properties (1)--(2) of Fact \ref{fct:NOS loc} freely.

\begin{rmk}\label{rmk:roof}
We do not assume in Condition \ref{cond:NOS loc} that $S_{\catN}$ is a multiplicative system.
However, any morphism $f \colon X\to Y$ in $\catC/\catN$
is of the form $f=Q(s)^{-1}Q(g)$ 
for some morphisms $g \colon X\to A$ in $\catC$ and $s \colon Y \to A$ in $S_{\catN}$.
Indeed,
let $\ol{Q} \colon \ol{\catC} \to \ol{S_{\catN}}^{-1}\ol{\catC} = \catC/\catN$
be the canonical functor.
For any morphism $f \colon X\to Y$ in $\catC/\catN$,
there are morphisms $g \colon X\to A$ in $\ol{\catC}$ 
and $\ol{s} \colon Y \to A$ in $\ol{S_{\catN}}$ 
satisfying $f=\ol{Q}(\ol{s})^{-1}\ol{Q}(\ol{g})$.
Since $\ol{Q}(\ol{\phi})=Q(\phi)$ holds for any morphism $\phi$ in $\catC$, the claim follows.
\end{rmk}

We introduce some convenient classes of thick subcategories.
\begin{dfn}
Let $\catN$ be a thick subcategory of $\catC$.
\begin{enua}
\item
$\catN$ is called \emph{biresolving}
if for any $C\in\catC$, there exist an inflation $C\to N$ and a deflation $N'\to C$
in $\catC$ with $N,N' \in \catN$.
\item
$\catN$ is called \emph{percolating}
if for any morphism $f \colon X\to Y$ in $\catC$ factoring through some object in $\catN$,
there exist a deflation $g \colon X \to N$ and an inflation $h \colon N \to Y$
satisfying $N\in\catN$ and $f=hg$
\footnote{This definition is different from \cite[Definition 4.28]{NOS},
but they are equivalent by \cite[Lemma 4.29]{NOS}.}.
\end{enua}
\end{dfn}

For the triangulated case, we have the following observation.
\begin{ex}\label{ex:tri-thick-bir-perco}
  Let $\TT$ be a triangulated category with shift functor $\Sigma$.
\begin{enua}
\item
A thick subcategory of $\catT$ in the sense of Definition \ref{dfn:thick}
coincides with the usual one,
that is, a subcategory of $\catT$ closed under cones, shifts, and direct summands.
We can easily check it by considering the following conflations:
\[
X \xr{f} Y \to \Cone(f) \dto,\quad
X \to 0 \to \Sg X \dto, \quad \text{and} \quad
\Sg^{-1} X \to 0 \to X \dto.
\]
\item
Any thick subcategory of $\TT$ is biresolving
because there exist conflations 
$C \xr{0} N \to N\oplus \Sg C \dto$ and $\Sg^{-1} C \oplus N \to  N \xr{0} C \dto$.
\item Similarly, any thick subcategory of $\TT$ is percolating because every morphism is both an inflation and a deflation in $\TT$.
\end{enua}
\end{ex}

Typical examples of percolating subcategories are Serre subcategories of \emph{admissible} extriangulated categories, as we shall explain.
A morphism $f\colon A \to B$ in $\catC$ is called \emph{admissible}
if it has a factorization $f=i\circ d$ such that $i$ is an inflation and $d$ is a deflation.
We call this factorization a \emph{deflation-inflation factorization}.
We also say that \emph{$\catC$ is admissible} if every morphism in $\catC$ is admissible.
For examples, abelian categories and triangulated categories are admissible.

\begin{example}\label{ex:adm-serre-perco}
  Let $\CC$ be an admissible extriangulated category. Then every Serre subcategory $\catN$ of $\catC$ is percolating.
  Indeed, let $f \colon X\to Y$ be a morphism in $\catC$ 
  having a factorization $X\xr{x} N \xr{y} Y$ with $N\in\catN$.
  Consider a deflation-inflation factorization $X \xr{d_1} M_1 \xr{i_1} N$ 
  of $x$.
  Since $\catN$ is a Serre subcategory, an inflation $i_1$ implies $M_1 \in \catN$.
  Then consider a deflation-inflation factorization $M_1 \xr{d_2} M_2 \xr{i_2} Y$
  of $M_1 \xr{i_1} N \xr{y} Y$.
  We have $M_2\in \catN$ by a deflation $d_2$,
  and $X \xr{d_2d_1} M_2 \xr{i_2} Y$ is a desired decomposition.
  It is obvious that Serre subcategories are thick, and thus $\catN$ is percolating.
\end{example}

The following two facts are useful conditions where Condition \ref{cond:NOS loc} is satisfied for biresolving and percolating subcategories.
\begin{fct}[{\cite[Propostion 4.26]{NOS}}]\label{fct:loc tri}
If $\catN$ is biresolving,
then Condition \ref{cond:NOS loc} is satisfied.
In this case, $\catC/\catN$ is a triangulated category.
\end{fct}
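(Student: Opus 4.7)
The plan is to verify each of the four conditions in Condition \ref{cond:NOS loc} using the biresolving hypothesis, and then to upgrade the resulting exact localization $\catC/\catN$ to a triangulated category.

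First, I would verify (i). Suppose $f \colon A \to B$ is a split monomorphism with $\ol{f}$ an isomorphism in $\ol{\catC}$. Then there is a split conflation $A \xr{f} B \to C \dto$ with $B \iso A \oplus C$. The condition that $\ol{f}$ is an isomorphism in $\ol{\catC}$ forces $\id_C$ to factor through some object of $\catN$ in $\catC$. Using biresolving, I would choose an inflation $C \to N$ with $N \in \catN$, and then use the factorization of $\id_C$ together with thickness of $\catN$ (closure under direct summands and 2-out-of-3 for the relevant conflation) to deduce $C \in \catN$. The split conflation above then witnesses $f \in \calL \subseteq S_\catN$.

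For (iv), given $t_2 x_2 s_2 \circ t_1 x_1 s_1$ with $x_i$ inflations and $s_i, t_i \in S_\catN$, the key step is to rewrite the intermediate composite $s_2 t_1$ using biresolving: any morphism between two objects $X, Y$ can be interpreted by choosing an inflation $Y \to N$ into an object of $\catN$ and completing. This lets me replace $s_2 t_1$ by a composite of the form (element of $S_\catN$)$\circ$(inflation)$\circ$(element of $S_\catN$), after which the overall morphism fits into the claimed class. The deflation case is dual. Conditions (ii) and (iii) are then routine consequences: the 2-out-of-3 for $\ol{S_\catN}$ in $\ol{\catC}$ uses (i) to promote split identifications to membership in $S_\catN$, and the Ore (multiplicative system) axioms for $\ol{S_\catN}$ follow by completing a span or cospan via an inflation into, or a deflation from, an object of $\catN$ supplied by biresolving.

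Once Fact \ref{fct:NOS loc} gives the exact localization $\catC/\catN$, I would show it is triangulated as follows. Biresolving supplies for each $C \in \catC$ a conflation $C \to N \to C^+ \dto$ with $N \in \catN$, so applying $Q$ produces a conflation $QC \to 0 \to QC^+ \dto$ in $\catC/\catN$. Dually, a deflation $N' \surj C$ with $N' \in \catN$ extends to a conflation $C^- \to N' \to C \dto$, giving $QC^- \to 0 \to QC \dto$. These assignments $QC \mapsto QC^+$ and $QC \mapsto QC^-$ are functorial and mutually quasi-inverse by standard arguments using the universal property of $\catC/\catN$ (any two choices of $N$ are comparable via a conflation lying over $\catN$, and so become canonically identified after $Q$). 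This yields an autoequivalence $\Sigma$ on $\catC/\catN$ with $\E_{\catC/\catN}(-,-) \iso \Hom_{\catC/\catN}(-, \Sigma-)$, under which conflations correspond to triangles; so $\catC/\catN$ is triangulated.

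The hardest part will be condition (iv) together with the multiplicativity in (iii): converting the geometric biresolving hypothesis into manipulations of composable roofs requires careful use of the extriangulated axioms (in particular (ET4) and its dual) to splice auxiliary conflations into inflations of the desired form. This is essentially the content of \cite[Proposition 4.26]{NOS}, and once it is in hand, the identification of $\catC/\catN$ as triangulated follows formally from biresolving.
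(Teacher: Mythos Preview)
The paper does not give its own proof of this statement: it is recorded as a \emph{Fact} with a bare citation to \cite[Proposition 4.26]{NOS}, so there is nothing in the paper to compare your argument against. Your proposal is therefore not competing with an in-paper proof but with the external reference, and you yourself acknowledge at the end that the hard technical content ``is essentially the content of \cite[Proposition 4.26]{NOS}.''

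On the substance of your sketch: your argument for (i) is basically correct but overcomplicated --- once $\ol f$ is an isomorphism in $\ol\catC$ and $B \iso A \oplus C$, the identity $\id_C$ factors through an object of $\catN$, so $C$ is a direct summand of something in $\catN$ and thickness alone gives $C \in \catN$; biresolving is not needed here. Your outline for the triangulated structure (producing $\Sigma$ from conflations $C \to N \to C^+ \dto$ with $N \in \catN$) is the standard idea and is sound. However, your treatment of (ii)--(iv) is not really an argument: ``rewrite $s_2 t_1$ using biresolving'' and ``routine consequences'' do not constitute verifications, and these are exactly the places where the extriangulated axioms (ET4), (ET4$^{\mathrm{op}}$) and careful bookkeeping with $\calL$, $\calR$ must be invoked. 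If you want a self-contained proof rather than a citation, those steps need to be written out; otherwise, citing \cite{NOS} as the paper does is the honest route.
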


\begin{fct}[{\cite[Corollary 4.42]{NOS}}]\label{fct:loc ex}
Let $\catN$ be a thick subcategory of $\catC$.
Consider the following conditions:
\begin{description}
\item[(EL1)]
$\catN$ is percolating.
\item[(EL2)]
For any split monomorphism $f\colon A\to B$ in $\catC$ such that $\ol{f}$ is an isomorphism in $\ol{\catC}$,
there exist $N\in\catN$ and $j\colon N\to B$ in $\catC$
such that $[f\, j] \colon A\oplus N \to B$ is an isomorphism in $\catC$.
\item[(EL3)]
For every conflation $A\xr{f} B\xr{g}C \dto$ in $\CC$,  
both of the following hold:
\begin{align*}
\Ker\left(\CC(-,A)\xr{f \circ (-)}\CC(-,B)\right) &\subseteq [\catN](-,A),\\
\Ker\left(\CC(C,-)\xr{(-) \circ g}\CC(B,-)\right) &\subseteq [\catN](C,-).
\end{align*}

\item[(EL4)]
$\catC$ is admissible, namely, every morphism $f\colon A \to B$ in $\catC$ has a factorization
$f=i\circ d$ such that $i$ is an inflation and $d$ is a deflation.
\end{description}
Then the following statements hold.
\begin{enua}
\item
If {\upshape (EL1)--(EL3)} are satisfied, then Condition \ref{cond:NOS loc} is satisfied, and $\catC/\catN$ is an exact category.
\item
If {\upshape (EL1)--(EL4)} are satisfied, then
$\catC/\catN$ is an abelian category (endowed with the natural extriangulated structure).
\end{enua}
\end{fct}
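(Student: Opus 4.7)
The plan is to reduce both parts to Fact \ref{fct:NOS loc} by verifying that (EL1)--(EL3) imply Condition \ref{cond:NOS loc}, and then to upgrade the resulting extriangulated structure first to an exact one, and under (EL4) to an abelian one.

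First I would handle Condition \ref{cond:NOS loc} (i) directly from (EL2): given a split monomorphism $f\colon A\to B$ with $\ol{f}$ an isomorphism in $\ol{\CC}$, choose $N\in\catN$ and $j\colon N\to B$ with $[f\ j]\colon A\oplus N \simto B$ an isomorphism; then $f$ equals $[f\ j]$ precomposed with the canonical inflation $A\to A\oplus N$, whose cokernel is $N\in\catN$, so $f\in \calL\cdot S_\NN \subseteq S_\NN$. The heart of the argument is verifying (ii)--(iii), the 2-out-of-3 and multiplicative-system conditions for $\ol{S_\NN}$ in $\ol{\CC}$. The key technical input is a normal form: using percolation (EL1) to compress chains of $\calL$- and $\calR$-morphisms (each factoring-through-$\catN$ subchain becoming a single deflation-inflation pair), one shows every morphism in $\ol{S_\NN}$ can be written as $\ol{r}\circ\ol{\ell}$ with $\ell\in\calL$ and $r\in\calR$. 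With this normal form, the Ore conditions are verified by a standard diagram chase using the (weak) pullback/pushout-like axioms of extriangulated categories applied to the defining conflations of $\ell$ and $r$. The cancellation property of the multiplicative system, together with 2-out-of-3, is precisely the place where (EL3) is indispensable: the kernel hypotheses let one upgrade an equality in $\ol{\CC}$ (i.e.\ modulo $[\catN]$) to an equality after pre- or post-composition with morphisms already in $S_\NN$, by exhibiting the obstructing map as factoring through $\catN$ and absorbing it into $\calL$ or $\calR$. Condition \ref{cond:NOS loc} (iv) then follows by pasting two conflations in $\CC$ and rewriting the resulting composite via the normal form.

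Once Condition \ref{cond:NOS loc} is established, Fact \ref{fct:NOS loc} provides $\CC/\catN$ as an extriangulated category together with the lifting property (b) for inflations and deflations. To finish part (1), I would show that every inflation in $\CC/\catN$ is a categorical monomorphism and every deflation an epimorphism: via Fact \ref{fct:NOS loc} (b) it suffices to test this on images of inflations in $\CC$, where (EL3) says that any precomposing morphism killed by an inflation factors through $[\catN]$, and hence is zero in $\CC/\catN$. Consequently each conflation in $\CC/\catN$ is a short exact sequence, so $\CC/\catN$ is an exact category.

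For part (2), assume (EL4). Using Remark \ref{rmk:roof}, every morphism in $\CC/\catN$ has the form $Q(s)^{-1}Q(g)$ with $s\in S_\NN$; by (EL4) we may factor $g = i\circ d$ with $i$ an inflation and $d$ a deflation in $\CC$. Then $Q(d)$ is a deflation in $\CC/\catN$ and $Q(s)^{-1}Q(i)$ is an inflation by Fact \ref{fct:NOS loc} (b), so every morphism in $\CC/\catN$ admits a deflation-inflation factorization. Combined with part (1), this gives the epi-mono factorization together with the requirement that every inflation is a kernel of its cokernel and every deflation is a cokernel of its kernel, which is exactly what promotes an exact category to an abelian category. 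The principal obstacle throughout will be the verification of the Ore and 2-out-of-3 conditions for $\ol{S_\NN}$: these are delicate diagram chases that must weave together percolation, the kernel conditions (EL3), and the extriangulated axioms, and it is here that the technical strength of (EL1)--(EL3) is genuinely used.
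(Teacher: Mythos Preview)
The paper does not prove this statement at all: it is labelled a \emph{Fact} and simply cited from \cite[Corollary 4.42]{NOS}. So there is no in-paper proof to compare your proposal against; what you have written is an outline of how the cited result is established in \cite{NOS}, and your overall architecture (verify Condition~\ref{cond:NOS loc} from (EL1)--(EL3), then read off exactness from monicity/epicity of inflations/deflations via (EL3), then use (EL4) for the deflation--inflation factorization in the quotient) is the right one.

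Two small points of caution. First, the normal form you invoke is stated in \cite{NOS} as $S_{\catN}=\calL\circ\calR$ (see the proof of Proposition~\ref{prp:sat ex}, which cites \cite[Lemma~4.37]{NOS}), i.e.\ every $s\in S_{\catN}$ is $\ell\circ r$; your version $\ol{r}\circ\ol{\ell}$ has the factors in the opposite order, and while this may be harmless at the level of a sketch, the actual Ore and cancellation arguments in \cite{NOS} are organized around the $\calL\circ\calR$ form. Second, in part (2) you should make explicit the small step that in an exact category a monic deflation is an isomorphism (and dually); this is what lets you conclude from the deflation--inflation factorization of an arbitrary monomorphism that it is in fact an inflation, hence a kernel, completing the passage from ``exact with admissible factorizations'' to ``abelian''.
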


The situation can be summarized as follows.
\[
  \begin{tikzcd}[row sep = 0]
    \text{$\NN$ is biresolving} \rar[Rightarrow, "\text{Fact \ref{fct:loc tri}}"]
    & \text{Condition \ref{cond:NOS loc} + $\CC/\NN$ is triangulated} \\ 
    \text{(EL1)--(EL3) (+ (EL4))} \rar[Rightarrow, "\text{Fact \ref{fct:loc ex}}"]
    & \text{Condition \ref{cond:NOS loc} + $\CC/\NN$ is exact (+ abelian)}
  \end{tikzcd}
\]
Our main interests are the above two cases, namely, the case where $\NN$ is biresolving, and the case where (EL1)--(EL3) are satisfied (so $\NN$ is percolating).

\begin{rmk}\label{rmk:EL}\hfill
\begin{enua}
\item
Consider the following condition:
\begin{description}
\item[(WIC)]
Let $h=gf$ be a morphism in $\catC$.
If $h$ is an inflation, then so is $f$.
Dually, if $h$ is a deflation, then so is $g$.
\end{description}
Then (WIC) implies (EL2) by \cite[Remark 4.31 (2)]{NOS}.
A triangulated category satisfies (WIC)
since every morphism is both an inflation and a deflation.
More generally, 
an extension-closed subcategory of a triangulated category which is closed under direct summands
satisfies (WIC),
see Lemma \ref{lem:WIC} below.
\item
(EL2) implies Condition \ref{cond:NOS loc} (i) by \cite[Lemma 3.2, 4.34]{NOS}.
\item Under the condition (EL1), the condition (EL3) is equivalent to that $\ov{f}$ (resp. $\ov{g}$) is a monomorphism (resp. an epimorphism) in $\ov{\CC}$ for every conflation $A\xr{f} B\xr{g}C \dto$ in $\CC$, see Lemma \ref{lem:P cond}.
\end{enua}
\end{rmk}

\begin{ex}\label{ex:two-cond}
  The following are typical examples of the case where $\NN$ is biresolving or $\NN$ satisfies (EL1)--(EL4).
\begin{enua}
\item
Let $\catT$ be a triangulated category.
A thick subcategory $\catN$ of $\catT$ 
is biresolving as mentioned in Example \ref{ex:tri-thick-bir-perco}, so the exact localization $\TT/\NN$ exists and becomes a triangulated category by Fact \ref{fct:loc tri}.
This coincides with the Verdier quotient of $\catT$ by $\catN$.
\item Let $\FF$ be a Frobenius exact category, and let $\NN$ be the subcategory of projective objects in $\FF$. Then $\NN$ is a biresolving subcategory of $\FF$. In this case, the exact localization $\FF/\NN$ coincides with the usual stable (triangulated) category $\un{\FF} = \FF/[\NN]$.
\item
Let $\catA$ be an abelian category.
A Serre subcategory $\catS$ of $\catA$
is percolating as mentioned in Example \ref{ex:adm-serre-perco}.
Moreover, it satisfies (EL1)--(EL4) in Fact \ref{fct:loc ex}. Indeed, (EL2) is satisfied by Remark \ref{rmk:EL} (1) since $\AA$ satisfies (WIC), and (EL3) is satisfied since every inflation and deflation in $\AA$ is a monomorphism and an epimorphism respectively.
Therefore, the exact localization $\catA/\catS$ exists and becomes an abelian category by Fact \ref{fct:loc ex}.
This coincides with the Serre quotient of $\catA$ by $\catS$.
\end{enua}
\end{ex}
We have the following convenient criterion for the condition (WIC).
\begin{lem}\label{lem:WIC}
Let $\catC$ be an extension-closed subcategory of a triangulated category $\catT$
with shift functor $\Sigma$.
If $\catC$ is closed under direct summands in $\TT$,
then it satisfies {\upshape (WIC)}.
\end{lem}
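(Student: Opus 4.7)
The plan is to establish the inflation half of (WIC) directly via two applications of the octahedral axiom in $\TT$, and to deduce the deflation half by duality. Indeed, $\CC^{\op}$ is again an extension-closed, summand-closed subcategory of the triangulated category $\TT^{\op}$, and an inflation in $\CC^{\op}$ is exactly the opposite of a deflation in $\CC$; so the deflation statement for $\CC$ follows from the inflation statement applied to $\CC^{\op} \subseteq \TT^{\op}$.

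So assume $h = g \circ f$ with $f \colon X \to Y$ and $g \colon Y \to Z$ in $\CC$, and suppose $h$ is an inflation in $\CC$, so that $W := \operatorname{cone}(h) \in \CC$. Set $V := \operatorname{cone}(f)$; the goal is to show $V \in \CC$. The key device is the object $P := \operatorname{cone}\bigl(\binom{f}{h} \colon X \to Y \oplus Z\bigr)$, together with two natural factorizations: on the one hand $\binom{f}{h} = \binom{1}{g} \circ f$, and on the other hand $h = (0,\,1) \circ \binom{f}{h}$. Here $\binom{1}{g} \colon Y \to Y\oplus Z$ is a split monomorphism with cofiber $Z$, and $(0,\,1) \colon Y\oplus Z \to Z$ is a split epimorphism with fiber $Y$; the associated split triangles in $\TT$ both have zero connecting morphism.

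Applying the octahedral axiom to the first factorization produces a triangle $V \to P \to Z \to \Sigma V$ in $\TT$ whose connecting morphism $Z \to \Sigma V$ factors through the (zero) connecting morphism $Z \to \Sigma Y$ of the split triangle for $\binom{1}{g}$; hence this connecting morphism is itself zero and the triangle splits, giving $P \iso V \oplus Z$. Applying the octahedral axiom to the second factorization and rotating once yields a triangle $Y \to P \to W \to \Sigma Y$ in $\TT$, i.e., a conflation $Y \to P \to W \dashrightarrow$ with both outer terms in $\CC$. Extension-closedness of $\CC$ then forces $P \in \CC$, and closure under direct summands gives $V \in \CC$ from the splitting $P \iso V \oplus Z$, as required.

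The one delicate step is the identification of the connecting morphism of the first octahedral triangle as zero; this is what converts the triangle $V \to P \to Z \to \Sigma V$ into the splitting $P \iso V \oplus Z$, after which the two closure hypotheses on $\CC$ (extension-closed in $\TT$ and closed under direct summands) each play their role exactly once in concluding $V \in \CC$.
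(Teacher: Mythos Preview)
Your proof is correct and arrives at the same key conflation as the paper: in your notation, both arguments produce a triangle $Y \to V \oplus Z \to W \to \Sigma Y$ in $\TT$ with $Y, W \in \CC$, then use extension-closure to get $V \oplus Z \in \CC$ and summand-closure to conclude $V \in \CC$. The only difference is packaging: the paper obtains this triangle in one step by citing \cite[Proposition 1.20]{LN} (applied to the morphism of triangles induced by $g$), whereas you build it via two explicit octahedra through the auxiliary object $P = \operatorname{cone}\binom{f}{h}$, first identifying $P \iso V \oplus Z$ from the splitting and then obtaining $Y \to P \to W \dashrightarrow$ from the second factorization.
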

\begin{proof}
Let $f\colon A \to B$ and $g\colon B\to C$ be morphisms in $\catC$.
Suppose that $h:=gf$ is an inflation,
and thus there is a conflation $A \xr{h} C \xr{c} D\xdr{\delta}$ in $\catC$.
Then there exists a conflation $A \xr{f} B \xr{b} X \xdr{\tau}$ in $\catT$
by taking cone of $f$.
Applying \cite[Proposition 1.20]{LN} (see also \cite[Proposition 1.4.3]{Ne}) to $\catT$,
we obtain a morphism $x:X \to D$ in $\catT$
which gives a morphism of conflations
\[
\begin{tikzcd}
B \arr{d,"g"'} \arr{r,"b"} & X \arr{r,"\tau"}  \arr{d,"x"} & \Sigma A \arr{r,"-\Sg f",dashed} \arr{d,equal} & {} \\
C \arr{r,"c"'} & D \arr{r,"\delta"'} & \Sigma A \arr{r,"-\Sg h"',dashed} & {}
\end{tikzcd}
\]
in $\catT$ and makes $B \xr{\begin{bsmatrix}b \\ g\end{bsmatrix}} X\oplus C \xr{\begin{bsmatrix}x & -c\end{bsmatrix}} D \dto$ a conflation in $\catT$.
Since $B,D\in \catC$ and $\catC$ is extension-closed, we have $X\oplus C \in \catC$, which implies $X\in \catC$ because $\catC$ is closed under direct summands.
Hence $f$ is an inflation in $\catC$.
We can dually prove that, if $h$ is a deflation, then so is $g$, so we omit it.
\end{proof}

\subsection{The Grothendieck monoid of the localization of an extriangulated category}\label{ss:mon loc ET}
In this subsection, we investigate the relations between Grothendieck monoids and the exact localization.
Throughout this subsection, $\catC$ is an extriangulated category and $\catN$ is a thick subcategory of $\CC$.

The following is the main theorem of this section. We refer the reader to Definition \ref{def:quot-submon} for the quotient monoid by a submonoid.
\begin{thm}\label{thm:mon loc ET}
Let $\CC$ be a skeletally small extriangulated category and $\NN$ its thick subcategory.
Suppose that the following two conditions are satisfied:
\begin{enur}
\item
Condition \ref{cond:NOS loc} is satisfied.
Thus the exact localization $Q\colon \catC \to \catC/\catN$ exists,
and we can freely use Fact \ref{fct:NOS loc}.
\item
$S_{\catN}$ is saturated,
that is, for every morphism $f$ in $\CC$, we have
$f\in S_{\catN}$ if $Q(f)$ is an isomorphism in $\catC/\catN$.
\end{enur}
Then $\M(Q) \colon \M(\CC) \to \M(\CC/\NN)$ induces an isomorphism of monoids:
\[
\M(\catC)/\M_{\catN} \simto \M(\catC/\catN).
\]
\end{thm}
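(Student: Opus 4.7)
The plan is to construct an explicit inverse $\Phi \colon \M(\CC/\NN) \to \M(\CC)/\M_\NN$ to a natural map $\overline{\M(Q)}$ that $\M(Q)$ factors through. Since $Q(N) \iso 0$ for every $N \in \NN$, the homomorphism $\M(Q) \colon \M(\CC) \to \M(\CC/\NN)$ annihilates the submonoid $\M_\NN \subseteq \M(\CC)$, so by the universal property of the monoid quotient it factors uniquely as $\overline{\M(Q)} \colon \M(\CC)/\M_\NN \to \M(\CC/\NN)$. Surjectivity of $\overline{\M(Q)}$ is immediate, because by Fact \ref{fct:NOS loc}(1) the functor $Q$ is the identity on objects, so every generator of $\M(\CC/\NN)$ already lies in its image.

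To construct $\Phi$, I apply the universal property of $\M(\CC/\NN)$ from Definition \ref{def:grmon}. Using $\Ob(\CC/\NN) = \Ob(\CC)$, I define a candidate map $\psi \colon \Ob(\CC/\NN) \to \M(\CC)/\M_\NN$ by $\psi(X) := [X]$. The crucial step, and the point where saturation becomes essential, is to show that $\psi$ descends to $\Iso(\CC/\NN)$. Given an isomorphism $\varphi \colon Q(X) \iso Q(Y)$ in $\CC/\NN$, Remark \ref{rmk:roof} allows me to write $\varphi = Q(s)^{-1} Q(g)$ with $g \colon X \to A$ in $\CC$ and $s \colon Y \to A$ in $S_\NN$; then $Q(g)$ is also an isomorphism, and saturation forces $g \in S_\NN$ as well. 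Now every morphism in $S_\NN$ is a finite composition of morphisms in $\calL \cup \calR$, and a morphism $\ell \colon A \to B$ in $\calL$ fitting into a conflation $A \xr{\ell} B \to N \dto$ with $N \in \NN$ gives $[B] = [A] + [N]$ in $\M(\CC)$, so $[A] = [B]$ in $\M(\CC)/\M_\NN$; the argument for $\calR$ is symmetric. Applying this to both $g$ and $s$ yields $[X] = [A] = [Y]$ in $\M(\CC)/\M_\NN$.

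To verify that $\psi$ respects conflations, I take a conflation $X \xr{u} Y \xr{v} Z \dto$ in $\CC/\NN$ and use Fact \ref{fct:NOS loc}(2) to factor $u = \beta \circ Q(f) \circ \gamma$ with $f \colon X_0 \to Y_0$ an inflation in $\CC$ and $\gamma \colon X \simto Q(X_0)$, $\beta \colon Q(Y_0) \simto Y$ isomorphisms in $\CC/\NN$. Extending $f$ to a conflation $X_0 \xr{f} Y_0 \xr{g} Z_0 \dto$ in $\CC$ and applying the exact functor $Q$, then transporting via $\gamma$ and $\beta$, produces a conflation $X \xr{u} Y \to Q(Z_0) \dto$ in $\CC/\NN$ whose inflation is again $u$. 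Weak uniqueness of the cofiber of an inflation in an extriangulated category (a standard consequence of the axioms of \cite{NP}) then gives $Z \iso Q(Z_0)$ in $\CC/\NN$, so by the preceding paragraph $\psi[Z] = [Z_0]$, $\psi[X] = [X_0]$, $\psi[Y] = [Y_0]$. The equality $[Y_0] = [X_0] + [Z_0]$ in $\M(\CC)$ then descends to $\psi[Y] = \psi[X] + \psi[Z]$ in $\M(\CC)/\M_\NN$. The universal property produces $\Phi$, and a direct check on generators confirms that $\overline{\M(Q)} \circ \Phi = \id$ and $\Phi \circ \overline{\M(Q)} = \id$.

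I expect the well-definedness of $\psi$ on isomorphism classes to be the main technical hurdle: it relies crucially on the saturation assumption (ii), without which an arbitrary isomorphism in $\CC/\NN$ need not reduce to morphisms of $S_\NN$ whose effect on classes in $\M(\CC)$ is controlled by $\M_\NN$. The conflation-compatibility step is more routine but requires the standard uniqueness of cofibers of inflations in an extriangulated category.
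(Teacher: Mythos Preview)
Your proof is correct and follows essentially the same route as the paper's. Both arguments hinge on the same two ingredients: (a) saturation together with the roof description (Remark \ref{rmk:roof}) to show that an isomorphism in $\CC/\NN$ implies equality modulo $\M_\NN$ in $\M(\CC)$—this is the paper's Lemma \ref{lem:sat mod}, your well-definedness step; and (b) Fact \ref{fct:NOS loc}(2) to lift a conflation in $\CC/\NN$ to one in $\CC$ up to isomorphism, after which the uniqueness of cofibres (via \cite[Corollary 3.6]{NP}) finishes the job. The only difference is organizational: the paper proves injectivity of $\overline{\M(Q)}$ directly by unwinding the explicit description $\M(\CC/\NN) = \Iso(\CC/\NN)/{\ceq}$ from Proposition \ref{prop:gro-mon-ceq} and reducing to the case of a single c-relation, whereas you package the same computation as the construction of an inverse via the universal property of $\M(\CC/\NN)$.
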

We first prove this theorem,
and then discuss the condition (ii) of this theorem. 

\begin{lem}\label{lem:sat mod}
Assume the same conditions as in Theorem \ref{thm:mon loc ET}.
Let $X$ and $Y$ be objects in $\catC$.
If $X\iso Y$ in $\catC/\catN$,
then $[X] \equiv [Y] \bmod\M_{\catN}$.
\end{lem}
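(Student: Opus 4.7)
The plan is to lift the isomorphism $X \simto Y$ in $\catC/\catN$ to a pair of morphisms in $S_{\catN}$ inside $\catC$, and then to show that any morphism in $S_{\catN}$ preserves the class modulo $\M_{\catN}$.

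First, I would represent the isomorphism using the roof description. By Remark \ref{rmk:roof}, any morphism $X \to Y$ in $\catC/\catN$ has the form $Q(s)^{-1} Q(g)$ for some $g\colon X \to A$ in $\catC$ and $s\colon Y \to A$ in $S_{\catN}$. Apply this to our chosen isomorphism $\alpha \colon X \simto Y$ in $\catC/\catN$. Since $Q(s)$ and $\alpha$ are both isomorphisms in $\catC/\catN$, so is $Q(g) = Q(s)\circ \alpha$. Here the saturatedness hypothesis (ii) comes in decisively: it forces $g \in S_{\catN}$ as well. So we obtain $g,s \in S_{\catN}$ from $X$ and $Y$ respectively to a common object $A \in \catC$.

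Next, I would prove the key lemma: for every morphism $f \colon P \to P'$ in $S_{\catN}$, we have $[P] \equiv [P'] \pmod{\M_{\catN}}$ in $\M(\catC)$. Since $S_{\catN}$ consists of finite compositions of morphisms in $\calL \cup \calR$ and the relation $\equiv \pmod{\M_{\catN}}$ is transitive (it is a monoid congruence on $\M(\catC)$ by Definition \ref{def:quot-submon}), it suffices to treat a single $\ell \in \calL$ or $r \in \calR$. For $\ell \colon P \to P'$ in $\calL$, there is a conflation $P \xr{\ell} P' \to N \dto$ with $N \in \catN$, giving $[P'] = [P] + [N]$ in $\M(\catC)$, so $[P] \equiv [P'] \pmod{\M_{\catN}}$. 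The case $r \in \calR$ is symmetric, using a conflation $N \to P \xr{r} P' \dto$ that yields $[P] = [N] + [P']$.

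Finally, applying this key lemma to the morphisms $g$ and $s$ produced in the first step gives
\[
[X] \equiv [A] \equiv [Y] \pmod{\M_{\catN}}
\]
in $\M(\catC)$, which is precisely the claim.

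I expect the only non-routine ingredient to be the use of the saturatedness hypothesis in the first step; once $g \in S_{\catN}$ is established, the rest is a direct calculation with conflations and the definition of the quotient monoid.
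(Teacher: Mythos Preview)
Your proof is correct and follows essentially the same approach as the paper: represent the isomorphism as a roof $Q(s)^{-1}Q(g)$ via Remark \ref{rmk:roof}, invoke saturatedness to get $g \in S_{\catN}$, then reduce to showing that a single morphism in $\calL$ or $\calR$ preserves the class modulo $\M_{\catN}$ using the defining conflation. The paper's argument is organized identically, and your identification of saturatedness as the only non-routine ingredient matches the paper's emphasis.
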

\begin{proof}
Let $f\colon X\simto Y$ be an isomorphism in $\catC/\catN$.
We have morphisms $g\colon X\to A$ in $\catC$ and $s \colon Y \to A$ in $S_{\catN}$
such that $f=Q(s)^{-1}Q(g)$ by Remark \ref{rmk:roof}.
Then $Q(g)$ is also an isomorphism in $\catC/\catN$,
and thus $g\in S_{\catN}$ because $S_{\catN}$ is saturated.
Therefore it is enough to show that $[X] \equiv [Y] \bmod \M_{\catN}$
if there is a morphism $s\colon X\to Y$ in $S_{\catN}$.
Since a morphism in $S_\catN$ is a finite composition of morphisms in $\calL$ and $\calR$,
we may assume that $s\in \calL$ or $s\in \calR$.
If $s \in \calL$, then there is a conflation $X \xr{s} Y \to N \dto$ with $N \in \NN$. Then $[X] + [N] = [Y]$ holds in $\M(\CC)$, which implies $[X] \equiv [Y] \bmod \M_\NN$.
The case $s \in \calR$ is similar.
\end{proof}

\begin{proof}[Proof of Theorem \ref{thm:mon loc ET}]
The homomorphism $\M(Q) \colon \M(\catC) \to \M(\catC/\catN)$
induces a homomorphism $\phi \colon \M(\catC)/\M_{\catN} \to \M(\catC/\catN)$ satisfying $\phi([A] \bmod \M_{\catN}) = [Q(A)]$ by Proposition \ref{prp:mon quot}
since $Q(\catN)=0$ holds by Proposition \ref{prop:ex-loc-univ2}.
Clearly $\phi$ is surjective since $Q$ is the identity on objects.
We have to show that $\phi$ is injective.
Suppose that $[Q(A)]=[Q(B)]$ in $\M(\catC/\catN)$ for $A,B\in\catC$, or equivalently, $A \ceq B$ in $\CC/\NN$.
We want to show that $[A]\equiv [B] \bmod \M_{\catN}$.
It suffices to prove it for the case $A \crel B$ in $\catC/\catN$.

Since $A \crel B$ in $\catC/\catN$, there exists a conflation $X\to Y \to Z \dto $ in $\catC/\catN$
such that either (a) $X\oplus Z \iso A$ and $Y\iso B$ in $\catC/\catN$ or
(b) $X\oplus Z \iso B$ and $Y\iso A$ in $\catC/\catN$.
Clearly we only have to deal with the case (a).
By Fact \ref{fct:NOS loc} (2),
we can find a conflation $X'\xr{f} Y' \xr{g} Z' \dto$ in $\catC$ such that we have an isomorphism of conflations
\[
\begin{tikzcd}
X \arr{r} \arr{d,"\sim"'sloped,dash} & Y \arr{r} \arr{d,"\sim"'sloped,dash} & Z \arr{r,dashed} \arr{d,"\sim"'sloped,dash} & {}\\
X' \arr{r,"Q(f)"} & Y' \arr{r,"Q(g)"} & Z' \arr{r,dashed} & {}
\end{tikzcd}
\]
in $\catC/\catN$.
Thus, by Lemma \ref{lem:sat mod}, we have
\[
[A]= [X\oplus Z] \equiv [X'\oplus Z'] = [Y'] \equiv [Y] = [B] \bmod \M_{\catN}.
\]
This proves the injectivity of $\phi$.
\end{proof}

In the following, we discuss the kernel of the localization functor $Q \colon \CC \to \CC/\NN$
and study when $S_{\catN}$ is saturated.
The authors would like to thank Hiroyuki Nakaoka for sharing the results on the kernel of the localization functor.
We prepare some lemmas.
\begin{lem}\label{lem:Ker 1}
Suppose {\upshape (i)} of Condition \ref{cond:NOS loc} and let $A \in \catC$.
If there exists a split monomorphism $u\colon A \to N$ with $N \in \NN$,
then $A\in \catN$.
\end{lem}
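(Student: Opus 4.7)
The plan is to exploit the characterization of $\NN$ given in \eqref{eq:thick N}, namely $\NN = \{A \in \CC \mid \text{both } A \to 0 \text{ and } 0 \to A \text{ belong to } S_\NN\}$, and use Condition \ref{cond:NOS loc}~(i) applied to the zero morphisms $0 \to A$ and $A \to 0$.

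First I would observe that a split monomorphism $u \colon A \to N$ with retraction $r \colon N \to A$ yields a factorization $\id_A = r \circ u$ through $N \in \NN$. Thus $\id_A$ lies in the ideal $[\NN](A,A)$, so $\ol{\id_A} = 0$ in $\ol{\CC} = \CC/[\NN]$; equivalently, $A \iso 0$ in $\ol{\CC}$.

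Next, I would consider the canonical morphism $0 \to A$, which is trivially a split monomorphism in $\catC$ (its retraction is $A \to 0$). Since $A \iso 0$ in $\ol{\CC}$, the image of $0 \to A$ in $\ol{\CC}$ is an isomorphism, so Condition~\ref{cond:NOS loc}~(i) forces $0 \to A \in S_\NN$. Dually, $A \to 0$ is a split epimorphism whose image in $\ol{\CC}$ is also an isomorphism; applying the epi-version of (i) (equivalent to (i) itself by \cite[Lemma 3.2]{NOS}, as recorded in Condition~\ref{cond:NOS loc}) yields $A \to 0 \in S_\NN$. By \eqref{eq:thick N}, this gives $A \in \NN$, as desired.

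There is essentially no obstacle: the only substantive step is recognizing that the splitting $ru = \id_A$ already trivializes $A$ inside the additive quotient $\ol{\CC}$, after which Condition~\ref{cond:NOS loc}~(i) applied to $0 \to A$ (and its dual applied to $A \to 0$) together with \eqref{eq:thick N} close the argument cleanly.
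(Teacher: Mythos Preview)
Your proof is correct and follows essentially the same approach as the paper: both arguments show that $A \iso 0$ in $\ol{\CC}$ and then invoke Condition~\ref{cond:NOS loc}~(i) together with \eqref{eq:thick N}. The only cosmetic difference is that the paper applies (i) to $u$ itself (and dually to its retraction) and then composes with $N \to 0 \in S_\NN$, whereas you apply (i) directly to the zero morphisms $0 \to A$ and $A \to 0$; your route is marginally more direct.
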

\begin{proof}
The morphism $\ol{u}$ is a split monomorphism in $\ol{\catC}$ since so is $u$ in $\catC$.
Then the split monomorphism $A \xr{\ol{u}}N \iso 0$ should be an isomorphism in $\ol{\catC}$.
Thus (i) of Condition \ref{cond:NOS loc} implies that $u \colon A \to N$ belongs to $S_\NN$.
By composing this with $N \to 0$, which is in $S_\NN$, we obtain that $A \to 0$ is in $S_{\catN}$.
Dually, $0\to A$ is also in $S_{\catN}$
by considering a retraction $q\colon N\to A$ of $u$.
By \eqref{eq:thick N}, we conclude that $A\in\catN$.
\end{proof}

\begin{lem}\label{lem:Ker 2}
Suppose {\upshape (i)} of Condition \ref{cond:NOS loc}. Then the following hold.
\begin{enua}
\item
Let $N \xr{k} Y \xr{r} Z \xdr{\delta}$ be a conflation
with $N\in\catN$ and $\ol{r}=0$ in $\ol{\catC}$.
Then $Y\in \catN$.
\item
Dually,
let $X \xr{\ell} Y \xr{c} N \xdr{\delta}$ be a conflation
with $N\in\catN$ and $\ol{\ell}=0$ in $\ol{\catC}$.
Then $Y\in \catN$.
\end{enua}
In each case, all objects in the above conflations belong to $\catN$
since $\NN$ is a thick subcategory.
\end{lem}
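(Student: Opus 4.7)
The plan is to exhibit $Y$ as a direct summand of an object of $\NN$ and then conclude using the closure of a thick subcategory under direct summands. For (a), I would first unpack the hypothesis $\ov{r}=0$: it means $r$ factors through some object of $\NN$, so write $r=vu$ with $u\colon Y\to N'$ and $v\colon N'\to Z$ for some $N'\in\NN$. Pulling the conflation $N\xr{k}Y\xr{r}Z\xdr{\delta}$ back along $v$ via the extriangulated axioms (cf.\ \cite[Proposition 3.15]{NP}) yields a conflation $N\xr{k'}Y'\xr{p}N'\xdr{v^*\delta}$ together with a morphism of conflations $(\id_N,y,v)$. Since $N,N'\in\NN$ and $\NN$ is thick, hence extension-closed by 2-out-of-3, we get $Y'\in\NN$.

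The heart of the argument is to construct a section of $y\colon Y'\to Y$. For this I invoke the ``Mayer--Vietoris'' conflation attached to the pullback \cite[Proposition 3.15]{NP}:
\[
\begin{tikzcd}
Y' \rar["{\begin{bsmatrix}-p \\ y\end{bsmatrix}}"] & N'\oplus Y \rar["{\begin{bsmatrix}v & r\end{bsmatrix}}"] & Z \rar[dashed] & {}
\end{tikzcd}
\]
Because $r=vu$, the morphism $\begin{bsmatrix}-u \\ \id_Y\end{bsmatrix}\colon Y\to N'\oplus Y$ is annihilated by the deflation $\begin{bsmatrix}v & r\end{bsmatrix}$. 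By the weak kernel property of the inflation $\begin{bsmatrix}-p \\ y\end{bsmatrix}$ (a standard consequence of the extriangulated axioms), there exists $\mu\colon Y\to Y'$ with $y\mu=\id_Y$ and $p\mu=u$. Hence $\mu$ is a section of $y$, so $Y$ is a direct summand of $Y'\in\NN$, and closure under direct summands gives $Y\in\NN$. One could alternatively finish by invoking Lemma \ref{lem:Ker 1}, since $\mu$ is a split monomorphism into an object of $\NN$, but the direct route is slightly cleaner.

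Part (b) follows by the dual argument: factor $\ell$ through some object of $\NN$, push the conflation $X\xr{\ell}Y\xr{c}N\xdr{\delta}$ out along the relevant morphism to produce a conflation whose middle term $Y''$ lies in $\NN$ by extension-closure, and then use the weak cokernel property of deflations, together with the dual Mayer--Vietoris sequence, to exhibit $Y$ as a direct summand of $Y''$. The final assertion that all three objects in the original conflation belong to $\NN$ is then automatic from the thickness of $\NN$ via 2-out-of-3. The main potential friction lies in keeping the signs in the Mayer--Vietoris squares straight, but the existence of the pulled-back (resp.\ pushed-out) conflation and its associated Mayer--Vietoris sequence are standard facts from \cite{NP}, so the argument is essentially self-contained once these are quoted.
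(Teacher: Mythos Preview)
Your argument is essentially the paper's proof: factor $r$ through an object of $\NN$, pull back the conflation to obtain a middle term $Y'\in\NN$, form the associated Mayer--Vietoris conflation, and use the weak kernel property to produce a section $\mu\colon Y\to Y'$ of $y$. The paper then concludes via Lemma~\ref{lem:Ker 1}, exactly the alternative you mention.

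The only caveat concerns your preferred ``direct route'': having a split monomorphism $\mu\colon Y\to Y'$ with $Y'\in\NN$ does not literally exhibit $Y$ as a direct summand of $Y'$ unless the idempotent $\mu y$ on $Y'$ splits, and extriangulated categories are not assumed to be idempotent complete here. This is precisely why condition (i) of Condition~\ref{cond:NOS loc} is hypothesised and why Lemma~\ref{lem:Ker 1} is invoked; note that your ``direct route'' never uses condition (i). So the finish via Lemma~\ref{lem:Ker 1} is not merely an alternative but the correct way to close the argument.
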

\begin{proof}
We only prove (1) since (2) follows dually.
The assumption $\ol{r}=0$ implies that it has a factorization
$Y\xr{f} N' \xr{g} Z$ in $\catC$ with $N'\in\catN$.
Then there is a morphism of conflations
\[
\begin{tikzcd}
N \arr{d,equal} \arr{r,"a"} & M \arr{r,"c"} \arr{d,"b"} & N' \arr{d,"g"} \arr{r,"g^*\delta", dashed} & {}\\
N \arr{r,"k"'} & Y \arr{r,"r"'} & Z \arr{r,"\delta"', dashed} & {}.
\end{tikzcd}
\]
The top row implies $M\in \catN$ because $\catN$ is extension-closed.
We can choose $b\colon M\to Y$ so that
$M \xr{\begin{bsmatrix}c \\ b\end{bsmatrix}} Y\oplus N' \xr{\begin{bsmatrix}g & -r\end{bsmatrix}} Z \dto$ is a conflation by \cite[Proposition 1.20]{LN}.
This implies that the following diagram is a weak pullback diagram,
and hence a section $\phi \colon Y \to M$ of $b$ is induced:
\[
\begin{tikzcd}
Y \arr{rrd,bend left,"f"} \arr{rdd,bend right,equal} \arr{rd,"\phi",dashed}&&\\
&M \arr{r,"c"} \arr{d,"b"'} \wpb & N' \arr{d,"g"}\\
&Y \arr{r,"r"'} & Z.
\end{tikzcd}
\]
Since $M \in \NN$, we obtain $Y\in\catN$ by Lemma \ref{lem:Ker 1}.
\end{proof}

Set $\ol{\calL}:=p(\calL)$ and $\ol{\calR}:=p(\calR)$.
We have the following interpretation of (EL3) for the case of percolating subcategories.
\begin{lem}\label{lem:P cond}
Let $\catN$ be a percolating subcategory,
and let $f\colon A \to B$ be a morphism in $\catC$.
\begin{enua}
\item
$\ol{f}$ is a monomorphism in $\ol{\catC}$ if and only if
$\Ker\left(\CC(-,A)\xr{f \circ (-)}\CC(-,B)\right) \subseteq [\catN](-,A)$ holds.
\item
$\ol{f}$ is an epimorphism in $\ol{\catC}$ if and only if
$\Ker\left(\CC(B,-)\xr{(-) \circ f}\CC(A,-)\right) \subseteq [\catN](B,-)$ holds.
\end{enua}
\end{lem}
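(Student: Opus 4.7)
Parts (1) and (2) are dual; I outline (1) and indicate the modifications for (2) at the end.

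The forward direction of (1) is immediate: if $\ol f$ is a monomorphism in $\ol{\catC}$ and $g\colon X\to A$ satisfies $fg=0$ in $\catC$, then $fg$ factors through $0\in\catN$, so $\ol f\,\ol g=0$ in $\ol{\catC}$; hence $\ol g=0$ and $g\in[\catN](X,A)$.

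For the reverse direction, assume the kernel hypothesis and let $g\colon X\to A$ be a morphism with $\ol f\,\ol g=0$ in $\ol{\catC}$, so that $fg$ factors through some object of $\catN$; the goal is $g\in[\catN](X,A)$. By the percolating property of $\catN$, I write $fg=i\circ p$ with $p\colon X\to N'$ a deflation, $i\colon N'\to B$ an inflation, and $N'\in\catN$, and fix a conflation $K\xr{k} X\xr{p} N'\dto\delta$. Then $pk=0$ gives $f(gk)=ipk=0$, so the kernel hypothesis yields a factorization $gk=u\circ v$ with $v\colon K\to N''$, $u\colon N''\to A$, and $N''\in\catN$.

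The crux is to promote the factorization of $gk$ to one of $g$ itself. Form the pushout of $K\xr{k}X\xr{p}N'\dto\delta$ along $v\colon K\to N''$: this produces a conflation $N''\xr{k'} P\xr{p'} N'\dto v_*\delta$ together with a morphism $y\colon X\to P$ so that $(v,y,\id_{N'})$ is a morphism of conflations; in particular $yk=k'v$. Since $\catN$ is thick, hence extension-closed, and $N',N''\in\catN$, we have $P\in\catN$. By \cite[Proposition 1.20]{LN} applied to this pushout morphism of conflations, the associated combined sequence
\[
K \longrightarrow X\oplus N'' \longrightarrow P \dto
\]
is a conflation in $\catC$ (with the first map assembled from $k$ and $-v$, and the second from $y$ and $k'$). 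Applying $\Hom(-,A)$ and the long exact sequence of \cite[Corollary 3.12]{NP}, the identity $gk=uv$ supplies a morphism $\phi\colon P\to A$ with $\phi y=g$ (and $\phi k'=u$). Hence $g$ factors through $P\in\catN$, so $g\in[\catN](X,A)$.

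The main technical obstacle is arranging the pushout so that the associated conflation and its long exact sequence actually produce the lift $\phi$; once this is in place the conclusion is immediate. Part (2) is the formal dual: given $h\colon B\to Z$ with $hf\in[\catN]$, apply percolating to write $hf=i\circ p$ with $i\colon N'\to Z$ an inflation, fix a conflation $N'\xr{i} Z\xr{q} C\dto$, derive $qh\in[\catN]$ from the dual kernel hypothesis, and perform the dual pullback construction along $u'$ (where $qh=u'v'$) to obtain the factorization of $h$ through an object of $\catN$ via the same long exact sequence.
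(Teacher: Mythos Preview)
Your argument is correct and provides a self-contained alternative to the paper's proof. After both proofs reach the point where $gk$ factors through $\catN$ (via the percolating property and the kernel hypothesis), the paper simply invokes \cite[Lemma 4.7 (2)]{NOS}, which asserts that $\ol k\in\ol{\calL}$ is an epimorphism in $\ol{\catC}$; then $\ol{gk}=0$ immediately forces $\ol g=0$. Your pushout construction, together with the long exact sequence, is in effect a direct proof of that lemma specialized to the case at hand: it explicitly manufactures the object $P\in\catN$ through which $g$ factors. The paper's route is shorter because it outsources this step, while yours avoids the external dependence on \cite{NOS} and makes the factorization concrete. One minor wording point: \cite[Proposition 1.20]{LN} asserts that $y$ can be \emph{chosen} so that the folded sequence $K\to X\oplus N''\to P$ is a conflation, not that any $y$ completing the pushout square will do; so your two sentences introducing $y$ and then invoking that proposition should really be a single invocation.
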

\begin{proof}
Since (2) is dual of (1), we only prove (1).
To show the ``only if'' part, suppose that $\ol{f}$ is a monomorphism in $\ol{\catC}$.
Let $x\in\Ker\left(\CC(X,A)\xr{f \circ (-)}\CC(X,B)\right)$,
that is, $x\colon X\to A$ satisfies $fx=0$ in $\catC$.
Because $\ol{f}$ is a monomorphism, we have $\ol{x}=0$ in $\ol{\catC}$.
This means $x\in [\catN](X,A)$.

To show the ``if'' part, suppose that $\Ker\left(\CC(-,A)\xr{f \circ (-)}\CC(-,B)\right) \subseteq [\catN](-,A)$ holds.
Let $x\colon X\to A$ be a morphism in $\catC$ with $\ol{fx}=0$ in $\ol{\catC}$, that is, $fx$ factors through an object in $\NN$.
Then $fx$ has a deflation-inflation factorization $X \xr{d} N \xr{i} B$
with $N\in\catN$ since $\catN$ is percolating.
In particular, we have a conflation $K \xr{\ell} X \xr{d} N \dto$.
Note that $\ell\in \calL$.
Then we have $f x \ell=i d \ell=0$,
and by the assumption, we conclude that $\ol{x\ell}=0$ in $\ol{\catC}$.
Since $\ol{\ell}\in \ol{\calL}$ is an epimorphism in $\ol{\catC}$ by \cite[Lemma 4.7 (2)]{NOS},
we have $\ol{x}=0$.
This proves that $\ol{f}$ is a monomorphism.
\end{proof}

\begin{cor}\label{cor:regular}
Suppose that either of the following conditions holds.
\begin{enur}
\item
$\catN$ is a biresolving subcategory.
\item
$\catN$ satisfies {\upshape (EL1)--(EL3)}.
\end{enur}
Then any morphism in $\ol{S_{\catN}}$ is both a monomorphism and an epimorphism in $\ol{\catC}$.
\end{cor}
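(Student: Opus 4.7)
The plan is to first reduce the claim to a small set of generating morphisms and then handle the two cases separately. Since every morphism in $S_{\catN}$ is a finite composition of morphisms from $\calL\cup\calR$, and since monomorphisms and epimorphisms in $\ol{\catC}$ are closed under composition, it suffices to prove that for each $\ell\in\calL$ (resp.\ $r\in\calR$) the morphism $\ol{\ell}$ (resp.\ $\ol{r}$) is both a monomorphism and an epimorphism in $\ol{\catC}$. Two of the four required properties are already available from the literature: \cite[Lemma 4.7(2)]{NOS}, cited in the proof of Lemma \ref{lem:P cond}, asserts that $\ol{\ell}$ is an epimorphism for every $\ell\in\calL$, and its dual gives that $\ol{r}$ is a monomorphism for every $r\in\calR$. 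Hence it remains to show that $\ol{\ell}$ is a monomorphism and $\ol{r}$ is an epimorphism; these are formally dual, so I focus on proving that $\ol{\ell}$ is a monomorphism for $\ell\in\calL$.

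For case (ii), under the hypothesis (EL1)--(EL3), this is immediate from Lemma \ref{lem:P cond}(1): $\catN$ is percolating by (EL1), and (EL3) applied to the conflation $A\xr{\ell}B\to N\dto$ witnessing $\ell\in\calL$ is exactly the kernel condition that Lemma \ref{lem:P cond}(1) characterizes as $\ol{\ell}$ being a monomorphism in $\ol{\catC}$. No extra work is needed here.

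For case (i), with $\catN$ biresolving, I need to argue directly from the extriangulated axioms since (EL3) is not assumed. Fix $\ell:A\to B$ in $\calL$ with witnessing conflation $A\xr{\ell}B\xr{q}N\dto$, $N\in\catN$, and suppose $y:Y\to A$ satisfies $\ol{\ell y}=0$, so that $\ell y=\alpha\beta$ for some $\beta:Y\to N'$ and $\alpha:N'\to B$ with $N'\in\catN$. The aim is to produce a factorization of $y$ through some object of $\catN$. The plan is to use the biresolving hypothesis to choose an inflation $Y\to M$ with $M\in\catN$ (and dually, deflations from $\catN$ onto the other objects in sight), and then to assemble, via repeated applications of (ET3) and (ET4) (and their duals), a conflation of the form $Y\xr{u}\wti Y\to \wti N\dto$ with $\wti N\in\catN$ and with $\ol{u}=0$ in $\ol{\catC}$. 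Lemma \ref{lem:Ker 2}(2) then forces $\wti Y\in\catN$, and unwinding the construction yields the desired factorization of $y$ through an object of $\catN$.

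The main obstacle is executing case (i) cleanly: unlike in the triangulated setting---the prototypical biresolving example---there is no shift functor, so I cannot form cones or fibers freely. Every step (producing the relevant auxiliary conflations, extending morphisms between them, and combining their data to reach a conflation to which Lemma \ref{lem:Ker 2}(2) applies) must be carried out purely within the extriangulated framework, with the biresolving property supplying the inflations into and deflations from $\catN$ that substitute for the triangulated shift. Once the correct conflation is in hand, the final appeal to Lemma \ref{lem:Ker 2} is immediate.
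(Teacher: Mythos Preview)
Your reduction and the treatment of case (ii) match the paper exactly: the paper also invokes \cite[Lemma 4.7]{NOS} for the epimorphism (resp.\ monomorphism) half on $\ol{\calL}$ (resp.\ $\ol{\calR}$), reduces by duality to showing $\ol{\ell}$ is a monomorphism for $\ell\in\calL$, and for (ii) applies Lemma~\ref{lem:P cond}(1) using (EL1) and (EL3).

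The difference is in case (i). The paper does not argue directly at all; it simply cites \cite[Lemma 4.24]{NOS}, which already proves that every morphism in $\ol{\calL}$ is a monomorphism in $\ol{\catC}$ when $\catN$ is biresolving. Your direct approach is therefore an attempt to reprove that lemma from scratch. As written, however, the sketch has a genuine gap. Suppose you do manage to produce a conflation $Y\xr{u}\wti Y\to\wti N\dto$ with $\wti N\in\catN$ and $\ol{u}=0$, and Lemma~\ref{lem:Ker 2}(2) yields $\wti Y\in\catN$. This gives you a morphism $u\colon Y\to\wti Y$ into an object of $\catN$, but that is not what is needed: you must factor the given $y\colon Y\to A$ through some object of $\catN$, and nothing in your plan explains why $y$ should factor through $u$ (or through $\wti Y$ in any other way). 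Worse, since $\catN$ is thick, $\wti Y,\wti N\in\catN$ would force $Y\in\catN$ itself, which is certainly not true for an arbitrary $Y$; so a conflation of the precise shape you describe cannot exist in general. Whatever auxiliary conflation is actually needed, its relationship to $y$ must be built in from the start, not recovered by ``unwinding'' after the fact. The cleanest repair is to follow the paper and cite \cite[Lemma 4.24]{NOS} directly.
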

\begin{proof}
By \cite[Lemma 4.7]{NOS}, every morphism in $\ov{\LL}$ (resp. $\ov{\RR}$) is an epimorphism (resp. a monomorphism) in $\ov{\CC}$.
Thus it suffices to show that every morphism in $\ov{\LL}$ is a monomorphism in each case, since the proof for $\ov{\RR}$ follows dually.

(i)
This case is precisely \cite[Lemma 4.24]{NOS}.

(ii)
In this case, $\NN$ is a percolating subcategory by (EL1).
Thus the assertion follows from (EL3) and Lemma \ref{lem:P cond},
since every morphism in $\LL$ is an inflation in $\CC$ by definition.
\end{proof}
Now we can show that the kernel of $\CC \to \CC/\NN$ coincides with $\NN$ under some conditions. Recall that a kernel $\ker F$ of an additive functor $F \colon \CC \to \DD$ consists of objects $C \in \CC$ satisfying $F(C) \iso 0$ in $\DD$.
\begin{prp}\label{prp:Ker}
Suppose that Condition \ref{cond:NOS loc} is satisfied, and that every morphism in $\ol{S_{\catN}}$ is a monomorphism in $\ol{\catC}$.
Then $\Ker(Q\colon \catC \to \catC/\catN)=\catN$ holds.
\end{prp}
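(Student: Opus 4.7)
The plan is to prove $\Ker(Q) = \catN$ by verifying both inclusions. The containment $\catN \subseteq \Ker(Q)$ is immediate from \eqref{eq:thick N}: for any $N\in\catN$ the morphism $N\to 0$ lies in $S_{\catN}$, so $Q$ sends it to an isomorphism in $\catC/\catN$, giving $Q(N)\iso 0$.

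For the reverse inclusion, take $A\in\catC$ with $Q(A)\iso 0$, equivalently $Q(\id_A) = \id_{Q(A)} = 0$ in $\catC/\catN$. By Fact \ref{fct:NOS loc}(1) the localization is realized as the category of fractions $\ol{S_{\catN}}^{-1}\ol{\catC}$, and Condition \ref{cond:NOS loc}(iii) guarantees that $\ol{S_{\catN}}$ is a multiplicative system in $\ol{\catC}$. Applying the standard criterion for equality of two parallel morphisms in a category of fractions to the equation $\ol{Q}(\ol{\id_A}) = \ol{Q}(0)$ produces some $\ol{s}\colon A\to Y$ in $\ol{S_{\catN}}$ with $\ol{s}\circ \ol{\id_A} = 0$ in $\ol{\catC}$, that is, $\ol{s}=0$ in $\ol{\catC}$. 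Now exploit the hypothesis that $\ol{s}$ is a monomorphism: a zero morphism out of $A$ which is simultaneously monic forces $\ol{\catC}(W,A)=0$ for every $W$, because any $\ol{x}\colon W\to A$ satisfies $\ol{s}\circ\ol{x} = 0 = \ol{s}\circ 0$ and can be cancelled. Taking $W=A$ and $\ol{x}=\ol{\id_A}$ gives $\ol{\id_A}=0$, so $\id_A$ factors in $\catC$ through some $M\in\catN$ as $A\xr{u} M\xr{v} A$; then $u$ is a split monomorphism into $M\in\catN$, and Lemma \ref{lem:Ker 1} (applicable by Condition \ref{cond:NOS loc}(i)) concludes $A\in\catN$.

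The main conceptual hurdle I anticipate is the initial translation: rather than trying to analyse the isomorphism $Q(A)\iso 0$ directly as a left or right roof of specific morphisms, one should instead rephrase it as the vanishing of $Q(\id_A)$ and then unpack that vanishing through the calculus of fractions. The monomorphism hypothesis of the proposition is exactly what is needed to push the conclusion back from the auxiliary object $Y$ produced by the calculus of fractions onto the identity of $A$ itself, whereupon Lemma \ref{lem:Ker 1} closes the argument.
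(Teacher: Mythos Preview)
Your proof is correct, and the first half coincides with the paper's: both arguments obtain some $\ol{s}\in\ol{S_{\catN}}(A,Y)$ with $\ol{s}=0$ in $\ol{\catC}$, via the calculus of fractions applied to $Q(\id_A)=0$. The divergence is in how this is exploited. The paper lifts $\ol{s}$ to $s\in S_{\catN}$, writes $s=t\ell$ or $s=ur$ with $\ell\in\calL$, $r\in\calR$, uses the monomorphism hypothesis on $\ol{t}$ (resp.\ $\ol{u}$) to deduce $\ol{\ell}=0$ (resp.\ $\ol{r}=0$), and then invokes Lemma~\ref{lem:Ker 2} to conclude $A\in\catN$ from the conflation underlying $\ell$ or $r$. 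You instead apply the monomorphism hypothesis to $\ol{s}$ itself: since $\ol{s}=0$ is monic, $A\iso 0$ in $\ol{\catC}$, whence $\id_A$ factors through $\catN$ and Lemma~\ref{lem:Ker 1} finishes. Your route is shorter and bypasses Lemma~\ref{lem:Ker 2} entirely; the paper's route, on the other hand, isolates Lemma~\ref{lem:Ker 2} as a standalone fact about conflations with one term in $\catN$ and vanishing image in $\ol{\catC}$, which may be of independent use.
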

\begin{proof}
We have $\Ker(Q\colon \catC \to \catC/\catN) \supseteq \catN$
by Proposition \ref{prop:ex-loc-univ2}.
Conversely, suppose that $X\in \catC$ satisfies $Q(X)\iso 0$ in $\catC/\catN$.
Then there exists $Y\in \catC$ satisfying $0\in \ol{S_{\catN}}(X,Y)$
since $\catC/\catN$ is constructed as the category of fractions $\ol{S_{\catN}}^{-1}\ol{\catC}$ by Fact \ref{fct:NOS loc}.
Hence we obtain $s\in S_{\catN}(X,Y)$ satisfying $\ol{s}=0$ in $\ov{\CC}$.
By the construction of $S_\NN$, we can write either $s=t\ell$ or $s=ur$ 
for some $t,u\in S_{\catN}$, $\ell\in\calL$, and $r\in\calR$.
We consider the case $s=t\ell$.
Since $\ol{t}$ is a monomorphism in $\ol{\catC}$ by the assumption,
$\ov{s} = \ov{t} \ov{\ell} = 0$ implies $\ol{\ell}=0$.
Thus Lemma \ref{lem:Ker 2} (2) implies $X\in\catN$.
The case $s = ur$ can be proved similarly using Lemma \ref{lem:Ker 2} (1).
\end{proof}
As a consequence, we can describe the kernel of the localization for the cases we are interested in.
\begin{cor}\label{cor:Ker}
Suppose that either of the following conditions holds.
\begin{enur}
\item
$\catN$ is a biresolving subcategory.
\item
$\catN$ satisfies {\upshape (EL1)--(EL3)}.
\end{enur}
Then $\Ker(Q\colon \catC \to \catC/\catN)=\catN$ holds.
\end{cor}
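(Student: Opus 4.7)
The plan is to reduce this corollary directly to Proposition \ref{prp:Ker}, which already establishes the equality $\Ker(Q)=\catN$ under two hypotheses: that Condition \ref{cond:NOS loc} is satisfied, and that every morphism in $\ol{S_{\catN}}$ is a monomorphism in $\ol{\catC}$. So the work is just to verify that both hypotheses hold in each of the two cases (i) and (ii).

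First, I would check Condition \ref{cond:NOS loc}. For case (i), this is precisely the content of Fact \ref{fct:loc tri}: a biresolving subcategory automatically satisfies Condition \ref{cond:NOS loc}. For case (ii), this is given by Fact \ref{fct:loc ex}, which tells us that (EL1)--(EL3) imply Condition \ref{cond:NOS loc} (and moreover that $\catC/\catN$ is exact, though we do not need this here).

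Second, I would check the monomorphism condition on $\ol{S_{\catN}}$. This is exactly what Corollary \ref{cor:regular} gives us: in both case (i) and case (ii), every morphism in $\ol{S_{\catN}}$ is a monomorphism (and in fact also an epimorphism) in $\ol{\catC}$. With both hypotheses of Proposition \ref{prp:Ker} verified, that proposition delivers $\Ker(Q\colon \catC\to\catC/\catN)=\catN$ in either case, completing the proof.

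There is no genuine obstacle here, since all the serious work has already been done in Proposition \ref{prp:Ker} and Corollary \ref{cor:regular}; the corollary is essentially a bookkeeping assembly of these prior results. The only mild subtlety worth flagging is to be sure we are using the right implications (Fact \ref{fct:loc tri} for the biresolving case, Fact \ref{fct:loc ex}(1) for the percolating case), and to note that the inclusion $\catN\subseteq\Ker(Q)$ is immediate from the characterization \eqref{eq:thick N} of $\catN$ via $S_{\catN}$ together with the fact that $Q$ inverts every morphism in $S_{\catN}$.
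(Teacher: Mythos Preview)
Your proposal is correct and follows exactly the same approach as the paper's proof: verify Condition \ref{cond:NOS loc} via Facts \ref{fct:loc tri} and \ref{fct:loc ex}, verify the monomorphism hypothesis via Corollary \ref{cor:regular}, and then apply Proposition \ref{prp:Ker}. The only minor remark is that the inclusion $\catN\subseteq\Ker(Q)$ you mention at the end is already handled inside the proof of Proposition \ref{prp:Ker}, so you need not reprove it here.
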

\begin{proof}
For each case, Condition \ref{cond:NOS loc} is satisfied by Fact \ref{fct:loc tri} and \ref{fct:loc ex}, and every morphism in $\ov{S_\NN}$ is a monomorphism by Corollary \ref{cor:regular}. Thus the assertion follows from Proposition \ref{prp:Ker}.
\end{proof}

Using this result, we next consider whether $S_\NN$ is saturated. Assume Condition \ref{cond:NOS loc}, and 
let $Q \colon \catC \to \catC/\catN$ be the localization functor.
Recall that $S_{\catN}$ is \emph{saturated} if
$f\in S_{\catN}$ holds for any morphism $f$ in $\catC$ such that $Q(f)$ is an isomorphism in $\catC/\catN$.
\begin{prp}\label{prp:sat tri}
If $\catN$ is a biresolving subcategory of $\CC$,
then $S_{\catN}$ is saturated.
\end{prp}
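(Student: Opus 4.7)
Let $f\colon X\to Y$ be a morphism in $\catC$ with $Q(f)$ an isomorphism in $\catC/\catN$. The plan is to write $f$ as $\calR\circ\calL$, which places $f$ in $S_{\catN}$. Since $\catN$ is biresolving, I would pick a deflation $d\colon N\defl Y$ with $N\in\catN$ and assemble the combined morphism
\[
[f\ d]\colon X\oplus N\to Y.
\]
The two things to check are (a) that $[f\ d]$ is a deflation in $\catC$, and (b) that in the associated conflation the left-hand term lies in $\catN$.

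For (a), I would decompose
\[
[f\ d]=[f\ \id_Y]\circ\begin{bsmatrix}\id_X & 0\\ 0 & d\end{bsmatrix}.
\]
The right-hand factor is a deflation as the direct sum of $\id_X$ and the deflation $d$, since conflations are closed under direct sums in any extriangulated category. The left-hand factor equals the composition $[0\ \id_Y]\circ\begin{bsmatrix}\id_X & 0\\ f & \id_Y\end{bsmatrix}$ of the automorphism of $X\oplus Y$ with the split projection onto $Y$, hence is again a deflation. Composition of deflations yields (a). For (b), I would complete $[f\ d]$ to a conflation $K\inj X\oplus N\xr{[f\ d]} Y\dto$ in $\catC$ and apply the exact functor $Q$. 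Using $Q(N)\iso 0$ and the fact that $\catC/\catN$ is a triangulated category (Fact~\ref{fct:loc tri}), the image conflation is of the form $Q(K)\to Q(X)\xr{Q(f)} Q(Y)\dto$, and $Q(f)$ being an isomorphism forces $Q(K)\iso 0$. By Corollary~\ref{cor:Ker} this gives $K\in\catN$, so $[f\ d]\in\calR$.

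To finish, I would observe the identity $f=[f\ d]\circ\begin{bsmatrix}\id_X\\ 0\end{bsmatrix}$ together with the split conflation $X\xr{\begin{bsmatrix}\id_X\\ 0\end{bsmatrix}} X\oplus N\xr{[0\ \id_N]} N\dto$, whose third term $N$ lies in $\catN$; thus $\begin{bsmatrix}\id_X\\ 0\end{bsmatrix}\in\calL$, and therefore $f\in\calR\circ\calL\subseteq S_{\catN}$, proving saturatedness. The main technical obstacle I foresee is step (a): in the extriangulated setting one cannot appeal to abelian-style kernel/cokernel arguments, so the deflation property of $[f\ d]$ must be justified purely through the axiomatic closure of deflations under composition, direct sum with identities, and composition with isomorphisms. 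Once (a) is in place, the rest is a direct application of Fact~\ref{fct:loc tri} and Corollary~\ref{cor:Ker}.
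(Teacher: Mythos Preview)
Your proof is correct and is essentially the dual of the paper's argument. The paper uses the \emph{inflation} half of biresolving: it picks an inflation $x\colon X\to N$ with $N\in\catN$, forms $\begin{bsmatrix}x\\ f\end{bsmatrix}\colon X\to N\oplus Y$, and invokes \cite[Proposition~1.20]{LN} to produce a conflation $X\to N\oplus Y\to M\dto$, then shows $M\in\catN$ via Corollary~\ref{cor:Ker}; this factors $f$ as the split projection $N\oplus Y\to Y$ (in $\calR$) after $\begin{bsmatrix}x\\ f\end{bsmatrix}$ (in $\calL$). You instead use the \emph{deflation} half, picking $d\colon N\defl Y$ and forming $[f\ d]\colon X\oplus N\to Y$, and you verify directly that this is a deflation by decomposing it into a direct sum of deflations, an automorphism, and a split projection, thereby avoiding any appeal to \cite{LN}. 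Both routes land in $\calR\circ\calL$ and use Corollary~\ref{cor:Ker} at the same point; your argument is slightly more elementary in that it relies only on the axiomatic closure of deflations under direct sums and composition rather than on the homotopy pushout lemma.
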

\begin{proof}
Let $f\colon X \to Y$ be a morphism in $\catC$ with $Q(f)$ is an isomorphism in $\catC/\catN$.
Since $\catN$ is biresolving,
there is a conflation $X \xr{x} N \xr{y} C \xdr{\delta}$ with $N\in\catN$.
Then there is a morphism of conflations
\begin{equation}\label{diag:po1}
\begin{tikzcd}
X \arr{r,"x"} \arr{d,"f"'} & N \arr{r,"y"} \arr{d,"g"} & C \arr{d,equal} \arr{r, dashed} &{}\\
Y \arr{r,"a"'} & M \arr{r,"b"'} & C \arr{r, dashed} &{}.
\end{tikzcd}
\end{equation}
We can choose a morphism $g\colon N\to M$ so that
\begin{equation}\label{eq:sat tri}
X \xr{\begin{bsmatrix}
x\\ f
\end{bsmatrix}}
N \oplus Y
\xr{\begin{bsmatrix}
g & -a
\end{bsmatrix}}
M \xdr{b^*\delta}
\end{equation}
is a conflation by \cite[Proposition 1.20]{LN}.
We have a factorization $f = \begin{sbmatrix} 0 & \id_Y \end{sbmatrix} \circ \begin{sbmatrix} x\\ f\end{sbmatrix}$ for $\begin{sbmatrix} 0 & \id_Y \end{sbmatrix} \colon N \oplus Y \to Y$ and $\begin{sbmatrix} x\\ f\end{sbmatrix} \colon X \to N \oplus Y$.
Clearly, $\begin{sbmatrix} 0 & \id_Y \end{sbmatrix}$ belongs to $\calR$.
Thus it is enough to show that $\begin{sbmatrix} x\\ f\end{sbmatrix}$ belongs to $\calL$,
that is, $M\in\catN$.
Applying $Q$ to \eqref{diag:po1},
we have a morphism of conflation
\[
\begin{tikzcd}[column sep = large]
X \arr{r,"Q(x)"} \arr{d,"Q(f)"'} & N \arr{r,"Q(y)"} \arr{d,"Q(g)"} & C \arr{d,equal} \arr{r, dashed} &{}\\
Y \arr{r,"Q(a)"'} & M \arr{r,"Q(b)"'} & C \arr{r, dashed} &{}.
\end{tikzcd}
\]
in $\catC/\catN$ by Remark \ref{rmk:ex funct}.
Since $Q(f)$ is an isomorphism by the assumption,
$Q(g)$ is an isomorphism in $\catC/\catN$ by \cite[Corollary 3.6]{NP}.
Thus we obtain $M \iso N \iso 0$ in $\catC/\catN$, which shows $M\in \Ker Q$.
Since $\Ker Q = \NN$ holds by Corollary \ref{cor:Ker}, we obtain $M \in \NN$.
This proves $\begin{sbmatrix} x\\ f\end{sbmatrix}\in \calL$ by \eqref{eq:sat tri}.
\end{proof}
Unfortunately, for the percolating case, the authors do not know whether the condition (EL1)--(EL3) implies that $S_\NN$ is saturated, although we have the following criterion.
\begin{prp}\label{prp:sat ex}
Suppose that $\catN$ satisfies {\upshape (EL1)--(EL3)}.
Then the following conditions are equivalent.
\begin{enua}
\item
$S_{\catN}$ is saturated.
\item
For a morphism $f\colon X\to Y$ in $\catC$, if $Q(f)$ is an isomorphism in $\catC/\catN$, then $f$ is admissible in $\CC$, that is, there exists a deflation-inflation factorization of $f$. 
\end{enua}
\end{prp}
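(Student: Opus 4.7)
The proof splits into two implications. For $(2) \Rightarrow (1)$, let $f \colon X \to Y$ satisfy that $Q(f)$ is an isomorphism in $\CC/\NN$. By $(2)$, write $f = i \circ d$ with $d$ a deflation and $i$ an inflation in $\CC$. Since $\CC/\NN$ is an exact category by Fact \ref{fct:loc ex} and $Q$ is exact, $Q(d)$ is an admissible epi and $Q(i)$ is an admissible mono in $\CC/\NN$. Their composite $Q(f)$ being an isomorphism forces both factors to be isomorphisms: in an exact category, an admissible epi that is also a split monomorphism has trivial kernel, hence is an isomorphism, and dually for $Q(i)$. Applying $Q$ to the defining conflation $K \to X \xr{d} Z \dto$ of $d$ then yields $Q(K) \iso 0$, so $K \in \Ker Q = \NN$ by Corollary \ref{cor:Ker}, giving $d \in \calR \subseteq S_\NN$. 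Symmetrically $i \in \calL \subseteq S_\NN$, and hence $f \in S_\NN$, which is saturation.

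For $(1) \Rightarrow (2)$, assume $S_\NN$ is saturated and take $f \colon X \to Y$ with $Q(f)$ iso; saturation immediately gives $f \in S_\NN$, but the task is to produce an honest deflation-inflation factorization of $f$ in $\CC$. The plan is to apply Fact \ref{fct:NOS loc} (2) to $Q(f)$ viewed alternately as an inflation and as a deflation in the exact category $\CC/\NN$: this yields an inflation $i$ and a deflation $d$ in $\CC$, together with isomorphisms in $\CC/\NN$ satisfying $Q(f) = \beta \circ Q(i) \circ \gamma = \beta' \circ Q(d) \circ \gamma'$; the argument of the easy direction shows $i \in \calL$ and $d \in \calR$. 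To descend these data to $\CC$ itself, I would represent the intermediate isomorphisms $\beta, \gamma, \beta', \gamma'$ as right roofs of morphisms in $S_\NN$ via Remark \ref{rmk:roof}, use saturation to promote each leg of each roof to an element of $S_\NN$, and then employ the multiplicative system structure of $\ol{S_\NN}$ in $\ol{\CC}$ (Condition \ref{cond:NOS loc} (iii)) together with Lemma \ref{lem:P cond} to reassemble everything into a deflation followed by an inflation in $\CC$ with source $X$ and target $Y$.

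The main obstacle is precisely this last lifting step. Fact \ref{fct:NOS loc} (2) only produces factorizations of $Q(f)$ up to isomorphisms that live in $\CC/\NN$ rather than in $\CC$; transporting them back along $Q$ to honest morphisms of $\CC$ with the correct source and target requires careful manipulation of the Ore conditions of $\ol{S_\NN}$. A naive induction on the number of factors needed to express $f$ as a composition in $S_\NN$ fails because, in the absence of (EL4), a composition of the shape ``inflation followed by deflation'' in $\CC$ need not itself be admissible. The Ore-theoretic lifting sketched above, which keeps track of all intermediate roofs simultaneously rather than processing them one factor at a time, is what I expect will be needed to bypass this obstruction and yield the desired deflation-inflation factorization of $f$.
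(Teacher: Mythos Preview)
Your argument for $(2) \Rightarrow (1)$ is correct and is exactly what the paper does.

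For $(1) \Rightarrow (2)$, however, there is a genuine gap: the elaborate lifting scheme you sketch is unnecessary, and you yourself identify that it does not obviously close. The missing ingredient is a structural fact about $S_\NN$ under the hypotheses (EL1)--(EL3): one has
\[
S_\NN = \calL \circ \calR,
\]
which is \cite[Lemma 4.37]{NOS}. Granting this, the implication is immediate. If $Q(f)$ is an isomorphism, then saturation gives $f \in S_\NN = \calL \circ \calR$, so $f = \ell \circ r$ with $r \in \calR$ a deflation and $\ell \in \calL$ an inflation; this is already a deflation-inflation factorization of $f$.

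The point is that under (EL1)--(EL3) the class $S_\NN$, which is a priori built from arbitrary finite alternating compositions of morphisms in $\calL$ and $\calR$, collapses to length-two compositions in the order $\calL \circ \calR$. Your attempted route via Fact~\ref{fct:NOS loc}~(2) and Ore-theoretic manipulation of roofs is trying to reprove this collapse by hand in the localized category and then lift it back, which is both harder and, as you note, not obviously possible without further input. Once you invoke $S_\NN = \calL \circ \calR$ directly, the whole lifting problem disappears.
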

\begin{proof}
(1) $\Rightarrow$ (2):
We have $S_{\catN}=\calL\circ\calR$ by \cite[Lemma 4.37]{NOS}.
Hence if $Q(f)$ is an isomorphism in $\catC/\catN$,
then $f\in S_{\catN}=\calL\circ\calR$ since $S_\NN$ is saturated,
which proves (2) since a morphism in $\calL$ (resp. $\calR$) is an inflation (resp. a deflation).

(2) $\Rightarrow$ (1):
Let $f\colon X \to Y$ be a morphism in $\catC$ such that $Q(f)$ is an isomorphism in $\catC/\catN$.
There is a deflation-inflation factorization $X \xr{d} A \xr{i} Y$ of $f$ by (2).
In particular,
we have conflations $K\xr{k}X \xr{d} A \dto$ and $A\xr{i} Y \xr{c} C \dto$.
Since $Q$ is an exact functor, we obtain conflations $Q(K) \xr{Q(k)} Q(X) \xr{Q(d)} Q(A) \dto$ and $Q(A)\xr{Q(i)} Q(Y) \xr{Q(c)} Q(C) \dto$ in $\CC/\NN$.
In particular, $Q(d)$ is a deflation in $\catC/\catN$.
Thus $Q(d)$ is an epimorphism because $\catC/\catN$ is an exact category by Fact \ref{fct:loc ex}.
On the other hand,
$Q(d)$ is a split monomorphism since $Q(f)=Q(i) \circ Q(d)$ is an isomorphism.
Thus $Q(d)$ is an isomorphism in $\catC/\catN$.
Dually, $Q(i)$ is also an isomorphism in $\catC/\catN$.
Hence both $Q(K)$ and $Q(C)$ are isomorphic to $0$ in $\catC/\catN$,
which implies $K,C \in \Ker Q = \catN$ by Corollary \ref{cor:Ker}.
We conclude that $d\in\calR$ and $i\in\calL$,
and hence $f=i\circ d \in S_{\catN}$.
\end{proof}
Therefore, if one assumes in addition that (EL4) holds, that is, $\CC$ is admissible, then we can show the saturatedness of $S_\NN$. We can summarize our results of the saturatedness as follows.
\begin{cor}\label{cor:sat}
Suppose that either of the following conditions holds.
\begin{enur}
\item
$\catN$ is a biresolving subcategory.
\item
$\catN$ satisfies {\upshape (EL1)--(EL4)}.
\end{enur}
Then $S_{\catN}$ is saturated.
\end{cor}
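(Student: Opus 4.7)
My plan is that this corollary follows immediately by combining the two preceding propositions with the available hypotheses, so the proof is essentially a packaging statement and there is no real obstacle to overcome.

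For case (i), the claim is exactly the content of Proposition \ref{prp:sat tri}, so nothing further is needed there. For case (ii), I would invoke Proposition \ref{prp:sat ex}, which already establishes the equivalence between $S_{\catN}$ being saturated and the condition that every morphism $f\colon X\to Y$ in $\catC$ with $Q(f)$ an isomorphism admits a deflation-inflation factorization, under the hypothesis (EL1)--(EL3). Since (EL4) postulates that \emph{every} morphism in $\catC$ is admissible, condition (2) of Proposition \ref{prp:sat ex} is satisfied vacuously, and hence so is (1).

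In short: the plan is simply to write ``case (i) is Proposition \ref{prp:sat tri}; for case (ii), observe that (EL4) trivially forces condition (2) of Proposition \ref{prp:sat ex}, so (EL1)--(EL4) yields the saturatedness of $S_{\catN}$ via the equivalence proved there.'' No genuine computation is required, and the only step worth noting is the verification that the admissibility assumption (EL4) is applied to the specific morphism whose image under $Q$ is invertible, which is immediate since (EL4) applies uniformly to all morphisms of $\catC$.
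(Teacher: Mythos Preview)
Your proposal is correct and matches the paper's proof essentially verbatim: case (i) is exactly Proposition~\ref{prp:sat tri}, and for case (ii) the paper likewise observes that (EL1)--(EL3) allow one to apply Proposition~\ref{prp:sat ex}, while (EL4) makes condition (2) there automatic.
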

\begin{proof}
  (i) This is Proposition \ref{prp:sat tri}.

  (ii)
  Since $\NN$ satisfies (EL1)--(EL3), we can apply Proposition \ref{prp:sat ex}. In addition, every morphism in $\CC$ is admissible by (EL4), and hence the condition in Proposition \ref{prp:sat ex} (2) automatically holds. Therefore, $S_\NN$ is saturated.
\end{proof}
As a consequence, we can deduce the following result for the cases we are interested in.
\begin{cor}\label{cor:mon loc ET}
Suppose that either of the following conditions holds.
\begin{enur}
\item
$\catN$ is a biresolving subcategory.
\item
$\catN$ satisfies {\upshape (EL1)--(EL4)}.
\end{enur}
Then the monoid homomorphism $\M(Q) \colon \M(\CC) \to \M(\CC/\NN)$ for the exact localization $Q \colon \CC \to \CC/\NN$ induces an isomorphism of monoids:
\[
\M(\catC)/\M_{\catN} \simto \M(\catC/\catN).
\]
\end{cor}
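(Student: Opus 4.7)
The plan is to deduce this corollary as a direct consequence of the main Theorem \ref{thm:mon loc ET}, whose two hypotheses (Condition \ref{cond:NOS loc} and saturatedness of $S_\NN$) have essentially already been verified for both cases in the preceding material. In other words, no new argument is needed; the work is to collect the right references.

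First I would verify Condition \ref{cond:NOS loc} in each case. In case (i), this is precisely Fact \ref{fct:loc tri}: any biresolving thick subcategory satisfies Condition \ref{cond:NOS loc}. In case (ii), the conditions (EL1)--(EL3) (which are part of (EL1)--(EL4)) imply Condition \ref{cond:NOS loc} by Fact \ref{fct:loc ex}. So in both situations the exact localization $Q \colon \CC \to \CC/\NN$ exists and the structural tools from Fact \ref{fct:NOS loc} are available.

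Next I would verify that $S_\NN$ is saturated. This is exactly the content of Corollary \ref{cor:sat}: in case (i) saturatedness is Proposition \ref{prp:sat tri}, while in case (ii), (EL1)--(EL3) allow us to apply the criterion of Proposition \ref{prp:sat ex}, and the additional condition (EL4) (admissibility of $\CC$) guarantees that any morphism whose image under $Q$ is an isomorphism admits a deflation-inflation factorization, so condition (2) of Proposition \ref{prp:sat ex} holds automatically.

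Finally, with both hypotheses of Theorem \ref{thm:mon loc ET} in place, the theorem yields the desired isomorphism $\M(\CC)/\M_\NN \simto \M(\CC/\NN)$ induced by $\M(Q)$. Since all the hard lifting has been done in Theorem \ref{thm:mon loc ET} and Corollary \ref{cor:sat}, there is no real obstacle here; the only thing to be careful about is simply to cite the correct results for the two cases in parallel and observe that (EL4) is needed only to invoke Proposition \ref{prp:sat ex} in case (ii).
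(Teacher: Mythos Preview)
Your proposal is correct and matches the paper's approach exactly: the paper's proof is the single sentence ``This follows from Theorem \ref{thm:mon loc ET} and Corollary \ref{cor:sat},'' and your outline simply unpacks those two citations (together with the underlying Facts \ref{fct:loc tri} and \ref{fct:loc ex} for Condition \ref{cond:NOS loc}).
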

\begin{proof}
This follows from Theorem \ref{thm:mon loc ET} and Corollary \ref{cor:sat}.
\end{proof}

\begin{rmk}
Ogawa \cite{ogawa} developed localization theory of triangulated categories 
with respect to an \emph{extension-closed subcategory}.
We briefly introduce his results and explain the relation to ours.
Let $\catT$ be a triangulated category and $\catN$ an extension-closed subcategory of $\catT$.
We consider $\catT$ as the extriangulated category $(\catT, \bbE, \frs)$.
Then there is a subfunctor $\bbE_{\catN} \subseteq \bbE$ satisfying the following:
\begin{itemize}
\item 
$\catT^{\catN}:=(\catT,\bbE_{\catN}, \frs_{|\bbE_{\catN}})$ is an extriangulated category.
\item
$\catN$ is a thick subcategory of $\catT^{\catN}$
and satisfies Condition \ref{cond:NOS loc}.
In particular, the exact localization $\catT^{\catN}/\catN$ exists by Fact \ref{fct:NOS loc}.
\item
$S_\catN$ is saturated in $\catT^{\catN}$.
\end{itemize}
Thus we can apply Theorem \ref{thm:mon loc ET} and obtain a monoid isomorphism
$\M\left(\catT^{\catN}\right)/\M_{\catN} \simto \M\left(\catT^{\catN}/\catN\right)$
for any extension-closed subcategory $\catN$ of $\catT$.
However, we do not know a relation between $\M(\catT)$ and $\M\left(\catT^{\catN}\right)$
at the moment.
\end{rmk}

\subsection{Applications}
In this subsection, we give applications of our description Theorem \ref{thm:mon loc ET} of the Grothendieck monoid of the exact localization.

First, by applying this to the abelian case, we obtain the following consequence on the Serre quotient of an abelian category.
\begin{corollary}\label{cor:serre-quot}
  Let $\AA$ be a skeletally small abelian category, $\SS$ a Serre subcategory of $\AA$, and $\iota \colon \SS \hookrightarrow \AA$ and $Q \colon \AA \to \AA/\SS$ the inclusion and the localization functor respectively. Then the following holds.
  \begin{enumerate}
    \item $\M(\iota) \colon \M(\SS) \to \M(\AA)$ is injective, so we have an isomorphism $\M(\SS) \iso \M_\SS$.
    \item $\M(Q) \colon \M(\AA) \to \M(\AA/\SS)$ induces an isomorphism of monoids $\M(\AA)/\M_\SS \cong \M(\AA/\SS)$.
  \end{enumerate}
\end{corollary}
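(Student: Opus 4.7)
The plan is to derive both parts as essentially formal consequences of machinery already assembled in the paper. For part (1), I would simply invoke Proposition \ref{prp:Serre inj} applied to the extriangulated category $\AA$ and its Serre subcategory $\SS$. That proposition gives injectivity of $\M(\iota)$ and simultaneously the induced isomorphism $\M(\SS) \simto \M_\SS$, since $\M_\SS$ is by definition the image of $\M(\iota)$. Nothing abelian-specific is needed here beyond recognizing that a Serre subcategory of $\AA$ in the extriangulated sense coincides with the classical notion.

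For part (2), the strategy is to verify that $\SS \subseteq \AA$ satisfies the conditions (EL1)--(EL4) from Fact \ref{fct:loc ex} and then apply Corollary \ref{cor:mon loc ET} (ii). Fortunately, each of these was essentially already observed in Example \ref{ex:two-cond} (3): (EL1) holds because an abelian category is admissible and Serre subcategories of admissible extriangulated categories are percolating by Example \ref{ex:adm-serre-perco}; (EL2) follows from (WIC) via Remark \ref{rmk:EL} (1), and abelian categories satisfy (WIC) straightforwardly (given $h=gf$ which is monic, $f$ is monic, hence an inflation in $\AA$; dually for deflations); (EL3) reduces via Lemma \ref{lem:P cond} to the fact that inflations in $\AA$ are monomorphisms and deflations are epimorphisms, which is immediate; (EL4) is the fact that every morphism in an abelian category admits an epi-mono factorization through its image, which is a deflation-inflation factorization. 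I would write these verifications out in a brief list.

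With (EL1)--(EL4) established, Corollary \ref{cor:mon loc ET} (ii) immediately produces the desired isomorphism $\M(\AA)/\M_\SS \simto \M(\AA/\SS)$, completing part (2). The main observation is that the corollary we are asked to prove is really a packaging of two earlier general results (Proposition \ref{prp:Serre inj} for injectivity, Corollary \ref{cor:mon loc ET} for the quotient description) in the abelian setting, so there is no substantive obstacle to overcome; the only thing to be careful about is making all four axiom checks explicit, as opposed to waving at Example \ref{ex:two-cond} (3).
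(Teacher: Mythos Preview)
Your proposal is correct and follows essentially the same approach as the paper: part (1) is deduced directly from Proposition \ref{prp:Serre inj}, and part (2) is obtained by checking (EL1)--(EL4) for $\SS \subseteq \AA$ (exactly as in Example \ref{ex:two-cond} (3)) and then applying Corollary \ref{cor:mon loc ET}. The only difference is that you propose spelling out the four axiom checks in more detail, whereas the paper simply cites Example \ref{ex:two-cond} (3).
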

\begin{proof}
  (1) This is Proposition \ref{prp:Serre inj}.

  (2)
  Example \ref{ex:two-cond} (3) shows that $\SS$ satisfies (EL1)--(EL4). Therefore, we can apply Corollary \ref{cor:mon loc ET} to this setting.
\end{proof}

This may be seen as a ``short exact sequence'' of monoids:
\begin{equation}\label{eq:gro-mon-ses}
  \begin{tikzcd}
    0 \rar & \M(\SS) \rar & \M(\AA) \rar & \M(\AA/\SS) \rar & 0.
  \end{tikzcd}
\end{equation}

This description of $\M(\AA/\SS)$ gives the following description of Serre subcategories of $\AA/\SS$, which seems to be a folklore.
\begin{corollary}
  Let $\AA$ be a skeletally small abelian category and $\SS$ a Serre subcategory of $\AA$. Then there is a bijection between the following two sets:
  \begin{enumerate}
    \item $\Serre (\AA/\SS)$.
    \item $\{ \SS' \in \Serre \AA \mid \SS \subseteq \SS' \}$.
  \end{enumerate}
\end{corollary}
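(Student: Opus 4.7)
The plan is to reduce the statement to a general fact about faces of monoid quotients, via the Grothendieck monoid functor.

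First, I would translate both sides into monoid-theoretic language. By Proposition~\ref{prop:serre-face-bij}, Serre subcategories of $\AA/\SS$ are in bijection with faces of $\M(\AA/\SS)$. The same proposition applied to $\AA$ yields a bijection between Serre subcategories of $\AA$ and faces of $\M(\AA)$, and I would check that this bijection restricts to one between Serre subcategories of $\AA$ containing $\SS$ and faces of $\M(\AA)$ containing $\M_\SS$. This restriction is straightforward: the assignments $\SS' \mapsto \M_{\SS'}$ and $F \mapsto \DD_F$ are inclusion-preserving by construction, so $\SS \subseteq \SS'$ iff $\M_\SS \subseteq \M_{\SS'}$.

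Next, I would apply Corollary~\ref{cor:serre-quot}(2) to identify $\M(\AA/\SS)$ with the monoid quotient $\M(\AA)/\M_\SS$. At this point the claim reduces to the following purely monoid-theoretic statement: for a monoid $M$ and a submonoid $N \subseteq M$ which is a face, the canonical projection $\pi \colon M \twoheadrightarrow M/N$ induces a bijection
\[
\{\text{faces of } M/N\} \xrightarrow{\sim} \{\text{faces } G \text{ of } M \text{ with } N \subseteq G\}, \qquad F \mapsto \pi^{-1}(F),
\]
with inverse $G \mapsto \pi(G)$. This should be either recorded in Appendix~\ref{app:mon} or proved on the spot; in any case it is a short verification.

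The only real content is showing $\pi^{-1}(\pi(G)) = G$ when $G$ is a face containing $N$. Given $x$ with $\pi(x) = \pi(y)$ for some $y \in G$, by the explicit description of the congruence defining $M/N$ (Definition~\ref{def:quot-submon}) there exist $n, n' \in N$ with $x + n = y + n'$. Since $y \in G$ and $n' \in N \subseteq G$, we get $x + n \in G$; since $n \in G$ and $G$ is a face, we conclude $x \in G$. The remaining verifications — that $\pi^{-1}(F)$ is a face of $M$ containing $N$, that $\pi(G)$ is a submonoid and hence (by the face property transferred through $\pi$) a face of $M/N$, and that $\pi(\pi^{-1}(F)) = F$ by surjectivity of $\pi$ — are formal. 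I do not anticipate a real obstacle; the main subtlety is just being careful about the definition of $M/N$ as a quotient by a submonoid (rather than by an ideal or a congruence given directly).
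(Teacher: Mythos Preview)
Your proposal is correct and follows essentially the same route as the paper: translate both sides via Proposition~\ref{prop:serre-face-bij}, identify $\M(\AA/\SS)$ with $\M(\AA)/\M_\SS$ using Corollary~\ref{cor:serre-quot}(2), and then invoke the monoid fact that faces of $M/N$ correspond to faces of $M$ containing $N$. The only difference is that the paper cites this last fact directly as Proposition~\ref{prp:face quot}(3) from the appendix (which holds for any submonoid $N$, not just a face), whereas you sketch its proof; your sketch is fine, though the extra hypothesis that $N$ be a face is not actually used in your argument and can be dropped.
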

\begin{proof}
  There is a bijection between $\Serre(\AA/\SS)$ and $\Face \M(\AA/\SS)$ by Proposition \ref{prop:serre-face-bij}. On the other hand, $\M(\AA/\SS)$ is isomorphic to the quotient monoid $\M(\AA)/\M_\SS$ by Corollary \ref{cor:serre-quot} (2).
  Thus Proposition \ref{prp:face quot} (3) shows that $\Face (\M(\AA)/\M_\SS)$ is in bijection with the set of faces of $\M(\AA)$ containing $\M_\SS$. Since $\Face \M(\AA)$ are in bijection with $\Serre \AA$ by Proposition \ref{prop:serre-face-bij} again and since $\SS$ corresponds to $\M_\SS$ in this bijection, we conclude that $\Face(\M(\AA)/\M_\SS)$ are in bijection with (2).
  The situation can be summarized as the following figure.
  \[
    \begin{tikzcd}
      \Serre(\AA/\SS) \ar[rr, leftrightarrow, dashed] \dar[leftrightarrow] & & \{ \SS' \in \Serre \AA \mid \SS \subseteq \SS' \} \rar[symbol=\subseteq] \dar[leftrightarrow]
      & \Serre \AA \dar[leftrightarrow] \\
      \Face \M(\AA/\SS) \rar[leftrightarrow] &
      \Face(\M(\AA)/\M_\SS) \rar[leftrightarrow] &
      \{ F \in \Face \M(\AA) \mid \M_\SS \subseteq F \} \rar[symbol=\subseteq]
      & \Face \M(\AA)
    \end{tikzcd}
  \]
\end{proof}

It is well-known that the similar sequence to \eqref{eq:gro-mon-ses} for the Grothendieck group is only right exact. Actually, we can deduce it using our result as follows.
\begin{cor}\label{cor:loc-gr}
Suppose that Condition \ref{cond:NOS loc} are satisfied and $S_{\catN}$ is saturated (e.g. the assumption in Corollary \ref{cor:sat} holds).
Then the following sequence is exact:
\[
\begin{tikzcd}
  \K_0(\catN) \rar["{\K_0(\iota)}"] & \K_0(\catC) \rar["{\K_0(Q)}"] &
  \K_0(\catC/\catN) \rar & 0,
\end{tikzcd}
\]
where $\iota \colon \catN \inj \catC$ is the inclusion functor
and $Q \colon \catC \to \catC/\catN$ is the exact localization functor.
\end{cor}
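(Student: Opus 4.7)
My strategy is to deduce the right exact sequence by applying the group completion functor $\gp\colon \Mon \to \Ab$ to the monoid isomorphism already supplied by Theorem \ref{thm:mon loc ET}. Under the stated hypotheses, that theorem yields
\[
  \M(Q) \colon \M(\catC)/\M_\catN \simto \M(\catC/\catN).
\]
Following Definition \ref{def:quot-submon}, I would first observe that the quotient $\M(\catC)/\M_\catN$ is naturally the coequalizer in $\Mon$ of the pair of maps $\M(\iota), 0 \colon \M(\catN) \rightrightarrows \M(\catC)$. This is because $\M_\catN$ is, by its very definition, the image of $\M(\iota)$, and modding out by the submonoid $\M_\catN$ has the universal property of identifying every $\M(\iota)([N])$ with $0$.

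Next, by Remark \ref{rem:k0-as-gp}, we have a natural isomorphism $\K_0 \iso \gp\circ \M$ of functors $\ETCat \to \Ab$, and by Proposition \ref{prop:gp-compl-univ} the functor $\gp$ is left adjoint to the inclusion $\Ab \hookrightarrow \Mon$, hence preserves all colimits, in particular coequalizers. Applying $\gp$ to the coequalizer presentation of $\M(\catC)/\M_\catN \iso \M(\catC/\catN)$ therefore produces a coequalizer in $\Ab$:
\[
  \K_0(\catN) \;\overset{\K_0(\iota)}{\underset{0}{\rightrightarrows}}\; \K_0(\catC) \xrightarrow{\K_0(Q)} \K_0(\catC/\catN).
\]
In $\Ab$, the coequalizer of a homomorphism $f$ and the zero map is precisely the cokernel of $f$, so this immediately gives the required right exact sequence
\[
  \K_0(\catN) \xrightarrow{\K_0(\iota)} \K_0(\catC) \xrightarrow{\K_0(Q)} \K_0(\catC/\catN) \to 0.
\]

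The only non-formal step is the initial one, namely verifying that $\M(\catC)/\M_\catN$ as set up in Definition \ref{def:quot-submon} is genuinely the coequalizer of $\M(\iota)$ and $0$ in $\Mon$; I expect this to be an immediate unwinding of the defining universal property of the quotient of a commutative monoid by a submonoid. Beyond this bookkeeping, the argument is a pure adjoint / colimit-preservation formality, so I do not foresee a real obstacle.
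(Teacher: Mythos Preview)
Your proposal is correct and follows essentially the same argument as the paper: both use Theorem \ref{thm:mon loc ET} together with Proposition \ref{prp:mon quot} to obtain a coequalizer diagram $\M(\catN)\rightrightarrows \M(\catC)\to \M(\catC/\catN)$ in $\Mon$, and then apply the left adjoint $\gp$ (Proposition \ref{prp:adj grp}, which is what Proposition \ref{prop:gp-compl-univ} amounts to) together with $\K_0\iso \gp\circ\M$ to transport this to the desired right exact sequence in $\Ab$.
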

\begin{proof}
The diagram
\[
\begin{tikzcd}
  \M(\catN) \rar[shift left, "\M(\iota)"] \rar[shift right, "0"'] &
  \M(\catC) \rar["\M(Q)"] & \M(\catC/\catN)
\end{tikzcd}
\]
is a coequalizer diagram in $\Mon$
by Theorem \ref{thm:mon loc ET} (see also Proposition \ref{prp:mon quot}).
Since the group completion functor $\gp \colon \Mon \to \Ab$
is a left adjoint of the forgetful functor $\Ab \inj \Mon$ by Proposition \ref{prp:adj grp},
it preserves colimits.
Hence we have a coequalizer diagram
\[
  \begin{tikzcd}
    \K_0(\catN) \rar[shift left, "\K_0(\iota)"] \rar[shift right, "0"'] &
    \K_0(\catC) \rar["\K_0(Q)"] & \K_0(\catC/\catN)
  \end{tikzcd}
\]
in $\Ab$ because $\gp\circ \M = \K_0$ by Remark \ref{rem:k0-as-gp}.
This is nothing but the claimed exact sequence.
\end{proof}
\section{Intermediate subcategories of the derived category}\label{sec:5}
In this section, we give a concrete example of an extriangulated category and its thick subcategory which satisfies the condition (EL1)--(EL4) in Fact \ref{fct:loc ex}, \emph{intermediate subcategories of the derived category}, and compute its Grothendieck monoid and the exact localization.

\emph{Throughout this section, $\AA$ denotes a skeletally small abelian category, and we assume that every category, functor, and subcategory is additive.}
We denote by $\D^\b(\AA)$ the bounded derived category of $\AA$, which we regard as an extriangulated category. We also denote by $H^i \colon \D^\b(\AA) \to \AA$ the $i$-th cohomology functor. We often identify $\AA$ with the essential image of the canonical embedding $\AA \hookrightarrow \D^\b(\AA)$, that is, the subcategory of $\D^\b(\AA)$ consisting of $X$ such that $H^i(X) = 0$ for $i \neq 0$.

The following observation is useful throughout this section, and can be proved easily by using the truncation functor.
\begin{lemma}\label{lem:intermediate}
  Let $\AA$ be a skeletally small abelian category and $\BB$ and $\BB'$ subcategories of $\AA$.
  \begin{enumerate}
    \item We have the following equality in $\D^\b(\AA)$:
    \[
      \BB[1] * \BB' = \{
        X \in \D^\b(\AA) \mid 
        \text{$H^i(X) = 0$ for $i \neq 0,-1$, $H^{-1}(X) \in \BB$, and $H^0(X) \in \BB'$} \}.
    \]
    \item For every $X \in \AA[1] * \AA$, we have the following conflation in $\D^\b(\AA)$, which is natural on $X$:
    \[
      \begin{tikzcd}
        H^{-1}(X)[1] \rar & X \rar & H^0(X) \rar[dashed] & {}. 
      \end{tikzcd}
    \]
    Moreover, every conflation $A[1] \to X \to B \dto$ with $A, B\in \AA$ is isomorphic to the above conflation.
    \item For every $X \in \AA[1] * \AA$, there is some $Y \in \D^\b(\AA)$ with $X \iso Y$ in $\D^\b(\AA)$ such that $Y$ is a complex concentrated in degree $0$ and $-1$.
  \end{enumerate}
\end{lemma}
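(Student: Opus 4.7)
The plan is to exploit the canonical truncation functors $\tau^{\leq n}$ and $\tau^{\geq n}$ on $\D^\b(\AA)$ together with the long exact sequence of cohomology attached to a triangle.

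For (1), given $X \in \BB[1] * \BB'$ with defining triangle $B[1] \to X \to B' \to B[2]$, the long exact sequence of cohomology immediately yields $H^i(X) = 0$ for $i \neq -1, 0$, together with $H^{-1}(X) \iso B \in \BB$ and $H^0(X) \iso B' \in \BB'$, since $B[1]$ and $B'$ have cohomology concentrated in degree $-1$ and $0$ respectively. Conversely, for $X$ whose cohomology satisfies the stated conditions, the canonical truncation triangle $\tau^{\leq -1} X \to X \to \tau^{\geq 0} X \to$ witnesses the membership in $\BB[1] * \BB'$, because the cohomological concentration gives canonical isomorphisms $\tau^{\leq -1} X \iso H^{-1}(X)[1]$ and $\tau^{\geq 0} X \iso H^0(X)$. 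The same observation establishes the first half of (2): the truncation triangle applied to $X \in \AA[1] * \AA$ is precisely the asserted conflation, and its naturality in $X$ is inherited from that of $\tau^{\leq -1}$ and $\tau^{\geq 0}$. Part (3) is then a direct consequence: for any bounded complex $C^\bullet$ representing $X$, the iterated good truncation $\tau^{\geq -1} \tau^{\leq 0} C^\bullet$ is concentrated in degrees $-1$ and $0$ and is quasi-isomorphic to $X$, since $X$ has cohomology only in those degrees.

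The nontrivial part is the uniqueness assertion in (2). Suppose we have a second conflation $A[1] \xrightarrow{f} X \xrightarrow{g} B \dto$ with $A, B \in \AA$. Applying cohomology to the associated triangle immediately identifies $A \iso H^{-1}(X)$ and $B \iso H^0(X)$. To promote these to an isomorphism of conflations, I would argue as follows: $A[1] \in \D^{\leq -1}$ gives $\Hom_{\D^\b(\AA)}(A[1], H^0(X)) = 0$, and therefore the composite $A[1] \to X \to H^0(X)$ vanishes. Consequently $f$ factors (uniquely) through the inflation $H^{-1}(X)[1] \to X$ of the canonical conflation, and the induced map $A[1] \to H^{-1}(X)[1]$ is the shift of the cohomology isomorphism above, in particular an isomorphism. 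Invoking the standard completion lemma for morphisms of triangles extends this to a morphism between the two triangles, whose third component $B \to H^0(X)$ agrees with the cohomology isomorphism and is thus also an isomorphism; \cite[Corollary 3.6]{NP} then forces the middle component to be an isomorphism as well.

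The main obstacle will be the bookkeeping for this uniqueness argument: one has to verify that the morphism of triangles obtained via factorization through the truncation really has outer components that recover the cohomology isomorphisms (so that the middle component is forced to be an isomorphism). Everything else amounts to standard manipulations with truncation functors and long exact sequences, requiring no new ideas beyond the standard t-structure on $\D^\b(\AA)$.
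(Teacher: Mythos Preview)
Your approach is correct and matches the paper's own: the paper does not give a proof at all, merely stating that the lemma ``can be proved easily by using the truncation functor,'' and your proposal is precisely the standard fleshing-out of that hint via the truncation triangle $\tau^{\leq -1}X \to X \to \tau^{\geq 0}X \to$ and the cohomology long exact sequence.

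One small wording issue in your uniqueness argument for (2): once you have the factored isomorphism $A[1] \to H^{-1}(X)[1]$ and pair it with $\id_X$, the completion axiom (TR3) already gives a morphism of triangles with the first two components isomorphisms, and then the third component $B \to H^0(X)$ is forced to be an isomorphism by the $2$-out-of-$3$ property for morphisms of triangles (or \cite[Corollary 3.6]{NP}). Your final sentence says this ``forces the middle component to be an isomorphism,'' but the middle component is $\id_X$ by construction; it is the \emph{third} component whose invertibility is being deduced. This is purely a slip of phrasing and does not affect the argument.
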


\subsection{Classification of intermediate subcategories}
Let us introduce the main topic of this section.
\begin{definition}
  Let $\AA$ be a skeletally small abelian category. An \emph{intermediate subcategory} of $\D^\b(\AA)$ is a subcategory $\CC$ satisfying the following conditions:
  \begin{enumerate}
    \item $\AA \subseteq \CC \subseteq \AA[1] * \AA$ holds.
    \item $\CC$ is closed under extensions in $\D^\b(\AA)$.
    \item $\CC$ is closed under direct summands in $\D^\b(\AA)$.
  \end{enumerate}
 By (2), we regard an intermediate subcategory as an extriangulated category.
\end{definition}
See Figure \ref{fig:inter} for the intuition of this notion. The simplest example of an intermediate subcategory is $\AA$ itself, and generally intermediate subcategories are larger than $\AA$ but not too large so that they are contained in $\AA[1] * \AA$.

First, we will see that intermediate subcategories can be described using torsionfree classes in $\AA$.
Here a subcategory $\FF$ of $\AA$ is called a \emph{torsionfree class} if $\FF$ is closed under subobjects and extensions.
Note that torsionfree classes do not necessarily correspond to torsion classes and torsion pairs in general.
\begin{theorem}\label{thm:inter-torf-bij}
  Let $\AA$ be a skeletally small abelian category. Then the following hold.
  \begin{enumerate}
    \item If $\FF$ is a torsionfree class in $\AA$, then $\FF[1] * \AA$ is an intermediate subcategory of $\D^\b(\AA)$.
    \item If $\CC$ is an intermediate subcategory of $\D^\b(\AA)$, then $H^{-1}(\CC)$ is a torsionfree class in $\AA$, and we have $\CC = H^{-1}(\CC)[1] * \AA$.
    \item The assignments given in {\upshape (1)} and {\upshape (2)} give bijections between the set of torsionfree classes in $\AA$ and that of intermediate subcategories of $\D^\b(\AA)$.
  \end{enumerate}
\end{theorem}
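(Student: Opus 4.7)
The proof splits into three parts, mirroring (1), (2), (3); (1) is a direct verification, (3) is formal, and the substantive content is (2).

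For (1), combine Lemma \ref{lem:intermediate}~(1), which describes $\FF[1] * \AA$ as those $X \in \D^\b(\AA)$ with $H^i(X) = 0$ for $i \notin \{-1, 0\}$, $H^{-1}(X) \in \FF$, and $H^0(X) \in \AA$, with the torsionfree properties of $\FF$. The containments $\AA \subseteq \FF[1] * \AA \subseteq \AA[1] * \AA$ are immediate. For extension-closure, the long exact cohomology sequence associated to a triangle $X \to Y \to Z$ in $\D^\b(\AA)$ with $X, Z \in \FF[1]*\AA$ shows $Y$ has cohomology only in degrees $-1, 0$, and $H^{-1}(Y)$ sits in a short exact sequence in $\AA$ realizing it as an extension of some subobject of $H^{-1}(Z) \in \FF$ by $H^{-1}(X) \in \FF$; closure of $\FF$ under subobjects then extensions forces $H^{-1}(Y) \in \FF$. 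Summand-closure of $\FF[1]*\AA$ is immediate from the fact that $H^{-1}$ preserves direct sums.

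For (2), given an intermediate $\CC$, I would set $\FF := \{A \in \AA : A[1] \in \CC\}$. Its torsionfree property uses only extension-closure of $\CC$ applied to conflations obtained by shifting and rotating short exact sequences in $\AA$. Explicitly, given $0 \to A' \to A \to A'' \to 0$ in $\AA$ with $A', A'' \in \FF$, the shifted conflation $A'[1] \to A[1] \to A''[1] \dashrightarrow$ has both ends in $\CC$, so extension-closure gives $A[1] \in \CC$. For subobject-closure, given $A' \hookrightarrow A$ with $A \in \FF$, rotating the triangle from $0 \to A' \to A \to A/A' \to 0$ twice produces the conflation $A/A' \to A'[1] \to A[1] \dashrightarrow$ whose ends lie in $\CC$ (the left in $\AA \subseteq \CC$, the right by assumption), forcing $A'[1] \in \CC$ and hence $A' \in \FF$. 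The inclusion $\FF[1]*\AA \subseteq \CC$ then follows: for $X \in \FF[1]*\AA$ the natural conflation $A[1] \to X \to B \dashrightarrow$ from Lemma \ref{lem:intermediate}~(2) has both ends in $\CC$, so extension-closure gives $X \in \CC$. Once the reverse inclusion $\CC \subseteq \FF[1]*\AA$ is established, $\FF$ coincides with $H^{-1}(\CC)$, the torsionfree assertion of (2) follows, and (3) is formal because $H^{-1}(\FF[1]*\AA) = \FF$ is immediate from Lemma \ref{lem:intermediate}~(1).

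\textbf{Main obstacle.} The hardest step is the inclusion $\CC \subseteq \FF[1]*\AA$, equivalently the claim that $H^{-1}(C)[1] \in \CC$ whenever $C \in \CC$. The natural conflation $H^{-1}(C)[1] \to C \to H^0(C) \dashrightarrow$ of Lemma \ref{lem:intermediate}~(2) has its middle and right terms in $\CC$, but extracting the left term needs more than extension-closure, since $\CC$ is not assumed to satisfy 2-out-of-3. My plan is to combine summand-closure with the octahedron axiom in $\D^\b(\AA)$ applied to compositions such as $A'[1] \to H^{-1}(C)[1] \to C$ for subobjects $A' \subseteq H^{-1}(C)$, thereby producing auxiliary objects in $\AA[1]*\AA$ whose direct summand structure forces $H^{-1}(C)[1]$ to appear in $\CC$. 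A noetherian induction on the length (or an analogous invariant) of $H^{-1}(C)$ reducing to a simple base case is the natural route, and this is where the abelian structure of $\AA$, together with the triangulated structure of the ambient $\D^\b(\AA)$, enters decisively.
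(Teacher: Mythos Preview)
Your handling of (1), (3), and the ``easy'' parts of (2) --- verifying that $\FF := \{A \in \AA : A[1] \in \CC\}$ is torsionfree and that $\FF[1]*\AA \subseteq \CC$ --- is correct and matches the paper.

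The gap is precisely where you locate it: showing $H^{-1}(C)[1] \in \CC$ for $C \in \CC$. Your proposed route via noetherian induction on the length of $H^{-1}(C)$ does not work, because $\AA$ is only assumed skeletally small abelian; there is no finiteness hypothesis, so objects need not have finite length and there is no invariant to induct on. The vague octahedral approach applied to $A'[1] \to H^{-1}(C)[1] \to C$ for subobjects $A'$ also does not visibly produce anything, since you do not yet know $A'[1] \in \CC$ either.

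The paper's argument is a direct, one-step trick that avoids induction entirely. Use Lemma~\ref{lem:intermediate}~(3) to represent $C$ as a two-term complex $[C^{-1} \to C^0]$ concentrated in degrees $-1,0$. Besides the canonical map $f \colon H^{-1}(C)[1] \to C$, there is an obvious chain map $g \colon C^0 \to C$ given by the identity in degree~$0$. Now take the mapping cocone of $[f,\,g] \colon H^{-1}(C)[1] \oplus C^0 \to C$, obtaining a triangle
\[
A \longrightarrow H^{-1}(C)[1] \oplus C^0 \longrightarrow C \longrightarrow A[1].
\]
A direct check on the long exact cohomology sequence (using that $H^{-1}(f)$ is an isomorphism and $H^0(g) \colon C^0 \to H^0(C)$ is surjective) shows $A \in \AA$. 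Since $A \in \AA \subseteq \CC$ and $C \in \CC$, extension-closure gives $H^{-1}(C)[1] \oplus C^0 \in \CC$, and then summand-closure yields $H^{-1}(C)[1] \in \CC$. This is where the representation of $C$ as an honest two-term complex, not just an abstract object of $\AA[1]*\AA$, is essential.
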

\begin{proof}
  (1)
  Let $\FF$ be a torsionfree class in $\AA$. Recall from Lemma \ref{lem:intermediate} (1) that $\FF[1] * \AA$ consists of $X \in \D^\b(\AA)$ satisfying $H^i(X) = 0$ for $i \neq 0, -1$ and $H^{-1}(X) \in \FF$.
  We clearly have $\AA \subseteq \FF[1] * \AA \subseteq \AA[1] * \AA$. Next, we show that $\FF[1] * \AA$ is closed under extensions in $\D^\b(\AA)$.
  Suppose that there is a conflation (triangle)
  \[
    \begin{tikzcd}
      X \rar & Y \rar & Z \rar[dashed] & \ 
    \end{tikzcd}
  \]
  in $\D^\b(\AA)$ with $X, Z \in \FF[1] * \AA$. It clearly suffices to show that $H^{-1}(Y) \in \FF$. Since $H$ is a cohomological functor, we have the following exact sequence in $\AA$:
  \[
    \begin{tikzcd}
      0 = H^{-2}(Z) \rar & H^{-1}(X) \rar & H^{-1}(Y) \rar & H^{-1}(Z).
    \end{tikzcd}
  \]
  Since we have $H^{-1}(X), H^{-1}(Z) \in \FF$ and $\FF$ is closed under subobjects and extensions, we obtain $H^{-1}(Y) \in \FF$. Since clearly $H^i(Y) = 0$ for $i \neq 0, -1$, we obtain $Y \in \FF[1] * \AA$, so $\CC$ is closed under extensions in $\D^\b(\AA)$.

  Since $H^i(X \oplus Y) \iso H^i(X) \oplus H^i(Y)$ holds and $\FF$ is closed under direct summands in $\AA$, we can easily check that $\FF[1] * \AA$ is closed under direct summands in $\D^\b(\AA)$.

  (2)
  Suppose that $\CC$ is an intermediate subcategory of $\D^\b(\AA)$, that is, $\AA \subseteq \CC \subseteq \AA[1] * \AA$ holds and $\CC$ is closed under extensions and direct summands in $\D^\b(\AA)$.
  We first show the equality $\CC = H^{-1}(\CC)[1] * \AA$. Lemma \ref{lem:intermediate} (2) implies $\CC \subseteq H^{-1}(\CC)[1] * \AA$. To show the converse, it suffices to show $H^{-1}(\CC)[1] \subseteq \CC$ since $\AA \subseteq \CC$ and $\CC$ is closed under extensions.

  Take any $C \in \CC$ and we will see $H^{-1}(C)[1] \in \CC$. By Lemma \ref{lem:intermediate} (3), we may assume that $C = C^{\bullet}$ is a complex concentrated in degree $0$ and $-1$, that is, $C$ is of the form $C = [\cdots \to 0 \to C^{-1} \to C^0 \to 0 \to \cdots]$.
  Lemma \ref{lem:intermediate} (2) gives the following triangle:
  \[
    \begin{tikzcd}
      H^{-1}(C)[1] \rar["f"] & C \rar & H^0(C) \rar[dashed] & \ .
    \end{tikzcd}
  \]
  On the other hand, define the cochain map $g \colon C^0 \to C$ by the following:
  \[
    \begin{tikzcd}
      C^0 \dar["g"'] & \cdots \rar & 0 \rar\dar & 0 \rar\dar & C^0 \rar\dar[equal] & 0 \rar\dar & \cdots \\
      C & \cdots \rar & 0 \rar & C^{-1} \rar & C^0 \rar & 0 \rar & \cdots \\
    \end{tikzcd}
  \]
  Then by consider the mapping cocone of $[f,\ g] \colon H^{-1}(C) [1] \oplus C^0 \to C$, we obtain the following triangle in $\D^\b(\AA)$:
  \[
    \begin{tikzcd}
      A \rar & H^{-1}(C)[1] \oplus C^0 \rar["{[f,\ g]}"] & C \rar & A[1]. 
    \end{tikzcd}
  \]
  We claim that $A$ belongs to $\AA$. To show that, consider the cohomology long exact sequence induced from the above triangle:
  \[
    \begin{tikzcd}[row sep = 0]
      0 = H^{-2}(C) \rar & H^{-1}(A) \rar & H^{-1}(H^{-1}(C)[1] \oplus C^0) \rar["\ov{f}"] & H^{-1}(C) \\
      \rar & H^0(A) \rar & H^0(H^{-1}(C)[1] \oplus C^0) \rar["\ov{g}"] & H^0(C)\\
      \rar & H^1(A) \rar & H^1(H^{-1}(C)[1] \oplus C^0) = 0
    \end{tikzcd}
  \]
  Clearly we have $H^i(A) = 0$ for $i \neq -1, 0, 1$. On the other hand, $\ov{f}$ is isomorphic to $H^{-1}(f)$, which is an isomorphism. Hence $H^{-1}(A) = 0$ holds.
  Moreover, $\ov{g}$ is isomorphic to $H^0(g) \colon C^0 \to H^0(C)$, which is surjective. Hence $H^{1}(A) = 0$ holds.
  Therefore, we have shown that $H^i(A) = 0$ for $i \neq 0$. Thus $A \in \AA$ holds.

  It follows that $H^{-1}(C)[1] \oplus C^0$ belongs to $\CC$, since $\AA \subseteq \CC$ and $\CC$ is closed under extensions. Therefore, we obtain $H^{-1}(C)[1] \in \CC$ because $\CC$ is closed under direct summands.
  Thus we have shown that $\CC = H^{-1}(\CC)[1] * \AA$ holds.

  We claim that $H^{-1}(\CC)$ is a torsionfree class in $\AA$. Put $\FF := H^{-1}(\CC)$ for simplicity, and then we have $\CC = \FF[1] * \AA$ by the above argument.
  Suppose that we have a short exact sequence
  \[
    \begin{tikzcd}
      0 \rar & X \rar & Y \rar & Z \rar & 0
    \end{tikzcd}
  \]
  in $\AA$. Note that this gives a triangle $X \to Y \to Z \to X[1]$ in $\D^\b(\AA)$.
  If $X$ and $Z$ belong to $\FF$, then $X[1]$ and $Z[1]$ belongs to $\FF[1] \subseteq \CC$. Therefore, the conflation
  \[
    \begin{tikzcd}
      X[1] \rar & Y[1] \rar & Z[1] \rar[dashed] & \ 
    \end{tikzcd}
  \]
  in $\D^\b(\AA)$ implies that $Y[1] \in \CC$ since $\CC$ is closed under extensions. Then we obtain $Y = H^{-1}(Y[1]) \in H^{-1}(\CC) = \FF$.

  Similarly, suppose that $Y$ belongs to $\FF$. Then we have a conflation
  \[
    \begin{tikzcd}
      Z \rar & X[1] \rar & Y[1] \rar[dashed] & \ 
    \end{tikzcd}
  \]
  in $\D^\b(\AA)$, and we have $Z \in \AA \subseteq \CC$ and $Y[1] \in \FF[1] \subseteq \CC$. Therefore, $X[1] \in \CC$ holds since $\CC$ is closed under extensions. Hence we obtain $X = H^{-1}(X[1]) \in H^{-1}(\CC) = \FF$. Therefore, $\FF$ is a torsionfree class in $\AA$.

  (3)
  Let $\FF$ be a torsionfree class in $\AA$. Then Lemma \ref{lem:intermediate} (1) shows $H^{-1}(\FF[1] * \AA) \subseteq \FF$. Since we clearly have $\FF \subseteq H^{-1}(\FF[1] * \AA)$, we obtain $H^{-1}(\FF[1] * \AA) = \FF$.
  Conversely, if $\CC$ is an intermediate subcategory of $\D^\b(\AA)$, then we have  $\CC = H^{-1}(\CC)[1] * \AA$ by the proof of (2). Thus the assignments in (1) and (2) are mutually inverse.
\end{proof}

\subsection{Grothendieck monoid of an intermediate subcategory}
By the previous subsection, an intermediate subcategory of $\D^\b(\AA)$ is of the form $\FF[1] * \AA$ for a torsionfree class $\FF$ in $\AA$.
Next, we will calculate the Grothendieck monoid of this extriangulated category. Note that the inclusion functor $\AA \hookrightarrow \FF[1] * \AA$ induces a monoid homomorphism $\M(\AA) \to \M(\FF[1] * \AA)$.
\begin{theorem}\label{thm:mon-of-inter}
  Let $\AA$ be a skeletally small abelian category and $\FF$ a torsionfree class in $\AA$, and put $\CC := \FF[1] * \AA$. Then the natural monoid homomorphism $\M(\AA) \to \M(\CC)$ induces the following isomorphism of monoids
  \[
    \M(\AA)_{\M_\FF} \iso \M(\CC),
  \]
  where the left hand side is the localization of $\M(\AA)$ with respect to $\M_\FF$ (see Definition \ref{def:mon-loc}).
\end{theorem}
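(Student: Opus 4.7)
The plan is to exhibit mutually inverse monoid homomorphisms between $\M(\CC)$ and $\M(\AA)_{\M_\FF}$, using the cohomology functors $H^{-1}, H^0 \colon \D^\b(\AA) \to \AA$.

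First, I would show that the natural map $\eta\colon \M(\AA) \to \M(\CC)$ induced by the inclusion sends each element of $\M_\FF$ to an invertible element. For $F \in \FF$, the triangle $F \to 0 \to F[1] \to F[1]$ in $\D^\b(\AA)$ has all three terms in $\CC$ (since $F \in \AA \subseteq \CC$ and $F[1] \in \FF[1] \subseteq \CC$), so it is a conflation in $\CC$; hence $[F] + [F[1]] = 0$ in $\M(\CC)$, so $[F]$ is invertible with inverse $[F[1]]$. By the universal property of monoid localization, $\eta$ factors uniquely through a homomorphism $\phi \colon \M(\AA)_{\M_\FF} \to \M(\CC)$.

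Next, I would construct the inverse $\psi \colon \M(\CC) \to \M(\AA)_{\M_\FF}$. For $C \in \CC$, Lemma \ref{lem:intermediate}~(1) yields $H^0(C) \in \AA$ and $H^{-1}(C) \in \FF$, so the element
\[
\psi[C] := [H^0(C)] - [H^{-1}(C)]
\]
is well-defined in $\M(\AA)_{\M_\FF}$ (the inverse $-[H^{-1}(C)]$ exists since $[H^{-1}(C)] \in \M_\FF$). To descend $\psi$ through the universal property of $\M(\CC)$, I must verify it respects conflations. For a conflation $X \to Y \to Z \dto$ in $\CC$, the cohomological long exact sequence collapses to
\[
0 \to H^{-1}(X) \to H^{-1}(Y) \to H^{-1}(Z) \to H^0(X) \to H^0(Y) \to H^0(Z) \to 0,
\]
which breaks into four short exact sequences in $\AA$ via the subquotients $L := \ker(H^{-1}(Z) \to H^0(X))$, $K := \im(H^{-1}(Z) \to H^0(X))$, and $M := \im(H^0(X) \to H^0(Y))$. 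Adding the invertible element $[H^{-1}(X)] + [H^{-1}(Y)] + [H^{-1}(Z)]$ to both sides of the desired identity $\psi[Y] = \psi[X] + \psi[Z]$ reduces the claim to the equation $[H^0(Y)] + [H^{-1}(X)] + [H^{-1}(Z)] = [H^0(X)] + [H^0(Z)] + [H^{-1}(Y)]$ in $\M(\AA)$, which follows by substituting the four short-exact-sequence relations: both sides equal $[K] + [L] + [M] + [H^{-1}(X)] + [H^0(Z)]$.

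Finally, I would verify that $\phi$ and $\psi$ are mutually inverse. For any $C \in \CC$, Lemma \ref{lem:intermediate}~(2) provides a conflation $H^{-1}(C)[1] \to C \to H^0(C) \dto$ in $\CC$, whence $\phi\psi[C] = [H^0(C)] + [H^{-1}(C)[1]] = [C]$. Conversely, $\M(\AA)_{\M_\FF}$ is generated by $\{[A] : A \in \AA\} \cup \{-[F] : F \in \FF\}$, and both families are fixed by $\psi\phi$ via direct computation (using $H^{-1}(F[1]) = F$ and $H^0(F[1]) = 0$). The main subtle step is the second one: one must check the conflation condition for $\psi$ without being able to invert the intermediate subquotient $[K]$, which need not lie in $\FF$; the clearing-denominators reformulation above sidesteps this cleanly, turning the identity into one that can be checked directly in $\M(\AA)$ from the cohomological long exact sequence.
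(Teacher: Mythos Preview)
Your proof is correct and follows essentially the same approach as the paper. The paper verifies directly that $\iota\colon \M(\AA)\to\M(\CC)$ satisfies the universal property of the localization (showing uniqueness and existence of the factoring map $\ov{\varphi}$ for an arbitrary target monoid $M$), while you instead build explicit mutual inverses by specializing to $M=\M(\AA)_{\M_\FF}$; but the substantive computation is identical in both---the invertibility of $[F]$ via the conflation $F\to 0\to F[1]\dto$, the definition of $\psi$ via $[H^0(C)]-[H^{-1}(C)]$, and the verification that $\psi$ respects conflations via the long exact sequence identity $[H^{-1}(X)]+[H^{-1}(Z)]+[H^0(Y)]=[H^{-1}(Y)]+[H^0(X)]+[H^0(Z)]$ in $\M(\AA)$.
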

\begin{proof}
  Let $\iota \colon \M(\AA) \to \M(\CC)$ be the natural monoid homomorphism. We check that $\iota$ satisfies the universal property of the localization of $\M(\AA)$ with respect to $\M_\FF$.

  First, we show that every element in $\iota(\M_\FF)$ is invertible. Take any $[F] \in \M_\FF$ with $F \in \FF$, then $\iota[F] = [F]$ in $\M(\CC)$. Now we have a conflation
  \[
    \begin{tikzcd}
      F \rar & 0 \rar & F[1] \rar[dashed] & \ 
    \end{tikzcd}
  \]
  in $\CC = \FF[1] * \AA$. Therefore, $[F] + [F[1]] = 0$ holds in $\M(\CC)$, that is, $\iota[F] \in \M(\CC)$ is invertible.

  Next, we will check the universal property. Let $\varphi \colon \M(\AA) \to M$ be a monoid homomorphism such that $\varphi[F]$ is invertible in $M$ for every $F \in \FF$.
  We have to show that there is a unique monoid homomorphism $\ov{\varphi} \colon \M(\CC)\to M$ which makes the following diagram commute:
  \[
    \begin{tikzcd}
      \M(\AA) \dar["\varphi"'] \rar["\iota"] & \M(\CC) \ar[ld, "\ov{\varphi}", dashed] \\
      M
    \end{tikzcd}
  \]
  We first check the uniqueness. Suppose that there is such a map $\ov{\varphi}$. Let $C \in \CC$ be any object. Then since $\CC = \FF[1] * \AA$, there is a conflation in $\CC$
  \[
    \begin{tikzcd}
      H^{-1}(C)[1] \rar & C \rar & H^0(C) \rar[dashed] & \
    \end{tikzcd}
  \]
  with $H^{-1}(C) \in \FF$ by Lemma \ref{lem:intermediate}. Thus $[C] = [H^0(C)] + [H^{-1}(C)[1]] = [H^0(C)] - [H^1(C)]$ in $\M(\CC)$.
  Since $\ov{\varphi}$ is a monoid homomorphism, it preserves the inverse. Therefore, we must have
  \[
    \ov{\varphi}[C] = \ov{\varphi}[H^0(C)] - \ov{\varphi}[H^1(C)]
    = \ov{\varphi} \iota [H^0(C)] - \ov{\varphi} \iota [H^1(C)]
    = \varphi[H^0(C)] - \varphi[H^1(C)].
  \]
  Therefore, the uniqueness of $\ov{\varphi}$ holds.

  Next, we will construct $\ov{\varphi}$. Consider the following map $\psi \colon \Iso\CC \to M$:
  \[
    \psi [X] := \varphi [H^0(X)] - \varphi [H^{-1}(X)].
  \]
  Note that $\varphi[H^{-1}(X)]$ has an inverse in $M$ since $H^{-1}(X) \in \FF$.
  We will show that $\psi$ respects conflations in $\CC$. Clearly $\psi[0] = 0$ holds.
  Take any conflation 
  \[
    \begin{tikzcd}
      X \rar & Y \rar & Z \rar[dashed] & \ 
    \end{tikzcd}
  \]
  in $\CC$. Since this is a triangle in $\D^\b(\AA)$, we obtain the following long exact sequence in $\AA$:
  \[
    \begin{tikzcd}[row sep = 0]
      H^{-2}(Z) = 0 \rar & H^{-1}(X) \rar & H^{-1}(Y) \rar & H^{-1}(Z) \\
      \rar & H^0(X) \rar & H^0(Y) \rar & H^0(Z) \rar & H^2(X) = 0
    \end{tikzcd}
  \]
  By decomposing this long exact sequence into short exact sequences in $\AA$, we can easily obtain the equality
  \[
    [H^{-1}(X)] + [H^{-1}(Z)] + [H^0(Y)] = [H^{-1}(Y)] + [H^0(X)] + [H^0(Z)]
  \]
  in $\M(\AA)$. Therefore, by applying $\varphi$, we obtain
  \[
    \varphi[H^{-1}(X)] + \varphi[H^{-1}(Z)] + \varphi[H^0(Y)] = \varphi[H^{-1}(Y)] + \varphi[H^0(X)] + \varphi[H^0(Z)],
  \]
  which clearly implies $\psi[Y] = \psi[X] + \psi[Z]$ in $M$. Thus $\psi$ respects conflations, so it induces a monoid homomorphism $\ov{\varphi} \colon \M(\CC) \to M$. Moreover, for any $A \in \AA$, we have $\ov{\varphi}\iota[A] = \varphi[H^0(A)] - \varphi[H^{-1}(A)] = \varphi[A] - 0 = \varphi[A]$, and hence $\ov{\varphi}\iota = \varphi$ holds.
\end{proof}

As an immediate consequence, we obtain the following example of an extriangulated category whose Grothendieck monoid is a group:
\begin{corollary}\label{cor:a1-a-k0}
  Let $\AA$ be a skeletally small abelian category. Then the natural homomorphism $\M(\AA) \to \M(\AA[1] * \AA)$ induces an isomorphism $\K_0(\AA) \iso \M(\AA[1] * \AA)$.
\end{corollary}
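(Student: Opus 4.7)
The plan is to deduce this corollary as a direct specialization of Theorem \ref{thm:mon-of-inter}. First I would observe that $\AA$ itself is trivially a torsionfree class in $\AA$ (it is closed under subobjects and extensions). Thus we may apply Theorem \ref{thm:mon-of-inter} with $\FF := \AA$, which yields the isomorphism
\[
  \M(\AA)_{\M_\AA} \iso \M(\AA[1] * \AA)
\]
induced by the natural homomorphism $\M(\AA) \to \M(\AA[1] * \AA)$.

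The second step is to identify the left-hand side with $\K_0(\AA)$. Since $\M_\AA$ is the entire image of $\M(\AA)$ in itself, which is all of $\M(\AA)$, the monoid $\M(\AA)_{\M_\AA}$ is the localization of $\M(\AA)$ at every one of its own elements. By the universal property of monoid localization (Definition \ref{def:mon-loc}), a homomorphism out of $\M(\AA)_{\M_\AA}$ corresponds to a homomorphism out of $\M(\AA)$ under which every element becomes invertible, i.e., a homomorphism into an abelian group. Hence $\M(\AA)_{\M_\AA}$ satisfies the universal property of the group completion, so $\M(\AA)_{\M_\AA} \iso \gp \M(\AA)$. Combined with the natural isomorphism $\K_0(\AA) \iso \gp \M(\AA)$ from Remark \ref{rem:k0-as-gp}, we obtain $\M(\AA)_{\M_\AA} \iso \K_0(\AA)$.

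Putting these together gives the required isomorphism $\K_0(\AA) \iso \M(\AA[1] * \AA)$. I do not expect any real obstacle: the only subtle point is checking that localizing a monoid at itself recovers its group completion, and this is essentially immediate from the universal properties and should already appear in the appendix on commutative monoids.
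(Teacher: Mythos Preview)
Your proposal is correct and follows essentially the same approach as the paper: apply Theorem \ref{thm:mon-of-inter} with $\FF=\AA$ (a torsionfree class), identify $\M(\AA)_{\M_\AA}$ with the group completion since $\M_\AA=\M(\AA)$, and then invoke Remark \ref{rem:k0-as-gp}. The paper's proof is just a more condensed version of exactly these steps.
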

\begin{proof}
  By Theorem \ref{thm:mon-of-inter}, we have that $\M(\AA[1] * \AA)$ is isomorphic to the localization of $\M(\AA)$ with respect to the whole set $\M(\AA)$. This is nothing but the group completion of $\M(\AA)$, which is $\K_0(\AA)$ (see Remark \ref{rem:k0-as-gp}).
\end{proof}

The above corollary says that the inclusion $\AA \hookrightarrow \AA[1] * \AA$ \emph{categorifies} the group completion $\M(\AA) \to \K_0(\AA)$. Furthermore, we can realize all the localization of $\M(\AA)$ in this way, as follows.
\begin{rmk}\label{rem:mon-loc-categorify}
Consider the localization $\M(\catA)_S$ of $\M(\catA)$ 
with respect to any subset $S \subseteq \M(\catA)$.
We have a monoid isomorphism $\M(\catA)_S \iso \M(\catA)_{\gen{S}_{\face}}$
by Lemma \ref{lem:face loc 1}.
Then $\catF :=\catD_{\gen{S}_{\face}}$ is a Serre subcategory of $\catA$ 
and $\M_{\catF}=\gen{S}_{\face}$ by Proposition \ref{prop:serre-face-bij}.
In particular, it is a torsionfree class of $\catA$.
By Theorem \ref{thm:mon-of-inter}, we have a monoid isomorphism
\[
\M(\catA)_S \iso \M(\catA)_{\gen{S}_{\face}}=\M(\catA)_{\M_{\catF}}\iso \M(\catF[1]*\catA).
\]
Thus we can realize all localizations of $\M(\catA)$ 
as the Grothendieck monoids of intermediate subcategories of $\D^\b(\AA)$.
Therefore, the natural inclusion $\catA \inj \catC$ to an intermediate subcategory
yields a categorification of a localization of the monoid $\M(\catA)$.
\end{rmk}

Next, we describe the Grothendieck monoid of an intermediate subcategory of $\D^\b(\AA)$ when $\AA$ is an abelian length category with finitely many simples. 
Recall that an \emph{abelian length category} is a skeletally small abelian category such that every object has a finite length. For an abelian length category $\AA$, we denote by $\simp \AA$ the set of isomorphism classes of simple objects in $\AA$.
The following observation on the localization of a monoid can be easily checked.
\begin{lemma}\label{lem:loc-monoid-of-sum}
  Let $M$ and $N$ be monoids. Then the localization of $M \oplus N$ with respect to $M$ is isomorphic to $(\gp M) \oplus N$.
\end{lemma}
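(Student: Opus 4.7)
The plan is to verify the universal property of the monoid localization (Definition \ref{def:mon-loc}) directly. I would construct a natural candidate morphism $\Phi \colon M\oplus N \to (\gp M) \oplus N$, show that it inverts every element of $M$ (viewed as the subset $M \oplus 0$), and then check that any monoid homomorphism out of $M \oplus N$ inverting $M$ factors uniquely through $\Phi$.

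First I would take $\Phi := \rho \oplus \id_N$, where $\rho \colon M \to \gp M$ is the group completion. For any $m \in M$, the element $\Phi(m,0) = (\rho(m), 0)$ has the explicit inverse $(-\rho(m),0)$ in $(\gp M) \oplus N$, so every element of the subset $M \subseteq M \oplus N$ is sent by $\Phi$ to an invertible element.

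For the universal property, suppose $\varphi \colon M\oplus N \to P$ is a monoid homomorphism such that $\varphi(m,0)$ is invertible in $P$ for every $m \in M$. I would split $\varphi$ into the two partial homomorphisms $\varphi_1(m) := \varphi(m,0)$ and $\varphi_2(n) := \varphi(0,n)$, which commute in $P$ because $(m,0) + (0,n) = (0,n) + (m,0)$. By hypothesis $\varphi_1$ takes values in invertible elements, so the universal property of group completion (Proposition \ref{prop:gp-compl-univ}) yields a unique group homomorphism $\widetilde{\varphi}_1 \colon \gp M \to P$ with $\widetilde{\varphi}_1 \circ \rho = \varphi_1$. I would then set $\widetilde{\varphi}(g,n) := \widetilde{\varphi}_1(g) + \varphi_2(n)$, which is a monoid homomorphism $(\gp M) \oplus N \to P$ satisfying $\widetilde{\varphi} \circ \Phi = \varphi$.

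Uniqueness of $\widetilde{\varphi}$ follows because any such factorization must agree with $\varphi_1$ (after composing with $\rho$) on the $\gp M$ component and with $\varphi_2$ on the $N$ component; the first is pinned down by the universal property of $\rho$ and the second by $\widetilde{\varphi}(0,n) = \widetilde{\varphi}\Phi(0,n) = \varphi(0,n)$. There is no real obstacle here: the only subtle point is noting that invertibility of $\varphi_1$ in $P$ is what makes the group completion step available, and the direct sum structure makes the two components independent.
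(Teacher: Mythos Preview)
Your proof is correct, and the paper itself does not supply a proof for this lemma (it only says the statement ``can be easily checked''), so there is no approach to compare against; your verification via the universal property is exactly the natural argument.

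One small citation issue: you invoke Proposition~\ref{prop:gp-compl-univ} to produce $\widetilde{\varphi}_1 \colon \gp M \to P$, but that proposition is stated only for homomorphisms into a \emph{group} $G$, whereas your $P$ is an arbitrary monoid. The fix is immediate: either appeal directly to the universal property of localization (Definition~\ref{def:mon-loc}) for $\gp M = M_M$, which allows any monoid target in which the images of $M$ are invertible, or observe that $\varphi_1$ lands in the group of units $P^\times$ and apply Proposition~\ref{prop:gp-compl-univ} there. Also, your remark that $\varphi_1$ and $\varphi_2$ ``commute in $P$'' is superfluous, since all monoids in the paper are commutative by convention.
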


\begin{corollary}\label{cor:mon-of-inter-of-length}
  Let $\AA$ be an abelian length category and $\FF$ a torsionfree class in $\AA$. Define $\simp_\FF \AA$ as follows:
  \[
    \simp_\FF \AA := \{ [S] \in \simp\AA \mid \text{$S$ appears as a composition factor of a some object in $\FF$} \}.
  \]
  Then we have an isomorphism of monoids
  \[
    \M(\FF[1] * \AA) \iso \Z^{\oplus \simp_\FF \AA} \oplus 
    \N^{\oplus (\simp\AA \setminus \simp_\FF\AA)}.
  \]
\end{corollary}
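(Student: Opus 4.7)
The plan is to combine Theorem~\ref{thm:mon-of-inter} with a direct computation inside $\N^{\oplus \simp\AA}$. By Theorem~\ref{thm:mon-of-inter}, $\M(\FF[1] * \AA) \iso \M(\AA)_{\M_\FF}$, and by Example~\ref{ex:gro-mon}~(1) the dimension vector map $\udim$ identifies $\M(\AA)$ with $\N^{\oplus \simp\AA}$; under this identification $\M_\FF$ corresponds to the subset $\{\udim F \mid F \in \FF\}$. It therefore suffices to compute the localization of $\N^{\oplus \simp\AA}$ at this subset.

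The key step is to replace $\M_\FF$ by the face it generates. By Lemma~\ref{lem:face loc 1} (already invoked in the remark preceding the corollary), the localization depends only on this face, so it is enough to show $\la \M_\FF \ra_{\face} = \N^{\oplus \simp_\FF \AA}$ inside $\N^{\oplus \simp\AA}$. For the inclusion $\subseteq$, observe that $\N^{\oplus \simp_\FF \AA}$, i.e. the set of vectors supported on $\simp_\FF \AA$, is itself a face of $\N^{\oplus \simp\AA}$, and for any $F \in \FF$ the vector $\udim F$ is supported on the composition factors of $F$, all of which lie in $\simp_\FF \AA$ by definition of $\simp_\FF\AA$. For the reverse inclusion, take any $[S] \in \simp_\FF \AA$ and choose $F \in \FF$ having $S$ as a composition factor. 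Then $\udim F - [S]$ has non-negative entries, so $[F] = [S] + x$ holds in $\M(\AA)$ for some $x$. Since $[F] \in \M_\FF \subseteq \la \M_\FF \ra_{\face}$ and faces are closed under taking summands, we conclude $[S] \in \la \M_\FF \ra_{\face}$; closure under addition then yields all of $\N^{\oplus \simp_\FF \AA}$.

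Finally, the direct sum decomposition
\[
\N^{\oplus \simp\AA} = \N^{\oplus \simp_\FF \AA} \oplus \N^{\oplus (\simp\AA \setminus \simp_\FF\AA)}
\]
combined with Lemma~\ref{lem:loc-monoid-of-sum} gives
\[
\M(\AA)_{\M_\FF} \iso \Z^{\oplus \simp_\FF \AA} \oplus \N^{\oplus (\simp\AA \setminus \simp_\FF \AA)},
\]
which is the claimed isomorphism. The only non-routine step is the face computation in the second paragraph, and even that reduces cleanly to the defining subtractive property of faces applied to the identity $[F] = [S] + x$.
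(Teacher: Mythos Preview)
Your proof is correct and follows essentially the same route as the paper's: apply Theorem~\ref{thm:mon-of-inter}, identify $\M(\AA)$ with $\N^{\oplus \simp\AA}$, use Lemma~\ref{lem:face loc 1} to replace $\M_\FF$ by the face it generates, verify $\la \M_\FF \ra_\face = \N^{\oplus \simp_\FF\AA}$, and finish with Lemma~\ref{lem:loc-monoid-of-sum}. The only difference is that the paper dispatches the face identity with ``it is easily checked,'' whereas you spell out both inclusions; your argument for them is clean and correct.
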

\begin{proof}
  Since $\AA$ is an abelian length category and the Jordan-H\"older theorem holds in $\AA$, we have that $\M(\AA)$ is a free commutative monoid with the basis $\{ [S] \in \M(\AA) \mid [S] \in \simp\AA\}$, see \cite[Corollary 4.10]{JHP}. Thus we have an isomorphism $\M(\AA) \iso \N^{\oplus \simp\AA}$, which sends $[A] \in \M(\AA)$ to $\sum n_i [S_i]$, where $n_i$ is the multiplicity of $S_i$ in the composition series of $M$. In the rest of this proof, we identify $\M(\AA)$ with $\N^{\oplus \simp\AA}$.

  Theorem \ref{thm:mon-of-inter} implies that $\M(\FF[1] * \AA)$ is the localization of $\M(\AA)$ with respect to $\M_\FF$. By Lemma \ref{lem:face loc 1}, this localization coincides with that with respect to the smallest face $\la \M_\FF \ra_\face$ of $\M(\AA)$ containing $\M_\FF$. It is easily checked that the following holds:
  \[
    \la \M_\FF \ra_{\face} = \N^{\oplus \simp_\FF \AA} \subseteq \N^{\oplus \simp_\FF \AA} \oplus \N^{\oplus (\simp\AA \setminus \simp_\FF\AA)} = \M(\AA).
  \]
  Then Lemma \ref{lem:loc-monoid-of-sum} immediately deduces the assertion.
\end{proof}

\begin{example}\label{ex:toy}
  Consider Example \ref{ex:intro}.
  Since $\AA$ is length, $\M(\AA) = \N [S_1] \oplus \N[S_2] \oplus \N[S_3]$, where $S_i = \sst{i}$ is the simple module corresponding to each vertex $i$.
  Then $\simp_\FF\AA = \{ S_1, S_2\}$, so Corollary \ref{cor:mon-of-inter-of-length} implies $\M(\CC) = \Z [S_1] \oplus \Z[S_2] \oplus \N[S_3]$.
  For example, $[S_2]$ is invertible in $\M(\CC)$ although $[S_2[1]] \not \in \CC$, which can be checked alternatively as follows. We have $[S_2] = [\sst{2\\1}] + [\sst{1}[1]]$ in $\M(\CC)$ by a conflation
  \[
    \begin{tikzcd}
      \sst{2\\1} \rar & \sst{2} \rar & \sst{1}[1] \rar[dashed] & \ .
    \end{tikzcd}
  \]
  On the other hand, $[\sst{2\\1}]$ and $[\sst{1}[1]]$ have inverses $[\sst{2\\1}[1]]$ and $[\sst{1}]$ in $\M(\CC)$ respectively. Thus $[S_2]$ has an inverse $[\sst{2\\1}[1]] + [\sst{1}]$.
\end{example}

\subsection{Serre subcategories of an intermediate subcategory}
In this subsection, we classify Serre subcategories of an intermediate subcategory and study its localization.

First, we classify Serre subcategories of an intermediate subcategory via Serre subcategories of the original abelian category.
\begin{proposition}\label{prop:inter-serre-bij}
  Let $\AA$ be a skeletally small abelian category and $\FF$ a torsionfree class in $\AA$.
  Then there is a bijection between the following two sets.
  \begin{enumerate}
    \item $\Serre(\FF[1] * \AA)$.
    \item $\{\SS \in \Serre \AA \mid \FF \subseteq \SS \}$.
  \end{enumerate}
  The maps are given as follows: for $\DD$ in {\upshape (1)}, we consider $\AA \cap \DD$ in {\upshape (2)}, and for $\SS$ in {\upshape (2)} we consider $\FF[1] * \SS$ in {\upshape (1)}.
\end{proposition}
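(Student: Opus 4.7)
My plan is to use the cohomological description of $\FF[1]*\SS$ given by Lemma~\ref{lem:intermediate}(1), namely
\[
  \FF[1]*\SS = \{\, X \in \D^\b(\AA) \mid H^i(X)=0 \text{ for } i \neq -1,0,\ H^{-1}(X) \in \FF,\ H^0(X) \in \SS \,\}.
\]
In particular, within the ambient category $\CC := \FF[1]*\AA$ (where $H^{-1}(X) \in \FF$ is automatic), the condition for membership in $\FF[1]*\SS$ simplifies to $H^0(X) \in \SS$. This will let me reduce everything to Serre-type statements in $\AA$ via the long exact sequence in cohomology.

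First I would verify that the two maps are well-defined. For $\SS \mapsto \FF[1]*\SS$, closure under direct sums and summands is immediate from the cohomological characterization, since $H^i$ commutes with direct sums and summands. For the Serre property, I take a conflation $X \to Y \to Z \dto$ in $\CC$ (equivalently, a triangle in $\D^\b(\AA)$) and use the six-term exact sequence
\[
  0 \to H^{-1}(X) \to H^{-1}(Y) \to H^{-1}(Z) \to H^0(X) \to H^0(Y) \to H^0(Z) \to 0.
\]
Since all three $H^{-1}$'s lie in $\FF \subseteq \SS$ already, in each of the three Serre cases the claim $H^0(-) \in \SS$ follows by breaking the sequence into short exact pieces and invoking that $\SS$ is Serre in $\AA$ (hence closed under subobjects, quotients, and extensions); the crucial role of the hypothesis $\FF \subseteq \SS$ appears in the case ``$Y,Z \in \FF[1]*\SS \Rightarrow X \in \FF[1]*\SS$'', where $H^0(X)$ is an extension of a subobject of $H^0(Y)$ by a quotient of $H^{-1}(Z) \in \FF \subseteq \SS$. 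For the reverse map $\DD \mapsto \AA \cap \DD$, it is clear that $\AA \cap \DD$ is Serre in $\AA$ (any short exact sequence in $\AA$ is a conflation in $\CC$); and $\FF \subseteq \AA \cap \DD$ because the conflation $F \to 0 \to F[1] \dto$ in $\CC$ combined with $0 \in \DD$ and the Serre property of $\DD$ forces $F \in \DD$ (and incidentally $F[1] \in \DD$).

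Finally I would check that the two assignments are mutually inverse. The equality $\AA \cap (\FF[1]*\SS) = \SS$ is immediate: if $A \in \AA$, then $H^0(A)=A$, so $A \in \FF[1]*\SS$ iff $A \in \SS$. For $\FF[1]*(\AA \cap \DD) = \DD$, the inclusion $\supseteq$ follows from $\FF \subseteq \DD$ (so $\FF[1] \subseteq \DD$) together with closure of $\DD$ under extensions. For $\subseteq$, given $D \in \DD$, Lemma~\ref{lem:intermediate}(2) supplies a conflation $H^{-1}(D)[1] \to D \to H^0(D) \dto$ in $\CC$; since $H^{-1}(D)[1] \in \FF[1] \subseteq \DD$ and $D \in \DD$, the Serre property of $\DD$ forces $H^0(D) \in \DD$, hence $H^0(D) \in \AA \cap \DD$, which shows $D \in \FF[1]*(\AA \cap \DD)$.

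I expect the main (though still mild) obstacle to be the Serre-property verification for $\FF[1]*\SS$, specifically the case in which only two of $X,Y,Z$ are assumed to lie in $\FF[1]*\SS$ and one must track both the $H^{-1}$ and $H^0$ information through the long exact sequence; the cleanest way to handle all three cases uniformly is to exploit the observation that the $H^{-1}$ constraint is automatic inside $\CC$, so the verification really only tests the Serre property of $\SS$ in $\AA$ on the $H^0$-layer, with the $H^{-1}$-terms feeding in harmlessly via $\FF \subseteq \SS$.
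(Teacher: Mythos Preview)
Your proposal is correct and follows the same approach as the paper, reducing the Serre verification for $\FF[1]*\SS$ to the $H^0$-layer via the long exact sequence and the cohomological description from Lemma~\ref{lem:intermediate}. Two cosmetic points: the highlighted case should read ``$Y \in \FF[1]*\SS \Rightarrow X \in \FF[1]*\SS$'' (your argument only uses $H^0(Y)\in\SS$ and $H^{-1}(Z)\in\FF\subseteq\SS$, so the extra hypothesis $Z\in\FF[1]*\SS$ is neither needed nor part of the Serre condition), and your inclusion labels $\supseteq/\subseteq$ in the final paragraph are swapped, though both arguments you give are correct.
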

\begin{proof}
  Let $\DD$ be a Serre subcategory of $\FF[1] * \AA$. Then clearly $\AA \cap \DD$ is a Serre subcategory of $\AA$ since $\AA$ is an extension-closed subcategory of $\FF[1] * \AA$. Moreover, for any $F \in \FF$, we have a conflation
  \begin{equation}\label{eq:f-0-f1}
    \begin{tikzcd}
      F \rar & 0 \rar & F[1] \rar[dashed] & \ 
    \end{tikzcd}
  \end{equation}
  in $\FF[1] * \AA$, which implies that $F \in \DD$ and $F[1] \in \DD$ since $\DD$ is a Serre subcategory of $\FF[1] * \AA$ and $0 \in \DD$. Thus $F \in \AA \cap \DD$, and hence $\FF \subseteq \AA \cap \DD$.

  Conversely, let $\SS$ be a Serre subcategory of $\AA$ with $\FF \subseteq \SS$, and put $\DD := \FF[1] * \SS$. By Lemma \ref{lem:intermediate} (1), we can describe $\DD$ as follows:
  \[
    \FF[1] * \SS = \{
      X \in \FF[1] * \AA \mid H^0(X) \in \SS \}.
  \]
  We claim that $\DD$ is a Serre subcategory of $\FF[1] * \AA$. Suppose that there is a conflation
  \[
    \begin{tikzcd}
      X \rar["f"] & Y \rar["g"] & Z \rar[dashed] & \ 
    \end{tikzcd}
  \]
  in $\FF[1] * \AA$. Then we obtain the following long exact sequence in $\AA$:
  \[
    \begin{tikzcd}[row sep = 0]
      H^{-1}(Z) \rar["\delta"] & H^0(X) \rar["H^0(f)"] & H^0(Y) \rar["H^0(g)"] & H^0(Z) \rar & H^1(X) = 0
    \end{tikzcd}
  \]
  Thus we obtain the following short exact sequence:
  \[
    \begin{tikzcd}
      0 \rar & \coker \delta \rar & H^0(Y) \rar["H^0(g)"] & H^0(Z) \rar & 0.
    \end{tikzcd}
  \]
  If $X$ and $Z$ belong to $\FF[1] * \SS$, then we have $H^0(X),H^0(Z) \in \SS$ by $X, Z \in \FF[1] * \AA$, and $\coker \delta \in \SS$ since $\SS$ is closed under quotients in $\AA$. Thus $H^0(Y) \in \SS$ holds since $\SS$ is closed under extensions.
  Similarly, if $Y$ belongs to $\FF[1] * \SS$, then $H^0(Y) \in \SS$, and hence $\coker \delta, H^0(Z) \in \SS$ since $\SS$ is closed under subobjects and quotients. Moreover, we have another short exact sequence:
  \[
    \begin{tikzcd}
      0 \rar & \im \delta \rar & H^0(X) \rar & \coker \delta \rar & 0.
    \end{tikzcd}
  \]
  Since $\im \delta$ is a quotient of $H^{-1}(Z) \in \FF \subseteq \SS$, we have $\im \delta \in \SS$. Thus $H^0(X) \in \SS$ holds since $\SS$ is closed under extensions. Therefore, we obtain $X, Z \in \FF[1] * \SS$, so $\FF[1] * \SS$ is a Serre subcategory of $\FF[1] * \AA$.

  Finally, we check that these maps are inverse to each other. Let $\DD$ be a Serre subcategory of $\FF[1] * \AA$, and we will see $\DD = \FF[1] * (\AA \cap \DD)$. By the conflation \eqref{eq:f-0-f1}, we have $\FF[1] \subseteq \DD$. Thus we obtain $\FF[1] * (\AA\cap \DD) \subseteq \DD$ since $\DD$ is closed under extensions. Conversely, let $X \in \DD$. Then since $X \in \DD \subseteq \FF[1] * \AA$, we have the conflation
  \[
    \begin{tikzcd}
      F[1] \rar & X \rar & A \rar[dashed] & \ 
    \end{tikzcd}
  \]
  in $\FF[1] * \AA$ with $F \in \FF$ and $A \in \AA$. Since $\DD$ is a Serre subcategory of $\FF[1] * \AA$, we have in addition $A \in \DD$, and hence $A \in \AA \cap \DD$. Thus $X \in \FF[1] * (\AA \cap \DD)$.
  Next, let $\SS$ be a Serre subcategory of $\AA$ containing $\FF$. Clearly we have $\SS \subseteq \FF[1] * \SS$, so $\SS \subseteq \AA \cap (\FF[1] * \SS)$. Conversely, let $A \in \AA \cap (\FF[1] * \SS)$. Then we have an isomorphism $A \iso H^0(A)$, and $H^0(A) \in \SS$ holds by Lemma \ref{lem:intermediate} (1). Therefore we obtain $A \in \SS$.
\end{proof}

\begin{remark}
  The bijection in Proposition \ref{prop:inter-serre-bij} can also be constructed purely combinatorially as follows: Proposition \ref{prop:serre-face-bij}, Theorem \ref{thm:mon-of-inter}, and Proposition \ref{prp:face loc} give the following diagram consisting of bijections.
  \[
    \begin{tikzcd}
      \Serre (\FF[1] * \AA) \dar["\M_{(-)}"'] & \{\SS \in \Serre(\AA) \mid \FF \subseteq \SS \}
      \dar["\M_{(-)}"]
      \\
      \Face \M(\FF[1] * \AA) = \Face \M(\AA)_{\M_\FF} 
      \rar[leftrightarrow, "\sim"] & \{ F \in \Face \M(\AA) \mid \M_\FF \subseteq F\}
    \end{tikzcd}
  \]
  Therefore, we can think of Proposition \ref{prop:inter-serre-bij} as a \emph{categorification} of Proposition \ref{prp:face loc} about faces of the localization of a monoid.
\end{remark}

Next, we discuss the localization of an intermediate subcategory by its Serre subcategory. It is always possible and yields an abelian category:
\begin{proposition}\label{prop:inter-loc}
  Let $\AA$ be a skeletally small abelian category, $\FF$ a torsionfree class in $\AA$, and $\DD$ a Serre subcategory of $\FF[1] * \AA$. Then $\DD$ satisfies conditions {\upshape (EL1)--(EL4)} in Fact \ref{fct:loc ex}. Therefore, the exact localization $(\FF[1] * \AA) \big/\DD$ exists and is an abelian category.
\end{proposition}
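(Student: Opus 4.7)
The plan is to verify conditions (EL1)--(EL4) one by one for $\CC := \FF[1] * \AA$ and $\NN := \DD$. By Proposition \ref{prop:inter-serre-bij}, $\DD$ has the form $\FF[1] * \SS$ for a Serre subcategory $\SS$ of $\AA$ containing $\FF$; this concrete description will be indispensable for (EL3). I will work throughout with the canonical truncation conflation $H^{-1}(Z)[1] \to Z \to H^0(Z) \dto$ from Lemma \ref{lem:intermediate}(2) and the resulting isomorphism $\Hom_{\D^\b(\AA)}(Z, M) \iso \Hom_\AA(H^0(Z), M)$ for $Z \in \CC$ and $M \in \AA$ (valid because $\Ext^i_\AA = 0$ for $i < 0$).

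Condition (EL2) is essentially free. Since $\CC$ is extension-closed in the triangulated category $\D^\b(\AA)$ and closed under direct summands (by the definition of intermediate subcategory), Lemma \ref{lem:WIC} shows that $\CC$ satisfies (WIC), and Remark \ref{rmk:EL}(1) then gives (EL2).

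The main work is (EL4). Given $f \colon X \to Y$ in $\CC$, let $I := \im H^0(f)$ in $\AA$ and let $Y_0$ be the fiber in $\D^\b(\AA)$ of the composition $Y \to H^0(Y) \twoheadrightarrow H^0(Y)/I$, so that $Y_0 \to Y \to H^0(Y)/I \dto$ is a conflation in $\D^\b(\AA)$. The cohomology long exact sequence yields $H^{-1}(Y_0) \iso H^{-1}(Y) \in \FF$ and $H^0(Y_0) = I \in \AA$, so $Y_0 \in \CC$, and the map $Y_0 \to Y$ is an inflation with cofiber $H^0(Y)/I \in \AA \subseteq \CC$. The composition $X \to Y \to H^0(Y)/I$ is determined by its $H^0$ (by the Hom isomorphism above), which is $H^0(f)$ post-composed with the quotient and hence zero, so $f$ factors as $f = (Y_0 \to Y) \circ f_1$ for a unique $f_1 \colon X \to Y_0$. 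For the fiber $F_1$ of $f_1$, another long exact sequence computation shows that $H^{-1}(F_1) = \ker H^{-1}(f)$ is a subobject of $H^{-1}(X) \in \FF$ hence in $\FF$, while $H^0(F_1) \in \AA$, so $F_1 \in \CC$ and $f_1$ is a deflation. This cohomology bookkeeping is the technically delicate step, but each item reduces to a routine diagram chase.

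Once (EL4) is established, (EL1) follows at once from Example \ref{ex:adm-serre-perco}. For (EL3), I will treat the two kernel conditions separately. Given a conflation $A \xrightarrow{f} B \xrightarrow{g} C \dto$ and $h \colon X \to A$ with $fh = 0$, the cohomology long exact sequence yields $\im H^0(h) \subseteq \ker H^0(f) = \im(H^{-1}(C) \to H^0(A))$; since $H^{-1}(C) \in \FF \subseteq \SS$ and $\SS$ is Serre (hence closed under quotients), this image lies in $\SS$, so $\im H^0(h) \in \SS$. Then the fiber $A'$ in $\D^\b(\AA)$ of $A \to H^0(A)/\im H^0(h)$ satisfies $H^{-1}(A') = H^{-1}(A) \in \FF$ and $H^0(A') = \im H^0(h) \in \SS$, so $A' \in \FF[1] * \SS = \DD$; the morphism $A' \to A$ is an inflation through which $h$ factors, again by the $H^0$-determinedness of morphisms to objects of $\AA$. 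Dually, for $h \colon C \to X$ with $hg = 0$, the surjectivity of $H^0(g)$ (visible from $H^0(B) \to H^0(C) \to H^1(A) = 0$) forces $H^0(h) = 0$, so $h$ factors through $H^{-1}(X)[1] \in \FF[1] \subseteq \DD$ via the truncation triangle for $X$. With all four conditions verified, Fact \ref{fct:loc ex}(2) delivers the exact localization $\CC/\DD$ and makes it abelian.
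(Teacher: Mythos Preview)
Your proof is correct. For (EL1) and (EL2) you and the paper do exactly the same thing, and for the second half of (EL3) you both factor through $H^{-1}(X)[1]\in\FF[1]\subseteq\DD$. The genuine differences are in (EL4) and the first half of (EL3). For (EL4), the paper takes the mapping cocone $K$ of $f$, forms its truncation, and uses the octahedral axiom to obtain the intermediate object $W$; you instead build $Y_0$ directly as the fiber of $Y\to H^0(Y)/\im H^0(f)$ and use the $\Hom$-isomorphism to factor $f$. These turn out to give the \emph{same} object (since $K^{\geq 1}[1]\iso H^0(Y)/\im H^0(f)$), so the difference is packaging: your argument is more explicit and cohomological, while the paper's is more structurally triangulated. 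For the first half of (EL3), the paper factors the offending morphism through the weak kernel $Z[-1]$ and then through $H^{-1}(Z)\in\FF$, needing only $\FF\subseteq\DD$; you instead invoke Proposition~\ref{prop:inter-serre-bij} to write $\DD=\FF[1]*\SS$ and build $A'\in\FF[1]*\SS$ via the image of $H^0(h)$ inside $\ker H^0(f)$. Your route uses slightly more input (the explicit Serre description of $\DD$) but stays closer to abelian-category reasoning; the paper's route is leaner and never needs to name $\SS$.
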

\begin{proof}
  (EL1) Since $\DD$ is a Serre subcategory of $\FF[1] * \AA$ and $\FF[1] * \AA$ is admissible as shown in (EL4) below, it is a percolating subcategory by Example \ref{ex:adm-serre-perco}.

  (EL2)
  Lemma \ref{lem:intermediate} (1) implies that $\FF[1] * \AA$ is closed under direct summands in $\D^\b(\AA)$. Therefore, $\FF[1] * \AA$ satisfies the condition (WIC) by Lemma \ref{lem:WIC}, and thus (EL2) is satisfied by \cite[Remark 4.31 (2)]{NOS}.

  (EL3)
  First, we will check the first condition in (EL3).
  Let $f \colon X \to Y$ be an inflation in $\FF[1] * \AA$ and $\varphi \colon W\to X$ in $\FF[1] * \AA$ a morphism satisfying $f \varphi = 0$.
  We have the following triangle in $\D^\b(\AA)$ with $Z \in \FF[1] * \AA$:
  \[
    \begin{tikzcd}
      & W \dar["\varphi"] \ar[rd, "0"] \ar[ld, dashed, "\ov{\varphi}"'] & \\
      Z[-1] \rar["h"'] & X \rar["f"'] & Y \rar & Z.
    \end{tikzcd}
  \]
  Since $f\varphi = 0$ and $h$ is a weak kernel of $f$ in $\D^\b(\AA)$, it follows that $\varphi$ factors through $h$, so we obtain the above $\ov{\varphi} \colon W \to Z[-1]$ satisfying $\varphi = h \ov{\varphi}$.
  Then, since $Z \in \FF[1] * \AA$, we have the following triangle in $\D^\b(\AA)$ with $F \in \FF$ and $A \in \AA$.
  \[
    \begin{tikzcd}
      & W \dar["\ov{\varphi}"] \ar[rd, "0"] \ar[ld, dashed, "\psi"'] & \\
      F \rar["a"'] & Z[-1] \rar["b"'] & A[-1] \rar & F[1].
    \end{tikzcd}
  \]
  Here $b \ov{\varphi} = 0$ holds since both $\D^\b(\AA)(\FF[1], \AA[-1])$ and $\D^\b(\AA)(\AA, \AA[-1])$ vanish. Therefore, we obtain the above $\psi$ satisfying $\ov{\varphi} = a \psi$. Thus $\varphi = h \ov{\varphi} = h a \psi$ holds, so $\varphi$ factors through the object $F \in \FF$. On the other hand, by the proof of Proposition \ref{prop:inter-serre-bij}, we have $\FF \subseteq \DD$.
  Hence $\varphi$ factors through an object in $\DD$.

  Next, we will prove the second condition. Let $g \colon Y \to Z$ be a deflation in $\FF[1] * \AA$ and $\varphi \colon Z \to W$ in $\FF[1] * \AA$ a morphism satisfying $\varphi g = 0$.
  We have the following triangle in $\D^\b(\AA)$ with $X \in \FF[1] * \AA$:
  \[
    \begin{tikzcd}
      X \rar & Y \rar["g"] \ar[rd, "0"'] & Z \rar["h"] \dar["\varphi"] & X[1] \ar[dl, dashed, "\ov{\varphi}"] \\
      & & W
    \end{tikzcd}
  \]
  Since $\varphi g = 0$ and $h$ is a weak cokernel of $g$ in $\D^\b(\AA)$, it follows that $\varphi$ factors through $h$, so we obtain the above $\ov{\varphi} \colon X[1] \to W$ satisfying $\varphi = \ov{\varphi} h$.
  Then, since $W \in \FF[1] * \AA$, we have the following triangle in $\D^\b(\AA)$ with $F \in \FF$ and $A \in \AA$.
  \[
    \begin{tikzcd}
      & X[1] \dar["\ov{\varphi}"] \ar[rd, "0"] \ar[ld, dashed, "\psi"'] & \\
      F[1] \rar["a"'] & W \rar["b"'] & A \rar & F[2].
    \end{tikzcd}
  \]
  Here $b \ov{\varphi} = 0$ holds since $X[1] \in \FF[2] * \AA[1]$ and both $\D^\b(\AA)(\FF[2], \AA)$ and $\D^\b(\AA)(\AA[1], \AA)$ vanish. Therefore, we obtain the above $\psi$ satisfying $\ov{\varphi} = a \psi$. Thus $\varphi = \ov{\varphi} h = a \psi h$ holds, so $\varphi$ factors through the object $F[1] \in \FF[1]$. On the other hand, by the proof of Proposition \ref{prop:inter-serre-bij}, we have $\FF[1] \subseteq \DD$.
  Hence $\varphi$ factors through an object in $\DD$, and thus (EL3) is satisfied.

  (EL4)
  The proof of this part is essentially the same as the proof of the fact that the heart of a $t$-structure is an abelian category. We denote by $(-)^{\leq 0}, (-)^{\geq 1} \colon \D^\b(\AA) \to \AA$ the truncation functors with respect to the standard $t$-structure of $\D^\b(\AA)$.

  Let $f \colon X \to Y$ be any morphism in $\FF[1] * \AA$. By taking the mapping cocone $K$ of $f$ and the truncation of $K$ and using the octahedral axiom, we obtain the following commutative diagram consisting of triangles in $\D^\b(\AA)$:
  \begin{equation}\label{eq:octa}
    \begin{tikzcd}
      K^{\leq 0} \dar \rar[equal] & K^{\leq 0} \dar \\
      K \rar \dar & X \rar["f"] \dar["p"'] & Y \rar \dar[equal] & K[1] \dar \\
      K^{\geq 1} \rar \dar & W \rar["i"'] \dar & Y \rar & K^{\geq 1} [1] \\
      K^{\leq 0} [1] \rar[equal] & K^{\leq 0} [1]
    \end{tikzcd}
  \end{equation}
  We claim that $f = i p$ is a deflation-inflation factorization. It suffices to show the following three assertions.
  \begin{enumerate}
    \item $K^{\leq 0} \in \FF[1] * \AA$
    \item $K^{\geq 1}[1] \in \AA \subseteq \FF[1] * \AA$.
    \item $W \in \FF[1] * \AA$.
  \end{enumerate}
  
  (1) By the second row of \eqref{eq:octa}, we have the following long exact sequence in $\AA$:
  \[
    \begin{tikzcd}[row sep = 0]
      & 0 = H^{-2}(Y) \rar & H^{-1}(K) \rar & H^{-1}(X) \\
      \rar & H^{-1}(Y) \rar & H^0(K) \rar & H^0(X) \\
      \rar & H^0(Y) \rar & H^1(K) \rar & H^{1}(X) = 0.
    \end{tikzcd}
  \]
  It easily follows that $H^i(K) = 0$ for $i \notin \{-1,0,1\}$. Moreover, since $H^{-1}(X) \in \FF$, we have $H^{-1}(K) \in \FF$ because $\FF$ is closed under subobjects.
  Since the $i$-th cohomology of $K^{\leq 0}$ is zero for $i > 0$ and the same as $K$ for $i \leq 0$, we obtain that $K^{\leq 0}$ belongs to $\FF[1] * \AA$ by Lemma \ref{lem:intermediate} (1).

  (2)
  Because the $i$-th cohomology of $K$ vanishes for $i \geq 1$ except for $i=1$ by the proof of (1), we have $K^{\geq 1} \in \AA[-1]$. Thus $K^{\geq 1}[1] \in \AA$ holds.

  (3)
  Since $H^i(X) = 0$ and $H^i(K^{\leq 0}[1]) = H^{i+1}(K^{\leq 0}) = 0$ for $i \geq 1$, we obtain that $H^i(W) = 0$ for $i \geq 1$ by the second column of \eqref{eq:octa}. On the other hand, the third row of \eqref{eq:octa} shows the following exact sequence in $\AA$:
  \[
    \begin{tikzcd}
      0 = H^{-1}(K^{\geq 1}) \rar & H^{-1}(W) \rar & H^{-1}(Y) \rar
      & H^0(K^{\geq 1}) = 0
    \end{tikzcd}
  \]
  Thus $H^i (W) = 0$ holds for $i \leq -2$ and $H^{-1} (W) \iso H^{-1}(Y) \in \FF$ holds. Therefore, we obtain $W \in \FF[1] * \AA$ by Lemma \ref{lem:intermediate} (1).
\end{proof}

Actually, we can describe the localization of an intermediate subcategory as a usual Serre quotient of an abelian category as follows.
\begin{theorem}\label{thm:inter-serre-loc}
  Let $\AA$ be a skeletally small abelian category, $\FF$ a torsionfree class in $\FF$, and $\SS$ a Serre subcategory of $\AA$ with $\FF \subseteq \SS$. We have the following commutative diagram consisting of exact functors of extriangulated categories, where $Q_1$ and $Q_2$ are localization functors:
  \[
    \begin{tikzcd}
      \AA \rar["\iota"] \dar["Q_1"'] & \FF[1] * \AA \dar["Q_2"] \\
      \AA / \SS \rar["\Phi"] & (\FF[1] * \AA)\big / (\FF[1] * \SS)
    \end{tikzcd}
  \]
  Then $\Phi$ gives an equivalence of extriangulated categories (or equivalently, abelian categories).
\end{theorem}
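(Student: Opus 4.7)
The idea is to construct a quasi-inverse $\Psi\colon \CC/\DD \to \AA/\SS$ of $\Phi$ using the universal property of the exact localization (Proposition \ref{prop:ex-loc-univ2}) together with the zeroth cohomology functor $H^0\colon \CC \to \AA$ obtained by restricting the cohomological functor $H^0\colon \D^\b(\AA) \to \AA$. The functor $\Phi$ itself arises from the universal property of the Serre quotient $Q_1$ applied to $Q_2\circ\iota$: since $\SS\subseteq\FF[1]*\SS = \DD$ (seen by the conflation $0\to S\to S \dashrightarrow$ for $S\in\SS$), this composite kills $\SS$ and factors uniquely through $Q_1$.

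The crucial step is to check that $Q_1\circ H^0\colon \CC \to \AA/\SS$ is an exact functor of extriangulated categories. For any conflation $X\to Y\to Z\dashrightarrow$ in $\CC$, the associated long exact sequence in cohomology yields
\begin{equation*}
H^{-1}(Z)\xrightarrow{\delta} H^0(X)\to H^0(Y)\to H^0(Z)\to 0
\end{equation*}
in $\AA$, with $H^{-1}(Z)\in\FF\subseteq\SS$ and hence $\im\delta \in \SS$ since $\SS$ is closed under quotients. Applying $Q_1$ annihilates $\im\delta$, giving a short exact sequence in $\AA/\SS$, so $Q_1\circ H^0$ is exact by Lemma \ref{lem:exact-func-to-exact-cat}. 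Moreover, $X\in\DD=\FF[1]*\SS$ forces $H^0(X)\in\SS$, so $Q_1\circ H^0$ kills $\DD$; by the universal property of $Q_2$ it therefore factors through the desired exact functor $\Psi\colon \CC/\DD \to \AA/\SS$.

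It remains to check that $\Phi$ and $\Psi$ are mutually quasi-inverse. For $\Psi\Phi \iso \id_{\AA/\SS}$: using $H^0\circ\iota = \id_{\AA}$, one computes $\Psi\Phi\circ Q_1 = \Psi\circ Q_2\circ\iota = Q_1\circ H^0\circ\iota = Q_1$, and the uniqueness in the universal property of $Q_1$ forces $\Psi\Phi = \id$. For $\Phi\Psi \iso \id_{\CC/\DD}$: the natural conflation $H^{-1}(X)[1]\to X\to H^0(X)\dashrightarrow$ from Lemma \ref{lem:intermediate}(2), whose first term lies in $\FF[1]\subseteq\DD$, produces a natural isomorphism $Q_2(X)\simto Q_2 H^0(X) = \Phi\Psi\circ Q_2(X)$ in the abelian category $\CC/\DD$. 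Thus $\Phi\Psi\circ Q_2 \iso Q_2$ as exact functors $\CC \to \CC/\DD$, and by the $2$-universal property of the exact localization (Remark \ref{rmk:2-loc}, which applies here via Proposition \ref{prop:inter-loc} and Fact \ref{fct:loc ex}) this natural isomorphism lifts to $\Phi\Psi \iso \id_{\CC/\DD}$. The main subtleties are verifying the exactness of $Q_1\circ H^0$ (where the hypothesis $\FF\subseteq\SS$ is essential for making the image of the connecting map die in $\AA/\SS$) and invoking the $2$-categorical universal property in the final step, since $\Phi\Psi\circ Q_2$ is only naturally isomorphic to $Q_2$ rather than equal to it.
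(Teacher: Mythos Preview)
Your proof is correct and follows essentially the same approach as the paper: construct $\Psi$ from $Q_1\circ H^0$ by verifying exactness via the long exact cohomology sequence (using $H^{-1}(Z)\in\FF\subseteq\SS$ to kill the image of the connecting map), then establish $\Psi\Phi\simeq\id$ and $\Phi\Psi\simeq\id$ via the natural conflation of Lemma~\ref{lem:intermediate}(2) and the $2$-universal property of the exact localization from Remark~\ref{rmk:2-loc}. The arguments coincide in all essential points.
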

\begin{proof}
  First, recall that an exact functor from an extriangulated category to an exact category (e.g. an abelian category) is precisely an additive functor preserving conflations by Lemma \ref{lem:exact-func-to-exact-cat}.
  It is easily checked from the universality of the localization that an exact functor $\Phi$ exists (see Proposition \ref{prop:ex-loc-univ2}). Furthermore, since both $\AA / \SS$ and $(\FF[1] * \AA)\big / (\FF[1] * \SS)$ are abelian categories by Proposition \ref{prop:inter-loc}, we only have to check that $\Phi$ is just an equivalence of an additive category.

  Actually, we will construct a quasi-inverse $\Psi \colon (\FF[1] * \AA)\big / (\FF[1] * \SS) \to \AA/\SS$ as follows. Consider the following diagram.
  \begin{equation}\label{eq:inter-eq}
    \begin{tikzcd}
      \FF[1] * \AA \dar["Q_2"'] \rar["H^0"] & \AA \dar["Q_1"]  \\
      (\FF[1] * \AA)\big / (\FF[1] * \SS) \rar[dashed, "\Psi"'] & \AA/\SS
    \end{tikzcd}
  \end{equation}
  We claim that the composition $Q_1 H^0$ is an exact functor. To show this, take any conflation
  \[
    \begin{tikzcd}
      X \rar["f"] & Y \rar["g"] & Z \rar[dashed] & \ 
    \end{tikzcd}
  \]
  in $\FF[1] * \AA$. Then since it is a triangle in $\D^\b(\AA)$, we obtain the following long exact sequence in $\AA$:
  \[
    \begin{tikzcd}
      H^{-1}(Z) \rar & H^0(X) \rar & H^0(Y) \rar & H^0(Z) \rar & H^1(X) = 0
    \end{tikzcd}
  \]
  Then Lemma \ref{lem:intermediate} (1) implies $H^{-1}(Z) \in \FF$. Now since $\FF$ is contained in $\SS$, we obtain the following short exact sequence in $\AA/\SS$ by applying an exact functor $Q_1$ to the above exact sequence:
  \[
    \begin{tikzcd}
      0 \rar & Q_1 H^0(X) \rar & Q_1 H^0(Y) \rar & Q_1 H^0(Z) \rar & 0.
    \end{tikzcd}
  \]
  Therefore, Lemma \ref{lem:exact-func-to-exact-cat} shows that $Q_1 H^0$ is an exact functor between extriangulated categories. Hence an exact functor $\Psi$ in \eqref{eq:inter-eq} is induced.
  Now we obtain the following strict commutative diagram consisting of additive functors between additive categories.
  \[
    \begin{tikzcd}
      \AA \rar["\iota"] \dar["Q_1"'] & \FF[1] * \AA \dar["Q_2"] \rar["H^0"]
      & \AA \dar["Q_1"] \\
      \AA / \SS \rar["\Phi"'] & (\FF[1] * \AA)\big / (\FF[1] * \SS) \rar["\Psi"'] & \AA/\SS
    \end{tikzcd}
  \]
  Since $H^0 \iota$ is naturally isomorphic to $\id_\AA$, we have $Q_1 H^0 \iota \simeq Q_1$. Thus the universal property of $Q_1$ as an exact 2-localization (see Remark \ref{rmk:2-loc}) implies that $\Psi \Phi$ is naturally isomorphic to the identity functor.

  Similarly, we have the following strict commutative diagram.
  \[
    \begin{tikzcd}
       \FF[1] * \AA \dar["Q_2"'] \rar["H^0"]
      &  \AA \dar["Q_1"] \rar["\iota"] & \FF[1] * \AA \dar["Q_2"]   \\
      (\FF[1] * \AA)\big / (\FF[1] * \SS) \rar["\Psi"'] & \AA/\SS \rar["\Phi"'] & (\FF[1] * \AA)\big / (\FF[1] * \SS)
    \end{tikzcd}
  \]
  We claim that $Q_2 \iota H^0$ is naturally isomorphic to $Q_2$.
  In fact, for any morphism $f \colon X \to Y$ in $\FF[1] * \AA$, we obtain the following morphism of conflations in $\FF[1] * \AA$:
  \[
    \begin{tikzcd}
      H^{-1}(X)[1] \rar \dar["{H^{-1}(f)[1]}"'] & X \rar\dar["f"] & \iota H^0(X) \dar["\iota H^0(f)"] \rar[dashed] & \ \\
      H^{-1}(Y)[1] \rar & Y \rar & \iota H^0(Y) \rar[dashed] & \ 
    \end{tikzcd}
  \]
  Since $Q_2$ is an exact functor which sends every object in $\FF[1]$ to $0$, we obtain the following exact commutative diagram in an abelian category $ (\FF[1] * \AA)\big / (\FF[1] * \SS)$:
  \[
    \begin{tikzcd}
      0 = Q_2 H^{-1}(X)[1] \rar \dar["{Q_2 H^{-1}(f)[1]}"'] & Q_2 (X) \rar\dar["Q_2 (f)"] & Q_2 \iota H^0(X) \dar["Q_2 \iota H^0(f)"] \rar & 0 \\
      0 = Q_2 H^{-1}(Y)[1] \rar & Q_2 (Y) \rar & Q_2 \iota H^0(Y) \rar & 0 
    \end{tikzcd}
  \]
  Therefore, $Q_2 \iota H^0$ is naturally isomorphic to $Q_2$. Thus the universal property of an exact 2-localization $Q_1$ (Remark \ref{rmk:2-loc}) implies that $\Phi \Psi$ is naturally isomorphic to the identity functor. Therefore, $\Phi$ and $\Psi$ are mutually quasi-inverse.
\end{proof}

\section{Questions}\label{sec:6}
In this section, we give some questions on the Grothendieck monoid of an extriangulated category. \emph{Throughout this section, $\CC$ denotes a skeletally small extriangulated category.}

\subsection{Invertible elements in the Grothendieck monoid}
We begin with considering $\M(\CC)^\times$, which consists of invertible elements in the Grothendieck monoid (Remark \ref{rmk:Face}). We can interpret this as the smallest Serre subcategory:
\begin{proposition}\label{prop:smallest-serre}
  In the bijection of $\Serre \CC$ and $\Face \M(\CC)$ in Proposition \ref{prop:serre-face-bij}, the face $\M(\CC)^\times$ corresponds to the unique smallest Serre subcategory of $\CC$.
\end{proposition}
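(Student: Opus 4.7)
The plan is to show directly that $\M(\CC)^\times$ is the smallest face of $\M(\CC)$; the conclusion then follows because the bijection in Proposition \ref{prop:serre-face-bij} (together with Proposition \ref{prp:c-closed bij}) is manifestly inclusion-preserving in both directions.

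First I would verify that $\M(\CC)^\times$ is actually a face. It is obviously a submonoid containing $0$. For the face condition, if $a+b \in \M(\CC)^\times$, pick $c \in \M(\CC)$ with $(a+b)+c = 0$; then $a + (b+c) = 0$ and $b + (a+c) = 0$, so both $a$ and $b$ are invertible, i.e.\ lie in $\M(\CC)^\times$. This shows $\M(\CC)^\times \in \Face \M(\CC)$.

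Next I would show that any face $F$ of $\M(\CC)$ contains $\M(\CC)^\times$. Given $x \in \M(\CC)^\times$, choose $y$ with $x+y = 0$. Since $F$ is a submonoid, $0 = x+y \in F$, and since $F$ is a face, $x+y \in F$ forces $x \in F$. Hence $\M(\CC)^\times \subseteq F$, so $\M(\CC)^\times$ is the unique smallest element of $\Face \M(\CC)$ with respect to inclusion.

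Finally, the two inverse maps $\SS \mapsto \M_\SS$ and $F \mapsto \DD_F$ of Proposition \ref{prop:serre-face-bij} are both inclusion-preserving (immediate from their definitions), so they restrict to an order isomorphism between $(\Serre\CC,\subseteq)$ and $(\Face\M(\CC),\subseteq)$. Therefore the smallest face $\M(\CC)^\times$ corresponds to the unique smallest Serre subcategory of $\CC$, which is precisely $\DD_{\M(\CC)^\times} = \{X \in \CC \mid [X] \in \M(\CC)^\times\}$. No step here looks delicate; the only point one must be careful about is the verification that $\M(\CC)^\times$ is a face, but this is the standard argument recalled above.
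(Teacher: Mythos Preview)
Your proposal is correct and follows essentially the same approach as the paper: the paper's proof simply cites Remark~\ref{rmk:Face} (which records that $M^\times$ is the smallest face) together with Proposition~\ref{prop:serre-face-bij}, whereas you spell out the verification that $\M(\CC)^\times$ is the smallest face and make explicit that the bijection is inclusion-preserving.
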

\begin{proof}
  This follows from Proposition \ref{prop:serre-face-bij} and the fact that $\M(\CC)^\times$ is the unique smallest face of $\M(\CC)$ by Remark \ref{rmk:Face}.
\end{proof}
We are interested in the extreme case, that is, $\M(\CC)^\times = 0$ and $\M(\CC)^\times = \M(\CC)$. The former one holds when $\CC$ is an exact category.
Actually, we have the following implications.
\begin{proposition}
  Let $\CC$ be a skeletally small extriangulated category. Consider the following conditions.
  \begin{enumerate}
    \item $\CC$ is an exact category (with the usual extriangulated structure).
    \item If $X \to 0 \to Y \dto$ is a conflation in $\CC$, then $X \iso 0$ holds in $\CC$ (or equivalently, $Y \iso 0$ holds).
    \item $0$ is a Serre subcategory of $\CC$, where $0$ denotes the subcategory consisting of zero objects in $\CC$.
    \item If $[A] = 0$ in $\M(\CC)$, then $A \iso 0$ holds in $\CC$.
    \item $\M(\CC)^\times = 0$ holds, or equivalently, $\M(\CC)$ is a reduced monoid (see Definition \ref{def:mon-red-can}).
  \end{enumerate}
  Then {\upshape (1) $\Rightarrow$ (2) $\Leftrightarrow$ (3) $\Leftrightarrow$ (4) $\Rightarrow$ (5)} holds.
\end{proposition}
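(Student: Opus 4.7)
The plan is to verify each link in the chain separately; each is a short deduction once the right structural fact is in hand. For (1) $\Rightarrow$ (2), I would use that in any exact category every inflation is a monomorphism (and every deflation an epimorphism): applied to a conflation $X \to 0 \to Y \dto$, this immediately forces $X \iso 0$ and $Y \iso 0$.

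Next, for (2) $\Leftrightarrow$ (3), the subcategory $0 \subseteq \CC$ is trivially additive, so being a Serre subcategory is equivalent to the biconditional ``$X \iso 0$ and $Z \iso 0$ $\Leftrightarrow$ $Y \iso 0$'' holding for every conflation $X \to Y \to Z \dto$. The forward direction is automatic in any extriangulated category: since $\E(Z,0) = 0$, any conflation with $X \iso 0$ realizes the zero extension and hence is split, giving $Y \iso Z$; if further $Z \iso 0$, then $Y \iso 0$. The converse direction is precisely (2), establishing the equivalence.

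For (3) $\Leftrightarrow$ (4), I will apply Proposition \ref{prp:c-closed bij} to the subset $\{0\} \subseteq \M(\CC)$, which is always a face of $\M(\CC)$. The associated c-closed subcategory is $\DD_{\{0\}} = \{X \in \CC \mid [X] = 0\}$, which always contains $0$ and, via Proposition \ref{prop:serre-face-bij}, is always Serre. Hence the subcategory $0$ is itself Serre precisely when $\DD_{\{0\}} = 0$, and that is exactly (4). For (4) $\Rightarrow$ (5), given $x = [A] \in \M(\CC)^\times$ with inverse $y = [B]$, the equality $[A \oplus B] = [A] + [B] = 0$ together with (4) forces $A \oplus B \iso 0$, hence $A \iso 0$ and $x = 0$; the parenthetical equivalence with reducedness of $\M(\CC)$ is just Definition \ref{def:mon-red-can}.

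The only step requiring any real care is the automatic direction in (2) $\Leftrightarrow$ (3): one must properly justify ``$X \iso 0$ and $Z \iso 0$ in a conflation forces $Y \iso 0$'' by invoking the additivity of the realization $\s$ and the vanishing of $\E(-,0)$, rather than treating it as obvious. Everything else is a direct bookkeeping of the universal property of $\M(\CC)$ and the face/Serre bijection.
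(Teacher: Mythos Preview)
Your argument for (4) $\Rightarrow$ (3) contains a genuine error: the claim that $\{0\}$ is always a face of $\M(\CC)$ is false. By definition, $\{0\}$ is a face precisely when $a+b=0$ forces $a=b=0$, i.e., precisely when $\M(\CC)$ is reduced --- which is condition (5), not something available a priori. Consequently $\DD_{\{0\}}$ need not be Serre: in any nonzero triangulated category, the conflation $X \to 0 \to \Sigma X \dto$ has middle term in $\DD_{\{0\}}$ while $[X]$ need not vanish. So the step ``$\DD_{\{0\}}$ is always Serre, hence (4) gives $0 = \DD_{\{0\}}$ is Serre'' does not go through.

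The repair is short and is exactly what the paper does: prove (4) $\Rightarrow$ (2) directly rather than (4) $\Rightarrow$ (3). Given a conflation $X \to 0 \to Y \dto$, one has $[X \oplus Y] = [X]+[Y] = 0$ in $\M(\CC)$, so (4) yields $X \oplus Y \iso 0$ and hence $X \iso 0$. Combined with your (correct) equivalence (2) $\Leftrightarrow$ (3), this closes the cycle (2) $\Leftrightarrow$ (3) $\Leftrightarrow$ (4). Note that your (3) $\Rightarrow$ (4) direction is fine as stated, since it only uses that a Serre subcategory is c-closed (so $0$ Serre gives $\DD_{\{0\}} = \DD_{\M_0} = 0$); the face claim was only needed for the reverse implication. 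The remaining implications (1) $\Rightarrow$ (2) and (4) $\Rightarrow$ (5) in your proposal are correct and match the paper.
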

\begin{proof}
  The conditions in (2) are easily seen to be equivalent by considering the long exact sequence associated to the conflation.

  (1) $\Rightarrow$ (2):
  This is clear since an inflation $X \to 0$ in an exact category must be a monomorphism.

  (2) $\Rightarrow$ (3):
  This is immediate from the definition of Serre subcategories.

  (3) $\Rightarrow$ (4):
  Suppose that $\DD := 0$ is a Serre subcategory. Then $\M_\DD = 0$ holds, and Proposition \ref{prop:serre-face-bij} implies that $0 = \DD = \DD_{\M_\DD} = \DD_0 = \{ X \in \CC \mid [X] = 0 \}$ holds. This clearly shows (4).

  (4) $\Rightarrow$ (2):
  Suppose that we have a conflation $X \to 0 \to Y \dto$. Then we have $[X \oplus Y] = [X] + [Y] = [0] = 0$ in $\M(\CC)$. Thus (4) implies $X \oplus Y \iso 0$, so $X \iso Y \iso 0$ holds.

  (4) $\Rightarrow$ (5):
  Suppose that $x + y = 0$ holds in $\M(\CC)$. There are $X$ and $Y$ in $\CC$ satisfying $[X] = x$ and $[Y] = y$, and $[X \oplus Y] = [X] + [Y] = 0$ holds. Thus (4) implies $X \oplus Y \iso 0$, which shows $X \iso Y \iso 0$. Therefore, $x = y = 0$ holds.
\end{proof}
In the first version of this paper,
we posed the question whether all of the above conditions are equivalent.
However, there is a counterexample for (5) $\imply$ (1) and (3) $\imply$ (1)
(see \cite[Example 5.2, Remark 5.3]{BHST}).
Thus we modify this question to the following form.
\begin{question}\label{q:exact-reduced}
  Let $\CC$ be a skeletally small extriangulated category. Are the following conditions equivalent?
  \begin{enumerate}
    \item $0$ is a Serre subcategory of $\CC$.
    \item $\M(\CC)$ is a reduced monoid.
  \end{enumerate}
\end{question}

Next, we consider the condition $\M(\CC)^\times = \M(\CC)$. Clearly this is equivalent to that $\M(\CC)$ is a group. If $\CC$ is triangulated, then this is the case, but there are examples where $\CC$ is not triangulated and $\M(\CC)$ is a group (see Example \ref{ex:gro-mon} and Corollary \ref{cor:a1-a-k0}). Thus we have the following natural question.
\begin{question}
  When does $\M(\CC)$ become a group?
\end{question}
We remark that this is equivalent to $\Serre\CC = \{\CC\}$ by Proposition \ref{prop:smallest-serre}.

Finally, we discuss the ``decomposition'' of a monoid into a group and a reduced monoid. We have the following monoid-theoretic decomposition of an arbitrary monoid.
\begin{proposition}
  Let $M$ be a monoid. Then the following holds.
  \begin{enumerate}
    \item $M/M^\times$ is a reduced monoid.
    \item Let $L$ be a submonoid of $M$. Suppose that $L$ is a group and that $M/L$ is a reduced monoid. Then $L = M^\times$ holds.
  \end{enumerate}
\end{proposition}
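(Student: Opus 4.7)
The plan for (1) is to unwind the construction of $M/M^\times$ as the quotient of $M$ by the congruence $\sim$ generated by identifying $M^\times$ with $\{0\}$. Concretely, $a\sim b$ iff there exist $u,u'\in M^\times$ with $a+u=b+u'$. To show $(M/M^\times)^\times=0$, suppose $[a]+[b]=0$ in $M/M^\times$, so $a+b\sim 0$, meaning $a+b+u=u'$ for some $u,u'\in M^\times$. Adding the $M$-inverse $-u'$ of $u'$, we obtain $(a+b)+(u-u')=0$, so $a+b\in M^\times$. Then in a commutative monoid, invertibility of $a+b$ forces both $a$ and $b$ to be invertible: if $(a+b)+c=0$, then $a+(b+c)=0$ witnesses $a\in M^\times$ and symmetrically $b\in M^\times$. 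Hence $[a]=[b]=0$, which shows $M/M^\times$ is reduced.

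For (2) I would prove the two inclusions separately. The inclusion $L\subseteq M^\times$ is immediate: any element of a group $L\subseteq M$ has an inverse inside $L$, hence inside $M$. For $M^\times\subseteq L$, take $x\in M^\times$ with $M$-inverse $y$. In the quotient $M/L$ we have $[x]+[y]=0$, so $[x]\in(M/L)^\times$. Since $M/L$ is reduced by hypothesis, $(M/L)^\times=0$, so $[x]=0$, i.e.\ $x\sim 0$ in $M$. By the definition of the congruence, there exist $l,l'\in L$ with $x+l=l'$. Because $L$ is a group, $l$ has an inverse $l''\in L$, and adding $l''$ yields $x=l'+l''\in L$.

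The main step to get right is simply the defining relation of $M/L$ from Definition~\ref{def:quot-submon} and the observation that, in the commutative setting, if a sum $a+b$ is invertible then each summand is. No real obstacle arises; the whole argument is a careful translation of the defining universal property of the submonoid quotient back into the relation $a+l=b+l'$. Once that translation is in hand, both (1) and (2) reduce to one-line manipulations with inverses.
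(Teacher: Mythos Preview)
Your proof is correct and essentially matches the paper's. For (2) your argument is identical to the paper's; for (1) the paper invokes Lemma~\ref{lem:face loc 2}~(1) (that $M/S$ is reduced whenever $S$ is a face), whose proof is exactly the direct computation you carry out, so there is no substantive difference.
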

\begin{proof}
  (1) This follows from the fact that $M^\times$ is a face of $M$ and Lemma \ref{lem:face loc 2} (1).

  (2)
  Since $L$ is a group, clearly $L \subseteq M^\times$ holds. Conversely, let $x \in M^\times$, so there is an element $y \in M$ satisfying $x + y = 0$. Then for the natural quotient map $\pi \colon M \defl M/L$, we have $\pi(x) + \pi(y) = 0$. This implies $\pi(x) = 0$ since $M/L$ is reduced. Therefore, there is some $l_1, l_2 \in L$ satisfying $x + l_1 = l_2$. Since $L$ is a group, $l_1$ is invertible, so we  obtain $x = l_2 - l_1 \in L$. 
\end{proof}
Roughly speaking, this claims the existence and the uniqueness of an ``short exact sequence'' of monoids
\[
  \begin{tikzcd}
    0 \rar & M^\times \rar & M \rar & M/M^\times \rar & 0
  \end{tikzcd}
\]
such that $M^\times$ is a group and $M/M^\times$ is a reduced monoid.
We are interested whether the corresponding exact localization exists for $M := \M(\CC)$, which we shall explain in detail.
Proposition \ref{prop:smallest-serre} shows that $\M(\CC)^\times$ corresponds to the smallest Serre subcategory of $\CC$, which we denote by $\SS_0$.
If $\SS_0$ satisfies some good properties like Corollary \ref{cor:mon loc ET}, then we  get the exact localization $\CC/\SS_0$ such that $\M(\CC/\SS_0) \iso \M(\CC)/\M_{\SS_0} = \M(\CC)/\M(\CC)^\times$. Moreover, if Question \ref{q:exact-reduced} is true, then $\CC/\SS_0$ should be an exact category.
This can be considered as a decomposition of an extriangulated category $\CC$ into the ``exact (reduced) part" $\CC/\SS_0$ and the ``triangulated-like (group) part" $\SS_0$.
Thus our question can be summarized as follows.
\begin{question}\label{q:cat-decomp}
  Let $\SS_0$ be the smallest Serre subcategory of $\CC$.
  \begin{enumerate}
    \item When does there exist the exact localization $\CC \to \CC/\SS_0$?
    \item If {\upshape (1)} holds, does it commute with the monoid quotient, that is, do we have $\M(\CC/\SS_0) \iso \M(\CC)/\M_{\SS_0} = \M(\CC)/\M(\CC)^\times$?
  \end{enumerate}
\end{question}
\begin{example}
  Question \ref{q:cat-decomp} has a positive answer for the following classes of extriangulated categories $\CC$.
  \begin{itemize}
    \item $\CC$ is an exact category. In this case, we have $\SS_0 = 0$, the exact localization is the identity functor, and $\M(\CC)$ is already reduced.
    \item $\CC$ is a triangulated category. In this case, we have $\SS_0 = \CC$, the exact localization is $\CC \to 0$, and $\M(\CC)$ is already a group.
    \item $\CC$ is an intermediate subcategory of $\D^\b(\AA)$ for an abelian category $\AA$. Let $\FF$ be a torsionfree class in $\AA$ with $\CC = \FF[1] * \AA$ and $\SS$ the smallest Serre subcategory of $\AA$ containing $\FF$. Then we have $\SS_0 = \FF[1] * \SS$ by Proposition \ref{prop:inter-serre-bij}. In this case, the exact localization exists and commutes with the monoid quotient by Proposition \ref{prop:inter-loc}. Note that $\CC/\SS_0$ is actually not only an exact category but an abelian category.
  \end{itemize}
\end{example}

We end this section with the following natural (but maybe difficult) inverse problem.
\begin{question}
  For a given commutative monoid $M$, does there exist an extriangulated category $\CC$ satisfying $\M(\CC) \iso M$?
\end{question}
Related to this, we remark that for any reduced monoid $M$, there is a split exact category $\CC$ satisfying $\M(\CC) \iso M$ by the argument similar to \cite[Section 2]{F}.

\subsection{Quasi-split extriangulated categories and c-closed subcategories}
In Section \ref{sec:3},
we have classified certain subcategories of $\catC$ via its Grothendieck monoid $\M(\catC)$.
In particular, we have introduced c-closed subcategories,
the largest class of subcategories which can be classified via $\M(\catC)$
(Proposition \ref{prp:c-closed bij}).
In this subsection,
we will discuss the conditions under which all subcategories are classified via $\M(\catC)$,
that is, all subcategories are c-closed.

Let us begin with a simple observation. Note that a subcategory is assumed to be closed under isomorphisms, but \emph{not} assumed to be additive here.
\begin{prp}\label{prp:c-closed quasi-split}
Let $\catC$ be a skeletally small extriangulated category.
Then the following conditions are equivalent.
\begin{enua}
\item
All subcategories of $\catC$ are c-closed.
\item
For any conflation $A\to B\to C\dto$ in $\catC$,
we have $B \iso A\oplus C$.
\end{enua}
\end{prp}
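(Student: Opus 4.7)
The plan is to prove the two implications directly using the definition of the conflation-relation $\crel$ and the observation that all subcategories are closed under isomorphisms by the convention fixed in the paper.

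For the implication (2) $\Rightarrow$ (1), I would argue that condition (2) forces the c-relation to coincide with isomorphism. Indeed, if $A \crel B$ via some conflation $X \to Y \to Z \dto$, then by (2) we have $Y \iso X \oplus Z$, and in either case (a) or (b) of the definition of $\crel$ this yields $A \iso B$. Since every subcategory is closed under isomorphisms by convention, every subcategory is automatically c-closed.

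For the harder direction (1) $\Rightarrow$ (2), the key idea is to apply the hypothesis to a very small, explicitly chosen subcategory. Fix a conflation $A \to B \to C \dto$ in $\catC$. Define the subcategory
\[
\catD := \{\, X \in \catC \mid X \iso A\oplus C \,\},
\]
which is a (full) subcategory of $\catC$ closed under isomorphisms. From the given conflation we directly have $(A\oplus C) \crel B$ by case (b) of the definition of $\crel$ (with $Y \iso B$ and $X\oplus Z \iso A\oplus C$). Since $A\oplus C \in \catD$ and $\catD$ is c-closed by the standing assumption (1), we obtain $B \in \catD$, i.e., $B \iso A\oplus C$, as desired.

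There is essentially no major obstacle here; the argument is short once one notices that (1) can be applied to a single isomorphism class. The only point that requires care is to confirm that picking the isomorphism class of $A\oplus C$ does give a legitimate subcategory in the sense of the paper's conventions (full and closed under isomorphisms, with no additivity required in Proposition \ref{prp:c-closed bij} and the present statement), which is immediate.
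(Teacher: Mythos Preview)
Your proof is correct and follows essentially the same approach as the paper's proof. The only cosmetic difference is that in the implication (1) $\Rightarrow$ (2) the paper takes the isomorphism class of $B$ as the test subcategory, whereas you take the isomorphism class of $A\oplus C$; these are symmetric choices and lead to the same conclusion.
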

\begin{proof}
(1) $\imply$ (2):
Let $A\to B\to C\dto$ be a conflation in $\catC$.
Consider the subcategory $\catX$ consisting of objects isomorphic to $B$.
Since $\catX$ is c-closed, it contains $A\oplus C$.
This means $A\oplus C \iso B$.

(2) $\imply$ (1):
Let $\catX$ be a subcategory of $\catC$.
For any conflation $A\to B\to C\dto$,
we have $B\iso A\oplus C$ by the assumption (2).
Thus $B\in \catX$ if and only if $A\oplus C \in \catX$.
This proves that $\catX$ is c-closed.
\end{proof}
Let us call a conflation $A \to B \to C \dto$ \emph{quasi-split} if $B \iso A \oplus C$ holds.
There are subtle differences between split and quasi-split conflations.
\begin{ex}\label{ex:counter eq quasi-split}
There are two short exact sequences of abelian groups:
\[
\delta \colon 0\to \bbZ \xr{2} \bbZ \to \bbZ/2\bbZ \to0 
\quad \text{and} \quad
\epsilon \colon 0\to 0 \to (\bbZ/2\bbZ)^{\oplus \bbN} \xrightarrow{=} (\bbZ/2\bbZ)^{\oplus \bbN} \to0.
\]
Then the short exact sequence obtained by their direct sum
\[
  \begin{tikzcd}
    \delta \oplus \epsilon \colon 0\rar & \bbZ \rar & \bbZ \oplus (\bbZ/2\bbZ)^{\oplus \bbN} \rar & \bbZ/2\bbZ \oplus (\bbZ/2\bbZ)^{\oplus \bbN} \rar & 0
  \end{tikzcd}
\]
is quasi-split but not split.
Indeed, by the natural isomorphism $\Ext^1_{\bbZ}\left(\bbZ/2\bbZ \oplus (\bbZ/2\bbZ)^{\oplus \bbN}, \bbZ\right) \iso \Ext^1_{\bbZ}\left(\bbZ/2\bbZ, \bbZ\right) \oplus \Ext^1_{\bbZ}\left((\bbZ/2\bbZ)^{\oplus \bbN}, \bbZ\right)$,
the element $\delta \oplus \epsilon$ corresponds to $(\delta,0)$,
and it is nonzero since $\delta$ is not a split exact sequence.
\end{ex}
From this example,
a quasi-split conflation is not a split conflation in general.
However, there are the following examples where this holds.
\begin{ex}\label{ex:miyata}
In the following classes of extriangulated categories $\catC$,
every quasi-split conflation is a split conflation.
\begin{itemize}
\item 
$\catC$ is an exact $R$-category over a commutative ring $R$
such that $\catC(A,B)$ is an $R$-module of finite length for any $A,B \in\catC$.
Indeed, for any quasi-split conflation $0\to A\to B\xr{f} C\to 0$,
we have an exact sequence
\[
  \begin{tikzcd}
    0\rar & \catC(C,A) \rar & \catC(C,B) \rar["{\catC(C,f)}"] & \catC(C,C)
  \end{tikzcd}
\]
of $R$-modules.
Now we have $\catC(C,B) \iso \catC(C,A)\oplus \catC(C,C)$ by the assumption.
Considering the lengths of $R$-modules in the above exact sequence,
we conclude that $\catC(C,f)$ is surjective,
and this implies that the conflation $0\to A\to B\xr{f} C\to 0$ splits.
\item
$\catC$ is an extension-closed subcategory of $\catmod \Lambda$,
where $\Lambda$ is an algebra over a commutative noetherian ring $R$
with $\Lambda \in \mod R$.
This follows from the result in \cite[Theorem 1]{miyata}, which states that any quasi-split short exact sequence in $\mod \Lambda$ actually splits.
\end{itemize}
\end{ex}

The above examples lead us to the next question.
\begin{question}\label{q:quasi-split 1}
When does a quasi-split conflation become a split conflation?
More concretely, in the following classes of exact categories $\catC$,
does this question have a positive answer?
\begin{itemize}
\item 
$\catC$ is an exact $R$-category over a commutative noetherian ring $R$
such that $\catC(A,B) \in \mod R$ holds for any $A,B \in\catC$.
\item
$\catC$ is a noetherian exact category,
that is for any $X\in\catC$, the poset of admissible subobjects of $X$
satisfies the ascending chain condition 
(see \cite[Section 2]{JHP} for this poset).
\end{itemize}
\end{question}

We now return to considering when all subcategories are classified via $\M(\catC)$.
An extriangulated category $\catC$ is said to be \emph{quasi-split} if every conflation in $\catC$ is quasi-split.
Recall that this condition is equivalent to the condition under which 
all subcategories of $\catC$ can be classified via the Grothendieck monoid
by Proposition \ref{prp:c-closed quasi-split}. 
On the other hand, if every conflation of an extriangulated category $\catC$ is a split conflation,
then every inflation is a monomorphism, and every deflation is an epimorphism.
Thus $\catC$ becomes an exact category (see \cite[Corollary 3.18]{NP}).
Such an exact category $\catC$ is called a \emph{split exact category}.
It is clear that a split exact category is quasi-split.
Thus we have the following natural question.
\begin{question}\label{q:quasi-split 2}
When does a quasi-split extriangulated category $\catC$ become a split exact category?
\end{question}

\begin{ex}
Question \ref{q:quasi-split 2} has a positive answer for the following classes of extriangulated categories $\catC$.
\begin{itemize}
\item
$\catC$ is one of the exact categories in Example \ref{ex:miyata}. 
\item
More generally, $\catC$ is an exact category in which Question \ref{q:quasi-split 1} has a positive answer.
\item
$\catC$ is an extriangulated category with enough projectives.
See Proposition \ref{prp:quasi-split enough proj} below.
\end{itemize}
\end{ex}

We recall some terminology to prove Proposition \ref{prp:quasi-split enough proj}.
An object $P$ of $\catC$ is \emph{projective} if $\bbE(P,X)=0$ for any $X\in\catC$.
We denote by $\Proj \catC$ the category of projective objects of $\catC$.
Clearly $\Proj\catC$ is closed under extensions and direct summands.
An extriangulated category $\catC$ \emph{has enough projectives}
if for any $A\in\catC$, there exists a deflation $P\to A$ from a projective object $P$.
\begin{prp}\label{prp:quasi-split enough proj}
Suppose that an extriangulated category $\catC$ has enough projectives.
Then the following conditions are equivalent.
\begin{enua}
\item
Every subcategory of $\CC$ is c-closed.
\item
Every additive subcategory of $\CC$ is c-closed.
\item
Every extension-closed subcategory of $\CC$ is c-closed.
\item
$\catC$ is a split exact category.
\item
$\catC$ is a quasi-split extriangulated category.
\end{enua}
\end{prp}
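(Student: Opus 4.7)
The plan is to establish the cycle of implications $(1) \Rightarrow (2) \Rightarrow (3) \Rightarrow (4) \Rightarrow (5) \Rightarrow (1)$. The implications $(1) \Rightarrow (2) \Rightarrow (3)$ are immediate from the inclusion of classes of subcategories, $(4) \Rightarrow (5)$ is immediate since a split conflation $A \to A \oplus C \to C \dto$ is quasi-split, and $(5) \Rightarrow (1)$ is exactly Proposition \ref{prp:c-closed quasi-split}. Thus the only substantive work is to prove $(3) \Rightarrow (4)$, which is where the enough projectives hypothesis enters.

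For $(3) \Rightarrow (4)$, the strategy is to show that every object of $\catC$ is projective; once this is established, we have $\bbE \equiv 0$ on $\catC$, so every conflation splits, and $\catC$ is then a split exact category by \cite[Corollary 3.18]{NP}. The key observation is that $\Proj \catC$ is extension-closed (it is additive since it contains $0$ and is closed under finite direct sums, and it is closed under extensions as noted in the excerpt) and closed under direct summands. Hence by assumption $(3)$, $\Proj \catC$ is c-closed.

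Now for an arbitrary $X \in \catC$, the enough projectives hypothesis produces a conflation $K \to P \to X \dashrightarrow$ with $P \in \Proj \catC$. This is exactly the kind of conflation that witnesses $P \crel K \oplus X$. Since $P \in \Proj \catC$ and $\Proj \catC$ is c-closed, we conclude $K \oplus X \in \Proj \catC$, and then $X \in \Proj \catC$ by closure under direct summands. Thus every object is projective.

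The main (and essentially the only) conceptual step is this use of c-closedness of $\Proj \catC$ together with enough projectives to force every object to be projective; no further obstacles are expected, since the remaining implications are either trivial set-theoretic inclusions or recorded by Proposition \ref{prp:c-closed quasi-split} and the standard fact \cite[Corollary 3.18]{NP} that a conflation category in which every conflation splits is a split exact category.
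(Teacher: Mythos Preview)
Your proof is correct and follows essentially the same approach as the paper: the only nontrivial implication is $(3) \Rightarrow (4)$, and both you and the paper exploit that $\Proj\catC$ is extension-closed (hence c-closed by (3)) together with enough projectives and closure under direct summands to conclude that every object is projective, so every conflation splits. The paper phrases it as showing the third term of an arbitrary conflation is projective, while you show every object is projective; these are trivially equivalent.
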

\begin{proof}
The implications (1) $\imply$ (2) $\imply$ (3) and (4) $\imply$ (5) $\imply$ (1) are clear,
so we only prove (3) $\imply$ (4).
Let $A \to B\to C \dto$ be a conflation in $\catC$.
Since $\catC$ has enough projectives,
there is a conflation $X \to P \to C \dto$ in $\CC$ with $P\in \Proj \catC$.
The assumption (3) implies that $\Proj\catC$ is c-closed, and hence $C\oplus X \in \Proj\catC$.
Then $C$ is also projective since $\Proj \catC$ is closed under direct summands.
Therefore the conflation $A \to B\to C \dto$ splits.
This proves that $\catC$ is a split exact category.
\end{proof}

\appendix
\def\thesection{\Alph{section}}
\section{Basics on commutative monoids}\label{app:mon}
In this section, we collect some definitions and properties of commutative monoids.

A \emph{monoid} is a semigroup with a unit.
In this paper, every monoid is assumed to be \emph{commutative}.
Hence we use the additive notation, that is,
the binary operation is denoted by $+$, and the unit is denoted by $0$.
A \emph{homomorphism} of monoids is a map $f\colon M \to N$ 
satisfying $f(x+y)=f(x)+f(y)$ and $f(0_M)=0_N$.
We denote $\Mon$ by the category of (commutative) monoids and homomorphisms of them.
The category $\Mon$ has arbitrary small limits and colimits
(see \cite[Section I.1.1]{Og}). 
We can define
the product $\prod_{i\in I}M_i$ and 
the direct sum (= coproduct) $\bigoplus_{i\in I}M_i$ of monoids
similarly to abelian groups. 
In particular, finite products and finite direct sums coincide.

A basic example of monoids is
the set $\bbN$ of non-negative integers with the arithmetic addition.
A monoid $M$ is said to be \emph{free} 
if it is isomorphic to $\bbN^{\oplus I}$ for some index set $I$.
In this case, the cardinality of $I$ is called the \emph{rank} of $M$.
A \emph{basis} of a free monoid is defined by a similar way to abelian groups.

The notion of quotients of monoids slightly differs from that of abelian groups.
We introduce a class of equivalence relations $\sim$ on a monoid $M$
to guarantee that the quotient set $M/{\sim}$ becomes a monoid.
\begin{dfn}\label{def:mon-congr}
The equivalence relation $\sim$ on a monoid $M$ is called a \emph{congruence}
if $x\sim y$ implies $a+x \sim a+y$ for every $a, x, y\in M$.
\end{dfn}
We can check easily that the quotient set $M/{\sim}$ of a monoid $M$ 
by a congruence $\sim$ has the unique monoid structure such that
the quotient map $M\to M/{\sim}$ is a monoid homomorphism.
To each submonoid of a monoid, we can associate the congruence as follows.
\begin{definition}\label{def:quot-submon}
  Let $M$ be a monoid and $N$ its submonoid. Define a congruence on $M$ as follows:
  \[
  x\sim y :\equi \text{there exist $n,n'\in N$ such that $x+n=y+n'$.}
  \]
  Then the monoid $M/N:=M/{\sim}$ is called the \emph{quotient monoid} of $M$ by $N$.
  We write $x\equiv y \bmod N$ if $x \sim y$ holds.
  The equivalence class of $x\in M$ is denoted by $x \bmod N$. 
\end{definition}

It is easily seen that the quotient monoids have the following universal property.
\begin{prp}\label{prp:mon quot}
Let $N$ be a submonoid of a monoid $M$, and let $\pi \colon M\to M/N$ be the quotient homomorphism.
Then $\pi(N) = 0$ holds, and for any monoid homomorphism $f\colon M \to X$ such that $f(N)=0$,
there exists a unique monoid homomorphism $\ol{f}\colon M/N \to X$ satisfying $\ol{f}\pi=f$.
This means that the diagram
\[
  \begin{tikzcd}
    N \rar[shift left, "\iota"] \rar[shift right, "0"'] & M \rar["\pi"] & M/N
  \end{tikzcd}
\]
is a coequalizer diagram in $\Mon$,
where $\iota$ is the inclusion map.
\end{prp}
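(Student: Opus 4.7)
The argument splits into three tasks: (i) verifying that $\pi(N) = 0$, (ii) constructing the factorization $\overline{f}$, and (iii) recasting the factorization property as a coequalizer in $\Mon$.

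For (i), I would observe that for every $n \in N$ the trivial equality $n + 0 = 0 + n$ with both $0$ and $n$ in $N$ witnesses $n \equiv 0 \bmod N$, so $\pi(n) = 0$. For (ii), the definition is forced by the required relation $\overline{f}\pi = f$: I set $\overline{f}(x \bmod N) := f(x)$. The only substantive content here is well-definedness. Given a relation $x + n = y + n'$ with $n, n' \in N$, applying the homomorphism $f$ gives $f(x) + f(n) = f(y) + f(n')$, and the hypothesis $f(N) = 0$ collapses this to $f(x) = f(y)$. That $\overline{f}$ is then a monoid homomorphism and satisfies $\overline{f}\pi = f$ is routine, and uniqueness follows immediately from the surjectivity of $\pi$.

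For (iii), I would interpret the coequalizer condition: for any monoid homomorphism $g \colon M \to X$, the two compositions of $g$ with the inclusion $N \hookrightarrow M$ and with the zero map $N \to M$ coincide if and only if $g$ vanishes on $N$. Hence the universal factorization of (ii) is precisely the universal property of a coequalizer of this parallel pair, yielding the desired diagram in $\Mon$.

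The proof has no substantive obstacle; it is essentially a verification of the universal property. The only delicate point is the well-definedness check in (ii), which simultaneously uses that $f$ is additive and that $f$ kills $N$. There is also a tacit prerequisite, implicit in Definition \ref{def:quot-submon}, that the relation $\sim$ actually is a congruence on $M$; reflexivity and symmetry are immediate, transitivity uses that $N$ is closed under addition (combining $x + n_1 = y + n_2$ and $y + n_3 = z + n_4$ gives $x + (n_1 + n_3) = z + (n_2 + n_4)$), and compatibility with addition is automatic.
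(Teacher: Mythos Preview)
Your proof is correct. The paper does not actually supply a proof of this proposition: it merely prefaces the statement with ``It is easily seen that the quotient monoids have the following universal property'' and moves on. Your verification fills in exactly the routine details the authors chose to omit, and there is nothing to compare.
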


Next, we introduce a class of submonoids which corresponds to
Serre subcategories (see Section \ref{s:Serre face}).
\begin{dfn}\label{def:face}
Let $M$ be a monoid.
\begin{enua}
\item
A submonoid $F$ of $M$ is called a \emph{face}
if for all $x,y\in M$, we have that $x+y\in F$ if and only if both $x \in F$ and $y\in F$.
\item
$\Face M$ denotes the set of faces of $M$.
\end{enua}
\end{dfn}

\begin{rmk}\label{rmk:Face}
Let $M$ be a monoid.
An element $x\in M$ is called a \emph{unit} if there exists $y\in M$ such that $x+y=0$.
The set $M^{\times}$ of units is the smallest face,
and $M$ itself is the largest face.
In particular, $\Face M$ is a singleton if and only if $M$ is a group.
\end{rmk}

\begin{ex}
Let $M$ be a free monoid of rank $2$ with a basis $e_1$ and $e_2$.
\begin{enua}
\item
$\bbN(e_1+e_2)$ is a submonoid of $M$ but not a face.
\item
$\Face M = \{ M, \bbN e_1, \bbN e_2, 0 \}$ holds.
\end{enua}
\end{ex}

Let us give an explicit description of the face generated by a subset,
which is useful to study faces.
\begin{fct}[{cf. \cite[Proposition I.1.4.2]{Og}}]
Let $S$ be a subset of a monoid $M$.
\begin{enua}
\item
The \emph{submonoid $\la S \ra_{\bbN}$ of $M$ generated by $S$} is the smallest submonoid of $M$ containing $S$. We can describe it as follows:
\[
\la S \ra_{\bbN}:=\left\{\sum_{i=1}^m n_ix_i \middle|\; m,n_i \in \bbN, x_i\in S  \right\}.
\]
\item
The \emph{face $\la S \ra_{\face}$ of $M$ generated by $S$} is the smallest face of $M$ containing $S$. We can describe it as follows:
\[
\la S \ra_{\face}:=\left\{x\in M \mid
\text{there exists $y\in M$ such that $x+y\in \la S \ra_{\bbN}$} \right\}.
\]
\end{enua}
\end{fct}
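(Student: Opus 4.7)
The plan is to verify each of the two descriptions in turn, with part (1) being essentially standard and part (2) requiring a bit more care to check the face axiom and minimality.

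For part (1), I would first check that the right-hand side $T := \{\sum_{i=1}^m n_i x_i \mid m, n_i \in \bbN, x_i \in S\}$ is a submonoid of $M$ containing $S$: it contains $0$ (the empty sum, $m=0$), each $x \in S$ (take $m=1$, $n_1 = 1$), and is closed under $+$ by concatenating sums. Hence $\la S \ra_\bbN \subseteq T$. Conversely, any submonoid containing $S$ must contain all such $\bbN$-linear combinations, so $T \subseteq \la S \ra_\bbN$.

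For part (2), set $F := \{x \in M \mid \exists y \in M, \, x + y \in \la S \ra_\bbN\}$. I would first show that $F$ is a face containing $S$:
\begin{itemize}
\item $S \subseteq F$: for $x \in S$, take $y = 0$, so $x + y = x \in \la S \ra_\bbN$.
\item $F$ is a submonoid: $0 \in F$ is witnessed by $y=0$; if $x_1, x_2 \in F$ with witnesses $y_1, y_2$, then $(x_1 + x_2) + (y_1 + y_2) = (x_1 + y_1) + (x_2 + y_2) \in \la S \ra_\bbN$ since $\la S \ra_\bbN$ is a submonoid, so $x_1 + x_2 \in F$.
\item Face property: if $x_1 + x_2 \in F$ with witness $y$, then $x_1 + (x_2 + y) \in \la S \ra_\bbN$ shows $x_1 \in F$, and symmetrically $x_2 \in F$. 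The converse direction (if $x_1, x_2 \in F$ then $x_1 + x_2 \in F$) is already covered by the submonoid step.
\end{itemize}

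Finally, for minimality, I would let $G$ be any face of $M$ containing $S$, and show $F \subseteq G$. Given $x \in F$, pick a witness $y \in M$ with $x + y \in \la S \ra_\bbN$. Since $G$ is a submonoid containing $S$, part (1) gives $\la S \ra_\bbN \subseteq G$, so $x + y \in G$. The face axiom applied to $G$ then forces $x \in G$. This yields $F = \la S \ra_\face$.

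I do not anticipate any real obstacle here; the proof is a routine unpacking of the definitions of submonoid and face. The only mildly subtle point is remembering that the element $y$ witnessing $x \in F$ need not itself lie in $F$, which is precisely what allows $F$ to be strictly larger than $\la S \ra_\bbN$ in general.
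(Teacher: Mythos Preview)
Your proof is correct. Note, however, that the paper does not actually prove this statement: it is recorded as a \emph{Fact} with a citation to \cite[Proposition I.1.4.2]{Og}, so there is no in-paper argument to compare against. Your verification is the standard one and would serve perfectly well as a self-contained proof.
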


Unlike the case of abelian groups,
submonoids of $M/N$ do not correspond to 
those of $M$ containing $N$:
\begin{ex}
Let $M:=\bbN^{\oplus 2}$ and $N:=\bbN(1,0)+\bbN(1,1) \subseteq M$.
Then we have $M/N = 0$ but $M$ and $N$ are distinct submonoids of $M$ containing $N$.
\end{ex}

However, we have a bijection for faces.
\begin{prp}\label{prp:face quot}
Let $N$ be a submonoid of a monoid $M$,
and let $\pi \colon M \to M/N$ be the quotient homomorphism.
\begin{enua}
\item
If $X$ is a face of $M$ containing $N$,
then $X/N:=\pi(X)$ is also a face of $M/N$.
\item
If $X'$ is a face of $M/N$,
then $\pi^{-1}(X')$ is also a face of $M$ containing $N$.
\item
The assignments in {\upshape (1)} and {\upshape (2)} give inclusion-preserving bijections
between $\{ X \in \Face M \mid X \supseteq N\} $ and $\Face (M/N)$.
\end{enua}
\end{prp}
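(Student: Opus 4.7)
The plan is to verify the three parts in order, with the key insight being that the face property is exactly what is needed to invert the quotient map on faces.

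For part (1), given a face $X$ of $M$ containing $N$, I would first note that $\pi(X)$ is a submonoid of $M/N$ because $\pi$ is a homomorphism. For the face property, suppose $\pi(a) + \pi(b) \in \pi(X)$, so $\pi(a+b) = \pi(x)$ for some $x \in X$. By the definition of the congruence, there exist $n_1, n_2 \in N$ with $a + b + n_1 = x + n_2$. Since $N \subseteq X$ and $X$ is a submonoid, both sides lie in $X$; so $a + (b + n_1) \in X$. Because $X$ is a face, both $a \in X$ and $b + n_1 \in X$, and applying the face property once more (to $b + n_1 \in X$ with $n_1 \in N \subseteq X$) gives $b \in X$. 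Hence $\pi(a), \pi(b) \in \pi(X)$.

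For part (2), the preimage $\pi^{-1}(X')$ of any submonoid is a submonoid, and it contains $N$ because $\pi(N) = 0 \in X'$. If $a + b \in \pi^{-1}(X')$, then $\pi(a) + \pi(b) = \pi(a+b) \in X'$, so the face property of $X'$ yields $\pi(a), \pi(b) \in X'$, i.e.\ $a, b \in \pi^{-1}(X')$.

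For part (3), the two assignments are inclusion-preserving by construction, so I only need to show they are mutually inverse. Since $\pi$ is surjective, $\pi(\pi^{-1}(X')) = X'$ for any face $X'$ of $M/N$. The main step — and the main (minor) obstacle — is showing $\pi^{-1}(\pi(X)) = X$ for any face $X$ of $M$ containing $N$. The inclusion $\supseteq$ is trivial. For $\subseteq$, suppose $a \in \pi^{-1}(\pi(X))$, so $\pi(a) = \pi(x)$ for some $x \in X$, hence $a + n_1 = x + n_2$ for some $n_1, n_2 \in N$. Since $N \subseteq X$ and $X$ is a submonoid, $x + n_2 \in X$, so $a + n_1 \in X$, and the face property of $X$ forces $a \in X$. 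This completes the bijection and hence the proof.
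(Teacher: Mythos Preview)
Your proof is correct and follows essentially the same approach as the paper's own proof: both parts (1) and (3) hinge on unpacking the congruence defining $M/N$ and applying the face property of $X$ to the resulting equation $a+b+n_1=x+n_2$ (respectively $a+n_1=x+n_2$) to extract $a,b\in X$. Your argument is in fact slightly more explicit than the paper's in part (1), where the paper applies the face property to the three-term sum $a+b+n'$ in one step rather than twice.
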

\begin{proof}
(1)
It is clear that $X/N$ is a submonoid of $M/N$.
Let $a$ and $b$ elements in $M$ such that $\pi(a)+\pi(b) \in X/N$.
Then there exist $x\in X$ and $n, n'\in N$ such that $x+n=(a+b)+n'$.
Since $x+n$ belongs to the face $X$ of $M$, both $a \in X$ and $b\in X$ hold.
Thus both $\pi(a)$ and $\pi(b)$ belong to $X/N$,
which shows that $X/N$ is a face of $M/N$.

(2)
This is clear from the definitions.

(3)
We have that $\pi^{-1}(X')/N=\pi(\pi^{-1}(X'))=X'$ for any face $X'$ of $M/N$
since $\pi$ is surjective.
Let $X$ be a face of $M$ containing $N$.
It is easily seen that $\pi^{-1}(X/N)\supseteq X$.
Take $a \in \pi^{-1}(X/N)$.
Then we have $\pi(a)\in X/N$,
which implies that there exist $x\in X$ and $n, n'\in N$ such that $x+n=a+n'$.
Since $a + n' = x + n$ belongs to $X$ the face of $M$, we have $a\in X$.
Therefore we conclude that $\pi^{-1}(X/N) = X$.
\end{proof}

\begin{cor}\label{cor:face unit}
Let $N$ be a submonoid of a monoid $M$.
There is an inclusion-preserving bijection between $\Face(M/N)$ and $\Face(M/\la N \ra_{\face})$.
In particular,
we have an inclusion-preserving bijection $\Face(M) \iso \Face(M/M^{\times})$,
where $M^{\times}$ is the set of units of $M$.
\end{cor}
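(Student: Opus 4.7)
The plan is to reduce both statements to Proposition \ref{prp:face quot}~(3) via the simple observation that a face contains a set $S$ if and only if it contains the face $\la S \ra_{\face}$ generated by $S$.

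First I would apply Proposition \ref{prp:face quot}~(3) twice, once with the submonoid $N$ and once with the submonoid $\la N \ra_{\face}$, to obtain two inclusion-preserving bijections
\[
\Face(M/N) \iso \{X \in \Face M \mid X \supseteq N\}, \qquad
\Face(M/\la N \ra_{\face}) \iso \{X \in \Face M \mid X \supseteq \la N \ra_{\face}\}.
\]
The key step is then to note that the two right-hand sides coincide: any face $X$ of $M$ containing $N$ automatically contains the smallest face $\la N \ra_{\face}$ containing $N$, and the converse is trivial since $N \subseteq \la N \ra_{\face}$. Composing the two bijections (the first inverted) yields the desired inclusion-preserving bijection $\Face(M/N) \iso \Face(M/\la N \ra_{\face})$.

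For the ``in particular'' clause, I would specialize to $N = 0$. On the one hand, $M/0 = M$, so the left-hand side becomes $\Face(M)$. On the other hand, by the explicit description of the generated face,
\[
\la 0 \ra_{\face} = \{ x \in M \mid \exists y \in M,\ x + y \in \la 0 \ra_\N = \{0\} \} = M^{\times},
\]
so the right-hand side becomes $\Face(M/M^{\times})$, giving $\Face(M) \iso \Face(M/M^{\times})$.

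The argument is essentially a formal consequence of the Galois-type correspondence already established in Proposition \ref{prp:face quot}, so there is no substantial obstacle; the only point requiring a moment of care is the identification $\la 0 \ra_{\face} = M^{\times}$ that makes the specialization work, which is immediate from the definition of the generated face.
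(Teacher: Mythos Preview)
Your proof is correct and follows essentially the same approach as the paper: apply Proposition~\ref{prp:face quot}(3) to both $N$ and $\la N\ra_{\face}$, identify the two sets of faces of $M$ via the defining property of $\la N\ra_{\face}$, and then specialize to $N=0$. The only cosmetic difference is that the paper cites Remark~\ref{rmk:Face} (that $M^{\times}$ is the smallest face) for the identification $\la 0\ra_{\face}=M^{\times}$, whereas you compute it directly from the explicit description of the generated face; these are of course equivalent.
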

\begin{proof}
  Proposition \ref{prp:face quot} shows that both $\Face (M/N)$ and $\Face(M/\la N \ra_{\face})$ are in bijection with $\{ X \in \Face M \mid X \supseteq N \}$ by the definition of $\la N \ra_{\face}$. Thus the former assertion holds.
  The latter assertion follows from Remark \ref{rmk:Face} and the former one by putting $N := 0$.
\end{proof}

Next, we recall the localization of a monoid,
which is a monoid analogue of the localization of a commutative ring.
\begin{dfn}\label{def:mon-loc}
Let $M$ be a monoid and $S$ a subset of $M$.
The \emph{localization of $M$ with respect to $S$} is 
a monoid $M_S$ together with a monoid homomorphism $\rho \colon M\to M_S$
which satisfies the following universal property:
\begin{enur}
\item
$\rho(s)$ is invertible in $M_S$ for each $s\in S$.
\item
For any monoid homomorphism $\phi \colon M \to X$ 
such that $\phi(s)$ is invertible for each $s\in S$,
there is a unique monoid homomorphism $\ol{\phi} \colon M_S \to X$ satisfying $\phi=\ol{\phi}\rho$.
\end{enur}
\end{dfn}
The localization of a monoid $M$ with respect to a subset $S \subseteq M$ actually exists,
which is constructed as follows:
Define a binary relation on $M \times \la S \ra_{\bbN}$ by
\[
(x,s) \sim (y,t) :\equi \text{there exist $u\in \la S \ra_{\bbN}$ such that $x+t+u=y+s+u$ in $M$.}
\]
It is a congruence on the monoid $M \times \la S \ra_{\bbN}$,
and hence the quotient set $M_S:=M\times \la S \ra_{\bbN} /{\sim}$ becomes a monoid.
We denote by $[x,s]$ the equivalence class of $(x,s)\in M\times \la S \ra_{\bbN}$.
We can think of $[x,s]$ as ``$x-s$.''
Then it is straightforward to check that
the monoid $M_S$ together with a monoid morphism $\rho \colon M \to M_S$ defined by $\rho(m)=[m,0]$
is the localization of $M$ with respect to $S$.
We call $\rho \colon M \to M_S$ the \emph{localization homomorphism} of $M$ with respect to $S$.

We reveal the relationship between faces of $M_S$ and those of $M$
in Proposition \ref{prp:face loc} below.
Let us prove two lemmas for this purpose.
\begin{lem}\label{lem:face loc 1}
Let $S$ be a subset of a monoid $M$.
Then the monoid homomorphism
\[
M_S \to M_{\la S \ra_{\face}},\quad
[x,s] \mapsto [x,s]
\]
is an isomorphism.
\end{lem}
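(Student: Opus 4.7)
The plan is to prove the isomorphism by comparing the universal properties of the two localizations. More precisely, I will show that for any monoid homomorphism $\phi \colon M \to X$, the element $\phi(s)$ is invertible in $X$ for every $s \in S$ if and only if $\phi(t)$ is invertible in $X$ for every $t \in \la S \ra_{\face}$. Once this equivalence is established, the universal property (Definition \ref{def:mon-loc}) immediately produces mutually inverse monoid homomorphisms between $M_S$ and $M_{\la S \ra_{\face}}$, and by tracing through the constructions, the map from $M_S$ to $M_{\la S \ra_{\face}}$ obtained this way is exactly $[x,s] \mapsto [x,s]$.

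The ``if'' direction of the above equivalence is trivial, since $S \subseteq \la S \ra_{\face}$. For the ``only if'' direction, suppose $\phi(s)$ is invertible for each $s \in S$. Since the set of invertible elements in a monoid is a submonoid, $\phi(u)$ is invertible for every $u \in \la S \ra_{\bbN}$. Now let $t \in \la S \ra_{\face}$. By the explicit description of $\la S \ra_{\face}$ recalled earlier in the appendix, there is some $y \in M$ with $t + y \in \la S \ra_{\bbN}$, so $\phi(t) + \phi(y) = \phi(t+y)$ is invertible in $X$. Using the general fact that if $a + b$ is invertible in a commutative monoid, then so is $a$ (with inverse $b + (a+b)^{-1}$), we conclude that $\phi(t)$ is invertible.

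Concretely, applying the universal property of $M_S$ to the localization homomorphism $M \to M_{\la S \ra_{\face}}$ (which inverts all of $\la S \ra_{\face}$, hence all of $S$) yields a unique homomorphism $f \colon M_S \to M_{\la S \ra_{\face}}$ factoring it; and applying the universal property of $M_{\la S \ra_{\face}}$ to the localization homomorphism $M \to M_S$ (which, by the equivalence just proved, inverts all of $\la S \ra_{\face}$) yields a unique homomorphism $g \colon M_{\la S \ra_{\face}} \to M_S$ factoring it. Both $g \circ f$ and $f \circ g$ factor the respective localization maps through themselves, so by the uniqueness clause in each universal property they are the identities. Finally, from the explicit construction of the localization, $f$ sends $[x,s]$ to $[x,s]$, as claimed. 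I do not anticipate a substantial obstacle; the only point requiring care is the passage from ``$\phi(t+y)$ invertible'' to ``$\phi(t)$ invertible,'' which relies solely on commutativity.
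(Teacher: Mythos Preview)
Your proof is correct and follows essentially the same route as the paper: both arguments show that $\rho\colon M\to M_S$ already satisfies the universal property of the localization at $\la S\ra_{\face}$, and hence the two localizations coincide. The only difference is that the paper quotes the identity $\rho^{-1}(M_S^\times)=\la S\ra_{\face}$ from \cite{Og} and leaves the remaining verification to the reader, whereas you prove the equivalent statement (a homomorphism inverts $S$ iff it inverts $\la S\ra_{\face}$) directly from the explicit description of $\la S\ra_{\face}$; your version is thus self-contained but otherwise the same argument.
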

\begin{proof}
Let $\rho\colon M \to M_S$ be the localization homomorphism.
We have $\rho^{-1}(M_S^{\times}) = \la S \ra_{\face}$ 
by \cite[the text following Proposition 1.4.4]{Og}.
Then we can check that $\rho$ also satisfies the universal property of the localization of $M$ with respect to $\la S \ra_{\face}$, so the assertion holds.
\end{proof}

\begin{lem}\label{lem:face loc 2}
Let $S$ be a face of a monoid $M$.
\begin{enua}
\item
$M/S$ is reduced (see Definition \ref{def:mon-red-can}).
\item
The monoid homomorphism
\[
\phi \colon M_S/M_S^{\times} \to M/S,\quad
[x,s] \bmod M_S^{\times} \to x \bmod S
\]
is an isomorphism.
\end{enua}
\end{lem}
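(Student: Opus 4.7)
For part (1), I will unfold the quotient congruence directly. If $(x \bmod S) + (y \bmod S) = 0$ in $M/S$, Definition \ref{def:quot-submon} produces $s_1, s_2 \in S$ with $x + y + s_1 = s_2$. Since $S$ is a face and $s_2 \in S$, the face axiom (applied twice, using that $S$ is closed under addition) forces $x, y \in S$, so $x \bmod S = y \bmod S = 0$; hence $(M/S)^\times = 0$, i.e., $M/S$ is reduced.

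For part (2), the plan is to construct $\phi$ abstractly using universal properties and only afterwards verify the formula. The quotient $q \colon M \to M/S$ sends every $s \in S$ to $0$, which is (trivially) invertible in $M/S$, so the universal property of $\rho \colon M \to M_S$ (Definition \ref{def:mon-loc}) provides a unique monoid homomorphism $\bar q \colon M_S \to M/S$ with $\bar q \rho = q$. Any monoid homomorphism sends units to units, so $\bar q(M_S^\times) \subseteq (M/S)^\times$, and by part (1) this is $0$. Proposition \ref{prp:mon quot} therefore induces the desired $\phi \colon M_S/M_S^\times \to M/S$. Using $[x, s] = [x, 0] + [0, s]$ together with $[s, 0] + [0, s] = [s, s] = 0$ (so $[0, s]$ is the inverse of $[s, 0]$ in $M_S$), one computes $\phi([x, s] \bmod M_S^\times) = \bar q([x, 0]) + \bar q([0, s]) = q(x) + 0 = x \bmod S$, matching the formula stated in the lemma.

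It remains to show $\phi$ is bijective. Surjectivity is immediate, since $[x, 0] \bmod M_S^\times \mapsto x \bmod S$. For injectivity, the identity $[s, 0] + [0, s] = 0$ above gives the key reduction $[x, s] \equiv [x, 0] \bmod M_S^\times$ for every $(x, s)$, so it suffices to prove $[x, 0] \equiv [y, 0] \bmod M_S^\times$ whenever $x \equiv y \bmod S$. But the latter hypothesis supplies $s_1, s_2 \in S$ with $x + s_1 = y + s_2$, and applying $\rho$ yields $[x, 0] + [s_1, 0] = [y, 0] + [s_2, 0]$ with both $[s_i, 0] \in M_S^\times$, which is exactly the required congruence. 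No genuine obstacle is expected here --- the whole argument is routine bookkeeping with the localization congruence --- but it is worth stressing the logical ordering: part (1) is precisely what licenses descending $\bar q$ through the quotient $M_S \to M_S/M_S^\times$ in part (2), so the two halves of the lemma are more tightly linked than they first appear.
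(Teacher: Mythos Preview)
Your proof is correct and follows essentially the same approach as the paper: both parts use the same core arguments (the face property for (1), universal properties to construct $\phi$ and then a direct injectivity check via the congruence for (2)). Your presentation of injectivity is slightly tidier in that you first reduce to representatives of the form $[x,0]$, whereas the paper handles general $[x,s]$ and $[y,t]$ in a single chain of equalities, but the substance is identical.
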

\begin{proof}
(1)
Let $x,y\in M$ such that $x+y \equiv 0 \bmod S$.
There are elements $s,t \in S$ such that $x+y+s=t$ in $M$.
Since $S$ is a face, both $x$ and $y$ belong to $S$,
which implies both $x \equiv 0 \bmod S$ and $y \equiv 0 \bmod S$.
Therefore $M/S$ is reduced.

(2)
The quotient homomorphism $M\to M/S$ induces 
a monoid homomorphism $\phi' \colon M_S \to M/S$ by the universal property of $M_S$, which satisfies $\phi'([x,s]) = x \bmod S$.
Then $\phi'(M_S^{\times})=0$ since $M/S$ is reduced.
Thus $\phi'$ induces a monoid homomorphism $\phi\colon M_S/M_S^{\times} \to M/S$
by the universal property of $M_S/M_S^{\times}$, which satisfies $\phi([x,s] \bmod M_S^\times) = x \bmod S$.
The homomorphism $\phi$ is clearly surjective.
We prove that $\phi$ is injective.
Let $[x,s], [y,t]\in M_S$ such that $x\equiv y \bmod S$ in $M/S$.
Then there are $n,n' \in S$ such that $x+n=y+n'$ in $M$.
Hence we have the following equalities in $M_S$:
\[
[x,s]+[s+n,0]=[x+s+n,s]=[x+n,0]=[y+n',0]=[y,t]+[t+n',0].
\]
We conclude that $[x,s]\equiv [y,t] \bmod M_S^{\times}$
because $[s+n,0], [t+n',0] \in M_S^{\times}$.
Therefore $\phi$ is injective.
\end{proof}

\begin{prp}\label{prp:face loc}
Let $M$ be a monoid and $S$ a subset of $M$.
Then there is an inclusion-preserving bijection between the following sets:
\begin{enua}
\item
$\Face(M_S)$.
\item
$\{ F \in \Face M \mid F \supseteq S \}$.
\end{enua}
\end{prp}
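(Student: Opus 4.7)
The plan is to chain together three results already established in the appendix: Lemma \ref{lem:face loc 1}, Lemma \ref{lem:face loc 2}, and Proposition \ref{prp:face quot} (with its corollary). First, I will observe that because a face of $M$ containing $S$ automatically contains $\la S \ra_{\face}$, the set in (b) is unchanged if $S$ is replaced by $\la S \ra_{\face}$. Combined with the isomorphism $M_S \iso M_{\la S \ra_{\face}}$ of Lemma \ref{lem:face loc 1}, this reduces the problem to the case in which $S$ is itself a face of $M$.

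Assuming now that $S$ is a face, I will produce the bijection as a composition of three known bijections:
\[
\Face(M_S) \xr{\sim} \Face(M_S/M_S^{\times}) \xr{\sim} \Face(M/S) \xr{\sim} \{F \in \Face M \mid F \supseteq S\}.
\]
The first arrow is Corollary \ref{cor:face unit} applied to $M_S$ (every face contains the unit group, and the bijection of that corollary is induced by the quotient $M_S \to M_S/M_S^{\times}$). The second is induced by the isomorphism $M_S/M_S^{\times} \iso M/S$ of Lemma \ref{lem:face loc 2} (2). The third is Proposition \ref{prp:face quot} (3) applied to the submonoid $S$ of $M$. Each bijection is inclusion-preserving (Corollary \ref{cor:face unit} and Proposition \ref{prp:face quot} state this explicitly, and any isomorphism of monoids induces an inclusion-preserving bijection on faces), so the composition is too.

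For clarity, I will then unwind the composition to describe the bijection directly in terms of the localization homomorphism $\rho \colon M \to M_S$: a face $G$ of $M_S$ is sent to $\rho^{-1}(G) \in \Face M$, and a face $F$ of $M$ containing $S$ is sent to the image of $F$ in $M_S$, namely $\{[x,s] \in M_S \mid x \in F, \; s \in \la S \ra_{\bbN}\}$. The routine verification that $\rho^{-1}(G)$ is a face containing $S$ (since $\rho(S) \subseteq M_S^{\times} \subseteq G$) and that $\rho^{-1}\rho(F) = F$ for $F \supseteq S$ can be done quickly using the explicit description of the equivalence relation defining $M_S$.

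I do not expect any genuine obstacle here, since all substantive content has been packaged into the preceding lemmas; the only point requiring a little care is checking that the reduction to the case where $S$ is a face is compatible with the sets in (b), which follows immediately from the defining property of $\la S \ra_{\face}$ as the smallest face containing $S$.
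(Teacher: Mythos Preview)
Your proposal is correct and follows essentially the same route as the paper: both chain together Lemma \ref{lem:face loc 1}, Corollary \ref{cor:face unit}, Lemma \ref{lem:face loc 2}, and Proposition \ref{prp:face quot} to produce the bijection, the only cosmetic difference being that you first reduce to the case where $S$ is a face and then run the chain, while the paper carries $\la S \ra_{\face}$ through the chain explicitly. Your additional paragraph unwinding the bijection in terms of $\rho$ is not in the paper but is a harmless and helpful supplement.
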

\begin{proof}
We have the following inclusion-preserving bijections:
\[
\Face(M_S) \iso \Face\left(M_{\la S \ra_{\face}}\right) 
\iso \Face\left(M_{\la S \ra_{\face}}/M_{\la S \ra_{\face}}^{\times}\right)
\iso \Face(M/\la S \ra_{\face})
\]
by Lemma \ref{lem:face loc 1}, \ref{lem:face loc 2}, and Corollary \ref{cor:face unit}.
The set $\Face(M/\la S \ra_{\face})$ corresponds bijectively to
the set (2) by Proposition \ref{prp:face quot}.
\end{proof}

We now discuss the relationship between monoids and groups.
Let $M$ be a monoid.
The \emph{group completion} $\gp M = (\gp M, \rho \colon M \to \gp M)$ of $M$
is the localization of $M$ with respect to $S=M$.
Note that $\gp M$ is an abelian group.
It has the following universal property:
\begin{prp}\label{prop:gp-compl-univ}
Let $M$ be a monoid.
The group completion $(\gp M, \rho \colon M \to \gp M)$ of $M$
satisfies the following universal property:
\begin{itemize}
\item 
For every monoid homomorphism $f\colon M \to G$ into a group $G$,
there exists a unique group homomorphism $\ol{f}\colon \gp M \to G$
satisfying $f=\rho \ol{f}$
\end{itemize}
\end{prp}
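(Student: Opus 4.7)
The plan is to deduce this directly from the universal property of the monoid localization $M_S$ (established in Definition \ref{def:mon-loc} and the construction immediately following it), applied to the special case $S = M$.

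First I would verify that $\gp M = M_M$ is actually a group, so that talking about ``group homomorphisms from $\gp M$'' makes sense. Using the explicit model $M_M = (M\times \la M \ra_{\bbN})/{\sim} = (M \times M)/{\sim}$, every element has the form $[x,s]$, and
\[
[x,s] + [s,x] = [x+s, s+x] = [0,0]
\]
because $(x+s, s+x) \sim (0,0)$ via $u = 0$. Hence $\gp M$ is a commutative group, with $-[x,s] = [s,x]$.

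Next, given any monoid homomorphism $f \colon M \to G$ into a group $G$, every element $f(s)$ with $s \in M$ is invertible in $G$ simply because $G$ is a group. Therefore the universal property of the localization $M_M$ (Definition \ref{def:mon-loc}(ii)) produces a unique monoid homomorphism $\ol{f} \colon \gp M \to G$ satisfying $\ol{f}\circ\rho = f$. This is exactly the universal property claimed, once we note that a monoid homomorphism between two groups is automatically a group homomorphism, since preserving the addition and the neutral element forces preservation of inverses: from $\ol{f}(a) + \ol{f}(-a) = \ol{f}(0) = 0$ we read off $\ol{f}(-a) = -\ol{f}(a)$.

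There is no real obstacle here: the whole argument is a one-line reduction to the universal property of $M_S$, plus the easy verification that $\gp M$ is a group and that monoid maps between groups are group maps. The only mildly delicate point is making sure uniqueness of $\ol{f}$ as a monoid homomorphism implies uniqueness as a group homomorphism, which is immediate since any group homomorphism is in particular a monoid homomorphism.
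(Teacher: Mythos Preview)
The paper does not actually supply a proof of this proposition; it is stated without argument, presumably because it is a standard fact and follows immediately from the definition of $\gp M$ as the localization $M_M$. Your proof is correct and is exactly the argument the paper's setup invites: invoke the universal property of $M_S$ from Definition~\ref{def:mon-loc} with $S=M$, check that $M_M$ is a group, and observe that monoid homomorphisms between groups are group homomorphisms.
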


The assignment $M \mapsto \gp M$ gives rise to a functor $\gp \colon \Mon \to \Ab$ by the universal property.
Also,
the assignment $M\mapsto M^{\times}$ gives rise to a functor $(-)^{\times} \colon \Mon \to \Ab$
since units are preserved by monoid homomorphisms.
\begin{prp}\label{prp:adj grp}
The forgetful functor $\Ab \to \Mon$
has both the left adjoint $\gp \colon \Mon \to \Ab$
and the right adjoint $(-)^{\times}\colon \Mon \to \Ab$.
\end{prp}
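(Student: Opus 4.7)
The plan is to verify each adjunction by exhibiting the natural bijection on hom-sets using the already-available universal properties. Write $U \colon \Ab \to \Mon$ for the forgetful functor.

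For the left adjoint $\gp \dashv U$, I would invoke Proposition \ref{prop:gp-compl-univ} directly. Given a monoid $M$ and an abelian group $G$, the universal property supplies, for each monoid homomorphism $f \colon M \to UG$, a unique group homomorphism $\ov{f} \colon \gp M \to G$ with $f = U(\ov{f}) \circ \rho_M$. This provides a bijection
\[
\Ab(\gp M, G) \;\xrightarrow{\ \sim\ }\; \Mon(M, UG), \qquad \varphi \mapsto U(\varphi) \circ \rho_M,
\]
with inverse $f \mapsto \ov{f}$. Naturality in $M$ follows from the fact that the group completion is a functor and $\rho_{(-)}$ is natural (both of which are immediate from the universal property), while naturality in $G$ is obvious since post-composition commutes with itself.

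For the right adjoint $U \dashv (-)^\times$, the key observation is that any monoid homomorphism $\varphi \colon UG \to M$ from (the underlying monoid of) an abelian group $G$ to a monoid $M$ automatically factors through $M^\times \hookrightarrow M$: if $x \in G$, then $x$ has an inverse $-x$ in $G$, so $\varphi(x) + \varphi(-x) = \varphi(0) = 0$ in $M$, showing $\varphi(x) \in M^\times$. Conversely, every group homomorphism $G \to M^\times$ composes with the inclusion $M^\times \hookrightarrow M$ to give a monoid homomorphism $UG \to M$. These two operations are mutually inverse and yield a natural bijection
\[
\Mon(UG, M) \;\xrightarrow{\ \sim\ }\; \Ab(G, M^\times).
\]
Naturality in $G$ is immediate, and naturality in $M$ follows because any monoid homomorphism $M \to M'$ restricts to a group homomorphism $M^\times \to (M')^\times$, so $(-)^\times$ is indeed functorial.

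There is no real obstacle here: both adjunctions reduce to unpacking definitions and applying the universal property of $\gp$ already established. The only thing that requires a moment of care is the functoriality of $(-)^\times$ and the observation that images of units are units, but this is a one-line check. I would therefore present the proof as two short paragraphs corresponding to the two adjunctions, explicitly writing down the unit $\rho_M \colon M \to U(\gp M)$ and the counit $\varepsilon_M \colon U(M^\times) \hookrightarrow M$, and noting that the triangle identities are either tautological (for the counit of $U \dashv (-)^\times$) or follow from the uniqueness clause in Proposition \ref{prop:gp-compl-univ} (for $\gp \dashv U$).
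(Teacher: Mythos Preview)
Your proposal is correct and takes essentially the same approach as the paper: the paper's proof is a single sentence invoking Proposition \ref{prop:gp-compl-univ} for the left adjoint and ``the fact that units are preserved by monoid homomorphisms'' for the right adjoint, which is exactly what you unpack in detail. Your version is simply a fleshed-out form of the paper's terse justification.
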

\begin{proof}
This follows from Proposition \ref{prop:gp-compl-univ} and 
the fact that units are preserved by monoid homomorphisms.
\end{proof}

Finally, we list properties of monoids which we use in this paper.
\begin{dfn}\label{def:mon-red-can}
A monoid $M$ is \emph{reduced}
if $a+b=0$ implies $a=b=0$ for any $a,b\in M$. This is equivalent to $M^\times = 0$.
\end{dfn}


\begin{dfn}\label{def:subt-cofin}
Let $S$ be a subset of a monoid $M$.
\begin{enua}
\item
$S$ is \emph{subtractive} 
if $x+y\in S$ and $x\in S$ imply $y\in S$ for any $x,y\in M$.
\item
$S$ is \emph{cofinal} 
if, for any $x\in M$, there exists $y\in M$ satisfying $x+y\in S$. 
\end{enua}
\end{dfn}

\begin{rmk}\label{rem:grp-subt-cof}
Let $M$ be a monoid.
\begin{enua}
\item
If $M$ is a group, then a subtractive submonoid is nothing but a subgroup.
\item
If $M$ is a group,
then any submonoid $N$ of $M$ is cofinal since $x+(-x)=0\in N$ for all $x\in M$. 
\item
We can define a pre-order $\le$ on any monoid $M$ by
\[
x\le y :\equi \text{there exists some $a\in M$ such that $y=x+a$.}
\]
A cofinal subset of $M$ defined as above is nothing but a cofinal subset of $M$
with respect to this pre-order $\le$.
\end{enua}
\end{rmk}

\medskip
\noindent
{\bf Acknowledgement.}
The authors would like to thank Hiroyuki Nakaoka and Arashi Sakai for valuable discussions and comments.
They would also like to thank Yasuaki Ogawa for interesting information.
H. E. is supported by JSPS KAKENHI Grant Number JP21J00299.
S. S. is supported by JSPS KAKENHI Grant Number JP21J21767.

\end{document}